\documentclass[11pt,reqno]{amsart}
\usepackage[a4paper,textwidth=152mm,textheight=235mm]{geometry}
\usepackage[T1]{fontenc}
\usepackage{lmodern,microtype,amsmath,amssymb,amsthm,mathtools,booktabs,array,longtable}
\usepackage{xcolor,listings,fancyhdr,tikz}
\usetikzlibrary{arrows.meta,positioning,calc}
\newif\ifreadingnotes
\readingnotestrue
\ifdefined\HideReadingNotes\readingnotesfalse\fi
\newcommand{\TermNote}[2]{\label{#1}\ifreadingnotes
 {\color{red!80!black}\textup{ (#2)}}\fi}
\newcommand{\ReadingNote}[2]{\ifreadingnotes\par\smallskip
 {\small\upshape\color{red!80!black}\noindent\textbf{Reading note: #1.} #2\par}
 \smallskip\fi}
\usepackage[hidelinks,hypertexnames=false,pdfpagelabels]{hyperref}
\hypersetup{pdftitle={The Linearity Conjecture: Parts 0, 1 and 2},
 pdfauthor={Peter Sziklai},
 pdfsubject={Reading guide, Frobenius methods and planar linearity}}
\newtheorem{theorem}{Theorem}[section]
\newtheorem{lemma}[theorem]{Lemma}
\newtheorem{proposition}[theorem]{Proposition}
\newtheorem{corollary}[theorem]{Corollary}
\theoremstyle{definition}\newtheorem*{definition}{Definition}
\theoremstyle{remark}
\newcommand{\F}{\mathbb F}
\newcommand{\PG}{\operatorname{PG}}
\newcommand{\ord}{\operatorname{ord}}
\newcommand{\adj}{\operatorname{adj}}
\newcommand{\rank}{\operatorname{rank}}
\newcommand{\fr}[2]{{{#1}^{[#2]}}}
\newcommand{\Span}{\operatorname{span}}
\newcommand{\calD}{\mathcal D}
\newcommand{\calN}{\mathcal N}
\newcolumntype{L}[1]{>{\raggedright\arraybackslash}p{#1}}
\newcommand{\Iref}[1]{\textup{I.\ref{p1:#1}}}
\newcommand{\IIref}[1]{\textup{II.\ref{p2:#1}}}
\newcommand{\Ipage}[1]{\pageref{p1:#1}}

\allowdisplaybreaks[1]
\numberwithin{equation}{section}
\newcommand{\CurrentPart}{0}
\fancypagestyle{collection}{\fancyhf{}\fancyhead[C]{\footnotesize\scshape
 The Linearity Conjecture --- Part \CurrentPart}\fancyfoot[C]{\thepage}
 }
\fancypagestyle{plain}{\fancyhf{}\fancyfoot[C]{\thepage}}
\newcommand{\StartPaper}[4]{%
 \clearpage\renewcommand{\CurrentPart}{#1}%
 \setcounter{page}{1}\setcounter{section}{0}\setcounter{subsection}{0}%
 \setcounter{theorem}{0}\setcounter{equation}{0}\setcounter{figure}{0}%
 \setcounter{table}{0}\setcounter{footnote}{0}%
 \pagestyle{collection}\thispagestyle{plain}%
 \pdfbookmark[0]{#2}{collection.part.#1}%
 \phantomsection\label{part#1:first}%
 \begin{center}\vspace*{4mm}{\large\scshape #3\par}
 \vspace{3mm}{\normalsize Peter Sziklai\par}
 \end{center}\vspace{3mm}%
 \begin{quotation}\small\noindent\textsc{Abstract.} #4\end{quotation}\medskip}
\newenvironment{PartContents}{\par\medskip
 \begin{center}\textsc{Contents}\end{center}\small
 \begin{list}{}{\setlength{\leftmargin}{0pt}\setlength{\rightmargin}{0pt}
 \setlength{\itemsep}{1pt}\setlength{\parsep}{0pt}}}{\end{list}\normalsize\medskip}
\newcommand{\ContentsEntry}[3]{\item\noindent
 \parbox[t]{.88\textwidth}{\hyperref[#3]{#1.\ #2}}\hfill\pageref{#3}}
\begin{document}
\StartPaper{0}{Part 0: a guide for reading Parts 1 and 2}
{The Linearity Conjecture, Part 0:\\a guide for reading Parts 1 and 2}
{This manuscript contains the proof of the Linearity Conjecture for $p^e\ne 3,4$. The text contains 3 concatenated parts. In this one, Part 0, we give a guide for reading Parts 1 and 2. We explain the common polynomial foundation and the two routes to
linearity developed in Parts 1 and 2 later. The guide identifies the purpose
of the main steps, the graph joining points on $(p^e+1)$-secants, the
data preserved while extracting coordinate powers.}
\begin{PartContents}
\ContentsEntry{1}{The question, the results, and a route through the papers}{G:route}
\ContentsEntry{2}{The polynomial vector and the common foundation}{G:foundation}
\ContentsEntry{3}{Local geometry, reconstruction, and the cubic extension}{G:local}
\ContentsEntry{4}{The short-secant graph: precisely what it proves}{G:graph}
\ContentsEntry{5}{The binary completion: why all secants enter}{G:binary}
\ContentsEntry{6}{Part 2: extracting coordinate powers and keeping the induction valid}{G:descent}
\ContentsEntry{7}{Why the case five has its own proof}{G:five}
\ContentsEntry{8}{Where each numerical margin is spent}{G:budgets}
\ContentsEntry{9}{What to keep in mind on a second reading}{G:last}
\end{PartContents}
\section{The question, the results, and a route through the papers}\label{G:route}
The object throughout is a small minimal blocking set $B$ in
$\PG(2,q)=\PG(\F_q^3)$, $q=p^h$. Small means $|B|<3(q+1)/2$; minimality implies that
that every point has a tangent. The exact intersection exponent is the
largest $e$ for which every line meets $B$ in $1\pmod{p^e}$ points. It is known that
$1\leq e|h$. 
We put $P=p^e$, write $q=P^n$, and set $|B|=q+Pd+1$.
The goal is to construct an $\F_P$-subspace $U\le\F_q^3$ with
\[
 B=L(U)=\{\langle u\rangle_{\F_q}:u\in U\setminus\{0\}\}.
\]
\label{exp:p0:linear-rank}The rank of this representation means $\dim_{\F_P}U$.
The maximum geometric field of linearity is the largest subfield
over which $B$ admits such a representation.
The existence of $U$ is the conclusion. 

The Linearity Conjecture states that for every small minimal blocking set, such a subspace $U$ exists. In this manuscript we prove it for every case when $p^e\ne 3,4$.

This research was assisted by AI tools, in parts of mathematical exploration, computations and drafting.
The origin of it was in the author's paper in 2008, a section titled "Three new curves" was about a certain curve $g$. Also, "putting together" a large-rank linear set from sublines over a subfield appears in his DSc thesis already. Still, AI assistance was needed to complete this research and it has become rather complicated. As a help, this guide was added; and also some remarks and notes were included along the text, typeset in red.
The guide explains the proofs and their dependencies; it is not an
independent certification of the full manuscripts.
The author retains responsibility for the proofs. In the near future, publication of a more streamlined, nicer version is planned.

References prefixed by I refer to Part~1; those prefixed by II refer
to Part~2. Section and page numbers restart in each part. An unprefixed
section reference in this guide refers to Part~0. All result numbers
and page references below are generated from the same LaTeX source
as the papers, so that they follow subsequent renumbering.

\label{exp:p0:frobenius-endpoint}For $n\ge3$, the \emph{Frobenius endpoint} is the target form of the polynomial matrix $M$ constructed in Part~1:
\[
 \Theta M=\mathsf C(V,V^{[P]},\ldots,V^{[P^{n-3}]}),\qquad\Theta\ne0,
\]
where $V=(X,Y,Z)^t$, $V^{[P^i]}=(X^{P^i},Y^{P^i},Z^{P^i})^t$,
\(\Theta=\Theta(X,Y,Z)\) is a nonzero homogeneous scalar polynomial, multiplying all entries of \(M\)
and $\mathsf C$ has constant coefficients and is linear in each
vector argument separately (\emph{multilinear}). It ends the
coordinate-power extraction; the geometric reconstruction follows.
A short secant means a $(P+1)$-secant, and its dual point is a short vertex.

\begin{longtable}{@{}L{.23\textwidth}L{.40\textwidth}L{.30\textwidth}@{}}
\toprule
Result & What it supplies & Where it is proved\\\midrule\endhead
Theorem~\Iref{I:size-main} & $|B|\ge q+q/P+1$, without assuming
linearity. & I, Proposition~\Iref{I:ordinary-bound},
p.~\Ipage{I:ordinary-bound}.\\
Theorem~\Iref{I:uniform-main} & The Frobenius matrix formulas
and their inverse relations for $g$, in every characteristic. & I, Sections~2--4;
Proposition~\Iref{pp:identities}, p.~\Ipage{pp:identities}.\\
Theorem~\Iref{I:binary-main} & $P=2$: $|B|=3q/2+1$ and a
representation of rank $n+1$. & I, Sections~9--17; the final
identification is in Section~17, p.~\Ipage{B:sec:closure}.\\
Theorem~\Iref{I:endpoint-theorem} & The specified Frobenius endpoint
gives compatible point representatives. & I, Section~8,
p.~\Ipage{I:endpoint-theorem}.\\
Corollary~\Iref{I:cubic-extension} & $q=P^3$, $P\ge4$: a rank-four
$\F_P$-representation, including $P=4,q=64$. & I, Section~8,
p.~\Ipage{I:cubic-extension}.\\
Theorem~\IIref{II:main} & Every exact exponent $P\ge5$: rank
$n+1$ and maximum geometric field $\F_P$. & II, Sections~2--8 for
$P\ge7$; Section~9 for $P=5$.\\\bottomrule
\end{longtable}

There are two completions after the common polynomial foundation.
For $P=2$, equality in a pencil count gives a single polynomial $D$;
its complete factorization leads to relations on every secant and then
to a dimension bound. For $P\ge5$, Part~2 successively extracts
coordinate powers from $M$. The resulting Frobenius endpoint allows Part~1,
Section~8 to construct a common rational function $W$; compatible
representatives, a connected graph, and two counts complete the proof.
\label{exp:p0:compatible}Compatibility means that on each short secant the vectors
$\F_P^*u_a$ together form the nonzero part of one two-dimensional
$\F_P$-space.

\begin{figure}[htbp]
\centering
\begin{tikzpicture}[>=Latex,box/.style={draw,rounded corners=1.5pt,
 align=center,font=\small,text width=54mm,inner sep=5pt},
 node distance=8mm and 11mm]
\node[box,text width=122mm] (g) {Part 1, Sections 2--4\\
 Arrangement $R$, root vector $g$, size bound, Frobenius matrix $M$};
\node[box,below left=8mm and -54mm of g] (binary) {Binary equality\\
 Canonical polynomial $D$};
\node[box,below right=8mm and -54mm of g] (local) {Short-secant regularity\\
 Part 1, Section 5};
\node[box,below=of binary] (factor) {Split $D$ into line factors\\
 Obtain relations on every secant};
\node[box,below=of local] (descent) {Part 2: coefficient descent\\
 Reach the Frobenius endpoint};
\node[box,below=of factor] (code) {Even subsets and triple count\\
 Part 1, Sections 16--17};
\node[box,below=of descent] (normal) {Part 1, Sections 7--8\\
 Common $W$ and compatible vectors};
\node[box,below=of code] (binend) {$P=2$: dimension $n+1$\\
 An even-set count gives $B=L(U)$};
\node[box,below=of normal] (genend) {Graph, dimension, signed pencils\\
 $P\ge5$: $B=L(U)$};
\draw[->] (g.south -| binary.north)--(binary.north);
\draw[->] (g.south -| local.north)--(local.north);
\draw[->] (binary)--(factor);\draw[->] (factor)--(code);
\draw[->] (code)--(binend);\draw[->] (local)--(descent);
\draw[->] (descent)--(normal);\draw[->] (normal)--(genend);
\end{tikzpicture}
\caption{The shared foundation and the two completion arguments.
For $q=P^3$, $P\ge4$, $M$ is already linear: the corollary in
Part~1 goes directly from the local data to its Frobenius endpoint theorem.}
\end{figure}

For a first reading, start with the pencil count in I.2 and the
statement of the uniform theorem. Then read I.5, the normalization
lemma I.\ref{p1:I:normalizer}, and the explanation of the graph in
Section~\ref{G:graph} below. This gives the geometric purpose of the
matrices before the longer coefficient calculations. The binary route
can then be read independently of Part~2. For Part~2, keep the definition
of admissible data in II.4 open while reading the successive steps.

\section{The polynomial vector and the common foundation}\label{G:foundation}
Choose a column $a$ for each point of $B$. Its dual line has equation
$L_a=a^tV=0$, where $V=(X,Y,Z)^t$, and $R=\prod_{a\in B}L_a$.
A secant containing $Pr+1$ points corresponds to a dual vertex where
$Pr+1$ arrangement lines meet. Its weight is $r$; a short secant has
$P+1$ points and weight one. This is why a count of secants through
$a$ becomes a count of vertices on $L_a$.

The main vector is $g=(g_1,g_2,g_3)^t$, homogeneous of degree $d$.
Brackets mean coordinatewise powers, so $g^{[P]}=(g_1^P,g_2^P,g_3^P)^t$.
In this notation $g$ is the root vector and $g^{[P]}$ the normalized
tangent vector\TermNote{exp:p0:root-tangent}{raising the coordinates of $g$ to $P$th powers gives the tangent vector; at a rational point $v$ of $R=0$ with $\nabla R(v)\ne0$, the tangent line is spanned by $v$ and $g(v)^{[P]}$}. The construction comes from the author's polynomial arguments
method, cited precisely in Part~1.

\subsection{Why the normalization matters}
Lemma~\Iref{pp:normalization} gives
\[
 R=\det(V,V^{[q]},g^{[P]}),\qquad L_a\mid a^{[1/P]t}g.
\]
The first identity comes from division on the affine grid and the
line-intersection congruences. Those congruences force appropriate
exponents to be divisible by $P$. The relation $Xf_1+Yf_2+Zf_3=0$
then pairs monomials with residues $(1,1,0)$ and produces the vector
$g$. This coefficient comparison is an important step: ordinary
first derivatives alone would detect $p$th powers, not necessarily
$P$th powers.

The second identity says that $g$ belongs to a family of polynomial
vectors with prescribed normal divisibilities\TermNote{exp:p0:normal-divisibility}{the scalar component $a^{[1/P]t}g$ vanishes identically on the dual line $L_a=0$}. Lemma~\Iref{pp:module}
gives an explicit basis of that family. In particular, a vector of
degree below the third basis degree must be a polynomial combination
of $g$ and $V^{[q/P]}$. This degree restriction is used repeatedly to
turn divisibility into an exact identity.

\label{exp:p0:ordinary-special}Write $a^{[1/P]t}g=L_a h_a$. The point $a$ and its dual line
$L_a=0$ are \emph{ordinary} if $h_a|_{L_a}$ is not the zero polynomial;
they are \emph{special} if it is zero, equivalently if
$L_a^2\mid a^{[1/P]t}g$. Thus special means higher normal vanishing:
in coordinates $L_a=Z$, the coefficient of $Z$ in $g_3$ vanishes
identically. The later results show that ordinary points lie on short
secants and special points lie on none; the latter must therefore be
recovered after the short-secant construction. The geometric normality
test uses $g^{[P]}$, since
$(a^{[1/P]t}g)^P=a^tg^{[P]}$; it is not the untwisted scalar product
$a^tg$. The distinction is relevant when coordinates are changed.

\subsection{The size theorem is obtained before the matrix machinery}
On an ordinary line $Z=0$, put $g_3=Zh$. At a vertex of weight $r$,
the coefficient of $Z^{P-1}$ in $R/Z$ has tangential order at least
$Pr-P+1$: at most $P-1$ of the $Pr$ incident factors can contribute
a normal term. The determinant expression then gives
$\ord h|_{Z=0}\ge r-1$.

If $k_a$ counts nontrivial secants through $a$, the pencil identity is
$\sum r=q/P+d$, independently of $k_a$. Comparing with
$\deg h=d-1$ yields
\[
 d-1\ge\sum(r-1)=q/P+d-k_a,
 \qquad k_a\ge q/P+1.
\]
The opposite bound $k_a\le Pd+1$ gives $d\ge q/P^2$ and hence
Theorem~\Iref{I:size-main}. Neither $J\ne0$ nor the Frobenius endpoint is
needed. This is the shortest major argument in the collection.

\subsection{From nonzero derivatives to the Frobenius representation}
The determinant
$J=\det(g,g_X,g_Y)/Z$ has degree $3d-3$. Lemma~\Iref{I:separable}
uses incidence moments to show that there are more than $d^2$ short
vertices. Two coprime combinations of the coordinates of $g$ have at
most $d^2$ common zeros. Therefore some short vertex has $g\ne0$;
its squarefree tangent cone\TermNote{exp:p0:tangent-cone}{the lowest-degree local part, here a product of distinct incident line equations} makes the two transverse first derivatives
independent, so $J\ne0$. The same argument covers $P=2$.

Section I.4 compares Hasse coefficients. A Hasse coefficient of order
$j$ is simply the coefficient of $t^j$ in $g(V+tW)$; no division by
$j!$ is involved. The determinant identities constrain the derivatives
enough to recognize an invertible matrix with coefficients in
$K^p$, where $K=\F_q(x,y)$ and $K^p$ is its subfield of $p$th powers.
Expansion in the basis $x^iy^j$, $0\le i,j<p$, is the concrete tool.

Characteristic two has a direct four-term calculation. In characteristic
three a separate argument obtains the integrating multiplier\TermNote{exp:p0:integrating}{a nonzero scalar function multiplying the vector to obtain the required matrix formula}; the
ordinary odd-characteristic computation cannot simply be divided by
three. Lemma~\Iref{pp:P-integration} then raises the power level from
$p$ to $P=p^e$. Its scalar-matrix argument includes $r=2$ only in
characteristic two. This is the reason that a proof over the prime
field is not yet the full uniform theorem.

Proposition~\Iref{pp:identities} completes Theorem~\Iref{I:uniform-main}:
\[
 Hg=M^{[P]}V,\qquad Mg=Ha_0V^{[q/P^2]},\qquad
 \det M=H^2a_0D,
\]
\[
 Dg=NV^{[q/P^2]},\qquad N=\adj M/H,\qquad
 c=H^{P-1}a_0^P,\qquad
 \Phi=J/c=H^{P-2}D^P.
\]
\label{exp:p0:H-adjugate}Here $\deg\Phi=q/P-(P-2)d-2$ and $\deg M=m$.
The adjugate $\adj M$ is the transpose of the cofactor matrix and
satisfies $M\adj M=(\det M)I$. Differentiating $Hg=M^{[P]}V$
shows that every irreducible factor of $H$ divides every cofactor
of $M$. Since $H$ is squarefree (no repeated irreducible factor),
their product $H$ divides every entry of $\adj M$. Hence
$N=\adj M/H$ has polynomial entries. We call $N$ reciprocal
because $MN=Ha_0D\,I$; dividing out $H$ lowers its degree.

\section{Local geometry, reconstruction, and the cubic extension}\label{G:local}
Section I.5 is the local input to Part~2. Its leading-block lemma
controls the first terms of $M$ at a vertex. At a short vertex,
$g,J,H,a_0,D$ and $\det M$ are nonzero, and every incident component
is ordinary. The short secant is an $\F_P$-subline.

The weight-gap lemma excludes weights $2,\ldots,P-1$. Its key
comparison using $A_r(B_r)_x-B_r(A_r)_x$ has difference exactly $P(2P-1-2r)>0$ in the
range being excluded. Thus, besides tangents, line sizes begin with
$P+1$ and then at least $P^2+1$. In particular every ordinary point
lies on at least
\[
 m_0=q/P+1-\left\lfloor\frac{d-1}{P-1}\right\rfloor
\]
short secants. This count is what supports the interpolation in
Part~2 and its final graph and dimension arguments. There are also
more than $q$ ordinary points, and
$\sigma=|B\setminus O|\le d-1-(P-3)m$ for $P\ge3$.

Section I.6 records additional information about $g$ and explains
the limitations of shortcuts. If a dual point represents a tangent
at $a$, then $g(v)^{[P]}$ represents a line through $a$, which need
not be a tangent. Likewise $g$ may have common zeros. Neither absence
of common zeros nor preservation of tangents is an input to Part~2.
The binary identities $\delta b=b^2$, $\delta c=bc$ do use
$\delta^2=0$ from Proposition~\Iref{I:extra-identities}.

\subsection{What the Frobenius endpoint theorem does}
The Frobenius endpoint is the explicit matrix formula
\[
 \Theta M=\mathsf C(V,V^{[P]},\ldots,V^{[P^{n-3}]}),
 \qquad \Theta\ne0,
\]
where $\mathsf C$ has constant coefficients and is linear in each
independent three-coordinate argument. For example, with two arguments
it is a sum $\sum_{i,j}(X_0)_i(X_1)_jC_{ij}$ of constant matrices.
Before substitution the arguments are independent; after substitution
the entries become ordinary homogeneous polynomials in $X,Y,Z$.

Theorem~\Iref{I:endpoint-theorem} proves that this formula supplies
the common rational function $W$ needed by Lemma~\Iref{I:normalizer}.
The determinant and adjugate split as
$\det\mathsf C=\mathcal F^2\mathcal G$ and
$\adj\mathsf C=\mathcal F\mathsf B$, with
\[
 D=\Theta^{P-2}\mathcal F_\Delta,
 \qquad W=\frac{\mathcal F_\Delta^P}{\Theta^PH},\quad\deg W=d-1.
\]
\label{exp:p0:diagonal}The subscript $\Delta$ means precisely the displayed Frobenius
substitution, also called the \emph{Frobenius diagonal}: replace the
independent vector arguments by these successive powers of $V$. Restriction to two-dimensional normal planes\TermNote{exp:p0:normal-plane}{for a dual vertex $v$, this is the vector plane of line normals $a$ with $a^tv=0$} shows
$h_a|_{L_a}=\beta_aW|_{L_a}$ for every ordinary $a$.
All needed numerator and denominator factors are nonzero at short
vertices, so these restrictions make sense there.

The proof compares coefficients before substitution. Argument degrees
are at most two, below $P$, so uniqueness of base-$P$ digits prevents
different monomials from cancelling on the Frobenius diagonal.
The same argument is now written explicitly for the normal planes.
This theorem is a proved implication with a stated hypothesis. It does
not, by itself, assert that all sets with $P=3$ or $P=4$ reach the
Frobenius endpoint, nor that compatible representatives already have rank $n+1$.

\subsection{Why Section I.8 now has an application inside Part 1}
For $q=P^3$, $P\ge4$, the degrees alone give
\[
 0\le\deg D=P-(P-2)m-1,\qquad
 0\le\deg H=Pm-d+1,\qquad d\ge P.
\]
Thus $m=1$ and $d=P$ or $P+1$. The Frobenius endpoint is already present:
take $\Theta=1$ and $\mathsf C(V)=M(V)$. The remaining counts are
particularly simple:
\[
 m_0=P^2,\quad |B|\le P^3+P^2+P+1<4q/3,
 \quad \sigma\le d-P+2.
\]
The graph is connected, the moment bound gives rank four, and the
signed-pencil bound identifies $B$ with its representation. The full
proof is Corollary~\Iref{I:cubic-extension}.
For $P=4$, its two possible sizes are $81$ and $85$.
For $P=3$, $m=2$ is still allowed; for $P=4$, arbitrary $n$ is still
outside this argument. Keeping the Frobenius endpoint theorem in Part~1 therefore
both preserves the shared foundation and gives it an unconditional
application in the same paper.

\section{The short-secant graph: precisely what it proves}\label{G:graph}
Let $O$ be the ordinary points. Join $a,b\in O$ when their line is
a short secant. All points of such a secant are ordinary.
Suppose the common function $W$ has supplied representatives
$u_a=\lambda_a a$ over a finite extension of $\F_q$, normalized by
$\lambda_a^{P-1}=\beta_a^P$.

Differentiating the normal identity at a short vertex gives an
invertible semilinear fixed-vector equation\TermNote{exp:p0:semilinear}{an equation $Aw^{[P]}=w$ with $A$ invertible; the fixed vectors form an $\F_P$-space} on the two-dimensional
normal plane. Its fixed space has dimension at most two over $\F_P$:
apply Frobenius to a shortest linear dependence and subtract to see
that its coefficients are in $\F_P$. The $P+1$ distinct directions
force dimension exactly two. Thus each short secant comes with all
nonzero vectors of a two-dimensional $\F_P$-space, up to the
$\F_P^*$-multiples of its chosen representatives.

If $a,b$ are adjacent, $u_a+u_b$ has the rational direction of a third
point on their secant. Since $a,b$ are independent over $\F_q$, this
forces $\lambda_a/\lambda_b\in\F_q^*$. Along a path the ratios
multiply. If the graph is connected, fix one vertex $a_0$ and divide
every representative by $\lambda_{a_0}$; all representatives then lie
in $\F_q^3$ at once. This is the role of connectivity.

If every ordinary point is on at least $m_0$ short secants, the graph
has minimum degree at least $Pm_0$. A disconnected graph would have
at least $2(Pm_0+1)$ vertices. Therefore $2Pm_0>|B|$ suffices.
The same inequality also makes the denominator in the moment bound
positive, because $N_0=|O|\le|B|$:
\[
 2+2Pm_0-N_0>0.
\]
These are two uses of one estimate, but they prove different things.

The graph does not bound the dimension of the span and does not
identify the special points. Those are the next two steps:
\[
 P^{r_U}\le
 \frac{PN_0((P-1)m_0+1)}{2+2Pm_0-N_0}
 \quad\hbox{(Proposition~\Iref{I:code-bound})},
\]
followed by the signed-pencil bound of Lemma~\Iref{I:signed}\TermNote{exp:p0:signed-pencil}{a count through a point of two disjoint sets whose line intersections are congruent modulo $P$}.
The moment estimate proves $r_U\le n+1$; the separate count of
projective directions, using $N_0>q$, proves $r_U\ge n+1$.
The written binary proof does not use this graph to recover the field:
its canonical representatives already lie in $\F_q^3$.

\section{The binary completion: why all secants enter}\label{G:binary}
The first binary simplification is $d=q/4$, from the two opposite
bounds on $d$. On each ordinary line the pencil count is then an
equality. It gives exact normal restrictions and
\[
 D|_{L_a}=\sqrt{\kappa_a}\,h_a|_{L_a},
 \qquad \deg D=q/4-1.
\]
This common polynomial replaces the rational function $W$ of the
general Frobenius endpoint argument. The canonical normalization is
\[
 L_a^2\mid u_a^{[1/2]t}g+(u_a^tV)D.
\]
Its scaling factor is $1/\kappa_a\in\F_q$, so no extension-field
descent is required. Every three-secant gives a relation
$u_a+u_b+u_c=0$.

\subsection{The global factorization is the decisive middle step}
Proposition~\Iref{B:support} detects special points by $L_a\mid D$.
The local calculation strengthens the restriction order to the full
plane order $\ord_{\ell^*}D=r-1$ at a vertex with an ordinary
incident line. Proposition~\Iref{B:indices} then counts the common
zeros of $V\times g^{[2]}$, with their local intersection counts\TermNote{exp:p0:intersection}{a common zero is counted with its intersection multiplicity, which can exceed one when the curves are tangent}.
It allows common zeros of $g$; their absence is a later conclusion.

Theorem~\Iref{B:fullsupport} proves
\[
 D\doteq\prod_{a\in\mathcal S}L_a^{\nu_a},\qquad
 \nu_a=2^{e_a}-1,\qquad \sum\nu_a=q/4-1.
\]
Here $\doteq$ means equality up to a nonzero scalar.
For concurrent special lines, the counts force full-degree
multiplicity at the concurrency point. With only one special line,
full-degree multiplicity at two distinct points on it suffices.
For nonconcurrent special lines, a residual factor of degree $h_G>0$
would contribute strictly too little to the global weight count:
B\'ezout on the special lines leaves a deficit $h_G/(\sigma-1)$.
Thus there is no residual factor. This proves both complete splitting
and the existence of an ordinary incident line at every vertex.

The special-point weights now determine all secant lengths:
$r=1+\sum_{a\in\mathcal S\cap\ell}\nu_a$.
The higher normal multiplicities in I.14 make the subsequent
interpolation possible. With
$E(t)=tB(t)^2+A(t)^2$, a squarefree polynomial of degree $2r+1$,
the ordinary representatives have coordinates
$C(t_i)^2(1,t_i)/E'(t_i)$, where $\deg C\le r-1$ and $C$ vanishes
at the special directions. Lagrange interpolation says
\[
 \sum_i\frac{F(t_i)}{E'(t_i)}=0\qquad(\deg F\le2r-1).
\]
Apply it to $C^2$ and $tC^2$. Section I.15 obtains the stronger relation
\[
 \sum_{a\in O\cap\ell}u_a=0\qquad\hbox{on every secant }\ell.
\]
This passage from short triples to all secants is essential to the
binary completion.

\subsection{The final dimension and equality arguments}
Let $\mathcal C$ be the binary vector space of subsets of $O$ meeting
every secant evenly; addition is symmetric difference. \TermNote{exp:p0:code}{A codeword is the binary indicator of a subset; a coordinate word records one $\F_2$-linear coordinate of all $u_a$} The coordinate
words of the canonical representatives belong to $\mathcal C$.
Section I.16 proves that a nonempty word has size at least
$3q/4+1-\sigma$. This improved bound uses the full splitting of $D$;
the earlier bound $q/2+2$ is insufficient for the final estimate.

Choose a generator matrix $G_{\mathcal C}$ with columns $v_a$\TermNote{exp:p0:generator}{its rows are a basis of the code; $\operatorname{wt}$ counts the nonzero coordinates of a word}.
For every binary row vector $z\in\F_2^{k_{\mathcal C}}$, put
$x(z)=N_0-2\operatorname{wt}(zG_{\mathcal C})$, where
$k_{\mathcal C}=\dim_2\mathcal C$. Average over all $2^{k_{\mathcal C}}$
vectors $z$, not merely the rows of the generator matrix.
The second and third averages count pairs and triples of columns:
$\mathbb Ex^2=N_0$, $\mathbb Ex^3=6A_3$, with $A_3$ the number
of distinct zero-sum column triples. The three-secants supply enough
such triples. The minimum word size bounds all nonzero terms in a
cubic average, while the zero word supplies the one exceptional term.
The explicit positive polynomial in I.17 gives
$2^{k_{\mathcal C}}<4q$, hence $k_{\mathcal C}\le n+1$.

The span $U$ of the canonical representatives therefore has dimension
at most $n+1$. It contains more than $q-1$ distinct nonzero vectors,
so its dimension is exactly $n+1$. For $B'=L(U)$, dimension counting
makes $B'$ blocking and all intersections odd. Since $O\subseteq B\cap B'$,
the symmetric difference has size at most $q-4$. A nonempty set
meeting every line evenly has at least $q+2$ points, by a pencil
through one of its points. Therefore $B=B'$, including all special
points. The cases $q=2,4$ are handled separately in I.17.

\subsection{A small example: \texorpdfstring{$q=8$}{q=8}}
Take the R\'edei set of $f(x)=x^2+x^4$ over $\F_8$. Its affine
graph has eight points and it determines five directions, so $|B|=13$.
The line spectrum is
\[
 1^{54},\qquad3^{16},\qquad5^3.
\]
The point $(1:1:0)$ lies on all three five-secants and on no
three-secant. It is the unique special point, with $\nu=1$.
Each of the other twelve points lies on four three-secants and one
five-secant. Thus $d=2$, $\deg D=1$, and
$(k-1)(\nu+1)=(3-1)2=4=q/2$.

The strengthened three-secant count is sharp:
$3b_3=3\cdot16=12(4+1-1)$. The final code estimate gives
$\dim_2\mathcal C\le4=n+1$. This example shows why short triples
alone omit a real part of the geometry: the special point lies on
none of them, but the final all-secant and even-set arguments recover it.

\section{Part 2: extracting coordinate powers and keeping the induction valid}\label{G:descent}
Part~2 imports the uniform representation, the ordinary-point and
short-secant bounds, the normal restrictions, and the reconstruction
results from Part~1. In particular it uses the leading-block lemma,
the weight gap, Lemmas~\Iref{I:signed}, \Iref{I:normalizer},
Proposition~\Iref{I:code-bound}, and
Theorem~\Iref{I:endpoint-theorem}. The geometric results of I.6,
the constant-space criterion and the Moore determinant criterion\TermNote{exp:p0:moore}{a determinant of the successive powers $\ell_j^{P^i}$, used to certify a linear representation} are useful
background but are not additional hypotheses of its descent.

Lines and the Baer case $n=2$ are disposed of first. For $n\ge3$ put
\[
 S=q/P^3,
 \qquad d=PS+\rho,\qquad \deg M=S+t.
\]
These are temporary degree parameters for Part~2; $q/P$ and $q/P^2$
keep their explicit notation. The four useful degrees are
\[
 \begin{array}{c|c}
 H&Pt-\rho+1\\
 a_0&S+2\rho-(P-1)t-1\\
 D&2S-(P-2)t-1\\
 N&2S+\rho-(P-2)t-1
 \end{array}
\]
For $P\ge7$, the moment bound in Part~1 gives
$0\le\rho<(1+3/P)S$ and $t\ge0$. These inequalities begin the
degree comparisons; no upper bound coming from an already assumed
linear representation is used.

\subsection{The first extraction}
The reciprocal formula expresses $Dg$ with the large coordinate
powers $V^{[q/P^2]}$. Expand a suitable common factor by exponent
residues modulo $P$. This supplies one multiplier $G$, simultaneously
for all entries of $M$, such that $GM\in(X^S,Y^S,Z^S)$ entrywise.
Here membership simply means that each monomial is divisible by one
of $X^S,Y^S,Z^S$.

The residue coefficients do not all vanish at any short vertex.
Homogenize them and choose a linear combination outside finitely many
forbidden hyperplanes; this is Lemma~\IIref{II:interpolation}(i).
One finite extension of the coefficient field permits all these choices.
The point representatives are returned to $\F_q$ at the end, as in
Section~\ref{G:graph}.

After cancelling common factors, II.3 obtains
$A=B_0M=\sum V_i^S A_i$, a reciprocal identity $AB_*=FI$, and a
primitive family $A_i$. The Hasse calculation at short vertices
shows that the matrix pencil $\sum u_iA_i$\TermNote{exp:p0:pencil}{a linear combination of fixed matrices; a primitive family has no polynomial factor common to all its entries} has common-row form after
normalization. In elementary terms it is
$\lambda(u)I+u'(u)v^t$, with one fixed row $v^t$.
Its repeated determinant factor then sharpens the residual degree to
\[
 \frac{S-1}{P+1}\le r:=\deg A_i\le\frac{S-1}{P-2}.
\]
This sharp interval starts the iteration; the rough bound $r<S/2$
alone is not the final output of the seed calculation.

\subsection{What one repetition must preserve}
At scale $s$ the matrix is
$C(Z_1,\ldots,Z_j;V)$, linear separately in its formal vector
arguments, with coefficient degree $w<5s/(P+1)$ in $V$.
The definition in II.4 retains primitivity, coefficient dimension
three in every argument, common-row form and nonsingular evaluation
of the last pencil\TermNote{exp:p0:evaluation}{evaluation is $u\mapsto(\sum u_iC_i)w$; nonsingular means its determinant is not identically zero, and coefficient dimension is $\dim\langle C_i\rangle$}, and the two identities
\[
 C_\Delta=\Xi A,\qquad
 U\mathcal D=\phi I,\qquad \phi_\Delta=\Xi^{P+1}F.
\]
The multipliers are nonzero at short vertices. The partial matrix
$U$ also has coefficient dimension three in each older argument.
Table~\IIref{tab:retained-properties} explains why each item is
needed. This is more than keeping only the equation $U\mathcal D=\phi I$.

\label{exp:p0:coefficient-frobenius}Two uses of Frobenius must be distinguished. On constant coefficients,
it leaves all polynomial variables fixed. On a whole coefficient
polynomial, $(aX+Y)^P=a^PX^P+Y^P$ also raises variable exponents.
In II.4, $C^{\langle P\rangle}$ means the latter operation on
each coefficient matrix, while the formal $Z_i$ stay unchanged.

The repeated step has four obligations:
\begin{enumerate}
\item Lemma~\IIref{lem:gap} extracts the squarefree factor $H_*$
and repeated factor $L$ from the adjugate. It gives a polynomial
residual factor $E$ with
$e=s-(P-2)w-\ell-1\ge0$ and $(P+1)w+1\ge s$.
The last-argument specialization of $L$ is nonzero because
its square divides $\det U\ne0$, even for $P=5$.
\item Lemma~\IIref{lem:cancel} removes every factor of $E$ involving
an older parameter before the final substitution. This leaves
$E_0(V)$. Lemma~\IIref{lem:extract} then simultaneously writes
$GC_I=\sum V_a^{s/P}C'_{Ia}$, with
$v=w+\deg G-s/P<5(s/P)/(P+1)$.
\item Lemma~\IIref{lem:hasse} checks that the new last pencil retains
common-row form and nonsingular evaluation. The proof takes local
Hasse coefficients, interpolates a determinant identity globally,
then separates the formal arguments. The nonsingularity check is
essential: a degenerate pencil would invalidate the next adjugate step.
\item Proposition~\IIref{II:descent-step} restores all the data,
including $\phi'_\Delta=\Xi'^{P+1}F$. That scalar identity gives
$D=E_\Delta L_\Delta(\Xi'B_0)^{P-2}$ at the next stage, ensuring
that the next multiplier can again be chosen nonzero at short vertices.
\end{enumerate}

At $s=P$ the residual degree is the integer $w=1$. The coefficients
are now linear in $V$, not constant. Regard $V$ as the final independent
argument; the coefficient matrices of this enlarged multilinear
polynomial are constant. Reorder the arguments to obtain
$V,V^{[P]},\ldots,V^{[S]}$, as required by Part~1.

\subsection{Worked extension degrees \texorpdfstring{$n=4$ and $n=5$}{n=4 and n=5}}
For $n=4$, $S=P$ and the strong interval forces $r=1$.
Write $A_i(V)=\sum_j V_j C_{ij}$ with constant matrices $C_{ij}$.
Then
\[
 \mathsf C(X_0,X_1)=\sum_i(X_1)_iA_i(X_0),\qquad
 B_0M=\mathsf C(V,V^{[P]}),\quad \deg(B_0M)=P+1.
\]
Thus $\Theta=B_0$ and no repeated descent step is needed.
For $n=5$, $S=P^2$: one repetition takes scale $P^2$ to $P$,
where the coefficients are linear. The Frobenius endpoint has three arguments
and degree $1+P+P^2$. In general there are $n-4$ repetitions after
the first extraction when $n\ge4$.

\section{Why the case five has its own proof}\label{G:five}
The proof for $P\ge7$ makes several uses of the size of $P$:
the initial separation $6S-2-3z<PS$, the rough bound $r<S/2$,
the seed and iterative interpolation budgets,
$2(P-3)\ge P+1$ in parameter cancellation, and formal determinant
degree six below $P$. It is not enough to check only the last of these.
For $P=5$, Section II.9 replaces the affected steps.

First the quadratic moment bound gives
$\rho<7S/4-1/5$. This supplies enough short vertices,
$m_0>373S/16$, for the five-case interpolation estimates.
In the first extraction a possible error is a relation among
$X^{5S},Y^{5S},Z^{5S}$. The elementary three-coordinate relation
lemma writes it as $E[V^{[5S]}]_\times$. The cross-product matrix
$[u]_\times$ is defined by $[u]_\times v=u\times v$; this is the
explicit content of the relation.

Multiply by $M$ and put $K=ME$. At short vertices the local normal
restrictions force $K+K^t$ to vanish on the transverse plane.
Interpolation and separation make $K$ skew-symmetric, so $K=[w]_\times$.
The reciprocal identity then forces $\bar D$ to divide every coordinate
of $w$, whereas $\deg(w/\bar D)<0$. Thus $w=K=E=0$.
This proves exact cancellation; an error merely vanishing at selected
points would not suffice.

Next Lemma~\IIref{five:cubic-criterion}, stated over an infinite field
of odd characteristic and used over an algebraic closure, recognizes
common-row form from a determinant of degree three. The moving-normal
plane calculation bounds its degree by $3h+15S<75S/4<m_0$.
This replaces the degree-six test.

Parameter cancellation proceeds backwards through the arguments.
The shifted reciprocal transfers nonsingular evaluation from the last
argument to the preceding one. \label{exp:p0:quadratic}Each matrix $T$ in the reciprocal pencil satisfies
$T^2-\alpha T+\beta I=0$, hence is a scalar matrix plus a matrix
of rank at most one. The common-image alternative\TermNote{exp:p0:image}{all these rank-one corrections have image in one fixed one-dimensional space} would give evaluation rank at most two,
so the common-row alternative holds. Its cubic lifts before substitution
because $3w<s$. Repeating this argument establishes the same property
in every older argument. Their repeated determinant factors then
remove parameter factors from the residual $E$.

The logical order matters: the last-argument specialization of $L$
is proved nonzero from the determinant before $E$ is constructed; only then does $e\ge0$ give
$3w<s$. After the next extraction the multiplier $G$ is cancelled
before comparing degrees. This gives
$w+5v+1\le s-2\ell-1<s$, rather than an unavailable larger bound.
Finally the proof explicitly restores $\phi'_\Delta=\Xi'^6F$.
Section II.9.6 then uses exactly the reconstruction sequence described
in Section~\ref{G:graph}.

\section{Where each numerical margin is spent}\label{G:budgets}
The table records the tests whose degree must stay below the relevant
interpolation or separation threshold. All degrees are measured before
the specified substitution. The scale $s$ is the current smallest
formal power, $\tau=s/P$, and $v$ is the new coefficient degree.
The quantity $z$ is the degree of the common factor removed from $N$;
$h_2=\deg H_2$. No table entry is an additional assumption.

\begin{longtable}{@{}L{.24\textwidth}L{.45\textwidth}L{.24\textwidth}@{}}
\toprule
Test & Degree or estimate & Threshold and use\\\midrule\endhead
Seed separation, $P\ge7$ & $6S-2-3z<PS$ & Coordinate-power
blocks in II.3.\\
Seed determinant & $3(P+1)(S+r)<\frac92(P+1)S$ & Below
$m_*=(P^2-2)S+2$; II.3.\\
Parameter cancellation & $2s-2(P-3)w-3\ell-h_*-2<s$ & Last
argument separated; II.5.\\
Iterative determinant after known factors removed &
$d_{\rm Kr}<(5P+6+3/P+3/(P-1)+4/(P-2))S$ & Below $m_*$;
at $P=7$ the coefficient is $2991/70<47$.\\
Evaluation minors, $P\ge7$ &
$\deg<(2P+P/(P-1)+P/(P-2))S+2$ & Below $m_*$;
excludes singular evaluation in II.6.\\
Return to formal arguments & Degree six in each old argument;
$6v<s$ & $6<P$ and distinct $s$-power blocks; II.6.\\
Restore the crossed identity & $\tau+(P+1)v+1<s$ & Separation
in the new old argument; II.7.\\
Five-case error & $\deg([V^{[5S]}]_\times^t(K+K^t)[V^{[5S]}]_\times)
\le12S-2$ & Below $m_0$; II.9.2.\\
Five-case output cubic & $3h+15S<75S/4$ & Below $m_0$;
II.9.3 and II.9.5.\\
Five-case evaluation minors & $5(S+r+h)+2<155S/12+1/3$ &
Below $m_0$; II.9.5.\\
Five-case return to formal arguments & Degree three in old
arguments; $3v<s$ & $3<5$; II.9.5.\\
Five-case crossed identity & $w+5v+1\le s-2\ell-1<s$ &
After cancelling $G$; II.9.5.\\\bottomrule
\end{longtable}

For reconstruction, the corresponding margins are
\[
 \begin{array}{c|c|c}
 &P\ge7&P=5\\\hline
 2Pm_0-|B|&>256q/343&>119q/200\\
 P^{r_U}&<(12/5)Pq<P^2q&<(1270/119)q<25q
 \end{array}
\]
The first row gives both connectivity and a positive denominator in
the moment bound. After rank $n+1$ is established, put $B'=L(U)$.
The comparison is always
\[
 |B\mathbin\triangle B'|
 \le\frac{Pq-1}{P-1}-|B|+2\sigma.
\]
One must not assume in advance that the first difference on the right
is nonnegative. The bounds show that the whole expression is below
$2(P-1)q/P+2$. A signed-pencil count then forces equality $B=B'$.
The same short secant also identifies the maximum geometric field:
any representation over a larger field would give a subline over
that field inside an $\F_P$-subline, which is impossible after
normalizing three common points to $0,1,\infty$.

\section{What to keep in mind on a second reading}\label{G:last}
The three short proofs to isolate first are the ordinary-pencil size
bound, the new $q=P^3$ corollary, and the graph argument. They expose
the arithmetic and geometric goals of the longer calculations.
The difficult middle steps have different jobs: I.4 builds the full
$P$-Frobenius matrix, I.13 completely splits the binary denominator,
and II.4--7 or II.9 preserve enough structure to repeat extraction.

The examples at the end of Part~2 concern the range of possible sizes.
The graph of $x\mapsto x^P$ attains $(Pq-1)/(P-1)$. The known
Polito--Polverino construction over $q=P^4$ has size
$P^4+P^3+P^2-P+1$. It shows why classifying only endpoint sizes
cannot prove the interval theorem. Neither example replaces the
descent argument for an arbitrary blocking set.

The general hypotheses should remain visible while reading:
the uniform theorem covers every exact exponent $P$;
the local results and the Frobenius endpoint theorem in I.5 and I.8 require $P\ge3$;
the Frobenius endpoint also requires $n\ge3$ and its displayed matrix formula.
The cubic-extension corollary requires $P\ge4$.
The binary special-point and code arguments requiring $q\ge8$ are
preceded or followed by the separate $q=4$ treatment.
The unrestricted $P=3$ and $P=4$ cases are not conclusions of these
two papers. All these assertions concern the plane; additional
hypotheses are needed for published higher-dimensional reductions.

\bigskip
\noindent{\bf Acknowledgements.} Project no. 153080 has been implemented with the support provided
by the Ministry of Culture and Innovation of Hungary from the National Research, Development and Innovation Fund, financed under the ADVANCED 25 funding schemes.

\label{part0:last}
\StartPaper{1}{Part 1: Frobenius methods and binary linearity}
{Part 1: Frobenius methods for small minimal blocking sets\\
and binary linearity in exponent one}
{For a nontrivial small minimal blocking set with exact intersection
exponent $P=p^e$, we prove $|B|\ge q+q/P+1$, the lower bound conjectured
by the author. The proof uses the polynomial vector $g$ and a count
of secants through one point. With the same vector we obtain
explicit polynomial expansions, determinant identities, and paired
matrix formulas involving $P$th powers, valid also in characteristics
two and three. We describe the common zeros of
$g$, its restrictions to lines, and the local multiplicities needed to
recover point representatives over $\F_P$. An application proves
$\F_P$-linearity in $\PG(2,P^3)$ for exact exponent $P\ge4$.
Finally, if $q=2^n$ and
the exact exponent is one, complete factorization and a count of
triples give $B=L(U)$ with $\dim_{\F_2}U=n+1$ and $|B|=3q/2+1$.}
\begin{PartContents}
\ContentsEntry{1}{Introduction and main results}{p1:secmap:1}
\ContentsEntry{2}{The arrangement polynomial and its line divisibilities}{p1:secmap:2}
\ContentsEntry{3}{A nonzero derivative determinant in every characteristic}{p1:secmap:3}
\ContentsEntry{4}{Directional Hasse identities and Frobenius matrix formulas}{p1:secmap:4}
\ContentsEntry{5}{Ordinary components, short secants, and the weight gap}{p1:secmap:5}
\ContentsEntry{6}{Common zeros, tangent lines, and factors of the root vector}{p1:secmap:6}
\ContentsEntry{7}{General reconstruction tools}{p1:secmap:7}
\ContentsEntry{8}{From a matrix formula to compatible point representatives}{p1:secmap:8}
\ContentsEntry{9}{The binary case: notation and exact size}{p1:secmap:9}
\ContentsEntry{10}{Sharp binary restrictions from complete splitting}{p1:secmap:10}
\ContentsEntry{11}{The canonical denominator}{p1:secmap:11}
\ContentsEntry{12}{Special factors, canonical triples, and local multiplicity}{p1:secmap:12}
\ContentsEntry{13}{The denominator splits completely}{p1:secmap:13}
\ContentsEntry{14}{Higher normal multiplicities}{p1:secmap:14}
\ContentsEntry{15}{Every secant satisfies the canonical sum relation}{p1:secmap:15}
\ContentsEntry{16}{A general code of secant relations}{p1:secmap:16}
\ContentsEntry{17}{The third moment closes the dimension bound}{p1:secmap:17}
\end{PartContents}
\section{Introduction and main results}\label{p1:secmap:1}
A blocking set in $\PG(2,q)=\PG(\F_q^3)$ meets every line. It is minimal if each of
its points has a tangent, nontrivial if it contains no line, and small if
its cardinality is less than $3(q+1)/2$. For a subspace $U\le\F_q^3$
over a subfield $\F_P$, write
\[
 L(U)=\{\langle u\rangle_{\F_q}:u\in U\setminus\{0\}\}.
\]
The planar Linearity Conjecture asserts that every small minimal blocking
set has such a representation. \label{exp:p1:rank}Its rank means $\dim_{\F_P}U$. The minimality hypothesis is essential and is retained throughout.
Such sets are called $\F_P$-linear; linear blocking sets were introduced by Lunardon \cite{p1:Lunardon}, see also \cite{p1:LunardonK}.

For $q=p^h$, the exact exponent $e$ is the largest integer for which
every line intersection is $1$ modulo $p^e$; put $P=p^e$. We use the
classical $1\bmod p$ theorem of Sz\H{o}nyi~\cite{p1:Szonyi}, the exponent
bounds of Sz\H{o}nyi--Weiner~\cite{p1:SW}, and the polynomial and subline
theory of Sziklai~\cite{p1:Sziklai,p1:DSc}. More precisely, the short-secant
subline theorem and exponent divisibility are recorded in
\cite[Theorem~11.33 and Corollary~11.35]{p1:DSc}; the classical size
intervals and their qualifications are in \cite[Corollary~11.37]{p1:DSc}.
Here the exact intersection congruence is an explicit hypothesis.

The development includes Blokhuis's prime-order bound~\cite{p1:Blokhuis},
the prime-square case in~\cite{p1:Szonyi}, and the cubic-extension results
of Polverino and Polverino--Storme~\cite{p1:Polverino,p1:PolverinoStorme}.
For directions determined by a graph, the foundational results are
\cite{p1:BBBSS,p1:Ball}; the binary case left by those results is settled
in~\cite{p1:Missing}. The present binary theorem also covers sets without
a R\'edei line.

The machinery is built on the polynomial vector $g$, starting from
the polynomial construction introduced and explored in~\cite{p1:Sziklai}.
\label{exp:p1:powers}Brackets denote coordinatewise powers: $g^{[P]}=(g_1^P,g_2^P,g_3^P)^t$.
In our notation $g^{[P]}$ is the normalized tangent vector and $g$ is
its root vector\TermNote{exp:p1:tangent-vector}{$g$ is the coordinatewise $P$th root; at a rational point $v$ of $R=0$ with $\nabla R(v)\ne0$, the tangent line is spanned by $v$ and $g(v)^{[P]}$}. We retain the needed statements and their foundational
proofs in this notation, including the small-characteristic arguments.
The derivatives, restrictions and common factors of $g$ then lead to
the matrix identities and reconstruction developed below.

\begin{theorem}[Binary exponent one]\label{p1:I:binary-main}
Let $q=2^n$ and let $B\subset\PG(2,q)$ be a nontrivial small minimal
blocking set of exact exponent one. Then
\[
 |B|=3q/2+1,\qquad B=L(U),\qquad\dim_{\F_2}U=n+1.
\]
The assertion includes blocking sets without a R\'edei line.
\end{theorem}

Ball et al's direction theorem~\cite[Theorem~1.1]{p1:BBBSS, p1:Ball} gives linearity of
the affine graph under the hypothesis $s>2$. Its $s=2$ clause supplies
direction bounds, not binary linearity. The remaining R\'edei case is
settled in~\cite[Theorem~2]{p1:Missing}. Our argument is independent of that
proof and establishes Theorem~\ref{p1:I:binary-main} also without a
R\'edei-line hypothesis.

\begin{theorem}[Size bound for the exact intersection exponent]\label{p1:I:size-main}
Let $B$ be a nontrivial small minimal blocking set of exact exponent
$e$ in $\PG(2,q)$, and put $P=p^e$. Then
\[
 |B|\ge q+q/P+1.
\]
The proof uses the arrangement polynomial and an explicit basis of
polynomial vectors satisfying its line divisibilities. It does not
assume a linear representation of $B$.
\end{theorem}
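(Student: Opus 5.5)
The plan follows the route sketched in Part~0, Section~2: a count of secants through a single ordinary point, compared with the degree of the restricted normal quotient of $g$. The inputs are the normalization lemma (to be proved in Section~2), giving $R=\det(V,V^{[q]},g^{[P]})$ and $L_a\mid a^{[1/P]t}g$ with $\deg g=d$ where $|B|=q+Pd+1$, and the module description of vectors with these normal divisibilities. The first step is to show that an ordinary point exists. If every $a\in B$ were special, then $L_a^2\mid a^{[1/P]t}g$ for all $a$, and the module basis lemma should force $g$ to be a polynomial combination of $V^{[q/P]}$ (and of $g$ itself in a degenerate way). That would contradict $\deg g=d<q/P$; note that $d<q/(2P)$ because $B$ is small. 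So I would fix an ordinary point, choose coordinates with $L_a=Z$, and write $g_3=Zh$ with $h|_{Z=0}\ne0$ and $\deg h=d-1$.

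The second step is the local order estimate. Let $v$ be a vertex on $Z=0$ of weight $r$, so $Pr+1$ arrangement lines pass through it, one of them $Z$. I would expand $R/Z$ in powers of $Z$ and look at the coefficient of $Z^{P-1}$ restricted to $Z=0$. Since only $P-1$ factors can supply a normal term, this coefficient vanishes at $v$ to tangential order at least $Pr-P+1$. Next I would compute the same coefficient from the determinant $\det(V,V^{[q]},g^{[P]})$ with $g_3^P=Z^Ph^P$; the $Z^{P}$ factor shifts, so that $h^P$ appears multiplied by a $2\times2$ minor that is nonvanishing generically at $v$. Matching orders then gives $P\cdot\ord_v h|_{Z=0}+(\text{bounded term})\ge Pr-P+1$, hence $\ord_v h|_{Z=0}\ge r-1$. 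I expect this to be the main obstacle: one has to verify that the $Z^{P-1}$ coefficient of the determinant isolates $h^P$ up to a unit at $v$, using the congruence conditions that make the relevant exponents divisible by $P$. If the minor vanishes at $v$, I would instead work in the local ring at $v$ and compare with the tangent vector description, using $\nabla R(v)$.

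The third step is the counting. Summing over the vertices on $Z=0$ gives $\sum_v(r_v-1)\le\deg h|_{Z=0}=d-1$. The pencil through $a$ counts the points of $B\setminus\{a\}$ as $\sum_\ell(|\ell\cap B|-1)=q+Pd$. A secant carrying $Pr+1$ points contributes $Pr$, so $\sum r=q/P+d$. Hence $k_a\ge q/P+1$, where $k_a$ is the number of nontrivial secants through $a$. On the other hand, every nontrivial secant through $a$ has at least $P$ further points, so $Pk_a\le q+Pd$. I would also use that a tangent exists at $a$ (minimality), so not all $q+1$ lines are secants; this gives $k_a\le Pd+1$. Sharpening by that tangent, or using the weight lower bound $r\ge1$ on each secant, should yield $Pd+1\ge q/P+1$. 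Thus $d\ge q/P^2$, and so $|B|=q+Pd+1\ge q+q/P+1$.
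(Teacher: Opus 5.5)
Your local order estimate and the resulting lower bound $k_a\ge q/P+1$ are the paper's argument. Step~2 is also not the obstacle you expect. Up to sign, $R/Z=(X^{q/P}g_2-Y^{q/P}g_1)^P+Z^{P-1}h^P(XY^q-YX^q)+Z^{q-1}(Yg_1^P-Xg_2^P)$. The first summand is a $P$th power and the last is divisible by $Z^{q-1}$, so the $Z^{P-1}$ coefficient is exactly $h_0^P(XY^q-YX^q)$. The factor $XY^q-YX^q$ vanishes \emph{simply} at $v$ rather than being nonzero there, so your bounded term is $1$. The other two ingredients, however, have genuine gaps.

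\textbf{The existence of an ordinary point.} Your argument for this never uses that the exponent is exact, and it must. Suppose every line met $B$ in $1\pmod{pP}$ points, and let $u$ be the vector of the normalization lemma for $pP$. Then $g=u^{[p]}$ satisfies the normalization lemma for $P$, and the basis $g,V^{[q/P]},w$ of $\mathcal M$ is unchanged. Every point is special, since $a^{[1/P]t}g=(a^{[1/(pP)]t}u)^p$, yet $g\ne0$ and $\deg g<q/P$. So no argument from those inputs alone can put $g$ in the span of $V^{[q/P]}$.

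The missing idea is to differentiate. If every point is special, then $L_a\mid a^{[1/P]t}\partial_jg$ for all $a\in B$ and all $j$. Thus each $\partial_jg$ lies in $\mathcal M$ in degree $d-1<d$, and so vanishes. Hence every coordinate of $g$ is a $p$th power, $g=u^{[p]}$. At a rational vertex, the terms of $\det(V,V^{[q]},u^{[pP]})$ of degree below $q$ all have degree $\equiv1\pmod{pP}$. The local order $Pr+1$ is below $q$, so every line meets $B$ in $1\pmod{pP}$ points, contradicting exactness.

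\textbf{The upper bound $k_a\le Pd+1$.} This carries the real content of the theorem, and your justification does not give it. One tangent through $a$ only yields $k_a\le q$, and the pencil count only yields $k_a\le q/P+d$. Combined with $k_a\ge q/P+1$, neither gives more than $d\ge1$, and ``sharpening by the tangent or by $r\ge1$'' does not change this.

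What you need is a nonzero polynomial of degree $Pd+1=|B|-q$ vanishing at every vertex on $L_a$. It comes from the same expansion. The restriction $(R/Z)|_{Z=0}=\pm(X^qg_2^P-Y^qg_1^P)|_{Z=0}$ vanishes at every vertex on $Z=0$. These vertices are rational, so each is a root of the binary form $(Xg_2^P-Yg_1^P)|_{Z=0}$, which has degree $Pd+1$. This form is nonzero because its two summands have disjoint exponent residues modulo $P$. If it vanished, then $g_1=g_2=0$ on $Z=0$, and together with $g_3=Zh$ this would contradict primitivity of $g$. Hence $k_a\le Pd+1$ and $d\ge q/P^2$.

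Combinatorially, this bound is equivalent to the Blokhuis--Brouwer count of at least $2q-|B|+1$ tangents through an essential point. That count is itself a polynomial-method result, not a consequence of minimality alone.

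A minor point: smallness gives $d\le q/(2P)$, not a strict inequality; equality occurs for $P=2$.
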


The bound is the one conjectured by Sziklai~\cite[p.~1170]{p1:Sziklai}
and restated in~\cite[Conjecture~4.3]{p1:DBVminimum}, in the small minimal
blocking-set setting of this paper. In that formulation ``maximum field
of linearity'' is defined by the exact line-intersection congruence.
The linear-set bound of~\cite[Theorem~4.1 and Remark~10]{p1:DBVminimum}
assumes linearity and an appropriate secant; the present argument
does not. Proposition~\ref{p1:I:ordinary-bound} proves the stronger local
statement from which Theorem~\ref{p1:I:size-main} follows.

We distinguish the intersection exponent from the scalar field of a
particular representing vector space. When we discuss the largest field
over which the point set admits any linear representation, we mean its
maximum \emph{geometric} field of linearity, as in~\cite[Definitions
1.3--1.4 and Remark~1.11]{p1:JenaVDV}. These notions must not be identified
merely by choosing a representation of unnecessarily large rank.

The uniform representation supplies a common foundation for this theorem
and the companion manuscripts. A polynomial vector or matrix is\label{exp:p1:matrix-terms}
\emph{primitive} if its entries have no common nonconstant factor.
A polynomial matrix is nonsingular when its determinant is not the zero
polynomial; a polynomial is squarefree when no irreducible factor is repeated.
We write $\adj M$ for the transpose of the cofactor matrix, so that
$M\adj M=(\det M)I$. These definitions also apply over a finite
extension of $\F_q$ when auxiliary coefficients are required.

\label{exp:p1:adjugate}\ReadingNote{Matrix of cofactors}{The notation $\adj M$ avoids denominators when using the inverse of a matrix: if $\det M\ne0$, then $M^{-1}=\adj M/\det M$. All its entries are ordinary polynomial determinants.}

\paragraph{Primal points and dual lines.}
A representative $a$ of a point of $B$ gives the line $L_a=a^tV=0$
in the dual plane. A primal secant containing $Pr+1$ points gives
an intersection of $Pr+1$ of these dual lines; we call it a vertex
of weight $r$, and call it \emph{short} when $r=1$.
Thus counting vertices on $L_a$ is exactly counting secants through
$a$. A component below always means one of the lines $L_a=0$.
This correspondence lets us pass between line intersections and
polynomial multiplicities without changing the underlying geometry.
\TermNote{exp:p1:normal-plane}{The normal plane at a vertex $v$ consists of all line normals $a$ with $a^tv=0$; it has vector dimension two}

\begin{theorem}[Uniform Frobenius representation]\label{p1:I:uniform-main}
Let $B$ be a nontrivial small minimal blocking set of exact exponent $e$
in $\PG(2,q)$, where $q=p^h$, $P=p^e$. Write
\[
 |B|=q+Pd+1.
\]
Then $e\mid h$, $n=h/e\ge2$, and there are a
primitive homogeneous vector $g$ of degree $d$, a primitive nonsingular
homogeneous matrix $M$ of degree $m$, a squarefree polynomial $H$, and
nonzero polynomials $a_0,D,c$, satisfying
\begin{align}
 R:=\prod_{a\in B}L_a&=\det(V,V^{[q]},g^{[P]}),
 &\delta g&=\beta g+cV^{[q/P]},\label{p1:I:main-R}\\
 Hg&=M^{[P]}V,& Mg&=Ha_0V^{[q/P^2]},\label{p1:I:main-pair}\\
 \det M&=H^2a_0D,&Dg&=NV^{[q/P^2]},\label{p1:I:main-reciprocal}\\
 N&=\adj(M)/H,& c&=H^{P-1}a_0^P.\nonumber
\end{align}
Here $V=(X,Y,Z)^t$, $L_a=a^tV$, and
\[
 \delta=g^{[P]}\cdot\nabla
 =g_1^P\frac{\partial}{\partial X}
  +g_2^P\frac{\partial}{\partial Y}
  +g_3^P\frac{\partial}{\partial Z}.
\]
The operator acts componentwise on vectors. Scalar normalizations of
$R$ are harmless.
Moreover
\begin{align}
 J:=\det(g,g_X,g_Y)/Z&=H^{2P-3}(a_0D)^P\ne0,\label{p1:I:main-J}\\
 \Phi:=J/c&=H^{P-2}D^P,\qquad
 \deg\Phi=\frac{q}{P}-(P-2)d-2.\label{p1:I:main-Phi}
\end{align}
The degree formulas are
\begin{align*}
 \deg H&=Pm-d+1,&\deg a_0&=2d-(P-1)m-\frac{q}{P^2}-1,\\
 \deg D&=\frac{q}{P^2}-(P-2)m-1,&\deg N&=d-(P-2)m-1.
\end{align*}
No odd-characteristic assumption is made.
\end{theorem}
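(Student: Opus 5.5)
The plan follows the roadmap of Part~0, Section~\ref{G:foundation}, proving the statement in four stages.

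\emph{Stage one: the arrangement identity.} Take $R=\prod_{a\in B}L_a$ and work on the affine grid, dividing by $X^q-XZ^{q-1}$ and $Y^q-YZ^{q-1}$. The exact intersection congruence says that each dual vertex lies on $Pr+1$ lines. Through the classical $1\bmod p$ and exponent-divisibility results \cite{p1:Szonyi,p1:SW,p1:DSc}, this gives $e\mid h$ and $n\ge2$. It also gives a representation $R=Xf_1+Yf_2+Zf_3$ (up to the grid reduction) in which all exponents of the $f_i$ satisfy the appropriate divisibilities by $P$.

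\emph{Stage two: building $g$.} The syzygy $Xf_1+Yf_2+Zf_3=0$ pairs monomials with residue classes $(1,1,0)$ modulo $P$. This lets me extract $g$ with $f=g^{[P]}$, giving
\[
 R=\det(V,V^{[q]},g^{[P]})
\]
and $L_a\mid a^{[1/P]t}g$. Primitivity follows by dividing out any common factor, which must be a $P$th-power obstruction.

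\emph{Stage three: the explicit basis and the $\delta$ identity.} Next I establish the explicit basis of vectors with the prescribed normal divisibilities, namely $g$, $V^{[q/P]}$ and a third generator of larger degree. Applying $\delta$ to $g$ produces a vector in the same module of degree below the third basis degree. Hence $\delta g=\beta g+cV^{[q/P]}$ for scalar polynomials $\beta$ and $c$.

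\emph{Stage four: $J\ne0$ and the Frobenius matrix.} For nonvanishing of $J$, I count short vertices with incidence moments and show there are more than $d^2$ of them. By B\'ezout, some short vertex has $g\ne0$. Its squarefree tangent cone then makes $g_X,g_Y$ independent there, so $J\ne0$.

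For the matrix formulas, I expand $g$ in the basis $x^iy^j$, $0\le i,j<p$, over $K^p$ with $K=\F_q(x,y)$. Comparing Hasse coefficients along directions, I use the determinant identities coming from the $\delta$ relation to show that the coefficient vectors assemble into an invertible matrix $M_1$ over $K^p$ with $Hg=M_1^{[p]}V$.
\begin{itemize}
\item In characteristic two this is a direct four-term computation.
\item In characteristic three I construct the integrating multiplier separately.
\item In general odd characteristic the identities integrate directly.
\end{itemize}
I then lift from $p$ to $P$ by iterating over successive Frobenius levels. I show the obstruction is a scalar matrix, which can only occur at $r=2$ in characteristic two, and there it is handled directly.

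\emph{The remaining identities.} Clearing denominators gives the primitive nonsingular $M$ with $Hg=M^{[P]}V$, and $H$ is taken squarefree. Substituting into the $\delta$ identity yields $Mg=Ha_0V^{[q/P^2]}$. Differentiating $Hg=M^{[P]}V$ shows that every irreducible factor of $H$ divides every cofactor of $M$. Since $H$ is squarefree, $H$ divides $\adj M$, and this gives $N$, the relation $\det M=H^2a_0D$, and $Dg=NV^{[q/P^2]}$. Comparing $J$ with $\det(M)^P$ via Cramer's rule gives $J=H^{2P-3}(a_0D)^P$. The degree formulas then follow by equating degrees in each identity, starting from $\deg R=|B|$.

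\emph{Main obstacle.} I expect the hardest step to be the Hasse-coefficient integration, especially obtaining the $P$-level lifting uniformly in characteristics two and three.
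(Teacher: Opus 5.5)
Your outline follows the paper's route; it is essentially the roadmap of Part~0. But it omits the two bounds on $d$ that the route depends on, and without them two of your steps fail as written.

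\textbf{First, $J\ne0$.} Incidence moments alone give
\[
 n_1\ge\frac{(2P-1)qd}{P}-\frac{q^2}{P^2}-d^2+\frac{2q}{P}+2d,
\]
and this exceeds $d^2$ only when $d$ is bounded below. For $P=2$ the quadratic part is $-2(d-\tfrac q4)(d-\tfrac q2)$, so $d$ must lie within a bounded distance of $q/4$. The bound from $\deg c\ge0$ gives only $d\ge(q+2)/6$. (For $P\ge3$ the analogous bound $d\ge(q+P)/(P(P+1))$ would actually suffice, but you never prove $c\ne0$ before this point.) The paper first proves $d\ge q/P^2$ by the ordinary-pencil count of Proposition~\ref{p1:I:ordinary-bound}:
\begin{itemize}
\item on an ordinary line $Z=0$ with $g_3=Zh$, the $Z^{P-1}$-coefficient of $R/Z$ forces $\ord h_0\ge r-1$ at every weight-$r$ vertex;
\item with $\deg h_0=d-1$ and $\sum r=q/P+d$ this gives $k_a\ge q/P+1$;
\item and $k_a\le Pd+1$ then yields $d\ge q/P^2$.
\end{itemize}
Your plan contains no such step.

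\textbf{Second, the lift from $K^p$ to $K^P$.} Passing from $C\in\operatorname{GL}_3(K^r)$ to $K^{pr}$ requires $C^{-1}C_\xi v,\,C^{-1}C_\eta v\in Kv$. That is exactly the order-$r$ identity $\det(g,\mathcal H_W^{[1]}g,\mathcal H_W^{[r]}g)=0$ for $r=p,\dots,P/p$. Iterating $\delta$ cannot supply these orders, since $\delta^jg=j!\,\mathcal H^{[j]}_{g^{[P]}}g$ and $j!=0$ for $j\ge p$. Lemma~\ref{p1:pp:higher-flatness} obtains them from two facts: $\mathcal H^{[j]}_{g^{[P]}}g\in\mathcal M$, and its degree $d+j(Pd-1)$ lies below the third basis degree $(P-1)(q/P+d)+1$. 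The second fact needs the upper bound $q/P-(P-2)d+1>0$, which for $P\ge5$ comes from the quadratic moment inequality (Lemma~\ref{p1:I:quadratic}, Corollary~\ref{p1:I:coarse-upper}). Your account of the scalar-matrix step is also garbled. That $C^{-1}C_\xi$ is scalar is the conclusion you need, since it lets you divide by an entry of $C$. The case $r=2$ in characteristic two is not an obstruction; it is only where monomial independence has to be checked by hand.

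\textbf{Other misdescribed steps.}
\begin{itemize}
\item The grid division gives $R=V^{[q]t}f$ with $V^tf=0$, not $R=Xf_1+Yf_2+Zf_3$.
\item The $P$-divisibility of the $X$-exponents of $f_1$ does not follow from vertex multiplicities alone. This is where minimality enters, and your plan never uses it. Take three nonconcurrent tangents as coordinate lines. For each of the $q$ external directions $(0,y,z)$ on the first tangent, $R(X,y,z)=(X^q-X)f_1(X,y,z)$ and every root has multiplicity $1\pmod P$, so $f_1$ is a $P$th power in $X$ there. Then $\partial_X^{[j]}f_1=0$ for $1\le j<P$, because its coefficients have degree at most $Pd+1-j<q$.
\item Primitivity of $g$ is not obtained by dividing, which would change $R$. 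It holds because a common factor $f$ would give $f^P\mid R$, while $R$ is squarefree.
\item Nor is $H$ chosen squarefree. Once $M$ is made primitive, $f^2\mid H$ would force $f\mid\partial_j(Hg_i)=M_{ij}^P$, contradicting primitivity.
\item $Mg=Ha_0V^{[q/P^2]}$ is not a mere substitution. Applying $\delta$ to $Hg=M^{[P]}V$ gives $(Mg)^{[P]}=(\delta H+\beta H)g+HcV^{[q/P]}$. You must kill the first term using the independence of $g_1,g_2,g_3$ over $\F_q(V)^p$, which follows from $J\ne0$. Only then does squarefreeness of $H$ turn $Hc$ into $(Ha_0)^P$.
\end{itemize}
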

\label{exp:p1:reciprocal}The matrix $N$ is called reciprocal because $MN=Ha_0D\,I$.
Its entries are polynomials: Proposition~\ref{p1:pp:identities} proves
that $H$ divides every cofactor of $M$, so the displayed quotient has
no denominators.

\paragraph{Proof of Theorem~\ref{p1:I:uniform-main}: organization.}
Lemmas~\ref{p1:pp:normalization}--\ref{p1:pp:module} construct $g$ and its
polynomial basis; Lemma~\ref{p1:I:separable} proves $J\ne0$.
Section~\ref{p1:I:integration} obtains a matrix formula first with
$p$th-power entries, then with $P$th-power entries in
Lemma~\ref{p1:pp:P-integration}.
Proposition~\ref{p1:pp:identities} completes the proof by homogenization
and the reciprocal identity. The distinction between $p$ and $P$ is essential.

\paragraph{Scope of the series.}
The companion paper~\cite{p1:PUB2} treats $P\ge5$. The case $P=3$ requires a separate treatment and is not claimed here.
Corollary~\ref{p1:I:cubic-extension} below treats $q=P^3$ for every
$P\ge4$, including $P=4$, $q=64$. The unrestricted case $P=4$
remains outside the completed arguments.
All statements in this series concern planar blocking sets; higher
dimensional $k$-blocking sets require the additional hypotheses in the
published reduction theorems~\cite{p1:VDV}.

\section{The arrangement polynomial and its line divisibilities}\label{p1:secmap:2}\label{p1:I:normalization}
Throughout the general construction, coefficients lie in $\F_q$.
The exact exponent $e$ is a hypothesis, not inferred from a size interval.
The exponent theorem of Sziklai gives $e\mid h$
\cite[Corollary~11.35]{p1:DSc}. Nontriviality
excludes $q=P$: a nontrivial intersection would have at least $P+1=q+1$
points, a whole line. Thus $n=h/e\ge2$.
A pencil through a point of $B$ gives $|B|\equiv1\pmod P$. An external
point on any line gives
\begin{equation}\label{p1:pp:linebound}
 |B\cap\ell|\le |B|-q=Pd+1<q,\qquad
 1\le d\le \frac{q}{2P}.
\end{equation}
The strict inequality follows directly from smallness. The arguments
below also recover exponent divisibility from the existence and local
structure of a short secant when $P\ge3$.
\begin{lemma}[Normalized dual arrangement]\label{p1:pp:normalization}
There is a primitive homogeneous vector $g\in \F_q[V]^3$ of degree $d$
such that
\begin{equation}\label{p1:pp:R}
 R=\det(V,V^{[q]},g^{[P]}).
\end{equation}
The vector $E=V\times g^{[P]}$ is primitive, and
$L_A\mid A^{[1/P]t}g$ for every $A\in B$\TermNote{exp:p1:inverse-frobenius}{$[1/P]$ takes the unique $P$th root of each coordinate in $\F_q$}.
\end{lemma}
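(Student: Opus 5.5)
The plan is the polynomial method in three stages: write $R$ as a determinant on the $\F_q$-grid of the dual plane, use the congruences $|\ell\cap B|\equiv1\pmod P$ to turn the third column into coordinatewise $P$th powers, and then deduce primitivity and the line divisibilities from the fact that $R$ is a product of distinct linear factors.

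\emph{Step 1 (grid division).} Since $B$ is blocking, $R=\prod_{a\in B}L_a$ vanishes at every $\F_q$-rational point of the dual plane. The vanishing ideal of these points is generated by the coordinates of $V\times V^{[q]}$, i.e. by $X^qY-XY^q$ and its companions. Hence
\[
 R=f\cdot(V\times V^{[q]})=\det(V,V^{[q]},f)
\]
for a homogeneous vector $f$ of degree $|B|-q-1=Pd$. Adding $\lambda V+\mu V^{[q]}$ to $f$ does not change the determinant. Because $Pd<q$, only the freedom $f\mapsto f+\lambda V$ with $\deg\lambda=Pd-1$ remains. I would use it to normalize $f$ so that the coefficient comparison in Step 2 is forced; the guide's hint is a relation of the form $Xf_1+Yf_2+Zf_3=0$ to be arranged.

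\emph{Step 2 (extracting $P$th powers).} This is the step I expect to be the main obstacle. Fix $(x,y)\in\F_q^2$ and regard $R(x,y,Z)$ as a polynomial in $Z$. Its roots in $\F_q$ are the dual lines through one dual point, i.e. the secants of $B$ in a pencil. Each root has multiplicity $\equiv1\pmod P$, because every line meets $B$ in $1\pmod P$ points. Therefore $R(x,y,Z)=c\,(Z^q-Z)\,s(Z)^P$ for some polynomial $s$, and so $\partial_Z$ of $R(x,y,Z)/(Z^q-Z)$ vanishes for all $(x,y)\in\F_q^2$. The same holds in the other two coordinate directions.

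Next I would translate this into the determinant expression. The degrees in $X,Y$ of the relevant coefficients stay below $q$, so vanishing on the grid becomes an identity in $f$. That identity says every monomial of $f_i$ has exponent vector congruent modulo $P$ to a fixed residue pattern, up to terms that pair off. Here the normalization $Xf_1+Yf_2+Zf_3=0$ would be used to cancel the monomials with residues $(1,1,0)$ and their permutations. The point to watch is that ordinary first derivatives only see $p$th powers. So I would compare coefficients by exponent residue modulo $P$ (equivalently, use Hasse derivatives up to order $P-1$) rather than differentiate once. The outcome should be $f=g^{[P]}$ for a homogeneous $g$ of degree $d$.

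\emph{Step 3 (divisibility and primitivity).} Let $A\in B$. On the line $L_A=0$ we have $A^tV=0$ and $A^tV^{[q]}=(A^tV)^q=0$, so $V$ and $V^{[q]}$ lie in the plane $A^\perp$. At a rational point $v$ of $L_A$ outside the other components, $R$ vanishes to first order. Expanding the determinant shows that $g(v)^{[P]}$ also lies in $A^\perp$, so $A^tg(v)^{[P]}=0$. At points of $L_A$ where other components meet, the multiplicity condition (each vertex on $L_A$ has $Pr+1$ incident lines) gives the same conclusion after the local expansion.

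So $A^tg^{[P]}=(A^{[1/P]t}g)^P$ vanishes at all $q+1$ rational points of $L_A$. Its restriction to $L_A$ has degree $Pd<q+1$, so it vanishes identically, and $L_A$ divides $A^{[1/P]t}g$.

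For primitivity, suppose $\pi$ is a common factor of $g$. Then $\pi^P$ divides $g^{[P]}$, hence divides $R$. But $R$ is squarefree, being a product of distinct linear forms, which is a contradiction. The same argument works for $E=V\times g^{[P]}$: a common factor of $E$ would divide every minor of $(V,g^{[P]})$, which forces either a factor of $g$ or proportionality of $g^{[P]}$ with $V$ along a component, and then $R$ would acquire a repeated factor.
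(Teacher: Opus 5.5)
\textbf{Step 2 has a genuine gap.} This step carries the whole content of the lemma, and as written it does not go through.

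\emph{The proposed normalization is impossible.} For your degree-$Pd$ vector $f$ one has $V^t(f+\lambda V)=V^tf+\lambda(X^2+Y^2+Z^2)$, so in general $V^tf=0$ cannot be reached this way. It also conflicts with your intended output, since $V^tg^{[P]}=Xg_1^P+Yg_2^P+Zg_3^P$ is not zero in general. The relation $Xf_1+Yf_2+Zf_3=0$ in the paper concerns a different vector, of degree $Pd+1$.

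\emph{The Hasse vanishing is about a different object.} It says nothing directly about residue patterns of the monomials of the $f_i$. For $(x,y)\in\F_q^2$ your determinant gives
\[
 R(x,y,Z)=(Z^q-Z)\,(yf_1-xf_2)(x,y,Z).
\]
So what you actually obtain is that every $Z$-exponent of $Yf_1-Xf_2$ is divisible by $P$, and symmetrically for the other two pencils. These are statements about $F:=f\times V$.

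\emph{The missing idea is to pass to $F$.} It has degree $Pd+1$, satisfies $R=V^{[q]t}F$, and satisfies $V^tF=0$ automatically; it is exactly the paper's vector.
\begin{itemize}
\item The three divisibilities say that the $X$-exponents of $F_1$, the $Y$-exponents of $F_2$ and the $Z$-exponents of $F_3$ are all $0\pmod P$.
\item In $XF_1+YF_2+ZF_3=0$ no monomial can occur in all three summands, because the total degree is $Pd+2\equiv2\pmod P$. So every cancellation pairs two summands with residues $(1,1,0)$ in some order, which gives $F=V\times h^{[P]}$.
\item Then $(f+h^{[P]})\times V=0$, so $f=-h^{[P]}+\lambda V$, and $g=-h$ works.
\end{itemize}
When the centre of one of your pencils lies in $B$, the multiplicities are not $1\pmod P$. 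This is harmless, because $R(x,y,Z)$ then vanishes identically; the paper avoids the case by putting tangents on the coordinate lines.

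\textbf{The primitivity of $E$ also needs a different argument.} Since $R=-V^{[q]t}E$, a common factor $\pi$ of $E$ divides $R$, but only to the first power. Your sketch gives no reason for $\pi^2\mid R$; the contradiction has to come from the $P$th-power structure. Because $\pi$ divides the squarefree $R$, it is an arrangement line, say $Z$. On $Z=0$ one gets $g_3=0$ and $Xg_2^P=Yg_1^P$. The $X$-exponents of the two sides have residues $1$ and $0$ modulo $P$, so both sides vanish. Then $Z$ divides $g_1,g_2,g_3$, contradicting primitivity of $g$.

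\textbf{The remaining parts are sound.}
\begin{itemize}
\item In Step 1 you quote the classical fact that the coordinates of $V\times V^{[q]}$ generate the ideal of the rational points. This replaces the paper's grid division.
\item Your pointwise proof of $L_A\mid A^{[1/P]t}g$ also works. At vertices $\nabla R=V^{[q]}\times g^{[P]}$ vanishes, so $g(v)^{[P]}$ is a multiple of $v$. The paper instead restricts the determinant, using that $V\times V^{[q]}=\phi A$ with $\phi\ne0$ on $L_A$.
\end{itemize}
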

\begin{proof}
The arrangement polynomial, following the polynomial viewpoint
of~\cite{p1:SziklaiSzonyi,p1:Sziklai}, vanishes at every rational dual point.
Division on the affine grid gives
\[
 R(x,y,1)=(x^q-x)A_0+(y^q-y)B_0,
 \qquad \deg A_0,\deg B_0\le Pd+1.
\]
The remainder has degree less than $q$ in each variable and vanishes
on $\F_q^2$, so it is zero.  Homogenize the two quotients to degree
$Pd+1$\TermNote{exp:p1:homogenize}{insert powers of $Z$ so that every monomial has that total degree}.  The restriction of $X(A_0)_h+Y(B_0)_h$ to $Z=0$ has degree
$Pd+2<q+1$ and vanishes at all its $q+1$ rational points, so it is
zero.  This yields a homogeneous vector $f$ of degree $Pd+1$ with
\[
 R=V^{[q]t}f,\qquad V^tf=0.
\]

Minimality supplies at least $|B|>q+1$ distinct primal tangents.
They cannot all be concurrent, so three nonconcurrent tangents may
be taken as coordinate lines.  For each of the $q$ external
directions $(0,y,z)$ on the first tangent, the Euler relation gives
\[
 R(X,y,z)=(X^q-X)f_1(X,y,z).
\]
All roots of the left side are rational and have multiplicity
$1\pmod P$.  The quotient is therefore a $P$th power in $X$.
For $1\le j<P$, expand $\partial_X^{[j]}f_1$ as a polynomial in $X$\TermNote{exp:p1:hasse}{$\partial_X^{[j]}F$ is the coefficient of $t^j$ in $F(X+t,Y,Z)$}.
Its coefficients are bivariate polynomials (homogeneous in $Y$ and $Z$)
of degree at most $Pd+1-j<q$. They vanish at the $q$ external
directions, and hence are zero. Thus these Hasse derivatives vanish
identically.  Every $X$ exponent in $f_1$ is divisible by
$P$: otherwise its lowest nonzero base-$p$ digit below position
$e$ supplies a nonzero Hasse derivative of order $p^a<P$.
The analogous assertions hold for the $Y$ exponents of $f_2$ and
the $Z$ exponents of $f_3$.

In $Xf_1+Yf_2+Zf_3=0$, a monomial in the $i$th summand has its
$i$th exponent $1\pmod P$.  It cannot occur in all three summands,
because its total degree is $2\pmod P$.  Every cancellation pairs
two summands and has exponent residues $1,1,0$ in a suitable order.
For example, a monomial of $Xf_1$ must pair with one in $Yf_2$
or $Zf_3$. In the first case its residues are $(1,1,0)$, and after
removing $XY$ all exponents are divisible by $P$; the other case is
the same with $Y,Z$ interchanged. Collect these paired coefficients,
with their opposite signs, and take their unique $P$th roots in
$\F_q$. This gives $f_1=Yh_3^P-Zh_2^P$ and, from the same pairs,
$f_2=Zh_1^P-Xh_3^P$, $f_3=Xh_2^P-Yh_1^P$.
Thus $f=V\times h^{[P]}$; changing the sign of $h$
gives~\eqref{p1:pp:R}.

A common factor of $g$ would have its $P$th power divide the
squarefree polynomial $R$.  A common factor of $E$ also divides
$R$, so must be an arrangement line.  Put such a line at $Z=0$.
Then $g_3=0$ and $Xg_2^P=Yg_1^P$ there.  Distinct monomial residues
force $g_1=g_2=0$ on that line, contradicting primitivity.
Finally, restriction of~\eqref{p1:pp:R} to $L_A=0$ gives the normal
divisibility, because there $V\times V^{[q]}$ is a nonzero scalar
bivariate polynomial times $A$ (homogeneous in $X$ and $Y$ after
choosing coordinates with $L_A=Z$).
\end{proof}
\label{exp:p1:ordinary-special}
\paragraph{Ordinary and special points.}
For $a\in B$, write $a^{[1/P]t}g=L_a h_a$.
The point $a$ and its dual line $L_a=0$ are \emph{ordinary} when
$h_a|_{L_a}$ is not the zero polynomial, and \emph{special} when it is zero.
Equivalently, special means $L_a^2\mid a^{[1/P]t}g$.
In coordinates $L_a=Z$, this says that the coefficient of $Z$ in
$g_3=Zh_a$ vanishes identically: its normal vanishing has order at least two.
Thus an ordinary or special ``line'' always means the dual line of
the correspondingly named point of $B$. Sections~5 and~12--13 show
that ordinary points lie on short secants and special points lie on none.

\paragraph{Changes of coordinates.}\label{p1:I:coordinates}
For $V_{\rm old}=CV$, $C\in\operatorname{GL}_3(\F_q)$, choose
\[
 g_{\rm new}=(C^{-1})^{[1/P]}g_{\rm old}(CV).
\]
Then $R_{\rm new}=\det(C)^{-1}R_{\rm old}(CV)$, an irrelevant
nonzero scalar normalization. Normal divisibilities, their orders,
ordinary and special components, and vertex weights are therefore
unchanged. Euler's identity shows that $J\ne0$ is equivalent to
$g,g_X,g_Y,g_Z$ spanning a three-dimensional space over $\F_q(V)$;
the chain rule makes this condition invariant as well. When $M$ has
been constructed, use
\[
 M_{\rm new}=(C^{-1})^{[1/P^2]}M_{\rm old}(CV)C^{[1/P]}.
\]
The scalar factors $H,a_0$ are composed with $CV$. Both Frobenius
identities are preserved, since inverse $P^2$-Frobenius on $\F_q$
is raising to the power $q/P^2$.

\label{exp:p1:local-order}\ReadingNote{Lowest-degree terms at a point}{After moving the point to the origin, the least degree of a nonzero term is the local order. The sum of terms of that degree is the initial form; for a product of lines it is the product of the incident line equations, called the tangent cone.}

At a rational vertex normalized to $(0,0,1)$, the terms of $R$ of
degree below $q$ in affine local coordinates are $yg_1^P-xg_2^P$.
By~\eqref{p1:pp:linebound}, the tangent cone has degree $Pr+1<q$.
Disjoint monomial residues show that $g_1,g_2$ have order at least
$r$ and their degree-$r$ initial forms $A_r,B_r$ satisfy
\begin{equation}\label{p1:pp:cone}
 yA_r^P-xB_r^P=
 \text{a nonzero scalar times the squarefree tangent cone}.
\end{equation}
Thus these initial forms are nonzero and coprime.  At a short
vertex they are independent linear forms.

\begin{lemma}[A basis for the normal divisibilities]\label{p1:pp:module}
The family of polynomial vectors
\[
 \mathcal M=\{h\in \F_q[V]^3:L_A\mid A^{[1/P]t}h
                     \text{ for every }A\in B\}
\]
consists exactly of the unique polynomial combinations of
\[
 g,\quad F_0=V^{[q/P]},\quad
 w=\operatorname{diag}(V)(F_0\times g)^{[P-1]},
\]
of degrees $d,\frac{q}{P},(P-1)(\frac{q}{P}+d)+1$.
For $\delta=g^{[P]}\cdot\nabla$ there are homogeneous polynomials
$\beta,c$ such that
\begin{equation}\label{p1:pp:delta}
 \delta g=\beta g+cV^{[q/P]},\qquad
 \deg\beta=Pd-1,\quad \deg c=(P+1)d-\frac{q}{P}-1,\quad c\ne0.
\end{equation}
The coefficient $\beta$ is allowed to be zero.
\end{lemma}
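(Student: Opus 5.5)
We first check that the three proposed generators lie in $\mathcal M$. Then we prove that they generate $\mathcal M$ freely by a Cramer-rule argument whose common denominator is the arrangement polynomial $R$. Finally we place $\delta g$ in $\mathcal M$ and read off \eqref{p1:pp:delta} from degrees.

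\emph{Membership.} For $g$ this is Lemma~\ref{p1:pp:normalization}. For $F_0=V^{[q/P]}$, use that $a\mapsto a^{q/P}$ is the inverse Frobenius $a\mapsto a^{1/P}$ on $\F_q$. Hence
\[
 A^{[1/P]t}F_0=\sum_i A_i^{q/P}X_i^{q/P}=L_A^{q/P}.
\]
For $w$, restrict to $L_A=0$. There both $F_0$ and $g$ are orthogonal to $A^{[1/P]}$, so $F_0\times g=\lambda A^{[1/P]}$ for some polynomial $\lambda$ on the line. Then
\[
 A^{[1/P]t}w=\lambda^{P-1}\sum_i A_i^{1/P}X_iA_i^{(P-1)/P}=\lambda^{P-1}L_A,
\]
which vanishes on $L_A=0$. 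The degrees of the three generators are immediate.

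\emph{Freeness and generation.} The key identity is
\[
 \det(F_0,g,w)=V^t(F_0\times g)^{[P]}=\det(V,F_0^{[P]},g^{[P]})=\det(V,V^{[q]},g^{[P]})=R,
\]
by \eqref{p1:pp:R}. This uses that coordinatewise $P$th powers commute with the cross product, since $(F_0\times g)_i^P$ is computed entrywise in characteristic $p$. Since $R\ne0$, the three generators are independent over $\F_q(V)$, which gives uniqueness.

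Now let $h\in\mathcal M$. Any three members of $\mathcal M$ are, on $L_A=0$, all orthogonal to the nonzero vector $A^{[1/P]}$. Hence their determinant vanishes on every $L_A=0$. Because $R=\prod L_A$ is squarefree, $R$ divides every such determinant. Cramer's rule gives
\[
 Rh=\det(h,g,w)F_0+\det(F_0,h,w)g+\det(F_0,g,h)w,
\]
and each coefficient determinant is divisible by $R$. So $h$ is a polynomial combination of $F_0,g,w$. In particular, a member of $\mathcal M$ of degree below $\deg w$ has zero $w$-coefficient.

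\emph{The derivation.} Write $A^{[1/P]t}g=L_Ah_A$ and apply $\delta$. Since $\delta$ is a derivation and $A^{[1/P]}$ is constant, we get
\[
 A^{[1/P]t}\delta g=(A^tg^{[P]})h_A+L_A\,\delta h_A .
\]
The first term is divisible by $L_A$ because $A^tg^{[P]}=(A^{[1/P]t}g)^P$. So $\delta g\in\mathcal M$ with $\deg\delta g=(P+1)d-1$. By \eqref{p1:pp:linebound}, $d\le q/(2P)$, and therefore $(P+1)d-1<(P-1)(q/P+d)+1$. The $w$-coefficient is then zero, which yields $\delta g=\beta g+cV^{[q/P]}$ with the stated degrees; $\beta=0$ is allowed.

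The main obstacle is $c\ne0$. Note that $c\ne0$ carries real content: it already implies the inequality $(P+1)d\ge q/P+1$. Suppose $c=0$, so that $\delta g=\beta g$. We would work at a rational vertex normalized to $(0,0,1)$, where \eqref{p1:pp:cone} describes the initial forms $A_r,B_r$ of $g_1,g_2$. The plan is to compare lowest-order terms of $g\times\delta g=c\,g\times F_0$ there, aiming for a nonzero leading term forced by the squarefree tangent cone, which would contradict $c=0$. If this local comparison fails, the fallback is global. Restrict to an ordinary line $L_a=Z$ with $g_3=Zh_a$ and $h_a|_{Z=0}\ne0$. The $Z$-component of $\delta g=\beta g$ then gives, on $Z=0$, the relation $g_3^P\,\partial_Zg_3=\beta g_3$ modulo $Z$. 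We would compare the normal-order terms of both sides and use the fact that ordinary lines exist (some point has $h_a|_{L_a}\ne0$) to force a contradiction.
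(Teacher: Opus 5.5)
Your membership check, the identity $\det(F_0,g,w)=V^t(F_0\times g)^{[P]}=R$ with Cramer's rule, and the placement of $\delta g$ in $\mathcal M$ below the third basis degree are correct and essentially the paper's argument. The gap is $c\ne0$. You offer only plans there, and the one hypothesis that must be used, exactness of $e$, appears in neither of them, except implicitly through the existence of ordinary lines, which you do not prove.

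Exactness cannot be avoided. If every line met $B$ in $1\pmod{pP}$ points, Lemma~\ref{p1:pp:normalization} at level $pP$ would give $R=\det(V,V^{[q]},\tilde g^{[pP]})$. Then $g=\tilde g^{[p]}$ would satisfy \eqref{p1:pp:R}, primitivity and every normal divisibility, yet $\delta g=0$.

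Your local plan does not close the gap either. Carry it out at a vertex $v=(0,0,1)$ of weight $r$ with $g(v)\ne0$: eliminate $\partial_Z$ by Euler's identity and compare the degree-$r$ parts of the first coordinate of $g\times\delta g=c\,g\times F_0$. This yields exactly $c(v)=r\,g_3(v)^{P+1}$. So it refutes $c=0$ only at a vertex with $p\nmid r$ \emph{and} $g(v)\ne0$, and you do not produce one. Exactness gives $p\nmid r$ somewhere, but $g$ may vanish there. In the $x\mapsto x^p$ example of Section~\ref{p1:secmap:6}, $c$ is a scalar multiple of $X^pY^p$, and the R\'edei vertex $[0:0:1]$ has $r=p+1$ and $g=0$. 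The supply of short vertices with $g\ne0$ is established only later, from $d\ge q/P^2$, which itself depends on this lemma.

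Your fallback fails as stated. In the third coordinate, $cV^{[q/P]}$ contributes $cZ^{q/P}$, which is invisible at every normal order below $q/P$; indeed the relation you display is $0=0$ modulo $Z$. Moreover, the existence of an ordinary line is itself proved in the paper by the argument you are missing.

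That missing argument is global. Suppose $c=0$, so $\delta g=\beta g$. For all $i,j$ the vector $g_i\nabla g_j-g_j\nabla g_i$ is orthogonal to $V$ (by Euler) and to $g^{[P]}$ (because $\delta g=\beta g$). It is therefore a polynomial multiple of the primitive vector $E=V\times g^{[P]}$. Its degree $2d-1$ is below $\deg E=Pd+1$, so it vanishes. Hence every ratio $g_i/g_j$ lies in $\F_q(V)^p$, and primitivity gives $g=u^{[p]}$ for a polynomial vector $u$. At each rational vertex, all terms of $\det(V,V^{[q]},u^{[pP]})$ of degree below $q$ then have degree $\equiv1\pmod{pP}$. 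Since the tangent cone has degree $Pr+1<q$ by \eqref{p1:pp:linebound}, every line meets $B$ in $1\pmod{pP}$ points, contradicting exactness.
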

\begin{proof}
Modulo $L_A$, the cross product $F_0\times g$ is proportional to
$A^{[1/P]}$, so all three vectors belong to $\mathcal M$.  Their
determinant is $-R$.  Every Cramer numerator for a member of
$\mathcal M$ vanishes on each $L_A$, since its three columns lie
in the same normal plane.  Squarefreeness of $R$ makes the Cramer
coefficients polynomial.  Since $d<\frac{q}{P}$, this family has no nonzero
element of degree below $d$.

The equality $\delta L_A=(A^{[1/P]t}g)^P$ shows that $\delta$
preserves $\mathcal M$.  The degree $(P+1)d-1$ of $\delta g$ is
below the third basis degree, proving its decomposition and the
displayed degrees.  If $c=0$, each
$g_i\nabla g_j-g_j\nabla g_i$ annihilates both $V$ and $g^{[P]}$.
It is a polynomial multiple of primitive $E$, but has degree
$2d-1<Pd+1$, so it vanishes. All nonzero coordinate ratios are
$p$th powers. Comparing multiplicities of irreducible factors, using primitivity, gives
$g=u^{[p]}$ for a polynomial vector $u$. At a rational dual vertex,
all terms of degree below $q$ in
$\det(V,V^{[q]},u^{[pP]})$ have degree $1\pmod{pP}$.
The local order is below $q$ by~\eqref{p1:pp:linebound},
so every line meets $B$ in $1\pmod{pP}$ points, contrary to exactness.
Thus $c\ne0$, and its degree also gives
\begin{equation}\label{p1:I:lower-d}
 d\ge\left\lceil\frac{q+P}{P(P+1)}\right\rceil.
\end{equation}
\end{proof}

\begin{proposition}[Ordinary pencils and a uniform size lower bound]
\label{p1:I:ordinary-bound}
There is an ordinary point of $B$. At each ordinary point
the number $k_a$ of nontrivial secants satisfies
\[
 \frac{q}{P}+1\le k_a\le Pd+1.
\]
Consequently $d\ge q/P^2$ and $|B|\ge q+q/P+1$.
\end{proposition}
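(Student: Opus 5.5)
I will follow the outline given in the guide (Part~0, Section~2.2), working on one dual line at a time.

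\emph{Step 1: an ordinary point exists.} Suppose every point of $B$ were special, so $L_a^2\mid a^{[1/P]t}g$ for all $a\in B$. Then $\delta L_a=(a^{[1/P]t}g)^P$ is divisible by $L_a^{2P}$. I would use this extra vanishing in Lemma~\ref{p1:pp:module}, either to show that every line then meets $B$ in $1\pmod{pP}$ points, or to force $c=0$. Either conclusion contradicts exactness. A simpler route may be available. On a vertex of weight $r$ with $(0,0,1)$ normalized, \eqref{p1:pp:cone} shows that the initial forms $A_r,B_r$ are coprime and nonzero. If all incident components were special, the normal derivative of $a^{[1/P]t}g$ along each incident line would vanish. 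For a short vertex this already contradicts the independence of $A_1,B_1$. Short vertices exist, because otherwise all weights are at least $2$ and the pencil count below fails. I expect this step to be the main obstacle, since the pencil count and the order estimate must be set up before ordinariness can be exploited. If necessary, I would prove the count first and derive the existence of an ordinary point as a byproduct.

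\emph{Step 2: the pencil identity.} Fix $a\in B$ and put $L_a=Z$. Each of the other $|B|-1=q+Pd$ points lies on exactly one line through $a$, and a secant through $a$ with $Pr+1$ points contains $Pr$ of them. Hence $\sum_{\text{vertices on }Z=0}Pr=q+Pd$, that is, $\sum r=q/P+d$, summed over the $k_a$ nontrivial secants. Each secant has weight $r\ge1$ and at most $Pd+1$ points by \eqref{p1:pp:linebound}. This gives the upper bound $k_a\le\sum r= q/P+d$. The cruder bound $k_a\le Pd+1$ follows from tangent counting. Through $a$ there is at least one tangent, and restricting $R/Z$ to $Z=0$, or counting points off a tangent, bounds the number of nontrivial secants by the number of points of $B$ off one tangent line divided appropriately. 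I would check which count gives exactly $Pd+1$. The most likely one is this: take an external point on a tangent at $a$; the lines through it other than the tangent cover $|B|-1$ points, so the secants through $a$ are at most $|B|-q=Pd+1$.

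\emph{Step 3: the order estimate.} Let $a$ be ordinary, write $g_3=Zh$, and note that $h|_{Z=0}\ne0$ has degree $d-1$. At a vertex $v$ on $Z=0$ of weight $r$, expand $R=\det(V,V^{[q]},g^{[P]})$ and take the coefficient of $Z^{1}$. Modulo $Z^2$ this coefficient is determined by $g_3^P$ only through $Z^P$, so the relevant term is $(R/Z)|_{Z=0}$, which involves $h^P$. Among the $Pr+1$ incident lines, $Z$ itself is removed, and the remaining $Pr$ factors restrict to $Z=0$ with order $Pr$ at $v$. Comparing this with the determinant expression localized at $v$, as in \eqref{p1:pp:cone}, gives $P\cdot\ord_v h|_{Z=0}\ge Pr-P+1$, hence $\ord_v h|_{Z=0}\ge r-1$.

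\emph{Step 4: conclusion.} Summing the orders over the vertices on $Z=0$ gives
\[
d-1\ge\sum(r-1)=q/P+d-k_a,
\]
so $k_a\ge q/P+1$. Combining this with $k_a\le Pd+1$ yields $Pd\ge q/P$, so $d\ge q/P^2$, and therefore $|B|=q+Pd+1\ge q+q/P+1$.
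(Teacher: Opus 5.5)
Your outline (pencil identity, order of $h_0$ at the vertices, two-sided bound on $k_a$) matches the paper's. However, three load-bearing steps are not actually proved.

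First, the existence of an ordinary point. Route (a) is not an argument. The extra vanishing imposes no visible condition on $c$, because if $a$ is special, both $\beta\,a^{[1/P]t}g$ and $c\,a^{[1/P]t}V^{[q/P]}=c\,L_a^{q/P}$ are already divisible by $L_a^2$. Route (b) is sound once a short vertex is known: at a short vertex, one special incident line already contradicts the independence of $A_1,B_1$. But your reason for the existence of a short vertex is circular. The lower bound $k_a\ge q/P+1$ needs $h_a|_{L_a}\ne0$, i.e.\ that $a$ is ordinary. The moment count of Lemma~\ref{p1:I:separable} needs $d\ge q/P^2$, which is the conclusion of this proposition. (Feeding $d\ge(q+P)/(P(P+1))$ from \eqref{p1:I:lower-d} into that moment count would rescue route (b) for $P\ge3$, but not for $P=2$, which this proposition must also cover.) The missing idea is to differentiate. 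If $L_a^2\mid a^{[1/P]t}g$ for every $a\in B$, then $L_a\mid a^{[1/P]t}\partial_jg$. So each $\partial_jg$ lies in $\mathcal M$ in degree $d-1<d$, and it vanishes by Lemma~\ref{p1:pp:module}. Hence $g$ is a $p$th-power vector, and the $1\pmod{pP}$ argument at the end of that lemma contradicts exactness.

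Second, the upper bound $k_a\le Pd+1$ is what turns $k_a\ge q/P+1$ into $d\ge q/P^2$; your bound $k_a\le q/P+d$ gives nothing. Your count in the pencil of an external point $x$ on a tangent at $a$ is a non sequitur. It controls the lines through $x$, not how many lines through $a$ are secants. The paper uses $\nabla R=V^{[q]}\times g^{[P]}$. At a rational vertex, $V^{[q]}=V$ and $\nabla R=0$. So every vertex on $Z=0$ is a zero of $(Xg_2^P-Yg_1^P)|_{Z=0}$, a form of degree $Pd+1$. This form is nonzero: otherwise disjoint exponent residues force $g_1=g_2=0$ on $Z=0$, which together with $Z\mid g_3$ contradicts primitivity.

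Third, Step~3 uses the wrong coefficient. $(R/Z)|_{Z=0}=\pm(X^{q/P}g_2-Y^{q/P}g_1)^P|_{Z=0}$ contains no $h$ at all. The relevant coefficient is that of $Z^{P-1}$ in $R/Z$, which equals $\pm h_0^P(XY^q-YX^q)$. The $P$th-power term has only $Z$-exponents divisible by $P$, and the last term starts at $Z^{q-1}$. Write the $Pr$ incident factors of $R/Z$ as $\ell_b+\lambda_bZ$. At most $P-1$ of them contribute $\lambda_bZ$ to this coefficient, so its order at the vertex is at least $Pr-P+1$. Since $XY^q-YX^q$ has a simple zero there, the correct inequality is $P\ord h_0+1\ge Pr-P+1$. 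Your version without the $+1$ would give $\ord h_0\ge r$. That is false at short vertices, where $h_0\ne0$ by Lemma~\ref{p1:ppg:normals}.
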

\begin{proof}
If every point were nonordinary, each partial derivative of $g$
would lie in the family $\mathcal M$ in degree $d-1$, and would therefore
vanish. Primitivity would make $g$ a $p$th-power vector, contrary to
exactness of the exponent as in Lemma~\ref{p1:pp:module}.

Normalize an ordinary component to $Z=0$, and write $g_3=Zh$,
where $h_0=h(X,Y,0)\ne0$. The determinant expansion gives, up to
an overall sign,
\[
 \frac RZ=(X^{q/P}g_2-Y^{q/P}g_1)^P
 +Z^{P-1}h^P(XY^q-YX^q)
 +Z^{q-1}(Yg_1^P-Xg_2^P).
\]
At a rational vertex of weight $r$, the reduced product $R/Z$
contains $Pr$ incident linear factors. Its coefficient of $Z^{P-1}$
has order at least $Pr-P+1$ along $Z=0$. Indeed, each incident
factor is $\ell_b+\lambda_bZ$, with $\ell_b$ vanishing at the vertex.
In the coefficient of $Z^{P-1}$, at most $P-1$ of these $Pr$ factors
can contribute their normal term $\lambda_bZ$.
The first term has no such coefficient, and $q-1>P-1$.
Since $XY^q-YX^q$ has a simple zero at every rational point,
$\ord h_0\ge r-1$. The pencil identity $\sum r=\frac{q}{P}+d$ implies
\[
 d-1\ge\sum(r-1)=\frac{q}{P}+d-k_a,
\]
which proves the lower bound for $k_a$.
Every such vertex is a zero of $(Xg_2^P-Yg_1^P)|_{Z=0}$,
a bivariate polynomial (homogeneous in $X$ and $Y$) of degree $Pd+1$. This form is nonzero: otherwise comparison of
monomial residues would force $g_1=g_2=0$ on $Z=0$, and together
with $g_3=0$ this would contradict primitivity. Thus $k_a\le Pd+1$.
\end{proof}

The bound $d\ge q/P^2$ will also simplify the determinant and degree
estimates below and in the companion paper.

\section{A nonzero derivative determinant in every characteristic}\label{p1:secmap:3}\label{p1:I:separability}
The determinant used throughout the proof is
\[
 J=\det(g,g_X,g_Y)/Z,\qquad\deg J=3d-3.
\]
Euler's identity shows divisibility by $Z$: on $Z=0$, if $p\nmid d$
the vector $g$ is a combination of $g_X,g_Y$, and if $p\mid d$ the
last two vectors are dependent.

\begin{lemma}\label{p1:I:separable}
For every exact exponent $P\ge2$ under consideration, $J\ne0$.
\end{lemma}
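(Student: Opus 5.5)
The plan follows the outline in Part~0, Section~\ref{G:foundation}: find a short vertex at which $g$ does not vanish, and read off $J\ne0$ from the local structure there.

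\emph{Step 1: more than $d^2$ short vertices.} Take an ordinary point $a$ (it exists by Proposition~\ref{p1:I:ordinary-bound}). Through $a$ there are $k_a\ge q/P+1$ nontrivial secants, and the weights satisfy $\sum r=q/P+d$. Hence
\[
 \sum_{\ell\ni a}(r_\ell-1)=q/P+d-k_a\le d-1,
\]
so at least $k_a-(d-1)\ge q/P+2-d$ secants through $a$ are short. One pencil does not give $d^2$ vertices, since $d$ can be as large as $q/(2P)$. So I will sum over all ordinary points and use incidence moments: count pairs (point, secant) and pairs (pair of points, secant) over all of $B$. If $b_r$ denotes the number of secants of weight $r$, the first two moments $\sum(Pr+1)b_r$ and $\sum\binom{Pr+1}{2}b_r=\binom{|B|}{2}-{}$(pairs on tangent-free data) are fixed by $|B|=q+Pd+1$. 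Together with the bound $r\le d$ from \eqref{p1:pp:linebound}, this should force
\[
 b_1\gtrsim \frac{q^2}{P^2}-O(dq/P)>d^2.
\]
The last inequality holds because $d\le q/(2P)$ and $d\ge q/P^2$. The ordinary pencil bound gives the per-point input, and summing over the more than $q$ ordinary points gives roughly $q\cdot(q/P-d)/(P+1)$ short vertices. I expect this to exceed $d^2$ comfortably except possibly when $d$ is close to $q/(2P)$, which is where the care goes.

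\emph{Step 2: $g(v)\ne0$ at some short vertex.} Since $g$ is primitive, I choose two $\F_{\bar q}$-combinations of $g_1,g_2,g_3$ that are coprime. After a generic change, $g_1$ and $g_2$ have no common factor, and any common factor with $g_3$ is avoided by a generic combination. By Bézout these two forms of degree $d$ have at most $d^2$ common projective zeros. Every zero of $g$ is among them, so some short vertex $v$ from Step~1 has $g(v)\ne0$.

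\emph{Step 3: local computation at $v$.} Normalize $v=(0,0,1)$. By \eqref{p1:pp:cone} with $r=1$, the components $g_1,g_2$ vanish at $v$ and their linear initial forms $A_1,B_1$ are independent. Because $g(v)\ne0$, we get $g_3(v)\ne0$. In the affine chart $Z=1$, consider the Jacobian of $(g_1,g_2)$ in $x,y$ at the origin; it is $\det(A_1,B_1)\ne0$. Now evaluate $\det(g,g_X,g_Y)$ at $v$. The first column is $(0,0,g_3(v))^t$, so expanding along it gives $g_3(v)\cdot\det\begin{pmatrix}\partial_Xg_1&\partial_Yg_1\\ \partial_Xg_2&\partial_Yg_2\end{pmatrix}(v)\ne0$. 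The homogeneous partials $\partial_X,\partial_Y$ agree with the affine $\partial_x,\partial_y$ at $Z=1$. Since $v$ is not on $Z=0$, dividing by $Z$ keeps the value nonzero, so $J(v)\ne0$ and $J\ne0$.

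This argument does not depend on the characteristic, so $P=2$ is included: nothing divides by $d$, and only first-order partial derivatives appear. The main obstacle is Step~1, which requires making the incidence-moment count rigorous with explicit constants that beat $d^2$ throughout the whole range $q/P^2\le d\le q/(2P)$. If needed, the weight-$r$ order bound from the proof of Proposition~\ref{p1:I:ordinary-bound}, applied at every ordinary point, bounds the number of long secants and can tighten the count.
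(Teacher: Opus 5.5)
Your three steps are the paper's own: more than $d^2$ short vertices from incidence moments, B\'ezout for two coprime combinations of $g_1,g_2,g_3$, and the $3\times3$ determinant at a short vertex with $g\ne0$. Steps~2 and~3 are correct. The gap is Step~1. It carries all the quantitative content of the lemma, and you only announce it (``this should force'', ``I expect''). The form you predict for the bound is wrong, the place where you locate the difficulty is wrong, and the refinement you propose is circular.

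\emph{The missing computation.} Let $n_j$ be the number of lines meeting $B$ in $Pj+1$ points. The two incidence counts are
\[
 \sum jn_j=qd+q/P+d,\qquad \sum j^2n_j=q^2/P^2+qd/P+d^2 .
\]
The inequality to use is not $r\le d$ but $j(2-j)\le0$ for $j\ge2$, which gives
\[
 n_1\ge2\sum jn_j-\sum j^2n_j=\frac{(2P-1)qd}{P}-\frac{q^2}{P^2}-d^2+\frac{2q}{P}+2d .
\]
The $qd$ term has a positive sign, not a negative $O(dq/P)$ sign. Put $F(d)$ for this bound minus $d^2$. Then $F'(d)\ge(2P-3)q/P+2>0$ for $d\le q/(2P)$, so $F$ is increasing on the whole range.

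\emph{Where the difficulty actually lies.} The critical case is the lower end $d=q/P^2$, supplied by Proposition~\ref{p1:I:ordinary-bound}, not $d$ near $q/(2P)$. At the lower end,
\[
 F(q/P^2)=\frac{(P-2)(P+1)q^2}{P^4}+\frac{2(P+1)q}{P^2}>0 .
\]
At $P=2$, where $d=q/4$ is forced, the $q^2$ terms cancel exactly and the margin over $d^2$ is only $3q/2$. So an estimate with asymptotic error terms cannot settle the binary case you say is included; the exact constants are needed.

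\emph{Why the fallback is circular.} Summing $s_a\ge q/P+2-d$ over ``more than $q$ ordinary points'' is not available at this stage. That count comes from Lemma~\ref{p1:ppg:special}, and in the binary sections from $|O|\ge q+3$. Both rest on the identities of Proposition~\ref{p1:pp:identities}, whose proof uses $J\ne0$. Before this lemma you only know that one ordinary point exists, and, as you noted yourself, a single pencil gives too few short vertices.
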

\begin{proof}
Let $n_j$ count lines meeting $B$ in $Pj+1$
points. The incidence and ordered-pair counts give
\begin{equation}\label{p1:pp:moments}
 \sum jn_j=qd+\frac{q}{P}+d,\qquad
 \sum j^2n_j=\frac{q^2}{P^2}+\frac{qd}{P}+d^2.
\end{equation}
Since $j(2-j)\le0$ for $j\ge2$,
\[
 n_1\ge\frac{(2P-1)qd}{P}-\frac{q^2}{P^2}-d^2+\frac{2q}{P}+2d.
\]
Subtract $d^2$ and call the result $F(d)$. It is increasing for
$d\le q/(2P)$. Proposition~\ref{p1:I:ordinary-bound} gives $d\ge q/P^2$, and
\[
 F(q/P^2)=\frac{(P-2)(P+1)q^2}{P^4}
                +\frac{2(P+1)q}{P^2}>0.
\]
Thus $n_1>d^2$. Choose two linear combinations of $g_1,g_2,g_3$
with no common factor, over a finite extension if necessary.
Such a choice exists: after the first combination is fixed, avoid
its finitely many irreducible factors when choosing the second.
B\'ezout's theorem bounds their common zeros, and hence those of
$g$, by $d^2$. There is therefore a short vertex with $g\ne0$.
At this vertex $A_1,B_1$ are independent linear forms, and the
remaining coordinate of $g$ is nonzero. Substitution in the
$3\times3$ determinant gives $J\ne0$.

This also covers $P=2$: the size bounds force $d=q/4$, and the
displayed expression gives $F(q/4)=3q/2>0$. The squarefree cubic
tangent cone again makes $A_1,B_1$ independent.
\end{proof}

The short-secant tangent cone also recovers the subfield assertion
directly for $P\ge3$: its $P+1$ rational directions are fixed by an
invertible $P$-semilinear projectivity\TermNote{exp:p1:semilinear}{a map induced by $v\mapsto A v^{[P]}$ with $A$ invertible}. After three of them are placed
at $0,1,\infty$, this map is $P$-Frobenius. It has
$p^{\gcd(e,h)}+1$ rational fixed points, so $e\mid h$ and the directions
form an $\F_P$-subline. This is the local argument behind the classical
subline theorem~\cite[Theorem~11.33]{p1:DSc}.

\begin{lemma}[The quadratic exponent bound]\label{p1:I:quadratic}
For a small blocking set whose line intersections are $1\pmod P$,
\begin{equation}\label{p1:I:quadratic-form}
 f_P(|B|)\ge0,\qquad
 f_P(x)=x^2-\bigl((P+1)(q+1)+1\bigr)x
                  +(P+1)(q^2+q+1).
\end{equation}
For $P\ge5$, smallness selects the interval below the smaller root.
\end{lemma}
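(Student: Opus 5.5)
We prove the quadratic exponent bound by a standard variance count over the line intersections, using only that every line meets $B$ in $1\pmod P$ points. Put $N=|B|$. For each line $\ell$ write $|B\cap\ell|=1+Pj_\ell$ with $j_\ell\ge0$. Tangents have $j_\ell=0$, and every other line satisfies $j_\ell\ge1$. Hence
\[
 \sum_\ell j_\ell(j_\ell-1)\ge0 .
\]
We evaluate this sum through the first and second moments of $|B\cap\ell|$.

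The two moments follow by double counting incidences and ordered pairs of points:
\[
 \sum_\ell|B\cap\ell|=N(q+1),\qquad
 \sum_\ell|B\cap\ell|(|B\cap\ell|-1)=N(N-1).
\]
Substituting $|B\cap\ell|=1+Pj_\ell$ and using the total number of lines, $q^2+q+1$, gives
\[
 P\sum j_\ell=N(q+1)-(q^2+q+1),
\]
\[
 P^2\sum j_\ell^2+P\sum j_\ell=N(N-1).
\]
From these we get
\[
 P^2\sum j_\ell(j_\ell-1)=N(N-1)-(P+1)\bigl(N(q+1)-(q^2+q+1)\bigr).
\]
Nonnegativity of this quantity is exactly $f_P(N)\ge0$. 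Note that this reproduces the moment identities \eqref{p1:pp:moments}, specialised to $N=q+Pd+1$, so it is the same count as in Lemma~\ref{p1:I:separable}. The only point needing care is the bookkeeping of the constant term. Expanding, one obtains
\[
 N^2-N-(P+1)(q+1)N+(P+1)(q^2+q+1),
\]
which matches $f_P(N)$ as displayed.

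For the second sentence, we show that for $P\ge5$ the value $3(q+1)/2$ lies strictly between the two roots of $f_P$. Equivalently, we show $f_P(3(q+1)/2)<0$. Then every $N<3(q+1)/2$ with $f_P(N)\ge0$ must lie at or below the smaller root.

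To check this, set $x=3(q+1)/2$ and expand:
\[
 f_P(x)=\tfrac94(q+1)^2-\tfrac32(q+1)\bigl((P+1)(q+1)+1\bigr)+(P+1)(q^2+q+1).
\]
The leading coefficient in $q$ is
\[
 \tfrac94-\tfrac32(P+1)+(P+1)=\tfrac94-\tfrac{P+1}{2},
\]
which is negative as soon as $P\ge4$. The main obstacle is to confirm that the lower-order terms do not spoil the sign for all relevant $q$. Since $q=P^n\ge P^2\ge25$, a direct estimate of the linear and constant terms should give $f_P(x)<0$. For $P=5$, for instance, the leading coefficient is $-3/4$, and the remaining terms are $O(q)$ with small coefficients.

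Finally, the larger root exceeds $x$ because the sum of the roots is $(P+1)(q+1)+1>2x$. This excludes the upper interval, so smallness indeed selects the interval below the smaller root.
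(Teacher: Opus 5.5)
Your counting argument is the paper's. With $i=|B\cap\ell|=1+Pj_\ell$, your inequality $P^2\sum_\ell j_\ell(j_\ell-1)\ge0$ is exactly the paper's $\sum_i(i-1)(i-P-1)t_i\ge0$, and your bookkeeping of the constant term is correct.

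The one unfinished step is the threshold check, which you leave at ``should give''. There is no obstacle there, because the exact expansion is
\[
 4f_P\bigl(3(q+1)/2\bigr)=(7-2P)q^2+(4-8P)q+(1-2P).
\]
Its linear and constant coefficients are negative for every $P\ge1$, and its leading coefficient is negative for $P\ge4$. So the lower-order terms reinforce the sign, and no bound such as $q\ge25$ is needed.

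Once $f_P(3(q+1)/2)<0$ is established, the upward-opening quadratic automatically has $3(q+1)/2$ strictly between its roots. Your closing sum-of-roots remark is therefore correct but redundant.
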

\begin{proof}
Let $t_i$ count $i$-secants. The three incidence moments are
\[
 \sum t_i=q^2+q+1,\quad \sum it_i=|B|(q+1),\quad
 \sum i(i-1)t_i=|B|(|B|-1).
\]
Every nonzero $t_i$ has $i=1$ or $i\ge P+1$. Thus
$\sum(i-1)(i-P-1)t_i\ge0$, which expands to
\eqref{p1:I:quadratic-form}. At the smallness threshold,
\[
 4f_P\bigl(3(q+1)/2\bigr)
 =(7-2P)q^2+(4-8P)q+(1-2P)<0
 \quad(P\ge5).
\]
The upward-opening quadratic is negative between its roots, so the
smallness hypothesis excludes the interval above the larger root.
This is the classical moment bound underlying the exponent estimates
in~\cite{p1:SW,p1:DSc,p1:VDV}; its proof is included to specify exactly the
input needed at five.
\end{proof}

\begin{corollary}[A convenient coarse upper bound]\label{p1:I:coarse-upper}
For $P\ge7$ and $q\ge P^2$, a small blocking set with line
intersections $1\pmod P$ satisfies
\[
 |B|<q+q/P+q/P^2+3q/P^3.
\]
\end{corollary}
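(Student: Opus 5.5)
The plan is to apply Lemma~\ref{p1:I:quadratic} directly. Since $P\ge7\ge5$, it says that $|B|$ lies below the smaller root $x_-$ of $f_P$. So it is enough to show that
\[
 x_0:=q+q/P+q/P^2+3q/P^3
\]
lies strictly between the two roots. Equivalently, we need $f_P(x_0)<0$ and also $x_0<3(q+1)/2$. The second condition holds because $x_0<q(1+1/7+1/49+3/343)<3q/2$. Once both are known, the upward-opening quadratic is negative at $x_0$, and $x_0$ lies below the larger root, so $x_0$ is in the open interval between the roots. Then $|B|\le x_-<x_0$, which is the claim.

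The main computation is the sign of $f_P(x_0)$. Write $x_0=q(1+\epsilon)$ with $\epsilon=1/P+1/P^2+3/P^3$ and expand. The $q^2$ coefficient is
\[
 (1+\epsilon)^2-(P+1)(1+\epsilon)+(P+1)=(1+\epsilon)^2-(P+1)\epsilon .
\]
Now $(P+1)\epsilon=1+2/P+4/P^2+3/P^3$. On the other side, $(1+\epsilon)^2=1+2/P+3/P^2+O(P^{-3})$; more exactly, the $P^{-3}$ terms are $6/P^3+2/P^3=8/P^3$, plus smaller terms. The leading coefficient is therefore
\[
 -1/P^2+5/P^3+O(P^{-4}),
\]
which is negative of size about $-q^2/P^2$ once $P\ge7$. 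I will check this with exact fractions, multiplying through by $P^6$ to get a polynomial inequality in $P$ valid for $P\ge7$. The "$+3/P^3$" term was presumably chosen to make this margin work at $P=7$.

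The lower-order terms also need bounding. The $q$ coefficient is
\[
 -(1+\epsilon)\bigl((P+1)+1\bigr)\ \text{(from the $-(\ldots+1)x$ term)}+(P+1),
\]
which is about $-P\epsilon-\ldots$, i.e.\ bounded in absolute value by a small constant. The constant term is $P+1$. Since $q\ge P^2$, the negative $q^2$ term of size roughly $(1/P^2-5/P^3)q^2\ge(1/P^2-5/P^3)P^2q$ dominates these linear terms. I expect this to be the only delicate spot, namely the exact verification at $P=7$, $q=49$. If the margin turns out to be tight there, I will simply evaluate $f_7(x_0)$ numerically for $q=49$ and, for larger $q$, use monotonicity in $q$ of $f_P(x_0)/q^2$.
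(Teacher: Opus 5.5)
Your overall route is the paper's: substitute $x_0$ into $f_P$, clear denominators by $P^6$, show $f_P(x_0)<0$, and invoke the smaller-root conclusion of Lemma~\ref{p1:I:quadratic}. Your $q^2$ coefficient $(1+\epsilon)^2-(P+1)\epsilon$ is correct; multiplied by $P^6$ it is exactly the paper's $-P^4+5P^3+7P^2+6P+9$. The separate check $x_0<3(q+1)/2$ is harmless but redundant, since $f_P(x_0)<0$ alone puts $x_0$ strictly between the roots.

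\textbf{The step that fails as written} is your treatment of the lower-order terms. Your $q$ coefficient simplifies exactly to $c_1=-1-(P+2)\epsilon=-(2+3/P+5/P^2+6/P^3)$. Your claim that the $q^2$ term ``dominates these linear terms'' once $q\ge P^2$ is false.
\begin{itemize}
\item At $P=7$ the exact $q^2$ coefficient is $c_2=-292/7^6$; your estimate $1/P^2-5/P^3=686/7^6$ overstates it by more than a factor of two.
\item At $q=49$ the quadratic term is therefore about $-6$, while the linear term has size about $125$.
\item A comparison in absolute value would fail for $P=7$ at both $q=7^2$ and $q=7^3$.
\end{itemize}
Your fallback also runs the wrong way. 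Since $f_P(x_0)/q^2=c_2+(c_1q+P+1)/q^2$ with $c_1<0$, this quotient \emph{increases} with $q$ for $q\ge P^2$. So a numerical check at $q=49$ cannot be propagated upward by monotonicity.

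\textbf{The fix} is to use the sign of $c_1$ rather than its absolute value.
\begin{itemize}
\item Because $c_1<-2$, the linear and constant terms together, $c_1q+(P+1)$, are negative as soon as $q>(P+1)/2$. The paper records this, after multiplying by $P^6$ and using $q\ge P^2$, as the bound $-P^5(2P^3+2P^2+4P+6)$.
\item Hence $f_P(x_0)<c_2q^2$ for every admissible $q$, and everything reduces to $c_2<0$, that is, $P^4-5P^3-7P^2-6P-9>0$.
\item This holds for $P\ge7$, because $P^3(P-5)\ge2P^3\ge14P^2>7P^2+6P+9$.
\end{itemize}
This polynomial inequality in $P$ is the only delicate point and the only place where $P\ge7$ enters; the case $q=49$ is not delicate. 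At $P=5$ the coefficient is $+214/5^6$, which is why that case needs a different bound. With this repair your argument coincides with the paper's proof.
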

\begin{proof}
Put $x_0=q+q/P+q/P^2+3q/P^3$. Direct substitution gives
\begin{align*}
 P^6 f_P(x_0)
 &=q^2(-P^4+5P^3+7P^2+6P+9)\\
 &\quad-qP^3(2P^3+3P^2+5P+6)+P^6(P+1)<0.
\end{align*}
Indeed $-P^4+5P^3+7P^2+6P+9\le-7P^2+6P+9<0$ for
$P\ge7$. Since $q\ge P^2$, the last two terms together are at most
$-P^5(2P^3+2P^2+4P+6)<0$.
The smaller-root conclusion of Lemma~\ref{p1:I:quadratic} now gives
$|B|<x_0$. This recovers the familiar estimate
\cite[Lemma~1]{p1:VDV} from the moment calculation above.
\end{proof}

\section{Directional Hasse identities and Frobenius matrix formulas}\label{p1:secmap:4}\label{p1:I:integration}
We first record the only additional degree inequality needed in the
Hasse argument:
\begin{equation}\label{p1:I:flat-gap}
 \frac{q}{P}-(P-2)d+1>0.
\end{equation}
For $P\le4$ it follows from $d\le \frac{q}{2P}$. For $P\ge7$,
Corollary~\ref{p1:I:coarse-upper} gives
\[
 |B|<q+q/P+q/P^2+3q/P^3
\]
implies $d<\frac{q}{P^2}+\frac{q}{P^3}+\frac{3q}{P^4}$ and hence~\eqref{p1:I:flat-gap}.
For $P=5$, write $q=125S$ when $n\ge3$. Substituting
$|B|=q+5d+1$ in Lemma~\ref{p1:I:quadratic} gives
$d<27S/4-1/5<\frac{q}{3P}$. Indeed, for $d=5S+\rho$ the bounding quadratic is
\[
 \rho^2-(90S+1)\rho+150S^2-30S,
\]
whose value at $\rho=7S/4-1/5$ is
$-71S^2/16-289S/20+6/25<0$. Its smaller root bounds the
admissible interval. If $n=2$, the roots in Lemma~\ref{p1:I:quadratic} are
$P^2+P+1$ and $P^3+1$; together with Proposition~\ref{p1:I:ordinary-bound}
they give $d=1$, so the same degree inequality holds.

Define $\mathcal H_W^{[j]}g$ to be the coefficient of $t^j$ in
$g(V+tW)$; when the superscript is omitted, the order is one.
\begin{lemma}[Directional determinant identities]\label{p1:pp:higher-flatness}
For $2\le j\le P-2$ and independent direction variables $W$,
and also for $P=3$, $j=2$,
\begin{equation}\label{p1:pp:higher-flatness-equation}
 \det\bigl(g,\mathcal H_W^{[1]}g,\mathcal H_W^{[j]}g\bigr)=0.
\end{equation}
Here $\mathcal H_W^{[j]}$ is the Hasse directional derivative, defined by
$g(V+tW)=\sum_jt^j\mathcal H_W^{[j]}g(V)$.
\end{lemma}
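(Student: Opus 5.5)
We prove the identity by showing that the determinant lies in the module $\mathcal M$ of Lemma~\ref{p1:pp:module} in a disguised form. It is a scalar polynomial of degree $d+(d-1)+(d-j)=3d-1-j$ in $V$ and homogeneous of degree $1+j$ in $W$. We then show that it vanishes to high order along every arrangement line, so that its degree is too small for it to be nonzero.

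Concretely, fix $A\in B$ and work in coordinates with $L_A=Z$. By Lemma~\ref{p1:pp:normalization}, $g_3=Zh_A$ after the coordinate change of Section~\ref{p1:I:coordinates}. Expanding $g(V+tW)$, the third coordinate of $\mathcal H^{[i]}_Wg$ is $\mathcal H^{[i]}_W(Zh_A)=Z\,\mathcal H^{[i]}_Wh_A+W_3\,\mathcal H^{[i-1]}_Wh_A$.

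The first approach does not work directly. Restricting the determinant to $Z=0$ gives an expansion in terms of $W_3$ and the restrictions of $g_1,g_2$ and their Hasse derivatives, and this need not vanish. The information used in the size bound (Proposition~\ref{p1:I:ordinary-bound}) is instead that $g_1,g_2$ are tied to $h_A$ through $R$. The plan is therefore to use the $P$-th power structure. Apply $\mathcal H_W$ to the identity $R=\det(V,V^{[q]},g^{[P]})$. Since $g^{[P]}(V+tW)=g(V+tW)^{[P]}$ has no $t^i$ terms for $0<i<P$, the Hasse derivatives of $R$ of order $i<P$ only hit the first column $V$. Hence, for $2\le i<P$, we get $\mathcal H_W^{[i]}R=0$ in terms not involving $t^P$. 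We intend to compare this with $R=\prod L_a$: the Hasse derivatives of a product of linear forms are elementary symmetric functions of the $a^tW/L_a$ times $R$. This gives divisibility information on the determinant along each $L_a$.

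The key intermediate claim to aim for is
\[
 \det\bigl(g,\mathcal H_W^{[1]}g,\mathcal H_W^{[j]}g\bigr)\;\text{is divisible by }R,
\]
or at least by $\prod L_a$ restricted to the ordinary lines, with some multiplicity. To prove it, we first show that each of the three columns, paired with $A^{[1/P]}$, vanishes modulo $L_A$ up to lower Hasse orders, via Leibniz on $A^{[1/P]t}g=L_Ah_A$. This gives
\[
 A^{[1/P]t}\mathcal H^{[i]}_Wg=L_A\mathcal H^{[i]}_Wh_A+(A^tW)\mathcal H^{[i-1]}_Wh_A.
\]
On $L_A=0$, the row vector $A^{[1/P]t}$ applied to the three columns therefore gives $(0,\,(A^tW)h_A,\,(A^tW)\mathcal H^{[j-1]}_Wh_A)$. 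Combined with the rank-two structure of the normal plane, this should show that the determinant vanishes on $L_A$. Doing this for all $|B|=q+Pd+1$ lines gives divisibility by $R$ as a polynomial in $V$, with $W$ as a parameter. But $\deg_V=3d-1-j<q+Pd+1$, so the determinant is zero.

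The main obstacle is the vanishing on $L_A$. The row computation above only shows that the image of $A^{[1/P]t}$ is proportional to $(0,h_A,\mathcal H^{[j-1]}_Wh_A)$, which is not enough for a determinant to vanish. We also need a second linear relation on $L_A=0$, coming from $V\times g^{[P]}$ being primitive, or from the vector $F_0=V^{[q/P]}$. The first thing to try is to show that the three columns lie modulo $L_A$ in the span of $g$ and $F_0$, using Lemma~\ref{p1:pp:module}. Every $\mathcal H^{[i]}_Wg$ with $i<P$ has degree below the third basis degree, so it would be forced into the span of $g$ and $F_0$ with polynomial coefficients once it lies in $\mathcal M$. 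The determinant of three vectors in a rank-two module then vanishes identically. This is where the range $j\le P-2$, the inequality~\eqref{p1:I:flat-gap}, and the separate $P=3$, $j=2$ case should enter. They would be used to absorb the correction term $(A^tW)\mathcal H^{[i-1]}_Wh_A$ by an induction on $i$ and to handle the case when it is not divisible by $L_A$.
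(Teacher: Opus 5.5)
There is a genuine gap, exactly at the step you flag as the main obstacle, and the remedy you suggest cannot work. For an indeterminate direction $W$ the Hasse derivatives $\mathcal H_W^{[i]}g$ do not lie in $\mathcal M$.

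\begin{itemize}
\item Your own Leibniz formula gives $A^{[1/P]t}\mathcal H_W^{[1]}g\equiv(A^tW)h_A\pmod{L_A}$. This is nonzero on every ordinary line. By contrast, $g$ and $F_0=V^{[q/P]}$ are both annihilated by $A^{[1/P]t}$ modulo $L_A$. So the columns do not lie in the span of $g,F_0$ modulo $L_A$.
\item Globally, $\mathcal H_W^{[1]}g\in\Span(g,F_0)$ would make $g,g_X,g_Y$ dependent, contradicting $J\ne0$ (Lemma~\ref{p1:I:separable}).
\item Your degree comparison with the third basis degree is not the relevant one. An element of $\mathcal M$ of $V$-degree $d-i<d$ would simply have to vanish.
\end{itemize}

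The correction term $(A^tW)\mathcal H_W^{[i-1]}h_A$ is not absorbed by any induction you describe. Your facts about $\mathcal H_W^{[i]}R$ are never connected to the determinant of Hasse derivatives of $g$. So neither vanishing on $L_A$ nor divisibility by $R$ is established.

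The missing idea is to work first in the single special direction $W=g(V)^{[P]}$, frozen under the substitution.

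\begin{itemize}
\item There $L_A(V+t\,g(V)^{[P]})=L_A+t(L_Ah_A)^P$ stays divisible by $L_A$. So every $G_j=\mathcal H^{[j]}_{g^{[P]}}g$ lies in $\mathcal M$.
\item Its degree $d+j(Pd-1)$ is below the third basis degree for $j\le P-2$, by \eqref{p1:I:flat-gap}. Hence $G_j\in\Span(g,F_0)$, and $\mathcal A_j(V,g^{[P]})=0$, where $\mathcal A_j=\det(g,\mathcal H_W^{[1]}g,\mathcal H_W^{[j]}g)$.
\item The vanishing then spreads. The formula $\mathcal H^{[j]}_{W+tV}g=\sum_i\binom{d-i}{j-i}t^{j-i}\mathcal H^{[i]}_Wg$ and induction on $j$ give $\mathcal A_j(V,W+tV)=\mathcal A_j(V,W)$. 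With homogeneity in $W$, $\mathcal A_j$ vanishes on $\Span_{\F_q(V)}(V,g^{[P]})$.
\item So $\mathcal A_j$ is divisible by the $W$-linear form $E\cdot W$, where $E=V\times g^{[P]}$. Primitivity of $E$ and $\deg_V\mathcal A_j=3d-j-1<Pd+1$ force $\mathcal A_j=0$. The divisor is this linear form, not $R$.
\end{itemize}

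For $P=3$, $j=2$ the degree bound is unavailable, and one uses the derivation instead. Since $\delta V^{[q/P]}=0$, one has $\delta^2g=(\delta\beta+\beta^2)g+(\beta c+\delta c)V^{[q/P]}$. In odd characteristic $\delta^2=2\mathcal H^{[2]}_{g^{[P]}}$, so $\mathcal H^{[2]}_{g^{[P]}}g$ lies in the span of $g$ and $V^{[q/P]}$. The same plane-and-primitivity argument then applies. Your proposal gives no mechanism for this case either.
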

\begin{proof}
For odd $p$ there is an order-two identity even when $P=3$.
The coefficients of $\delta$ have zero ordinary derivatives, so
\[
 \delta^2g=(\delta\beta+\beta^2)g+(\beta c+\delta c)V^{[q/P]}
          =2\mathcal H_{g^{[P]}}^{[2]}g.
\]
Consequently $\det(g,\mathcal H_Wg,\mathcal H_W^{[2]}g)$ vanishes on
$\operatorname{Span}(V,g^{[P]})$. Its $V$-degree $3d-3$ is below
$Pd+1$, so primitivity of $V\times g^{[P]}$ makes the polynomial
identically zero. This proves the extra case $P=3$, $j=2$.
For the remaining range we use the following uniform argument.

Freeze $g(V)^{[P]}$ when forming the substitution and put
$G_j=\mathcal H_{g(V)^{[P]}}^{[j]}g(V)$.  Every $G_j$ belongs to the normal
family $\mathcal M$: substituting $V+t g(V)^{[P]}$ in the normal identity for
$g$ preserves divisibility by $L_A(V)$, because
\[
 L_A(V+t g(V)^{[P]})
 =L_A(V)+t\bigl(A^{[1/P]t}g(V)\bigr)^P.
\]
Its degree is $d+j(Pd-1)$.  For $1\le j\le P-2$, this is less
than $(P-1)(\frac{q}{P}+d)+1$: at $j=P-2$ the difference is
\[
 (P-1)\bigl[\frac{q}{P}-(P-2)d+1\bigr]
 >0.
\]
The positivity $\frac{q}{P}-(P-2)d+1>0$ was checked above.
 Thus $G_j$ is in the polynomial span of
$g,F_0$ throughout this range.

Write $\mathcal A_j=\det(g,\mathcal H_W^{[1]}g,\mathcal H_W^{[j]}g)$ and induct on
$j$, starting with $\mathcal A_0=\mathcal A_1=0$.  Homogeneity gives
\[
 \mathcal H_{W+tV}^{[j]}g=
 \sum_{i=0}^j\binom{d-i}{j-i}t^{j-i}\mathcal H_W^{[i]}g.
\]
Once the lower $\mathcal A_i$ vanish, this implies
$\mathcal A_j(V,W+tV)=\mathcal A_j(V,W)$.  The basis
calculation gives $\mathcal A_j(V,g^{[P]})=0$.  Homogeneity in $W$
therefore makes $\mathcal A_j$ vanish on
$\operatorname{Span}_{\F_q(V)}(V,g^{[P]})$.  Its linear equation
$E\cdot W$ divides $\mathcal A_j$ over $\F_q(V)[W]$, and by
primitivity also over $\F_q[V,W]$.  But
\[
 \deg_V\mathcal A_j=3d-j-1<Pd+1=\deg E,
\]
so $\mathcal A_j=0$.
\end{proof}

In the next three lemmas, $\mathcal H_W^2$ denotes the composition
of the first directional derivative with itself. In odd characteristic
it equals $2\mathcal H_W^{[2]}$, not the second Hasse derivative itself.
The next three matrix recognition lemmas are stated over $\F_q$.
Their proofs use only that the coefficient field is perfect\TermNote{exp:p1:perfect}{every coefficient has a $p$th root; for $K=\F_q(x,y)$, the power subfield is $K^{p^j}=\{f^{p^j}:f\in K\}$}, so also
apply over any perfect field of the indicated characteristic.

\subsection{Recognizing the matrix in characteristic two}
\begin{lemma}[Rational matrix formula in characteristic two]\label{p1:I:binary-integration}
Let $\F_q$ have characteristic two, $K=\F_q(x,y)$, and $u\in K^3$.
If $\Delta=\det(u,u_x,u_y)\ne0$, there is
$C\in\operatorname{GL}_3(K^2)$ such that
\[
 \Delta u=C(x,y,1)^t,\qquad\det C=\Delta^4.
\]
No second-derivative hypothesis is needed.
\end{lemma}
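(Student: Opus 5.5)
The plan is to construct $C$ directly from the decomposition of $K$ over $K^2$, and to read off $\det C$ afterwards by differentiating. In characteristic two, $K=\F_q(x,y)$ is a four-dimensional vector space over $K^2$ with basis $1,x,y,xy$, and $\partial_x,\partial_y$ are $K^2$-linear. Expand any $f\in K$ as
\[
 f=\alpha+x\beta+y\gamma+xy\varepsilon,\qquad \alpha,\beta,\gamma,\varepsilon\in K^2 .
\]
Then $\partial_x\partial_y f=\varepsilon$. So $f$ lies in the $K^2$-span of $1,x,y$ exactly when $\partial_x\partial_y f=0$. Applying this coordinatewise, the existence of a matrix $C$ with entries in $K^2$ and $\Delta u=C(x,y,1)^t$ is equivalent to the vector identity $(\Delta u)_{xy}=0$. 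In that case $C$ has columns the $x$-, $y$- and $1$-components of $\Delta u$.

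To verify $(\Delta u)_{xy}=0$, I would first use $u_{xx}=u_{yy}=0$, which holds since $\partial_x^2=0$ in characteristic two. Put $e=u_{xy}$. Differentiating the determinant column by column and discarding repeated columns gives
\[
 \Delta_x=\det(u,u_x,e),\qquad \Delta_y=\det(u,e,u_y),\qquad
 \Delta_{xy}=\det(u_y,u_x,e).
\]
Therefore
\[
 (\Delta u)_{xy}=\det(u_y,u_x,e)\,u+\det(u,e,u_y)\,u_x+\det(u,u_x,e)\,u_y+\det(u,u_x,u_y)\,e .
\]
Up to signs, which are irrelevant in characteristic two, this is the Cramer relation among four vectors $u,u_x,u_y,e$ in a three-dimensional space. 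That relation vanishes identically, so $(\Delta u)_{xy}=0$. The only place needing care is the sign and ordering bookkeeping in these three derivative formulas. This is the step I expect to be the main check; it is short, but every term must be matched to the corresponding Cramer term.

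For the determinant, write $v=(x,y,1)^t$. Since $C$ has entries in $K^2$, differentiating $\Delta u=Cv$ gives $C e_1=(\Delta u)_x=\Delta_xu+\Delta u_x$ and $Ce_2=(\Delta u)_y=\Delta_yu+\Delta u_y$. Hence
\[
 C\,(v,e_1,e_2)=(\Delta u,\;\Delta_xu+\Delta u_x,\;\Delta_yu+\Delta u_y).
\]
The matrix $(v,e_1,e_2)$ has determinant $1$. Expanding the right side multilinearly, every term with $u$ in two columns vanishes, and the remaining term is $\Delta^3\det(u,u_x,u_y)=\Delta^4$. Thus $\det C=\Delta^4\ne0$, so $C\in\operatorname{GL}_3(K^2)$.

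Throughout, the only hypothesis used is $\Delta\ne0$; no second-derivative condition enters, because $u_{xx}=u_{yy}=0$ holds automatically in characteristic two.
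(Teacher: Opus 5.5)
Your proposal is correct and takes essentially the same route as the paper. The paper expands $u=A+xB+yC_0+xyE$ over $K^2$ and identifies the $xy$-coefficient of $\Delta u$ with the four-vector alternating relation among $A,B,C_0,E$; your operator $\partial_x\partial_y$ extracts exactly that coefficient, with $E=u_{xy}$. The paper also computes the determinant as you do: $\det C=\Delta^3\det(u,u_x,u_y)=\Delta^4$.
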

\begin{proof}
Write $u=A+xB+yC_0+xyE$, with coefficient vectors in $(K^2)^3$.
Expansion gives
\[
 \Delta=d_0+xd_1+yd_2+xyd_3,
\]
where $d_0=\det(A,B,C_0)$, $d_1=\det(A,B,E)$,
$d_2=\det(A,E,C_0)$, and $d_3=\det(E,B,C_0)$.
The $xy$ coefficient of $\Delta u$ is
$d_0E+d_1C_0+d_2B+d_3A=0$, the four-vector alternating relation in
dimension three. The other three coefficients give the claimed matrix.
Taking determinants of the vector and its first derivatives gives
$\det C=\Delta^3\det(u,u_x,u_y)=\Delta^4\ne0$.
\end{proof}
\subsection{Recognizing the matrix in odd characteristic}
\begin{lemma}[Rational matrix formula]\label{p1:app:p-integration}
Let $\F_q$ have odd characteristic $p$ and let $u\in \F_q(x,y)^3$
satisfy
\[
 \Delta=\det(u,u_x,u_y)\ne0,\qquad
 \det(u,\mathcal H_Wu,\mathcal H_W^2u)=0
\]
for every formal direction $W$. Then $hu=\mathsf C(x,y,1)^t$, where
\[
 h\in \F_q(x,y)^*,\qquad \mathsf C\in\operatorname{GL}_3(\F_q(x^p,y^p)).
\]
For $p>3$ one can take $h=\Delta^\kappa$, where
$1\le\kappa<p$ and $3\kappa\equiv-1\pmod p$.
\end{lemma}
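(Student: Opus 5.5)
Following the characteristic-two Lemma~\ref{p1:I:binary-integration}, I would work in $K=\F_q(x,y)$ over its subfield $K^p=\F_q(x^p,y^p)$, with basis $x^iy^j$, $0\le i,j<p$. Then $hu=\mathsf C(x,y,1)^t$ with $\mathsf C$ over $K^p$ means exactly this: when $hu$ is expanded in this basis, only the coefficients of $1,x,y$ are nonzero. The derivations $\partial_x,\partial_y$ are $K^p$-linear, and their common kernel is $K^p$. So I would characterise such vectors by derivatives. A vector $w\in K^3$ has the form $\mathsf C(x,y,1)^t$ with $\mathsf C$ over $K^p$ if and only if all second derivatives $w_{xx},w_{xy},w_{yy}$ vanish. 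One direction is clear. Conversely, $w_x$ and $w_y$ then have zero $x$- and $y$-derivatives, hence lie in $(K^p)^3$. Therefore $w-xw_x-yw_y$ also lies in $(K^p)^3$, because its derivatives are $-xw_{xx}-yw_{xy}=0$ and similarly in $y$. The goal is thus to find $h$ with $\mathcal H_W^2(hu)=0$ for all directions $W$. Invertibility of $\mathsf C$ then follows as in the binary case: $\det(w,w_x,w_y)=\det\mathsf C$ up to sign, while $\det(w,w_x,w_y)=h^3\Delta$ since adding multiples of $w$ to columns does not change the determinant.

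For the construction I would use the hypothesis as follows. Since $\det(u,\mathcal H_Wu,\mathcal H_W^2u)=0$ and $u,u_x,u_y$ form a basis over $K$ (because $\Delta\ne0$), we can write $\mathcal H_W^2u=\alpha(W)u+\beta(W)\mathcal H_Wu$. Here $\alpha$ is a quadratic form in $W$ and $\beta$ is a linear form in $W$. Polarising in $W$ gives
\[
u_{ij}=\alpha_{ij}u+\tfrac12(\beta_iu_j+\beta_ju_i),\qquad i,j\in\{x,y\}.
\]
This is the projective-connection form of a system whose solutions span a ``flat'' rank-3 space. For $w=hu$ we have $w_{ij}=h_{ij}u+h_iu_j+h_ju_i+hu_{ij}$. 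Vanishing of all $w_{ij}$ therefore requires $h_i=-\tfrac12 h\beta_i$ and $h_{ij}+h\alpha_{ij}=0$. To identify $\beta$, differentiate $\Delta$. One has $\Delta_x=\det(u,u_{xx},u_y)+\det(u,u_x,u_{xy})=\tfrac32\beta_x\Delta$, and similarly $\Delta_y=\tfrac32\beta_y\Delta$. So $\beta_i=\tfrac23\Delta_i/\Delta$, and the first condition becomes $h_i/h=-\tfrac13\Delta_i/\Delta$. For $p>3$ we can take $h=\Delta^\kappa$ with $3\kappa\equiv-1\pmod p$: then $h_i/h=\kappa\Delta_i/\Delta=-\tfrac13\Delta_i/\Delta$ in $K$. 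This explains the exponent in the statement.

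It remains to show that the second condition is automatic. With $h$ chosen, $w=hu$ satisfies $w_{ij}=\gamma_{ij}w$ for some symmetric $\gamma$, which is trace-free in the projective sense after normalisation. I would then apply the integrability conditions: compare $(w_{xx})_y$ with $(w_{xy})_x$, and similarly in the other variable, and use that $w,w_x,w_y$ are independent. This should force $\gamma_{xx}=\gamma_{yy}=\gamma_{xy}=0$. I expect this to be the main obstacle. In characteristic zero the argument is the classical one. In characteristic $p$ the compatibility equations only give, for instance, that $\gamma_{xy}$ is a constant for $\partial_y$, together with relations such as $\gamma_{xx,y}=2\gamma_{xy,x}$. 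One must then rule out nonzero solutions lying in $K^p$-like subfields. I would try to use the freedom to modify $h$ by a factor in $K^p$, and the finiteness of the $p$-basis expansion of $\gamma$, to kill the residual terms. If this fails, I would fall back on a direct coefficient computation analogous to the binary four-term calculation: expand $u$ in the basis $x^iy^j$ and show that the hypothesis forces $\Delta^\kappa u$ to have only the $1,x,y$ components. For $p=3$, no such $\kappa$ exists, and I would expect $h$ to be obtained by solving the logarithmic equation $h_i/h=-\tfrac13\Delta_i/\Delta$ differently. In that case I would only claim existence of some $h$, as the statement does.
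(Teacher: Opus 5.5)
For $p>3$ your route is the paper's. Your $\tfrac12\beta_x,\tfrac12\beta_y$ (the coefficients of your linear form $\beta$) are its $\alpha,\beta_0$, and your $\alpha_{ij}$ are its $a_0,b_0,c_0$. The identities $\Delta_x=3\alpha\Delta$, the choice $h=\Delta^\kappa$, the criterion by vanishing second derivatives, and $\det\mathsf C=h^3\Delta$ all match. However, you do not prove the step you call the main obstacle, and your description of it is wrong: the compatibility conditions give far more than $\partial_y$-constancy of $\gamma_{xy}$. With $w=hu$ and $w_{ij}=\gamma_{ij}w$, equate $\partial_y(w_{xx})=\gamma_{xx,y}w+\gamma_{xx}w_y$ with $\partial_x(w_{xy})=\gamma_{xy,x}w+\gamma_{xy}w_x$. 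Since $\det(w,w_x,w_y)=h^3\Delta\ne0$, the coefficients of $w_y$ and $w_x$ give $\gamma_{xx}=\gamma_{xy}=0$. The identity $\partial_x(w_{yy})=\partial_y(w_{xy})$ gives $\gamma_{yy}=0$. This uses only that $\partial_x$ and $\partial_y$ commute, so no residues in $K^p$ have to be killed and no fallback is needed. The paper makes the same comparison before rescaling, taking $u_{xxy}$ against $u_{xyx}$ and $u_{xyy}$ against $u_{yyx}$. This gives $a_0=\alpha_x-\alpha^2$, $c_0=(\beta_0)_y-\beta_0^2$ and $b_0=\alpha_y-\alpha\beta_0$. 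These are exactly the identities that make $h_{ij}+h\alpha_{ij}=0$ once $h_x=-\alpha h$ and $h_y=-\beta_0h$.

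The case $p=3$ is a genuine gap, and the first assertion of the statement includes it. In characteristic three $\Delta_x=3\alpha\Delta=0=\Delta_y$. So $\Delta$ carries no information about $\alpha,\beta_0$, and the equation $h_i/h=-\tfrac13\Delta_i/\Delta$ is meaningless. You must solve $h_x=-\alpha h$, $h_y=-\beta_0h$ directly in $K=\mathbb F_q(x,y)$. In characteristic $p$ such a system need not be solvable even when $\alpha_y=(\beta_0)_x$: for instance $h_x=h$ has no nonzero solution, because $\partial_x^p=0$ on $K$.

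The paper supplies the missing idea, a $p$-curvature argument.
\begin{itemize}
\item From $u_{xxx}=0$ and $a_0=\alpha_x-\alpha^2$ it obtains $\alpha_{xx}+\alpha^3=0$. This says $(\partial_x+\alpha)^3=0$ on the three-dimensional $\mathbb F_q(x^3,y)$-space $K$, so a nonzero kernel element normalizes $\alpha$ to $0$.
\item It then writes $u=A+xB$ over $\mathbb F_q(x^3,y)$ and shows that $\partial_x\beta_0\in K^3$.
\item It proves $\partial_x\beta_0=0$ by comparing denominators in $\beta_0=-(K_1)_y/K_1$.
\item Only then can the $y$-equation be solved inside $\mathbb F_q(x^3,y)$ without reintroducing $\alpha$.
\end{itemize}
Nothing in your proposal replaces this, so for $p=3$ only the reduction to vanishing second derivatives is established.
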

\begin{proof}
For $p=3$, see the next subsection. Suppose $p>3$.
Coefficient comparison in the basis $(u,u_x,u_y)$ gives
\[
 u_{xx}=a_0u+2\alpha u_x,\qquad
 u_{xy}=b_0u+\beta_0u_x+\alpha u_y,\qquad
 u_{yy}=c_0u+2\beta_0u_y.
\]
Mixed third derivatives give
\begin{align*}
 a_0&=\alpha_x-\alpha^2,&c_0&=(\beta_0)_y-\beta_0^2,\\
 2\alpha_y&=b_0+(\beta_0)_x+\alpha\beta_0,
 &2(\beta_0)_x&=b_0+\alpha_y+\alpha\beta_0.
\end{align*}
Subtraction gives $\alpha_y=(\beta_0)_x$ and
$b_0=\alpha_y-\alpha\beta_0$. Also
$\Delta_x=3\alpha\Delta$ and $\Delta_y=3\beta_0\Delta$.
Choose $\kappa$ as stated. Then $h=\Delta^\kappa$ satisfies
$h_x=-\alpha h$ and $h_y=-\beta_0h$, and substitution shows that
all second derivatives of $hu$ vanish.
Expansion over $\F_q(x^p,y^p)$ in the basis $x^iy^j$, $0\le i,j<p$,
leaves precisely $1,x,y$. The coefficient matrix is invertible
because the rescaled determinant is $h^3\Delta\ne0$.
\end{proof}

\subsection{The additional argument in characteristic three}\label{p1:app:three}

\begin{lemma}[Rational matrix formula in characteristic three]\label{p1:I:ternary-integration}
Let $\F_q$ have characteristic $3$, put $K=\F_q(x,y)$,
and let $u\in K^3$. Suppose that
\[
 \Delta=\det(u,u_x,u_y)\ne0,
 \qquad \det(u,\mathcal H_Wu,\mathcal H_W^2u)=0
 \quad\text{for every formal direction }W=(s,t).
\]
Then there are $H\in K^*$ and $\mathsf C\in\operatorname{GL}_3(\F_q(x^3,y^3))$
such that $Hu=\mathsf C(x,y,1)^t$.
\end{lemma}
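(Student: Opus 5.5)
The plan is to rerun the odd-characteristic argument of Lemma~\ref{p1:app:p-integration} and replace only the step that divides by three. The vectors $u,u_x,u_y$ form a basis of $K^3$, since $\Delta\ne0$. Polarizing the hypothesis in $W=(s,t)$ shows that $u_{xx},u_{xy},u_{yy}$ lie in the span of $u,u_x,u_y$ with the same triangular shape as before:
\[
 u_{xx}=a_0u+2\alpha u_x,\qquad
 u_{xy}=b_0u+\beta_0u_x+\alpha u_y,\qquad
 u_{yy}=c_0u+2\beta_0u_y .
\]
Here $2=-1$ in $K$. As before, the target is a multiplier $H\in K^*$ with
\[
 H_x=-\alpha H,\qquad H_y=-\beta_0H .
\]
Substitution then gives $(Hu)_{xx}=(Hu)_{xy}=(Hu)_{yy}=0$. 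Expanding $Hu$ over $\F_q(x^3,y^3)$ in the monomial basis $x^iy^j$, $0\le i,j<3$, the vanishing of these second derivatives kills every monomial except $1,x,y$. For example, $\partial_x^2x^2=2\ne0$ and $\partial_x\partial_y(xy)=1$. So $Hu=\mathsf C(x,y,1)^t$, and $\det\mathsf C$ equals $H^3\Delta\ne0$ up to a unit, exactly as in the case $p>3$.

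The whole difficulty is the construction of $H$. In characteristic three the two devices used for $p>3$ fail. First, the mixed-third-derivative comparison produces $3(\alpha_y-(\beta_0)_x)=0$, which is vacuous, so it no longer gives the closedness condition $\alpha_y=(\beta_0)_x$. Second, $\Delta_x=3\alpha\Delta=0$ and $\Delta_y=0$, so $\Delta$ lies in $K^3$ and no power of it can serve as $H$. I would treat the existence of $H$ as a rank-one connection problem. Put $\omega=\alpha\,dx+\beta_0\,dy$. By the Jacobson--Cartier criterion, $-\omega$ is a logarithmic differential $dH/H$ if and only if two conditions hold. One is the curvature condition $\alpha_y=(\beta_0)_x$. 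The other is the vanishing of the $3$-curvature, namely $\partial_x^{2}\alpha+\alpha^3=0$ and $\partial_y^{2}\beta_0+\beta_0^3=0$, up to the sign convention, which I would fix by testing on $\alpha=f_x/f$. When both hold, a horizontal element exists. Concretely, the three-dimensional $\F_q(x^3,y^3)$-space $K$ with the operators $f\mapsto f_x+\alpha f$ and $f\mapsto f_y+\beta_0f$ then has commuting nilpotent operators, and a common kernel vector is found by applying $(\partial_x+\alpha)^2(\partial_y+\beta_0)^2$ to a suitable $f$.

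To obtain these conditions I would use the operator identities $\partial_x^3=\partial_y^3=0$ on $K$, which are special to characteristic three, together with the commutation $\partial_x\partial_y^2=\partial_y^2\partial_x$ applied to $u$. Differentiating the three frame relations once and reducing into the basis $u,u_x,u_y$, I would equate coefficients. The condition $u_{xxx}=0$ should give $a_0=\alpha_x-\alpha^2$ together with the $x$-part of the $3$-curvature condition, and $u_{yyy}=0$ the analogous $y$-statements. The extra mixed identities $u_{xxy}=\partial_y(u_{xx})=\partial_x(u_{xy})$ and $u_{xyy}=\partial_x(u_{yy})=\partial_y(u_{xy})$ should then be re-examined: their $u_x$- and $u_y$-coefficients no longer carry the factor $3$ once $\partial^3=0$ has been substituted, and they should yield $\alpha_y=(\beta_0)_x$. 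If the curvature condition does not drop out directly, my fallback is to use the full two-variable hypothesis for every $W$, not just its $s^2,st,t^2$ parts separately. This is the step I expect to be the main obstacle: the bookkeeping is short, but the closedness condition is precisely the information lost in characteristic three, and the Hasse-type relations must be used to recover it rather than the binomial $2$.

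Once $H$ exists, the remaining steps follow the expansion argument above and need no further characteristic-three input. The proof uses only that $\F_q$ is perfect, because the coefficients of $\mathsf C$ are obtained by taking cube roots in the expansion; so, as remarked before the lemma, it applies over any perfect field of characteristic three.
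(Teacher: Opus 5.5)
\textbf{The gap is the closedness step.} You set up the reduction correctly: the triangular frame relations, the target $H$ with $H_x=-\alpha H$, $H_y=-\beta_0H$, and the final expansion in the basis $x^iy^j$. You also correctly identify closedness as the crux. But the step you leave open is the whole characteristic-three content of the lemma, and the mechanism you propose for it cannot succeed.

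The identities $\partial_y(u_{xx})=\partial_x(u_{xy})$ and $\partial_x(u_{yy})=\partial_y(u_{xy})$ contain no pure third derivative, so substituting $\partial^3=0$ changes nothing. Their $u_x$- and $u_y$-coefficients give $a_0=\alpha_x-\alpha^2$, $c_0=(\beta_0)_y-\beta_0^2$, and two formulas for $b_0$ that differ by $3(\alpha_y-(\beta_0)_x)$. After substitution, their $u$-coefficients say only that $\kappa_x=\kappa_y=0$ for $\kappa=\alpha_y-(\beta_0)_x$, i.e.\ $\kappa\in\F_q(x^3,y^3)$. Your fallback adds nothing: vanishing for all $W$ is the vanishing of the four coefficients of a binary cubic in $(s,t)$, which is exactly the triangular shape you already used.

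In fact no formal coefficient comparison can force $\kappa=0$. The function $\kappa$ is unchanged under $u\mapsto Hu$, which shifts $\alpha,\beta_0$ by $H_x/H,\ H_y/H$. It is also $0$ for $\mathsf C(x,y,1)^t$. Now work over $\F_q(x)((y))$ and take
\[
\alpha=a_0=0,\quad \beta_0=Cx+\gamma,\quad b_0=-C,\quad c_0=(\beta_0)_y-\beta_0^2,
\]
with $C\in\F_q^*$ and $\gamma\in\F_q(x^3)[[y]]$ solving $\gamma_{yy}+\gamma^3=-C^3x^3$. This is solvable coefficient by coefficient, since at $y^{3j}$ the equation reads $2g_{3j+2}+g_j^3=f_{3j}$ and both sides vanish at other powers. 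The resulting frame connection has zero curvature and zero $3$-curvature. Cartier descent therefore produces a $u$ over that field satisfying both hypotheses, with $\kappa=-C\ne0$. Every identity you propose to use holds there too, so some global property of rational functions is unavoidable.

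\textbf{The missing ingredient.} The paper's route:
\begin{enumerate}
\item Gauge $\alpha$ to $0$, using only $(\partial_x+\alpha)^3=\partial_x^3+\alpha_{xx}+\alpha^3=0$ on $K$ over $\F_q(x^3,y)$.
\item Write $u=A+xB$. Comparing coefficients of $1,x,x^2$ gives $\beta_0=b_0+b_1x$ with $C=b_1\in\F_q(x^3,y^3)$.
\item From the $y$-part of the $3$-curvature, get $\beta_0=-(K_1)_y/K_1$ with $K_1\in K$.
\item Kill $C$. A logarithmic derivative has only simple poles, so $C=\partial_x\beta_0$ has poles of order at most two, hence none, because $C$ is a cube. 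As a rational function of $y$ over $\F_q(x)$, $C$ has numerator degree below denominator degree, so the polynomial $C$ is $0$.
\end{enumerate}

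Your framework can be completed the same way without the gauge. The one-variable $3$-curvature relations you derive give $\alpha=-(H_0)_x/H_0$ and $\beta_0=-(K_1)_y/K_1$ with $H_0,K_1\in K^*$. Hence $\kappa=\partial_x(F_y/F)$ with $F=K_1/H_0$. The same pole and degree count, together with $\kappa\in\F_q(x^3,y^3)$, then gives $\kappa=0$.

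A minor slip: $K$ is nine-dimensional over $\F_q(x^3,y^3)$, not three-dimensional. This is harmless, since commuting nilpotent operators still have a common kernel vector.
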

\begin{proof}
The coefficient comparison in the second hypothesis gives
\[
 u_{xx}=a u+2\alpha u_x,\qquad
 u_{xy}=b u+\beta u_x+\alpha u_y,\qquad
 u_{yy}=c u+2\beta u_y.
\]
Here all coefficients belong to $K$, since $(u,u_x,u_y)$ is a basis.
Comparing $u_{xxy}$ with $u_{xyx}$ and $u_{xyy}$ with $u_{yyx}$ gives
\[
 a=\alpha_x-\alpha^2,\qquad c=\beta_y-\beta^2,
\]
\[
 2\alpha_y=b+\beta_x+\alpha\beta,\qquad
 2\beta_x=b+\alpha_y+\alpha\beta.
\]
The operators $\partial_x^3$ and $\partial_y^3$ vanish on $K$.
Differentiating the
first displayed equation and substituting the formula for $a$ gives
\[
 0=u_{xxx}=(\alpha_{xx}+\alpha^3)u.
\]
The characteristic-three operator identity
$(\partial_x+\alpha)^3=\partial_x^3+\alpha_{xx}+\alpha^3$
\TermNote{exp:p1:operator}{$\partial_x+\alpha$ acts by $f\mapsto f_x+\alpha f$; its cube means three successive applications} therefore gives
$(\partial_x+\alpha)^3=0$. The operator is linear on the
three-dimensional vector space $K$ over $K_x=\F_q(x^3,y)$, so it has a
nonzero kernel vector $H_0$. Replacing $u$ by $H_0u$ preserves both
hypotheses: the relevant determinants are multiplied by $H_0^3$.
The coefficient $\alpha$ changes to $\alpha+(H_0)_x/H_0=0$;
the compatibility equation then gives $a=0$.

Thus $u=A+xB$ for $A,B\in K_x^3$. These vectors are independent,
because otherwise $u$ and $u_x=B$ would be dependent. The mixed
equation, with $b=-\beta_x$, becomes
\[
 B_y=-\beta_xA+(\beta-x\beta_x)B.
\]
Solving for its two coefficients using a nonzero minor of $(A,B)$
shows that $\beta_x$ and $\beta-x\beta_x$ belong to $K_x$.
Consequently, in the basis $1,x,x^2$ of $K/K_x$,
\[
 \beta=b_0+b_1x,\qquad b_0,b_1\in K_x,
 \qquad B_y=-b_1A+b_0B.
\]
Now compare the $x^2$ coefficients in
\[
 A_{yy}+xB_{yy}
  =(\beta_y-\beta^2)(A+xB)+2\beta(A_y+xB_y).
\]
On the right the coefficient is
\[
 -b_1^2A+(b_{1,y}-2b_0b_1)B+2b_1B_y=b_{1,y}B.
\]
It follows that $b_{1,y}=0$. Hence
\[
 C:=\beta_x=b_1\in \F_q(x^3,y^3)=K^3.
\]

The analogous third-derivative computation gives
$\beta_{yy}+\beta^3=0$. Therefore $(\partial_y+\beta)^3=0$ on
$K/\F_q(x,y^3)$, and there is a nonzero $K_1\in K$ with
\[
 \beta=-\frac{(K_1)_y}{K_1}.
\]
We show $C=0$ by comparing denominators. Write
$K_1=c\prod f^{m_f}$, where the $f\in\F_q[x,y]$ are distinct
irreducible polynomials and the integers $m_f$ may be negative.
Then $\beta=-\sum m_f f_y/f$. Every factor of the reduced
denominator of $\beta$ occurs at most once, and every factor of
that of $C=\beta_x$ occurs at most twice. But $C\in K^3$, so all
these multiplicities are divisible by three. Hence $C$ is polynomial.
On the other hand, write $K_1=A_*/B_*$ over $\F_q(x)[y]$. In
$\beta=-(A_*)_y/A_*+(B_*)_y/B_*$, each numerator has smaller $y$-degree than
its denominator. This remains true after differentiation in $x$:
$(A/B)_x=(A_xB-AB_x)/B^2$ has numerator degree less than $2\deg_yB$
whenever $\deg_yA<\deg_yB$. Thus $C$ is both polynomial and a
rational function with numerator degree smaller than denominator
degree in $y$. This forces $C=0$.

We now have $\beta\in K_x$. On the degree-three extension
$K_x/\F_q(x^3,y^3)$ the operator $\partial_y+\beta$ again has cube zero.
Choose a nonzero $H_1\in K_x$ in its kernel and replace $u$ by $H_1u$.
This leaves $\alpha=0$ and makes $\beta=0$. The compatibility
equations give $a=b=c=0$, so all second derivatives of the rescaled
vector vanish. Expansion in the basis $x^iy^j$, $0\le i,j<3$, of
$K/K^3$ leaves precisely the terms $1,x,y$. Thus the rescaled vector
is $\mathsf C(x,y,1)^t$ for a matrix $\mathsf C$ over $K^3$.
Its determinant equals the nonzero rescaled $\Delta$, so $\mathsf C$ is
invertible. Taking $H=H_0H_1$ proves the assertion.
\end{proof}

\begin{lemma}[The full $P$-Frobenius matrix formula]\label{p1:pp:P-integration}
For $K=\F_q(x,y)$ and $v=(x,y,1)^t$, there are $h\in K^*$ and
$C\in\operatorname{GL}_3(K^P)$ such that
\[
 h\,g(x,y,1)=Cv.
\]
\end{lemma}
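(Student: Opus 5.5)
We prove Lemma~\ref{p1:pp:P-integration} by induction along the tower $K\supset K^p\supset K^{p^2}\supset\cdots\supset K^P$, where $P=p^e$. The base step is the $p$th-power formula. Put $u=g(x,y,1)$. Lemma~\ref{p1:I:separable} gives $\det(u,u_x,u_y)\ne0$, since it is $J$ dehomogenized, up to a power of $Z$. In characteristic two, Lemma~\ref{p1:I:binary-integration} then yields $h_1u=C_1v$ with $C_1\in\operatorname{GL}_3(K^2)$. In odd characteristic we first need $\det(u,\mathcal H_Wu,\mathcal H_W^2u)=0$. Since $\mathcal H_W^2=2\mathcal H_W^{[2]}$, this follows from Lemma~\ref{p1:pp:higher-flatness} with $j=2$. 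That lemma covers $j=2$ when $P\ge4$, and also the extra case $P=3$. If $P=p$ is prime we use the order-two identity for odd $p$ proved in the first paragraph of that lemma's proof; I would check that its proof does not need $2\le P-2$. Lemmas~\ref{p1:app:p-integration} and~\ref{p1:I:ternary-integration} then give $h_1u=C_1v$ with $C_1\in\operatorname{GL}_3(K^p)$. If $e=1$ we are done.

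For the inductive step, suppose $h_ku=C_kv$ with $C_k\in\operatorname{GL}_3(K^{p^k})$ and $k<e$. Write $C_k=D^{[p^k]}$ entrywise with $D\in\operatorname{GL}_3(K)$, using the perfect field $\F_q$ to take roots of the coefficients. The aim is to show that the rows of $D$, viewed as a vector $w=D(x,y,1)$-type object, again satisfy the hypotheses of the recognition lemmas. Concretely, I would consider the vector $\tilde u:=C_k^{[1/p^k]}$ applied appropriately. A cleaner route works in homogeneous form. The identity says $h_k g=C_k(V)V$, so for any direction $W$ the Hasse derivatives $\mathcal H_W^{[j]}g$ of order $j<p^k$ involve only derivatives of $h_k^{-1}$ times $C_kW$-type terms. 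For order $j=p^k$ and $2p^k$, the Hasse derivatives see the $p^k$-th root data: $\mathcal H^{[p^k j']}_{W}$ acting on a $p^k$-th power $F^{p^k}$ equals $(\mathcal H^{[j']}_{W^{[1/p^k]}}F)^{p^k}$. I would therefore translate the vanishing $\det(g,\mathcal H_W^{[p^k]}g,\mathcal H_W^{[2p^k]}g)=0$ into the order-two identity for the transposed root matrix. This vanishing comes from Lemma~\ref{p1:pp:higher-flatness} with $j=2p^k\le P-2$, together with the lower orders. The nondegeneracy $\det C_k\ne0$ replaces $\Delta\ne0$. Applying Lemma~\ref{p1:app:p-integration}, Lemma~\ref{p1:I:ternary-integration} or Lemma~\ref{p1:I:binary-integration} over the perfect field, to the field $K^{p^k}\cong K$ via Frobenius, then produces a scalar $h'\in K^{p^k}$ and $C_{k+1}\in\operatorname{GL}_3(K^{p^{k+1}})$ with $h'h_ku=C_{k+1}v$.

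The main obstacle is this inductive step. First, the identity $h_ku=C_kv$ has a nonconstant multiplier $h_k$, so Hasse derivatives of $u$ mix the derivatives of $h_k^{-1}$ with the structured part. The mixed terms must be shown to lie in the span of $u$, and hence to drop out of the determinant. I would try to handle this by rescaling first, taking $h_k$ to be a $p^k$th power times something killed by the relevant operators, or by using that $\det(u,\cdot,\cdot)$ is insensitive to adding multiples of $u$. Second, there is an index issue. The determinant identities of Lemma~\ref{p1:pp:higher-flatness} are only available for $j\le P-2$, and $2p^k\le P-2$ can fail: for example $p=2$ with $k=e-1$ gives $2p^k=P$. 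In that case the step should use the characteristic-two Lemma~\ref{p1:I:binary-integration}, which needs no second-derivative hypothesis. In odd characteristic $2p^k\le p^{k+1}-2\le P-2$ holds. This is consistent with the guide's remark that the scalar-matrix argument uses $r=2$ only in characteristic two.

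Finally, we clear denominators and record $\det C\ne0$, which follows multiplicatively from the determinants at each stage.
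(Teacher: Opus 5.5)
Your base step is correct and is the paper's: Lemma~\ref{p1:I:binary-integration} for $p=2$, and Lemmas~\ref{p1:app:p-integration} and \ref{p1:I:ternary-integration} fed by the order-two identity of Lemma~\ref{p1:pp:higher-flatness} for odd $p$. The lifting step from $K^r$ to $K^{pr}$, with $r=p^k$, has a genuine gap, for three reasons.

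\emph{The identity you use is not available.} Lemma~\ref{p1:pp:higher-flatness} only supplies $\det(g,\mathcal H_W^{[1]}g,\mathcal H_W^{[j]}g)=0$, always with the first-order derivative in the middle slot. The identity $\det(g,\mathcal H_W^{[p^k]}g,\mathcal H_W^{[2p^k]}g)=0$ that you translate is not among these, and your sketch does not derive it.

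\emph{The recognition lemmas are the wrong tool.} Suppose $u=Cv$ with $C\in\operatorname{GL}_3(K^r)$ and $h'u=C'v$ with $C'\in\operatorname{GL}_3(K^{pr})$. Then $C'^{-1}Cv\in Kv$, and the scalar-matrix lemma forces $C=\lambda C'$ with $\lambda\in K^r$. So what must be proved concerns the matrix $C$ itself: $C^{-1}C_\xi$ and $C^{-1}C_\eta$ are scalar, where $\xi=x^r$, $\eta=y^r$. A recognition lemma, in any Frobenius twist, only controls one vector $Xv'$ for the relevant coordinate vector $v'$. It says nothing more about $X$, since you may add any $N$ with $Nv'=0$. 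If you feed it $CW_0$ with $W_0$ independent of $\xi,\eta$, its nondegeneracy hypothesis fails in exactly the expected situation. Indeed $C=\lambda C'$ makes $\partial_\xi(CW_0)$ and $\partial_\eta(CW_0)$ multiples of $CW_0$, so the determinant meant to replace $\Delta$ vanishes. Thus ``$\det C_k\ne0$ replaces $\Delta\ne0$'' is false.

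\emph{The characteristic-two fallback cannot work.} Take $u=(x,x^2y,1)^t=\operatorname{diag}(1,x^2,1)v$. It has $C\in\operatorname{GL}_3(K^2)$ and $\det(u,u_x,u_y)=x^2\ne0$. By the argument just given, it has no $K^4$-representation, since $x^2\notin K^4$. So some determinant identity must enter every lifting step.

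The missing idea, which is the paper's, is to use the identity of order $r$ with first-order middle slot. Since $r\le P/p\le P-2$, it is always available, and your index problem disappears. For $w=(a,b,0)^t$ and $r>1$,
\[
\mathcal H_wu=Cw,\qquad \mathcal H_w^{[r]}u=(a^rC_\xi+b^rC_\eta)v,
\]
so the identity becomes $\det(v,w,(a^rA+b^rB)v)=0$ with $A=C^{-1}C_\xi$ and $B=C^{-1}C_\eta$.
\begin{enumerate}
\item The coefficients of the distinct monomials $a^{r+1},a^rb,ab^r,b^{r+1}$ give $Av,Bv\in Kv$.
\item The scalar-matrix lemma ($D\in M_3(K^r)$ and $Dv\in Kv$ imply $D$ scalar, with the explicit check at $r=2$) then gives $C_\xi=\alpha C$ and $C_\eta=\beta C$.
\item Dividing $u$ by a nonzero entry of $C$ leaves a matrix over $K^{pr}$.
\end{enumerate}
Your worry about the nonconstant multiplier is settled by the Hasse product rule:
\[
\det(hu,\mathcal H_W(hu),\mathcal H_W^{[j]}(hu))=h^2\sum_{i=0}^{j}\mathcal H_W^{[i]}h\,\det(u,\mathcal H_Wu,\mathcal H_W^{[j-i]}u).
\]
This shows that all the identities through order $r$ survive each rescaling. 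It works precisely because the middle slot is first order.
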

\begin{proof}
For $p=2$, use Lemma~\ref{p1:I:binary-integration}.
For odd $p$, use Lemma~\ref{p1:app:p-integration}, including its
characteristic-three case. If $P=3$, the separate order-two argument
above supplies its hypothesis. For $P>3$, use the order-two Hasse
identity and $\mathcal H_W^2=2\mathcal H_W^{[2]}$.
After multiplication by a nonzero rational function, therefore, $u=Cv$ over $K^p$.

All the higher determinant identities survive a multiplication by a nonzero rational function.  Indeed, the Hasse product rule gives
\[
 \det(hu,\mathcal H_W(hu),\mathcal H_W^{[j]}(hu))
 =h^2\sum_{i=0}^j \mathcal H_W^{[i]}h\,
       \det(u,\mathcal H_Wu,\mathcal H_W^{[j-i]}u),
\]
whose terms vanish when the identities through order $j$ hold.

Suppose that $u=Cv$ with $C\in\operatorname{GL}_3(K^r)$, where
$r=p^a<P$.  Put $\xi=x^r$, $\eta=y^r$, and use independent
direction variables $a,b$ with $w=(a,b,0)^t$.  Since $r>1$,
\[
 \mathcal H_wu=Cw,\qquad \mathcal H_w^{[r]}u=(a^rC_\xi+b^rC_\eta)v.
\]
Set $A=C^{-1}C_\xi$ and $B=C^{-1}C_\eta$.
The determinant identity of order $r$ becomes
\[
 \det\bigl(v,w,(a^rA+b^rB)v\bigr)=0.
\]
The monomials $a^{r+1},a^rb,ab^r,b^{r+1}$ are distinct.
Their coefficients show
\[
 \det(v,e_1,Av)=\det(v,e_2,Av)=0,
 \qquad
 \det(v,e_1,Bv)=\det(v,e_2,Bv)=0.
\]
Thus $Av$ and $Bv$ are multiples of $v$.

If a matrix $D\in M_3(K^r)$ satisfies $Dv\in Kv$, then $D$ is
scalar.  To see this, eliminate the eigenvalue using the third
coordinate.  For $r\ge3$ the remaining equations involve
the independent monomials $1,x,y,x^2,xy,y^2$ over $K^r$ and force
the assertion. For $r=2$, write the two equations explicitly:
\begin{align*}
0&=(d_{11}-d_{33})x+d_{12}y+d_{13}-d_{31}x^2-d_{32}xy,\\
0&=d_{21}x+(d_{22}-d_{33})y+d_{23}-d_{31}xy-d_{32}y^2.
\end{align*}
The $xy$ coefficients first give $d_{32}=d_{31}=0$; independence
of $1,x,y$ then kills the other off-diagonal entries and equates
the diagonal entries. Thus the scalar-matrix assertion also holds
at the first binary lifting step.
It follows that $C_\xi=\alpha C$ and $C_\eta=\beta C$.

Choose a nonzero entry $c$ of $C$.  Every entry of $C/c$ has zero
$\xi$- and $\eta$-derivatives, so lies in
$\F_q(\xi^p,\eta^p)=K^{pr}$.  Replacing $u$ by $u/c$ gives the same
representation with a matrix in $\operatorname{GL}_3(K^{pr})$.
Iterate for $r=p,p^2,\ldots,P/p$.  When $e=1$ no lifting step is needed. Otherwise every required order is at most
$P/p\le P-2$, so Lemma~\ref{p1:pp:higher-flatness} supplies all of
them.  The last matrix lies in $\operatorname{GL}_3(K^P)$.
\end{proof}

\subsection{Homogenization and reciprocity}
\begin{proposition}[Full $P$-level identities]\label{p1:pp:identities}
There are a squarefree homogeneous polynomial $H$, a primitive
nonsingular homogeneous polynomial matrix $M$, and nonzero
homogeneous polynomials $a_0,D$ such that
\begin{align}
 Hg&=M^{[P]}V,& Mg&=Ha_0V^{[q/P^2]},&
 \det M&=H^2a_0D,\label{p1:pp:uniform}\\
 Dg&=NV^{[q/P^2]},&N&=\operatorname{adj}(M)/H,&
 c&=H^{P-1}a_0^P.\label{p1:pp:reciprocal}
\end{align}
Writing $m=\deg M$, one has
\begin{align}
 \deg H&=Pm-d+1,&\deg a_0&=2d-(P-1)m-\frac{q}{P^2}-1,\nonumber\\
 \deg D&=\frac{q}{P^2}-(P-2)m-1,&\deg N&=d-(P-2)m-1.\label{p1:pp:degrees}
\end{align}
Moreover
\begin{equation}\label{p1:pp:Phi}
 J=H^{2P-3}(a_0D)^P,\quad
 \Phi=J/c=H^{P-2}D^P,\quad\deg\Phi=\frac{q}{P}-(P-2)d-2.
\end{equation}
\end{proposition}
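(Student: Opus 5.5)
The plan is to start from Lemma~\ref{p1:pp:P-integration}, clear denominators and homogenize, and then obtain the remaining identities by differentiating the resulting formula and applying $\delta$ to it. I expect the main obstacle to be step three below, the vanishing of a certain coefficient $\lambda$.

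\textbf{Step 1: the first identity.} Lemma~\ref{p1:pp:P-integration} gives $h\,g(x,y,1)=Cv$ with $C\in\operatorname{GL}_3(K^P)$. Every entry of $C$ is the $P$th power of a rational function. So after multiplying $h$ by the $P$th power of a common denominator, we may take $C=M_0^{[P]}$ with $M_0$ a polynomial matrix; here $M^{[P]}$ means that every coefficient polynomial is raised to the $P$th power. Homogenizing in $Z$ and removing a common factor of the entries of $M_0$ gives
\[
 Hg=M^{[P]}V,
\]
with $M$ primitive and nonsingular and $H$ homogeneous. Any factor of $H$ forced by the removal lies in $H$, since $g$ is primitive.

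\textbf{Step 2: differentiate.} Since $M^{[P]}$ has zero partial derivatives,
\[
 M^{[P]}e_i=H\,\partial_ig+(\partial_iH)\,g .
\]
\begin{itemize}
\item \emph{$H$ is squarefree.} Suppose $f^k\,\|\,H$ for an irreducible $f$. Then $f^{k-1}$ divides every column of $M^{[P]}$. Since these are $P$th powers, $f^{\lceil (k-1)/P\rceil}$ divides every entry of $M$. Primitivity of $M$ forces $k\le1$.
\item \emph{$H$ divides the cofactors.} Modulo an irreducible factor $f$ of $H$, every column of $M^{[P]}$ is proportional to $g$. Hence $f$ divides all $2\times2$ minors of $M^{[P]}$, which are the $P$th powers of the cofactors of $M$. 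So $f$ divides every cofactor, and by squarefreeness $N=\adj(M)/H$ is a polynomial matrix.
\item \emph{A determinant relation.} Taking $\det(Hg,(Hg)_X,(Hg)_Y)$ and using Euler's relation gives $H^3JZ=(\det M)^PZ$, that is,
\[
 H^3J=(\det M)^P .
\]
\end{itemize}

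\textbf{Step 3: the second identity (main obstacle).} Apply $\delta$ to $Hg=M^{[P]}V$. The coefficients of $M^{[P]}$ are killed by $\delta$, and $\delta V=g^{[P]}$. By \eqref{p1:pp:delta},
\[
 (Mg)^{[P]}=M^{[P]}g^{[P]}=\lambda\,g+Hc\,V^{[q/P]},\qquad \lambda=\delta H+H\beta .
\]
The left side and $V^{[q/P]}$ have zero derivatives, so $\partial_i(\lambda g)=0$ for every $i$. Because $J\ne0$, the vectors $g,g_X,g_Y$ are independent over $\F_q(V)$, which forces $\lambda=0$. I would check this step carefully.

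Granting $\lambda=0$, we get $(Mg)^{[P]}=Hc\,V^{[q/P]}$. Thus $Hc$ is a $P$th power, say $Hc=(Ha_0)^P$, using that $H$ is squarefree; this is $c=H^{P-1}a_0^P$. Taking $P$th roots gives $Mg=Ha_0V^{[q/P^2]}$, and $a_0\ne0$ because $c\ne0$.

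\textbf{Step 4: the reciprocal identities.} Multiply the second identity by $\adj M$:
\[
 (\det M)\,g=Ha_0\adj(M)V^{[q/P^2]}=H^2a_0\,NV^{[q/P^2]}.
\]
Primitivity of $g$ gives $H^2a_0\mid\det M$. Setting $D=\det M/(H^2a_0)$ yields $Dg=NV^{[q/P^2]}$. Substituting $\det M=H^2a_0D$ into $H^3J=(\det M)^P$ gives
\[
 J=H^{2P-3}(a_0D)^P,
\]
and then $\Phi=J/c=H^{P-2}D^P$.

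\textbf{Step 5: degrees.} The degrees \eqref{p1:pp:degrees} follow by comparing degrees in the identities:
\begin{itemize}
\item $Hg=M^{[P]}V$ gives $\deg H+d=Pm+1$.
\item $Mg=Ha_0V^{[q/P^2]}$ gives $\deg a_0$.
\item $\det M=H^2a_0D$ gives $\deg D$.
\item $N=\adj(M)/H$ gives $\deg N$.
\item $\deg\Phi=\deg J-\deg c$, with $\deg J=3d-3$ and $\deg c$ from \eqref{p1:pp:delta}.
\end{itemize}
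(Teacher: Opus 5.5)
The gap is in Step~3, the step you flagged. Your Steps~1, 2, 4 and 5 are sound and follow the paper. Step~4 is slightly shorter than the paper's route: from $(\det M)g=H^2a_0NV^{[q/P^2]}$ and primitivity of $g$ you get $H^2a_0\mid\det M$ at once, whereas the paper first proves $H^2\mid\det M$ by elimination modulo each prime factor of $H$ and then extracts $a_0$.

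In Step~3, the claim $\partial_i(\lambda g)=0$ does not follow. In
\[
 \lambda g=(Mg)^{[P]}-Hc\,V^{[q/P]}
\]
the vector $V^{[q/P]}$ has zero derivatives, but its scalar coefficient $Hc$ need not. A priori you only get $\partial_i(\lambda g)=-\partial_i(Hc)\,V^{[q/P]}$. That $Hc$ has vanishing derivatives is essentially what you are trying to prove: $Hc$ is a $P$th power only once $\lambda=0$ is known. So the step as written is circular.

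The repair uses the same ingredient, $J\ne0$. Suppose $\lambda\ne0$. Then $g=\lambda^{-1}(Mg)^{[P]}-\lambda^{-1}Hc\,V^{[q/P]}$. Both $(Mg)^{[P]}$ and $V^{[q/P]}$ have zero partial derivatives, so differentiating shows that $g$, $g_X$, $g_Y$ all lie in the $\F_q(V)$-span of these two vectors. Hence $\det(g,g_X,g_Y)=0$, contradicting $J\ne0$.

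The paper phrases the same point differently. The coordinates of $g$ are independent over $\F_q(V)^p$, since a relation over that field would differentiate to a row annihilating $g,g_X,g_Y$. But $\lambda\ne0$ would put $g$ in the span of two vectors with entries in $\F_q(V)^p$, which produces such a relation.

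With $\lambda=0$ the rest of your Step~3 goes through, with one detail to add. From $Hc\,V_i^{q/P}=(Mg)_i^P$ and $P\mid q/P$, $Hc$ is a $P$th power in $\F_q(V)$, hence the $P$th power of a polynomial. Only then does squarefreeness of $H$ give $Hc=(Ha_0)^P$.
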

\begin{proof}
Take entrywise $P$th roots in Lemma~\ref{p1:pp:P-integration} and
clear all denominators by a common $P$th power.  At least one
affine coordinate of $g$ has degree exactly $d$, since otherwise
$Z$ would divide all three homogeneous coordinates.  If the
polynomial root matrix has entries of degree at most $m$, the
polynomial multiplier has degree at most $Pm+1-d$.
Homogenizing to these degrees gives $Hg=M^{[P]}V$.

Make $M$ primitive: a common factor $f$ of all its entries has
$f^P\mid H$ by primitivity of $g$, so divide $M$ by $f$ and $H$
by $f^P$, and repeat.  The resulting $H$ is squarefree.  Indeed,
if $f^2\mid H$, every partial derivative of every $Hg_i$ is
divisible by $f$, whereas
\[
 \partial_j(Hg_i)=M_{ij}^P.
\]
This would make $f$ a common factor of $M$.

The coordinates of $g$ are independent over $\F_q(V)^p$: a relation
over that field can be differentiated to give a row annihilating
$g,g_X,g_Y$, contrary to Lemma~\ref{p1:I:separable}.  Apply $\delta$ to the
first identity and use~\eqref{p1:pp:delta}:
\[
 (Mg)^{[P]}=(\delta H+H\beta)g+HcV^{[q/P]}.
\]
If $\delta H+H\beta$ were nonzero, the three coordinates of $g$
would lie in the span of two elements over $\F_q(V)^p$, since both
Frobenius vectors have entries in that field.  Therefore
$\delta H=-H\beta$ and $Hc$ is a polynomial $P$th power.
Squarefreeness of $H$ gives $Hc=(Ha_0)^P$ and hence
$Mg=Ha_0V^{[q/P^2]}$.

Differentiating $Hg=M^{[P]}V$ gives\TermNote{exp:p1:jacobian}{$\operatorname{Jac}(g)=(\partial g_i/\partial V_j)_{i,j}$ and $\nabla H=(H_X,H_Y,H_Z)^t$}
\[
 M^{[P]}=H\operatorname{Jac}(g)+g(\nabla H)^t.
\]
\label{exp:p1:H-proof}Let $f$ be an irreducible factor of $H$. Modulo $f$, the displayed
matrix is the product of one column and one row, so every $2\times2$
minor of $M^{[P]}$ vanishes. Such a minor is the $P$th power of the
corresponding minor of $M$; thus $f$ divides every cofactor of $M$.
Since $H$ is squarefree, the product of its distinct irreducible
factors divides each cofactor. Therefore $H\mid\adj M$ entrywise,
and $N=\adj M/H$ is a polynomial matrix of degree $2m-\deg H$.
For completeness, fix an
irreducible factor $f$ of $H$ and choose an entry of $M$ not divisible
by $f$, which exists by primitivity. Use it to eliminate its column
from the other two rows. All entries in those rows are divisible by
$f$, since the rank modulo $f$ is at most one. The denominators
introduced are coprime to $f$, so $f^2\mid\det M$.
Since $H$ is squarefree, $H^2\mid\det M$.  Write
$\operatorname{adj}M=HN$ and $\det M=H^2K$.  Multiplying the
second identity by $N$ gives
$Kg=a_0NV^{[q/P^2]}$.  Primitivity of $g$ implies $a_0\mid K$.
Putting $K=a_0D$ proves~\eqref{p1:pp:uniform}--\eqref{p1:pp:reciprocal}.

Taking the determinant with the two first partial derivatives in
the first identity gives $J=(\det M)^P/H^3$.
This proves~\eqref{p1:pp:Phi}; the degree formulas~\eqref{p1:pp:degrees}
follow by comparing degrees in the displayed identities.
\end{proof}

\begin{proof}[Completion of the proof of Theorem~\ref{p1:I:uniform-main}]
The exponent discussion in Section~\ref{p1:I:normalization},
Lemmas~\ref{p1:pp:normalization}, \ref{p1:pp:module} and \ref{p1:I:separable},
and Proposition~\ref{p1:pp:identities} give all the assertions.
\end{proof}

For $P\ge3$, in particular,
\begin{equation}\label{p1:pp:exceptional-degree}
 \deg(Ha_0D)=d-1-(P-3)\deg M\le d-1.
\end{equation}
The coordinate rules in Section~\ref{p1:I:normalization} apply to these identities.

\section{Ordinary components, short secants, and the weight gap}\label{p1:secmap:5}\label{p1:I:local}

We use the established identities
\[
 R=\det(V,V^{[q]},g^{[P]}),\qquad
 Hg=M^{[P]}V,\qquad Mg=Ha_0V^{[q/P^2]},
\]
\[
 \det M=H^2a_0D=H^2K,\qquad
 J=H^{2P-3}K^P,\qquad \deg(HK)=d-1-(P-3)m,
\]
where $P\ge3$ and $m=\deg M$, $H$ is squarefree, $g$ and
$E=V\times g^{[P]}$ are primitive, and $J\ne0$.
For $a\in B$, the normal polynomial $a^{[1/P]t}g$ is divisible
by $L_a=a^tV$. The component is called ordinary if this
divisibility has order exactly one, and special otherwise.
A vertex of weight $r$ has $Pr+1$ incident components.

At a rational vertex normalized to $(0,0,1)$, the transverse
coordinates $g_1,g_2$ have order $r$, with nonzero coprime
homogeneous initial forms $A_r,B_r$ of degree $r$. Their tangent
cone is a nonzero scalar multiple of
\[
 yA_r^P-xB_r^P.
\]
In particular, $A_1,B_1$ are independent linear forms.
This is the local description~\eqref{p1:pp:cone}.

\begin{lemma}[Ordinary pencils]\label{p1:ppg:ordinary}
On an ordinary component $L_a=Z$, write $g_3=Zh$ and
$h_0=h|_{Z=0}$. At every weight-$r$ vertex on this component,
$\operatorname{ord}h_0\ge r-1$. Its number $k_a$ of vertices satisfies
\[
 \frac{q}{P}+1\le k_a\le Pd+1<q.
\]
Consequently it contains at least $\frac{q}{P}-d+2>0$ short vertices.
\end{lemma}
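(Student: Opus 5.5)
This lemma is essentially a restatement of Proposition~\ref{p1:I:ordinary-bound}, localized to a single ordinary component, plus a counting consequence. I will reuse that proof verbatim for the first two assertions and then add the count of short vertices.

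First, normalize the ordinary component to $Z=0$ by the coordinate-change rules of Section~\ref{p1:I:coordinates}. These rules preserve ordinarity and vertex weights. Write $g_3=Zh$ with $h_0\ne0$ and expand $R/Z$ as in the proof of Proposition~\ref{p1:I:ordinary-bound}:
\[
 \frac RZ=(X^{q/P}g_2-Y^{q/P}g_1)^P+Z^{P-1}h^P(XY^q-YX^q)+Z^{q-1}(Yg_1^P-Xg_2^P).
\]
At a rational vertex of weight $r$ on $Z=0$, the reduced product has $Pr$ incident linear factors $\ell_b+\lambda_bZ$. The coefficient of $Z^{P-1}$ therefore has order at least $Pr-P+1$ along $Z=0$. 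The first summand contributes nothing to that coefficient, and the last contributes nothing because $q-1>P-1$. Since $XY^q-YX^q$ has a simple zero at each rational point, $P\ord h_0+1\ge Pr-P+1$, which gives $\ord h_0\ge r-1$.

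Next comes the counting. The pencil identity $\sum r=q/P+d$ over vertices on $L_a$ follows from counting the $|B|-1=q+Pd$ other points, grouped by secants through $a$. Combining it with $\deg h_0=d-1$ gives
\[
 d-1\ge\sum(r-1)=q/P+d-k_a,
\]
so $k_a\ge q/P+1$. For the upper bound, every vertex is a zero of $(Xg_2^P-Yg_1^P)|_{Z=0}$. This form has degree $Pd+1$ and is nonzero by the residue and primitivity argument of Proposition~\ref{p1:I:ordinary-bound}. Hence $k_a\le Pd+1$, and $Pd+1<q$ by \eqref{p1:pp:linebound}.

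Finally, for the short vertices, let $s$ be the number of weight-one vertices on $L_a$. Every other vertex has $r\ge2$, so $r-1\ge1$. Then
\[
 d-1\ge\sum(r-1)\ge k_a-s\ge q/P+1-s,
\]
which gives $s\ge q/P-d+2$. Positivity follows from $d\le q/(2P)<q/P+2$.

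The only delicate point is the order estimate for the $Z^{P-1}$ coefficient. I would check it by expanding the product and noting that each term picks at most $P-1$ normal parts, so at least $Pr-P+1$ factors $\ell_b$ remain. Everything else is bookkeeping already done in Proposition~\ref{p1:I:ordinary-bound}.
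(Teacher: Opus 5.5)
Your proposal is correct and follows the paper's proof: the order estimate and the bounds $q/P+1\le k_a\le Pd+1$ are exactly Proposition~\ref{p1:I:ordinary-bound}, whose argument you reproduce. For the short vertices the paper uses $q/P+d=\sum r\ge 2k_a-s_a$ where you use $d-1\ge\sum(r-1)\ge k_a-s$; combined with $k_a\ge q/P+1$, both give the same bound $s\ge q/P-d+2$.
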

\begin{proof}
The local order and the two pencil bounds are
Proposition~\ref{p1:I:ordinary-bound}. If $s_a$ vertices are short,
the pencil identity gives $\frac{q}{P}+d\ge2k_a-s_a$; hence
$s_a\ge \frac{q}{P}-d+2$.
\end{proof}

\begin{lemma}[Local leading block]\label{p1:ppg:leading}
Suppose a vertex has transverse initial forms $A_r,B_r$ whose
ratio is not a $P$th power. Then $\operatorname{ord}g_3<r$.
Writing $\nu=\operatorname{ord}H$, one has
$\nu+r\equiv1\pmod P$. There is a nonsingular $2\times2$ matrix $C_0$ of forms of degree
$j=(\nu+r-1)/P$ such that
\[
 H_\nu(A_r,B_r)^t=C_0^{[P]}(x,y)^t,
\]
where $H_\nu$ is the initial form of $H$.
\end{lemma}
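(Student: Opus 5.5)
The plan is to work locally at the vertex, normalized to $(0,0,1)$ with affine coordinates $x,y$ (so $Z=1$). I would compare the lowest-degree homogeneous parts of the two sides of $Hg=M^{[P]}V$ and of its differentiated form
\[
 M^{[P]}=H\operatorname{Jac}(g)+g(\nabla H)^t .
\]
Write $\nu=\ord H$ with initial form $H_\nu$, let $M_k$ be the initial block of $M$, and let $\mu=\ord g_3$. The first two rows of $Hg=M^{[P]}V$ have lowest part $H_\nu(A_r,B_r)^t$ of degree $\nu+r$. On the right, each entry $M_{ij}^P$ is a $P$th power, so its lowest part is $(M_{ij})_{\rm init}^P$, of degree divisible by $P$. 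At $V=(x,y,1)$ the right side is $M^{[P]}_{\cdot1}x+M^{[P]}_{\cdot2}y+M^{[P]}_{\cdot3}$.

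\textbf{Step 1.} Show $\ord g_3<r$. Suppose instead $\mu\ge r$. Look at the third row and at the $2\times2$ minors of $M^{[P]}$. The differentiated identity shows that modulo $H$ the matrix $M^{[P]}$ has rank at most one, but we need information at the origin itself. Compare degrees: in the first two rows, the terms of degree $\nu+r$ have the form $a^Px+b^Py+c^P$. By residues modulo $P$ these three pieces cannot cancel one another, since their degrees are $\equiv1,1,0$. If the $c^P$ piece were present, $\nu+r\equiv0\pmod P$ and $H_\nu(A_r,B_r)$ would be a $P$th-power vector, so $A_r/B_r$ would be a $P$th power, which is excluded. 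Hence $\nu+r\equiv1\pmod P$, and the lowest part is $C_0^{[P]}(x,y)^t$, where $C_0$ is the $2\times2$ block of initial forms of degree $j=(\nu+r-1)/P$ taken from the first two columns. This proves the congruence and the displayed identity.

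\textbf{Step 2.} Show $C_0$ is nonsingular. If $\det C_0=0$, then $C_0^{[P]}$ has rank one over the form field. Then $H_\nu(A_r,B_r)^t$ is a multiple of a fixed column $u^{[P]}$ with polynomial entries, so $A_r/B_r=(u_1/u_2)^P$, again a $P$th power, contradicting the hypothesis. The same ratio argument rules out degenerate blocks.

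\textbf{Step 3.} Bound $\ord g_3$, which I expect to be the main obstacle. I would use the third row of $M^{[P]}=H\operatorname{Jac}(g)+g(\nabla H)^t$, or the tangent-cone identity \eqref{p1:pp:cone} together with $Mg=Ha_0V^{[q/P^2]}$. Near the origin $V^{[q/P^2]}$ has lowest part $(0,0,1)$ of degree $0$. Multiply $Mg$ by the top rows: the top-left block gives $C_0$-type terms applied to $(A_r,B_r)$, and these must be cancelled by the $M_{\cdot3}g_3$ contribution unless $Ha_0$ is involved.

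My first attempt would be an order count on $Mg=Ha_0V^{[q/P^2]}$ in the first two coordinates. These have right side $Ha_0\cdot(x^{q/P^2},y^{q/P^2})$ of huge order. Their left side contains $M_{\rm top}(A_r,B_r)^t$ at order about $j+r$, unless it is cancelled by $M_{i3}g_3$. Nonsingularity of $C_0$ and coprimality of $A_r,B_r$ prevent the vanishing of the $C_0$-part. This forces $g_3$ to contribute at the same low order, hence $\ord g_3\le$ a quantity below $r$ once orders are matched.

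If that degree matching fails, the fallback is to argue that $\mu\ge r$ would make $H_\nu g_{\rm init}$ a vector whose projective direction gives a rank-one lowest part. Combined with the $P$-residue separation from Step 1, this would again make $A_r/B_r$ a $P$th power.
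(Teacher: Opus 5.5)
Your Steps 1 and 2 follow the paper's argument. Comparing residues in $Hg=M^{[P]}(x,y,1)^t$ gives $\nu+r\equiv1\pmod P$, the order bounds $\operatorname{ord}M_{i1},\operatorname{ord}M_{i2}\ge j$ and $\operatorname{ord}M_{i3}\ge j+1$ in the top two rows, and the identity defining $C_0$. Nonsingularity of $C_0$ then follows from the $P$th-power ratio argument. One small correction: $a^Px$ and $b^Py$ both have total degree $\equiv1\pmod P$. To see that they cannot cancel you must use the residues of the individual $x$- and $y$-exponents, not the total degree.

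The genuine gap is Step~3, the only part that needs the second identity $Mg=Ha_0V^{[q/P^2]}$. Assuming $\operatorname{ord}g_3\ge r$, you correctly find that the top pair of $Mg$ has the nonzero degree-$(j+r)$ part $C_0(A_r,B_r)^t$, since the third column of the top rows has order at least $j+1$. But the matching right-hand side $Ha_0(x^{q/P^2},y^{q/P^2})^t$ has order $\operatorname{ord}(Ha_0)+q/P^2$, and you give no reason why this exceeds $j+r$.

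Nothing in the hypotheses forces that. The statement allows any weight $r$, constrained only by $r\le d$ with $d\ge q/P^2$. Even at a short vertex with $\nu=0$ when $n=2$, one has $j+r=1=q/P^2$. There the top two coordinates are perfectly consistent unless you already know that $a_0$ vanishes at the vertex. Your fallback does not help either, because $Hg=M^{[P]}V$ alone does not force $\operatorname{ord}g_3<r$.

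The missing idea is to apply the residue separation to the \emph{third} row under the contradiction hypothesis:
\begin{itemize}
\item If $\operatorname{ord}g_3\ge r$, then $Hg_3$ has order at least $\nu+r=Pj+1$. Separation gives $\operatorname{ord}M_{31},\operatorname{ord}M_{32}\ge j$ and $\operatorname{ord}M_{33}\ge j+1$.
\item Hence every coordinate of $Mg$, including the third, has order at least $j+r$.
\item The third coordinate of $Mg=Ha_0(x^{q/P^2},y^{q/P^2},1)^t$ is $Ha_0$ itself, so $\operatorname{ord}(Ha_0)\ge j+r$.
\item The top pair of the right side then has order at least $j+r+q/P^2$. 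This contradicts the nonzero term $C_0(A_r,B_r)^t$ of degree $j+r$.
\end{itemize}
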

\begin{proof}
In $Hg=M^{[P]}(x,y,1)^t$, the possible total-degree residues
are zero and one modulo $P$. If $\nu+r$ had residue zero,
$H_\nu A_r,H_\nu B_r$ would both be $P$th powers, contrary to
the hypothesis. Thus $\nu+r=Pj+1$. The first two columns in
the top two rows of $M$ have order at least $j$ and the third
has order at least $j+1$. Their degree-$j$ top-left block is
$C_0$, with the displayed identity. A singular $C_0$ would again
make $A_r/B_r$ a $P$th power, so $C_0$ is nonsingular.

Suppose now that $g_3$ had order at least $r$. The same order
bounds then hold in every row of $M$. The top pair of $Mg$
has a nonzero term of degree $j+r$, namely
$C_0(A_r,B_r)^t$. Every coordinate of $Mg$ has order at least
$j+r$, so its third coordinate in $Mg=Ha_0(x^{q/P^2},y^{q/P^2},1)^t$
forces $\operatorname{ord}(Ha_0)\ge j+r$. Its top pair would
then have order at least $j+r+\frac{q}{P^2}$, a contradiction.
\end{proof}

\begin{lemma}[Short-vertex regularity]\label{p1:ppg:short}
At every short vertex $g\ne0$ and $J\ne0$, and all incident lines are
ordinary. The polynomials
$H,a_0,D,\det M$ are nonzero at every short vertex.
The $P+1$ points of $B$ on a short secant form an $\F_P$-subline.
\end{lemma}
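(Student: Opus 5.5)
Normalize the short vertex to $(0,0,1)$. By the local description~\eqref{p1:pp:cone}, $g_1,g_2$ have order one with independent linear initial forms $A_1,B_1$, and the tangent cone $yA_1^P-xB_1^P$ is squarefree of degree $P+1$. Its ratio $A_1/B_1$ is a ratio of independent linear forms, hence not a $P$th power, so Lemma~\ref{p1:ppg:leading} applies with $r=1$. It gives $\operatorname{ord}g_3<1$, so $g_3(0,0,1)\ne0$ and in particular $g\ne0$ at the vertex. The same lemma gives $\nu+1\equiv1\pmod P$ with $\nu=\operatorname{ord}H$. I will then show $\nu=0$: if $\nu\ge P$, then $j\ge1$, and the nonsingular block $C_0$ of degree $j$ forces the first two rows of $M$ to vanish at the vertex. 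Combined with $Mg=Ha_0V^{[q/P^2]}$, the third coordinate then has order at least $j+1$ while $g_3\ne0$. I expect this to force the third row to vanish too, contradicting primitivity only after a more careful comparison, so the plan is to compare orders in $Mg$: the top pair has exact order $j+1$ via $C_0(A_1,B_1)^t$, whereas the right side has top pair of order $\ge\operatorname{ord}(Ha_0)+q/P^2$. Since $\operatorname{ord}(Ha_0)\ge\nu\ge P>j+1-q/P^2$, this is a contradiction. Hence $H\ne0$ at the vertex, $j=0$, and $C_0$ is a nonsingular constant block.

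Next I handle $a_0$, $D$ and $\det M$. With $j=0$, the top-left $2\times2$ block of $M$ at the vertex is $C_0$, invertible, and I will argue that $M(0,0,1)$ itself is invertible. Evaluating $Mg=Ha_0V^{[q/P^2]}$ at $(0,0,1)$ gives $M(0,0,1)g(0,0,1)=(Ha_0)(0,0,1)e_3$. Since $g(0,0,1)=(0,0,g_3)$ with $g_3\ne0$, the third column of $M$ at the vertex is $(Ha_0/g_3)e_3$. Hence $\det M(0,0,1)=\det C_0\cdot(Ha_0)(0,0,1)/g_3(0,0,1)$. It therefore suffices to show $a_0(0,0,1)\ne0$. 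For this I compare the first-order terms of the top pair: the linear part of the top pair of $Mg$ is $C_0(A_1,B_1)^t+g_3(0)\cdot(\text{linear part of the }(1,3),(2,3)\text{ entries})$, and the right side has top pair of order $\ge q/P^2$. This is the delicate step, and I am not sure it closes from this comparison alone.

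If it does not, I would use instead $J=H^{2P-3}K^P$ with $K=a_0D$. At the vertex, $J=\det(g,g_X,g_Y)/Z$ evaluates to $g_3\det(A_1,B_1\text{ coefficients})\ne0$, because $g=(0,0,g_3)$ and the columns $g_X,g_Y$ have independent top parts $(\partial A_1,\partial B_1)$. So $J\ne0$ at the vertex, and therefore $H$, $a_0$ and $D$ are all nonzero there. Then $\det M=H^2a_0D$ is nonzero as well. This $J$ route is cleaner and also settles $H$ without the order argument, so it is the one I will commit to.

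For ordinarity, take an incident component $L_a=Z'$ through the vertex and write $a^{[1/P]t}g=L_ah_a$. Ordinarity means $h_a|_{L_a}\not\equiv0$. Since $J\ne0$, the local derivatives are nondegenerate. Differentiating $a^{[1/P]t}g$ transversally at the vertex gives $a^{[1/P]t}(A_1,B_1,*)$ in the normal direction, and this is nonzero because $(A_1,B_1)$ are independent and $a^{[1/P]}$ is not proportional to $e_3$ (the vertex lies on $L_a$). So $h_a$ is nonzero at the vertex, hence not identically zero on $L_a$. Finally, the $\F_P$-subline claim follows as in the remark after Lemma~\ref{p1:I:separable}. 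The $P+1$ roots of $yA_1^P-xB_1^P$ are the fixed directions of the semilinear map $v\mapsto Av^{[P]}$ with $A$ invertible. Normalizing three of them to $0,1,\infty$ turns this map into the $P$-Frobenius, so its fixed set is exactly $\PG(1,P)$. The main obstacle is the ordinarity step, in making precise the claim that the normal derivative equals a nonzero pairing.
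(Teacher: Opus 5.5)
Your committed argument is the paper's own proof. The leading-block lemma with $r=1$ gives $g_3\ne0$ at the vertex, and the independent transverse derivatives then give $J\ne0$. The identity $J=H^{2P-3}(a_0D)^P$ yields all the nonvanishing claims. Ordinarity follows because $a_1^{1/P}A_1+a_2^{1/P}B_1$ is a nonzero linear form equal to $h_a$ (at the vertex) times $L_a$. The subline comes from normalizing the semilinear fixed-point map to $P$-Frobenius, followed by the one-line dualization from tangent directions to component normals that the paper also states.

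Drop the discarded order-comparison detour for $\nu=0$: it is unnecessary. As written it is also not quite right, because the degree-$(j+1)$ part of the top pair of $Mg$ also receives the top-right column times $g_3$, not only $C_0(A_1,B_1)^t$.
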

\begin{proof}
For $r=1$, the independent forms $A_1,B_1$ have ratio not a
$P$th power. Lemma~\ref{p1:ppg:leading} gives
$\operatorname{ord}g_3=0$. Their independent first derivatives
then give $J\ne0$. The identity $J=H^{2P-3}(a_0D)^P$
proves all the asserted nonvanishing statements.
For any incident component, its nonzero normal applied to the
invertible transverse linear map $(A_1,B_1)$ is a nonzero
linear form. Thus its normal divisibility has order exactly one.

Let $A\in\operatorname{GL}_2(\F_q)$ represent $(A_1,B_1)$.
The tangent directions are exactly the fixed points of the
projective semilinear map $[v]\mapsto[A^{[P]}v^{[P]}]$.
They are $P+1$ distinct $\F_q$-rational points. Conjugate three of
them to $0,1,\infty$ by a projectivity over $\F_q$. A
$P$-semilinear projectivity fixing these three points is
$P$-Frobenius itself, whose rational fixed points are
$\PG(1,P)$, since $e\mid h$ and $\F_P\subseteq\F_q$.
Dualizing tangent directions to their component
normals preserves the subline property.
\end{proof}

\begin{lemma}[Special components]\label{p1:ppg:special}
The special components are precisely the arrangement-line
factors of $HK$. Their number $\sigma$ satisfies
\[
 \sigma\le d-1-(P-3)m\le d-1,
 \qquad |B|-\sigma\ge q+(P-1)d+2>q+1.
\]
\end{lemma}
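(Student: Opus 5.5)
The plan is to prove the characterization first, then read off the two counts from degrees.

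\emph{Step 1: the determinant $J$ on a component.} Normalize a component to $L_a=Z$. With $g_3=Zh$, the identity $a^{[1/P]t}g=L_ah_a$ becomes $h_a=h$, up to the harmless coordinate rescaling of Section~\ref{p1:I:coordinates}. Restrict $J=\det(g,g_X,g_Y)/Z$ to $Z=0$. There $g_3$, $(g_3)_X$, $(g_3)_Y$ all vanish, so the third row of the determinant is $(Zh,Zh_X,Zh_Y)$. Expanding along that row gives
\[
 J|_{Z=0}=\bigl(h\,(g_1g_{2,Y}-g_2g_{1,Y})_{\partial X}\ldots\bigr),
\]
more precisely $J|_{Z=0}=h_0\cdot\det\begin{pmatrix}g_{1,X}&g_{1,Y}\\g_{2,X}&g_{2,Y}\end{pmatrix}\big|_{Z=0}$ plus terms carrying $h_{0,X},h_{0,Y}$ times $2\times2$ minors of $(g_1,g_2)$ with one column being $g$ itself. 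Euler's relation on the line handles those terms. If $h_0=0$, then $h_{0,X}=h_{0,Y}=0$, so $Z\mid J$; since $\det M=H^2a_0D$, this gives $L_a\mid HK$, where $J=H^{2P-3}K^P$.

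\emph{Step 2: the converse.} Suppose $L_a\mid HK$ but $a$ is ordinary. By Lemma~\ref{p1:ppg:ordinary}, $L_a$ carries at least $q/P-d+2>0$ short vertices. By Lemma~\ref{p1:ppg:short}, $H$, $a_0$ and $D$ are all nonzero at every short vertex. So no factor of $HK$ can vanish along $L_a$, and $L_a\nmid HK$. Hence an ordinary component is not a factor, and a special component is a factor by Step~1.

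\emph{Step 3: the counts.} The $\sigma$ special lines are distinct linear factors of $HK$. Therefore $\sigma\le\deg(HK)=d-1-(P-3)m$ by \eqref{p1:pp:exceptional-degree}, and this is at most $d-1$ since $P\ge3$. Finally,
\[
 |B|-\sigma\ge q+Pd+1-(d-1)=q+(P-1)d+2,
\]
and this exceeds $q+1$ because $d\ge1$.

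\emph{Main obstacle.} The delicate point is Step~1: showing that vanishing of $h_0$ forces $Z\mid J$ and not just some weaker condition. I would try the expansion with Euler's identity $Xg_{i,X}+Yg_{i,Y}+Zg_{i,Z}=dg_i$ first. As a fallback, I would work through $\det M$ via the leading-block Lemma~\ref{p1:ppg:leading} at a general point of the line, using $\nu=\ord H$. A cleaner route is to note that $h_0=0$ means $L_a^2\mid a^{[1/P]t}g$. With $Hg=M^{[P]}V$, the $Z$-row of $M^{[P]}$ then vanishes modulo $Z$ except for terms divisible by $H$. This should show that the row $a^{[1/P^2]t}M$ is divisible by $Z$ unless $Z\mid H$, which gives $Z\mid\det M$ either way.
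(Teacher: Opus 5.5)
Your proof is correct, and it differs from the paper's only in the forward implication (special $\Rightarrow$ $L_a\mid HK$).

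The paper works in coordinates with $L_a=Z$ and uses the third row of $Hg=M^{[P]}V$. Since $Z\mid g_3$, disjointness of the $X$- and $Y$-exponent residues modulo $P$ gives $Z\mid M_{31},M_{32}$. The coefficient of $Z$ then reads $(H|_{Z=0})h_0=(M_{33}|_{Z=0})^P$, so $h_0=0$ forces $Z\mid M_{33}$. The whole normal row of $M$ is therefore divisible by $Z$, and $Z\mid\det M=H^2K$.

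You instead pull $Z$ out of the third row of $(g,g_X,g_Y)$, so that $J$ becomes the determinant with last row $(h,h_X,h_Y)$. Since $\partial_X,\partial_Y$ preserve divisibility by $Z$, the condition $Z\mid h$ gives $Z\mid J=H^{2P-3}K^P$, and primality of $Z$ gives $Z\mid HK$.

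Your route is shorter. It needs only the already-proved factorization of $J$, plus covariance of $J$ under the coordinate change, which follows from $J=(\det M)^P/H^3$ and the coordinate rules. The paper's route proves something stronger: $\mu_a=0$, i.e.\ $L_a$ divides the entire row $a^{[1/P^2]t}M$. That is the matrix form of speciality matching Lemma~\ref{p1:ppg:normals}. Your converse and your count via $\deg(HK)=d-1-(P-3)m$ are exactly as in the paper.

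Finally, your ``main obstacle'' is not one. Step~1 is complete as soon as the last row vanishes identically on $Z=0$. You can drop the garbled first display and the appeal to Euler's relation. Your fallback ``cleaner route'' is essentially the paper's proof, and it needs no case distinction on whether $Z\mid H$.
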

\begin{proof}
On a special component $Z=0$, $Z^2\mid g_3$. The third row
of $Hg=M^{[P]}V$, restricted to $Z=0$, first gives
$Z\mid M_{31},M_{32}$, because the $X$- and $Y$-residue
classes modulo $P$ are disjoint. Comparing its coefficient
of $Z$ then gives $Z\mid M_{33}$. Hence $Z\mid\det M$
and therefore $Z\mid HK$. Conversely, every ordinary
component contains a short vertex by Lemma~\ref{p1:ppg:ordinary},
where $HK$ is nonzero by Lemma~\ref{p1:ppg:short}. Thus an
ordinary component cannot divide $HK$. The degree and
cardinality bounds follow.
\end{proof}

\begin{lemma}[The full $P$-weight gap]\label{p1:ppg:gap}
No vertex has weight $r$ with $2\le r\le P-1$. Every ordinary
component contains at least
\[
 m_0(d)=\frac{q}{P}+1-\left\lfloor\frac{d-1}{P-1}\right\rfloor
\]
short vertices.
\end{lemma}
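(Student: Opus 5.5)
The plan is to prove the gap by a local comparison at a hypothetical vertex of weight $r$ with $2\le r\le P-1$, and then to obtain the count $m_0(d)$ from the pencil identity, as in Lemma~\ref{p1:ppg:ordinary}.

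\emph{Reduction to the leading block.} Normalize the vertex to $(0,0,1)$ and let $A_r,B_r$ be the transverse initial forms. By \eqref{p1:pp:cone} they are coprime, of degree $r$, and $yA_r^P-xB_r^P$ is squarefree. Their ratio is not a $P$th power. Indeed, a $P$th-power ratio of coprime forms forces both forms to be $P$th powers up to scalars, which is impossible in degree $1\le r<P$; and a constant ratio contradicts coprimality since $r\ge1$. Hence Lemma~\ref{p1:ppg:leading} applies. It gives $\operatorname{ord}g_3<r$, $\nu+r=Pj+1$ with $\nu=\operatorname{ord}H$, and a nonsingular $2\times2$ block $C_0=(c_{ik})$ of degree $j$ with
\[
 H_\nu A_r=c_{11}^Px+c_{12}^Py,\qquad H_\nu B_r=c_{21}^Px+c_{22}^Py .
\]

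\emph{The key comparison.} Form the Wronskian-type combination $A_r(B_r)_x-B_r(A_r)_x$. Differentiating the displayed identities in $x$ (the $P$th powers are constants for $\partial_x$) gives
\[
 H_\nu^2\bigl(A_r(B_r)_x-B_r(A_r)_x\bigr)=-y\,(\det C_0)^P ,
\]
and symmetrically with $\partial_y$ and the factor $x$. The left side is nonzero, because $A_r/B_r$ is not a $p$th power. Its factor $A_r(B_r)_x-B_r(A_r)_x$ has degree only $2r-2<2P-2$, while the right side contains the $P$th power $(\det C_0)^P$ of degree $2Pj$.

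I would now count multiplicities of the irreducible factors of $\det C_0$. Each such factor must appear at least $P$ times on the left. Most of this multiplicity has to come from $H_\nu^2$, which has degree $2\nu=2Pj+2-2r$; at most $2r-2$ can come from the Wronskian. I would also use that $A_r,B_r$ are coprime to each other and, through the squarefree tangent cone, coprime to the relevant factors of $H_\nu$. The aim is a pair of degree counts whose difference is exactly $P(2P-1-2r)$, which is positive for $r\le P-1$. If needed, I would feed in $\operatorname{ord}g_3<r$ together with the third coordinate of $Mg=Ha_0V^{[q/P^2]}$ to bound $\nu$ from the other side. This bookkeeping is the step I expect to be the main obstacle. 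The identity itself is formal; the difficulty is choosing which orders to compare so that the two bounds are incompatible precisely when $2\le r\le P-1$. I would first test the argument with $j=1$, $r=2$ to calibrate.

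\emph{The count.} On an ordinary component $L_a$, let $s$ be the number of short vertices and $L$ the number of nontrivial non-short vertices. By the gap just proved, each non-short vertex has weight $r\ge P$, so $r-1\ge P-1$. Proposition~\ref{p1:I:ordinary-bound} gives $\sum(r-1)\le\deg h_0=d-1$, hence $L\le\lfloor (d-1)/(P-1)\rfloor$. The pencil identity $\sum r=q/P+d$ then gives
\[
 s=\frac qP+d-\sum_{\rm long}(r-1)-L\ge\frac qP+1-\left\lfloor\frac{d-1}{P-1}\right\rfloor=m_0(d).
\]
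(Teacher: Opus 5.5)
Your reduction to Lemma~\ref{p1:ppg:leading}, the identity $H_\nu^2\bigl(A_r(B_r)_x-B_r(A_r)_x\bigr)=-y(\det C_0)^P$, and the final count of short vertices are all fine. The count is the paper's, rewritten through $\sum(r-1)\le d-1$; note only that the Wronskian has degree $2r-1$, not $2r-2$. The gap is the contradiction itself: what you call ``bookkeeping'' is the entire argument, and the route you sketch cannot close it. Counting multiplicities of the factors of $\det C_0$ in that identity uses only local data. Adding coprimality of $A_r$, $B_r$, $H_\nu$, a squarefree tangent cone, and a lower bound on $\nu$ keeps it local, and these data are consistent with a forbidden weight. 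For $P=3$, $r=2$, $j=1$, take $c_{11}=c_{22}=x$, $c_{12}=-y$, $c_{21}=y$. Then $c_{11}^3x+c_{12}^3y=(x^2+y^2)(x^2-y^2)$ and $c_{21}^3x+c_{22}^3y=(x^2+y^2)xy$. So $H_\nu=x^2+y^2=\det C_0$, $A_r=x^2-y^2$ and $B_r=xy$ are pairwise coprime, with $\nu+r=Pj+1$ and $\nu=P+1-r$. The Wronskian is $-y(x^2+y^2)$, and $yA_r^3-xB_r^3=y(x^6-x^4y^2-y^6)$ is squarefree. No degree comparison inside the identity gives a contradiction here.

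The missing input is global: the factorization $J=H^{2P-3}(a_0D)^P$ of the derivative determinant. The paper argues in three steps. First it uses the third row of $Hg=M^{[P]}V$, not $Mg=Ha_0V^{[q/P^2]}$, which only yields $\ord g_3<r$. The initial form of $Hg_3$ has degree $\equiv0,1\pmod P$; together with $\nu+r\equiv1$ and $0<r-\ord g_3<P$, this forces $\ord g_3=r-1$ and $\nu\ge P+1-r$. Next, let $C_{r-1}$ be the initial form of $g_3$. Euler's identity gives $yJ_{\rm initial}=-C_{r-1}\bigl(A_r(B_r)_x-B_r(A_r)_x\bigr)$. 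Your Wronskian identity is needed only to see that this is nonzero, so $\ord J=3r-3$. Finally, the factorization gives $\ord J\ge(2P-3)\nu\ge(2P-3)(P+1-r)$, and $(2P-3)(P+1-r)-(3r-3)=P(2P-1-2r)>0$. In the example above, this is exactly what rules the configuration out: $(2P-3)\nu=6>3=3r-3$.
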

\begin{proof}
Suppose $2\le r<P$. Coprimality and degree show that
$A_r/B_r$ is not a $P$th power. Lemma~\ref{p1:ppg:leading}
gives $\nu+r\equiv1\pmod P$ and
$s_0:=\operatorname{ord}g_3<r$. The allowed total-degree
residues of $Hg_3$ are zero and one, so
$\nu+s_0\equiv0$ or $1\pmod P$. Since $0<r-s_0<P$,
one has $s_0=r-1$, and $\nu\ge P+1-r$.

There is a characteristic-sensitive point here: a rational
function of degree below $P$ need not have nonzero ordinary
derivative. The leading-block identity supplies exactly what
is needed. Write
\[
 C_0=\begin{pmatrix}c_{11}&c_{12}\\c_{21}&c_{22}\end{pmatrix}.
\]
Differentiating its $P$th-power entries gives
\[
 A_r(B_r)_x-B_r(A_r)_x
 =-\frac{y(\det C_0)^P}{H_\nu^2}\ne0.
\]
Thus the relevant ratio is not even a $p$th power.
If $C_{r-1}$ is the nonzero initial form of $g_3$, Euler's
identity applied to the homogeneous initial forms yields
\[
 yJ_{\rm initial}
 =-C_{r-1}\bigl(A_r(B_r)_x-B_r(A_r)_x\bigr)\ne0.
\]
Consequently $\operatorname{ord}J=3r-3$. But
\[
 \operatorname{ord}J\ge(2P-3)\nu
 \ge(2P-3)(P+1-r)>3r-3,
\]
because the difference in the last comparison is
$P(2P-1-2r)>0$. This contradiction proves the gap.

If an ordinary component has $k_a$ vertices and $s_a$ short
ones, the gap gives $\frac{q}{P}+d\ge s_a+P(k_a-s_a)$.
Using $k_a\ge \frac{q}{P}+1$ gives the stated floor formula.
\end{proof}

\begin{lemma}[Matrix normal restrictions]\label{p1:ppg:normals}
For every ordinary component there are nonzero homogeneous
normal forms $h_a,\mu_a$ with
\[
 a^{[1/P]t}g=L_a h_a,\qquad
 a^{[1/P^2]t}M=\mu_a a^{[1/P]t}\pmod{L_a},
 \qquad (H|_{L_a})(h_a|_{L_a})=\mu_a^P.
\]
Here $\mu_a$ has degree $m$ on $L_a$, and both restricted
normal forms are nonzero at every short vertex on that component.
\end{lemma}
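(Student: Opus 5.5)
Normalize coordinates so that $L_a=Z$, using the coordinate rules of Section~\ref{p1:I:coordinates}. Then $a$ is a multiple of $e_3$, so $a^{[1/P]t}g=g_3$ up to a scalar, and the normal form $h_a$ is $h$ with $g_3=Zh$. Ordinarity says exactly that $h_0=h|_{Z=0}\ne0$. The matrix statement becomes: the third row of $M$ is congruent modulo $Z$ to $(0,0,\mu_a)$. Thus $M_{31},M_{32}$ are divisible by $Z$, and we must show $M_{33}|_{Z=0}=\mu_a$ satisfies $H_0h_0=\mu_a^P$, where $H_0=H|_{Z=0}$.

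\textbf{Third row of $Hg=M^{[P]}V$.} Take this row and restrict to $Z=0$:
\[
 0=(Hg_3)|_{Z=0}=X\,M_{31}(X,Y,0)^P+Y\,M_{32}(X,Y,0)^P .
\]
The monomials in the two summands have $X$-exponent $\equiv1$ and $\equiv0\pmod P$ respectively, so they cannot cancel. Both summands therefore vanish, which gives $Z\mid M_{31},M_{32}$. This is the same residue argument as in Lemma~\ref{p1:ppg:special}.

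\textbf{Coefficient of $Z$.} Write $M_{3i}=ZN_i$ for $i=1,2$. Compare the coefficient of $Z^1$ on both sides of the third row:
\begin{itemize}
 \item on the left it is $H_0h_0$;
 \item on the right, $Z^P(XN_1^P+YN_2^P)$ contributes nothing in degree one in $Z$ since $P\ge3>1$, and $ZM_{33}^P$ contributes $M_{33}(X,Y,0)^P$.
\end{itemize}
Hence $H_0h_0=\mu_a^P$ with $\mu_a:=M_{33}|_{Z=0}$, a form of degree $m$. Returning to general coordinates via the transformation rule $M_{\rm new}=(C^{-1})^{[1/P^2]}M_{\rm old}(CV)C^{[1/P]}$, the third row of $M$ becomes $a^{[1/P^2]t}M$ and $e_3^t$ becomes $a^{[1/P]t}$ up to scalars. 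This is the displayed congruence, with scalars absorbed into $\mu_a$. The main care needed is tracking these Frobenius twists and scalars consistently.

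\textbf{Nonvanishing.} Let $v$ be a short vertex on $L_a$; one exists by Lemma~\ref{p1:ppg:ordinary} or~\ref{p1:ppg:gap}. By Lemma~\ref{p1:ppg:short}, $H(v)\ne0$, so $H_0$ is a nonzero form, not vanishing at $v$. It remains to show $h_0(v)\ne0$; then $\mu_a(v)^P=H_0(v)h_0(v)\ne0$, and both $h_a$ and $\mu_a$ are nonzero forms.

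For $h_0(v)\ne0$, normalize $v=(0,0,1)$ with the incident line $L_a$. Lemma~\ref{p1:ppg:leading} with $r=1$ gives $\operatorname{ord}_v g_3=0$, i.e.\ $g_3(v)\ne0$. But $g_3=Zh$, and on the line $L_a$ this reads $h_0(v)\ne0$ once the roles of the affine normalizations are matched. Equivalently, in the proof of Lemma~\ref{p1:ppg:short}, the normal applied to the invertible map $(A_1,B_1)$ is a nonzero linear form, so $a^{[1/P]t}g$ has a nonzero linear term transverse to $L_a$ at $v$. That linear term is $L_a\cdot h_a(v)$, so $h_a(v)\ne0$.

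The main obstacle is this last local step: correctly identifying the normal derivative of $a^{[1/P]t}g$ at $v$ with $h_a(v)$ in the vertex-centered coordinates, where $L_a$ is not a coordinate line. I would handle it by choosing coordinates with $v=(0,0,1)$ and $L_a=X$ simultaneously, which is possible since $v\in L_a$. Then the independence of $A_1,B_1$ gives that $g_1$ has nonzero $X$-coefficient at $v$, which is exactly $h_a(v)\ne0$.
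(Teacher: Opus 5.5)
Your proposal is correct and is essentially the paper's proof. You normalize to $L_a=Z$, use the disjoint $X$-exponent residues in the third row of $Hg=M^{[P]}V$ to get $Z\mid M_{31},M_{32}$, read off $(H|_{Z=0})(h_a|_{Z=0})=\mu_a^P$ from the coefficient of $Z$, and return to general coordinates by the coordinate rule. Nonvanishing at short vertices then comes from the transverse first-derivative calculation of Lemma~\ref{p1:ppg:short}.

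The aside invoking $\operatorname{ord}g_3=0$ from Lemma~\ref{p1:ppg:leading} is a red herring. There $g_3$ is the radial coordinate at the vertex, whereas in your $L_a=Z$ frame $g_3$ vanishes at $v$. Your final argument is the right one: with $v=(0,0,1)$ and $L_a=X$, the linear term of $g_1=Xh_a$ is $A_1=h_a(v)\,x\neq0$.
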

\begin{proof}
Normalize $a$ to $(0,0,1)$. Restriction of the third row of
$Hg=M^{[P]}V$ gives $Z\mid M_{31},M_{32}$; set
$\mu_a=M_{33}|_{Z=0}$. Its coefficient of $Z$ is precisely
$(H|_{Z=0})(h_a|_{Z=0})=\mu_a^P$.
The short-vertex first-derivative calculation in
Lemma~\ref{p1:ppg:short} proves nonvanishing there. Returning
to general rational coordinates gives the stated row identity.
This is the coordinate rule recorded in Section~\ref{p1:I:normalization}.
\end{proof}

\section{Common zeros, tangent lines, and factors of the root vector}\label{p1:secmap:6}\label{p1:I:ggeometry}

We distinguish $g$ from $G=g^{[P]}$, the vector that describes the
tangent lines in the polynomial construction. Earlier work sometimes
uses $g$ for this latter vector. The first derivatives of $G$ vanish;
the nonzero determinant $J$ belongs to the root vector $g$.

\begin{proposition}[Common zeros and tangent lines]\label{p1:I:base-locus}
The common projective zero set of $g_1,g_2,g_3$ is finite, consists of rational dual
vertices, and contains at most $d^2$ points; at most $d$ lie on any
arrangement component. For a rational dual point $v$, the following hold.
If $v$ represents a tangent at $a$, then $G(v)$ is defined, differs from
$v$, and represents another line through $a$. If $v$ represents a secant,
then either $g(v)=0$ or $[G(v)]=[v]$.
Every component $L_a=0$ is invariant under $G$ wherever the map is defined.
\end{proposition}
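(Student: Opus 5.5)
The plan is to derive every assertion from the determinant identity $R=\det(V,V^{[q]},g^{[P]})$ of Lemma~\ref{p1:pp:normalization}, the normal divisibilities $L_A\mid A^{[1/P]t}g$, and primitivity of $g$ and of $E=V\times g^{[P]}$.

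\textbf{Finiteness, size and rationality of the common zeros.} Primitivity of $g$ means the three coordinates have no common factor. As in the proof of Lemma~\ref{p1:I:separable}, I choose two linear combinations of $g_1,g_2,g_3$ (over a finite extension if needed) with no common factor. B\'ezout then shows the common zero set is finite, with at most $d^2$ points.

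\textbf{Common zeros are rational vertices.} If $g(v)=0$, every first derivative of $R$ at $v$ vanishes, because $R$ is linear in $g^{[P]}$ and $\partial(g_i^P)=0$. So $v$ is a singular point of $R=0$. Since $R$ is a squarefree product of lines, its singular points are exactly the intersection points of at least two arrangement lines. These are rational dual vertices, which correspond to secants.

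\textbf{At most $d$ on a component.} On a component $L_a=Z$, the zeros of $g$ lie among the zeros of $(Xg_2^P-Yg_1^P)|_{Z=0}$. This restriction is nonzero, as in Proposition~\ref{p1:I:ordinary-bound}. I would rather bound by $\gcd(g_1,g_2)|_{Z=0}$, which has degree at most $d$. It is nonzero because $g_3|_{Z=0}=0$, and primitivity then forces $(g_1,g_2)|_{Z=0}\ne0$ by the argument in Lemma~\ref{p1:pp:normalization}. Its roots contain all zeros of $g$ on $Z=0$, so there are at most $d$ of them.

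\textbf{Invariance of components.} On $L_a=0$ we have $a^tG=(a^{[1/P]t}g)^P\equiv0$. So $G(v)$ lies on $L_a$ whenever $v$ does and $G(v)\ne0$.

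\textbf{Tangent case.} If $v$ represents a tangent at $a$, then $v$ lies on exactly one arrangement line $L_a$. Hence $v$ is a smooth point of $R$, so $\nabla R(v)\ne0$, and in particular $G(v)\ne0$. Expanding $R=V^{[q]t}(V\times G)$ shows $\nabla R(v)$ is proportional to $v\times G(v)$, using $v^{[q]}=v$ for rational $v$. Since $\nabla R(v)\ne0$, we get $G(v)\ne[v]$. Invariance puts $G(v)$ on $L_a$, so the line spanned by $v$ and $G(v)$ is $L_a$, whose dual is the point $a$. Dually, $G(v)$ is another line through $a$.

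\textbf{Secant case.} Here $v$ is a singular point of $R$, so $\nabla R(v)=0$, which gives $v\times G(v)=0$. Hence either $G(v)=0$, which is equivalent to $g(v)=0$, or $[G(v)]=[v]$.

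\textbf{Main obstacle.} The delicate step is the gradient computation. One must check that $\nabla R(v)$ equals a nonzero multiple of $v\times G(v)$ plus terms that vanish at rational points. The term $\nabla(V^{[q]})$ vanishes since $qV^{[q-1]}=0$, and the derivatives of $G$ vanish. That leaves $\nabla R=\pm\,G\times V^{[q]}$, which at a rational point $v$ equals $\pm\,G(v)\times v$.
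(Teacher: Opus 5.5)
Your proposal is correct and follows essentially the same route as the paper: the gradient formula $\nabla R=V^{[q]}\times g^{[P]}$, which you verify, together with squarefreeness of the line arrangement gives rationality of the common zeros and the tangent/secant dichotomy. B\'ezout on two coprime combinations gives the $d^2$ bound, a nonzero restricted coordinate gives the bound $d$ on a component, and invariance is the normal divisibility raised to the $P$th power; the only differences are cosmetic (you place $G(v)$ on $L_a$ via invariance rather than by identifying the normal of $v\times G(v)$ with $a$, and you use a gcd of the restricted coordinates instead of one nonzero coordinate).
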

\begin{proof}
Primitivity makes the common zero set finite. At a common zero of $g$ both $R$
and its gradient vanish, since
\[
 \nabla R=V^{[q]}\times g^{[P]}.
\]
The singular points\TermNote{exp:p1:smooth}{a point is singular when all first partial derivatives of a defining equation vanish there; otherwise it is smooth} of a squarefree arrangement of rational lines are
their intersections, hence are rational. Choose two coprime linear combinations of the coordinates of $g$;
B\'ezout gives the bound $d^2$.
On an arrangement component not all coordinates vanish identically;
restricting one nonzero coordinate gives the bound $d$.
At a rational point, $V^{[q]}=V$. A tangent is a smooth point of $R$,
so $V\times G\ne0$, and its normal is $a$. At a secant the gradient
vanishes. The invariant-line assertion is exactly
$L_a\mid a^{[1/P]t}g$ after taking $P$th powers.
\end{proof}

\begin{proposition}[A pencil test for a common zero]\label{p1:I:pencil-test}
Let $\ell$ be a secant, let $a\in B\cap\ell$, and give the other lines
of the pencil through $a$ an affine coordinate $t\in \F_q$, with $\ell$
at infinity. Put
\[
 k_t=\frac{|B\cap m_t|-1}{P},\qquad
 F_{a,\ell}(T_0)=\prod_{t\in \F_q}(T_0-t)^{k_t}.
\]
Then $\ell^*$ is a common zero of $g$ if and only if
$[T_0^d]F_{a,\ell}(T_0)=0$. One point $a$ suffices for the test.
Also $[T_0^j]F_{a,\ell}=0$ for $d<j<\frac{q}{P}$.
\end{proposition}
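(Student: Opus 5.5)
We restrict $R=\det(V,V^{[q]},g^{[P]})$ to the dual line of $a$ and read $g(\ell^*)$ off a single coefficient of the restriction.

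\textbf{Coordinates.} Choose coordinates with $a=(0,0,1)$, so $L_a=Z$, and with $\ell^*=(1,0,0)$ on $Z=0$. The secants through $a$ are the dual points $(t,1,0)$, $t\in\F_q$, together with $\ell^*$ at infinity.

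\textbf{Restriction of $R$.} On $Z=0$, the first two columns give $V\times V^{[q]}=(0,0,XY^q-YX^q)$ up to sign. Hence
\[
 R|_{Z=0}=(XY^q-YX^q)\,g_3^P|_{Z=0}.
\]
Since $Z\mid g_3$, this vanishes, and we must look at the next order in $Z$. Rather than the $Z$-derivative, I would use the expansion from Proposition~\ref{p1:I:ordinary-bound}:
\[
 R/Z=(X^{q/P}g_2-Y^{q/P}g_1)^P+Z^{P-1}h^P(XY^q-YX^q)+Z^{q-1}(\cdots).
\]
Now restrict $R/Z$ to $Z=0$ and compare with the factorisation of $R/Z$ into the dual lines of the points $b\ne a$. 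On $Z=0$ each factor $L_b$ restricts to a linear form in $X,Y$ vanishing at the vertex of the secant $ab$. Thus
\[
 (R/Z)|_{Z=0}\doteq\prod_t(X-tY)^{Pk_t+1}\cdot Y^{Pk_\infty+1},
\]
with multiplicity $Pk_t+1$ at each rational point; tangents give $k_t=0$. Using $\prod_t(X-tY)=XY^q-YX^q$ up to the factor $Y$, this becomes
\[
 (X^{q/P}g_2-Y^{q/P}g_1)|_{Z=0}\doteq (XY^q-YX^q)^{1/P}\text{-free part}\ \cdots
\]
More cleanly, taking $P$th roots,
\[
 (X^{q/P}g_2-Y^{q/P}g_1)|_{Z=0}\doteq Y^{k_\infty}\prod_t(X-tY)^{k_t}\cdot(\text{the }P\text{th root of }XY^q-YX^q\text{, which is not a polynomial}).
\]
So the bookkeeping must be done carefully: $(R/Z)|_{Z=0}=(X^{q/P}g_2-Y^{q/P}g_1)^P|_{Z=0}$, hence $\prod(X-tY)^{Pk_t+1}Y^{Pk_\infty+1}$ is a $P$th power times $XY^q-YX^q$. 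I would redo this by dividing out $XY^q-YX^q$ from $R/Z$ first, so as to work with $R/(Z(XY^q-YX^q))$. This is the main obstacle: pinning down exactly which polynomial identity on $Z=0$ carries the pencil data. The target identity is
\[
 (X^{q/P}g_2-Y^{q/P}g_1)|_{Z=0}\doteq Y^{k_\infty}F^{\hom}_{a,\ell}(X,Y),
\]
possibly up to a known factor; I would verify it using the local tangent-cone description~\eqref{p1:pp:cone} at each vertex.

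\textbf{Reading off the coefficient.} Dehomogenise by $Y=1$, $X=T_0$. The total degree of the pencil data is $\sum_t k_t+k_\infty=\frac qP+d$ by the pencil identity. On the left, $g_2(T_0,1,0)T_0^{q/P}-g_1(T_0,1,0)$ has degree at most $\frac qP+d$. Its monomials with exponent in $(d,\frac qP)$ vanish, which is the second claim. The coefficient of $T_0^d$ is the leading coefficient of $g_1|_{Z=0}$ in $X$, namely $g_1(1,0,0)$. The top coefficient gives $g_2(1,0,0)$, and $g_3(\ell^*)=0$ automatically because $Z\mid g_3$. This also pins $k_\infty$ and the normalization.

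\textbf{Remaining step.} We still need to show that $g_2(\ell^*)=0$ whenever $g_1(\ell^*)=0$, so that one coefficient suffices. Since $\ell$ is a secant, Proposition~\ref{p1:I:base-locus} gives $g(\ell^*)=0$ or $[G(\ell^*)]=[\ell^*]=(1,0,0)$. In the second case $g_2(\ell^*)=g_3(\ell^*)=0$ and $g_1(\ell^*)\ne0$. Hence $g(\ell^*)=0$ if and only if $g_1(\ell^*)=0$, which is the vanishing of $[T_0^d]F_{a,\ell}$. Because the argument uses only the single point $a$, this also shows that one point suffices for the test.
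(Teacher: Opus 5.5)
Your route is the paper's: restrict the arrangement identity to the dual line $L_a=0$, use the expansion of $R/L_a$ from Proposition~\ref{p1:I:ordinary-bound}, take a $P$th root, and read off coefficients. The gap is in the one step that carries the content. You never establish the identity linking the restriction to $F_{a,\ell}$, and the factorization you write down, $(R/Z)|_{Z=0}\doteq\prod_t(X-tY)^{Pk_t+1}Y^{Pk_\infty+1}$, is false.

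It even has the wrong degree. Its exponents sum to $(q+Pd)+(q+1)$, whereas $\deg(R/Z)=|B|-1=q+Pd$. The surplus $q+1$ is exactly the spurious factor $XY^q-YX^q$ that made you think a non-polynomial $P$th root appears.

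The miscount comes from the vertex count $Pr+1$, which counts all arrangement lines through the vertex, including $L_a=Z$ itself. After dividing by $Z$, the surviving factors are the $L_b$ with $b\ne a$. Each restriction $L_b(X,Y,0)=b_1X+b_2Y$ is a nonzero linear form vanishing exactly at the dual point of the line $ab$. So the root at $m^*$ has multiplicity $|B\cap m|-1=Pk_m$, and tangents contribute $0$, not $1$. With $\gamma_i=g_i(X,Y,0)$ this gives
\[
 \bigl(X^{q/P}\gamma_2-Y^{q/P}\gamma_1\bigr)^P=(R/Z)|_{Z=0}
 \doteq\Bigl(Y^{k_\infty}\prod_{t\in\F_q}(X-tY)^{k_t}\Bigr)^P .
\]
The scalar has a $P$th root in $\F_q$, and $P$th powering is injective on polynomials in characteristic $p$. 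Hence your target identity holds exactly, with no extra factor. No tangent-cone analysis is needed; this is what the paper compresses into ``after removal of its normal factor and taking the unique $P$th root''.

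Once this is repaired, the rest of your proposal is correct. It coincides with the paper up to relabelling coordinates: the paper takes $\ell^*=(0,0,1)$, $a=(0,1,0)$ and reads off $g_3(\ell^*)$, while you take $\ell^*=(1,0,0)$ and read off $g_1(\ell^*)$. In both cases this is the coordinate along $\ell^*$.

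Because $d\le q/(2P)<q/P$, the exponent ranges of $T_0^{q/P}\gamma_2(T_0,1)$ and $\gamma_1(T_0,1)$ are disjoint. This gives $[T_0^d]F_{a,\ell}\doteq g_1(\ell^*)$ and the vanishing for $d<j<q/P$.

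Your appeal to Proposition~\ref{p1:I:base-locus} for $g_2(\ell^*)=0$ is valid, and it is the same fact the paper uses: the transverse coordinates vanish at a vertex. It can also be read off the corrected identity. Since $\ell$ is a secant, $k_\infty\ge1$, so $Y$ divides $X^{q/P}\gamma_2$, which forces $g_2(1,0,0)=0$.
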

\begin{proof}
Choose $\ell^*=(0,0,1)$ and $a=(0,1,0)$. The restriction of the
arrangement identity to the pencil, after removal of its normal factor
and taking the unique $P$th root, gives, up to a nonzero scalar,
\[
 F_{a,\ell}(T_0)=g_3(1,0,T_0)-T_0^{q/P} g_1(1,0,T_0).
\]
Since $d<\frac{q}{P}$, the degree-$d$ coefficient is $g_3(0,0,1)$ and the stated
intermediate coefficients vanish. At a secant the two transverse
coordinates $g_1,g_2$ already vanish. This proves the criterion and its
independence of the chosen permissible pencil coordinate.
\end{proof}

\begin{proposition}[Uniqueness and recognition of the multiplier]\label{p1:I:canonical-H}
The primitive first identity $Hg=M^{[P]}V$ determines $H$ up to a
nonzero scalar. Every other polynomial multiplier admitting such an
identity is a scalar times $H$ times a polynomial $P$th power. In
particular,
\[
 H\doteq\prod_{\operatorname{ord}_f(c)\equiv-1\pmod P}f,
\]
where the product is over irreducible polynomial factors and $\doteq$
denotes equality up to scalar. If $P\ne3$, it also equals
\[
 \prod_{f\mid J,\ \operatorname{ord}_f(J)\equiv-3\pmod P}f.
\]
The latter formula must not be used at $P=3$.
\end{proposition}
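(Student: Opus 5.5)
Suppose $H'g=M'^{[P]}V$ with $H'$ a nonzero polynomial and $M'$ a polynomial matrix. The plan is to differentiate this identity and compare it with the one for $H$. Applying $\delta$ exactly as in the proof of Proposition~\ref{p1:pp:identities} gives
\[
 (M'g)^{[P]}=(\delta H'+H'\beta)g+H'cV^{[q/P]}.
\]
The coordinates of $g$ are independent over $\F_q(V)^p$. So, as there, $\delta H'=-H'\beta$ and $H'c$ is a $P$th power. The same holds for $H$, so $Hc=(Ha_0)^P$, and hence $H'/H$ is a $P$th power of a rational function, say $H'/H=\lambda\,\rho^P$ with $\lambda$ a scalar.

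Next I show that $\rho$ is a polynomial. Write $H'=H\rho^P$ with $\rho=u/w$ in lowest terms. Then $H'g=(\rho M)^{[P]}V$, and $H\,u^P=H'w^P$. Any irreducible $f\mid w$ must then divide $H$ with multiplicity at least $P\ge2$. This contradicts squarefreeness of $H$, so $w$ is constant. This proves the second assertion. For the first, if $M'$ is primitive then $\rho$ divides $M'=\rho M$ entrywise, so $\rho$ is constant and $H'\doteq H$. Both identities use the same $g$, which is fixed up to scalar by $R$ and primitivity, so there is no ambiguity.

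For the product formula I use $c=H^{P-1}a_0^P$. For each irreducible $f$ we have $\operatorname{ord}_f c=(P-1)\operatorname{ord}_fH+P\operatorname{ord}_fa_0$. Squarefreeness gives $\operatorname{ord}_fH\in\{0,1\}$, so $\operatorname{ord}_fc\equiv-\operatorname{ord}_fH\pmod P$. Thus $f\mid H$ exactly when $\operatorname{ord}_fc\equiv-1$. For $J=H^{2P-3}(a_0D)^P$ the same computation gives $\operatorname{ord}_fJ\equiv-3\operatorname{ord}_fH\pmod P$. This is $\equiv-3$ when $f\mid H$ and $\equiv0$ otherwise. These two residues differ precisely when $P\nmid3$, i.e.\ $P\neq3$, so the criterion characterizes the factors of $H$, and it fails for $P=3$.

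The main obstacle is the first step: transferring the $\delta$-argument to an arbitrary multiplier $H'$. That argument used only $H'g=M'^{[P]}V$, the decomposition~\eqref{p1:pp:delta}, and the fact that the Frobenius vectors have entries in $\F_q(V)^p$. I will check that squarefreeness of $H'$ is never invoked before the $P$th-power conclusion; if it were, I would instead compare the two identities directly via $H'M^{[P]}V=HM'^{[P]}V$.
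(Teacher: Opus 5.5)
Your proof is correct, but the key step differs from the paper's. The paper never applies $\delta$. It dehomogenizes and observes that $(M^{[P]})^{-1}M'^{[P]}$ is a matrix over $\F_q(x,y)^P$ with $(x,y,1)^t$ as an eigenvector, eigenvalue $H'/H$. The scalar-matrix lemma from the proof of Lemma~\ref{p1:pp:P-integration} then makes it $\lambda I$ with $\lambda\in\F_q(x,y)^P$.

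You instead rerun the $\delta$-computation of Proposition~\ref{p1:pp:identities} for the arbitrary multiplier. Using $J\ne0$ (independence of the $g_i$ over $\F_q(V)^p$) and $c\ne0$, you get $H'c\in\F_q(V)^P$, and then divide by $Hc=(Ha_0)^P$. You are right that squarefreeness of $H'$ is never used there. From that point the two arguments coincide: squarefreeness of $H$ forces $\rho$ to be a polynomial, and the congruences $\ord_fc\equiv-\ord_fH$ and $\ord_fJ\equiv-3\ord_fH\pmod P$ give both recognition formulas.

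What each route buys:
\begin{itemize}
\item The paper's route uses nothing about $g$ beyond the identity itself and needs only $\det M\ne0$. It also gives $M'^{[P]}=\lambda M^{[P]}$, hence $M'=\rho M$, directly.
\item Your route avoids the scalar-matrix lemma and stays homogeneous throughout. It also shows in passing that every admissible multiplier satisfies $\delta H'=-\beta H'$ and makes $H'c$ a $P$th power, which explains why $H$ is read off from $c$.
\end{itemize}

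Your route does leave one step unjustified. You assert $M'=\rho M$, but what you actually have is $(M'-\rho M)^{[P]}V=0$. You need one more sentence: in each row the terms $XN_{i1}^P$, $YN_{i2}^P$, $ZN_{i3}^P$ lie in distinct exponent-residue classes modulo $P$, so each vanishes. Only then can you conclude that $\rho$ divides a primitive $M'$.

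Finally, your remark that $g$ is fixed by $R$ is unnecessary. The proposition compares identities with the same $g$.
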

\begin{proof}
On $Z=1$, compare two identities and divide their invertible coefficient
matrices. A matrix over $\F_q(x,y)^P$ having $(x,y,1)^t$ as an eigenvector
is scalar, by the scalar-matrix lemma in the integration proof. The
ratio of the multipliers is therefore a $P$th power in the rational
function field. For each prime factor, polynomiality and the fact that
$H$ has multiplicity zero or one force that ratio to have nonnegative
multiplicity in its factorization. It is a polynomial $P$th power up to a constant. This also
proves uniqueness of the squarefree representative. The formulas follow
from $c=H^{P-1}a_0^P$ and $J=H^{2P-3}(a_0D)^P$.
For $P=3$ the second exponent has residue zero, so the determinant $J$ no
longer distinguishes factors of $H$ from the other factors.
\end{proof}

\begin{proposition}[Additional reciprocal identities]\label{p1:I:extra-identities}
In the notation of Theorem~\ref{p1:I:uniform-main},
\[
 MN=Ha_0D I,\quad \adj N=a_0DM,\quad \det N=Ha_0^2D^2,
 \quad N^{[P]}g=H^{P-1}a_0^PD^P V.
\]
Furthermore $\delta H=-\beta H$, and
$\delta(c/H^{P-1})=0$. The homogeneous derivation $\delta$\TermNote{exp:p1:derivation}{a linear operator with $\delta(fh)=f\delta h+h\delta f$; $\delta^p$ means $p$ successive applications} satisfies
$\delta^p=0$.
\end{proposition}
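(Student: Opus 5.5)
The matrix identities come straight from Proposition~\ref{p1:pp:identities}. Since $\adj M=HN$ and $M\adj M=(\det M)I=H^2a_0D\,I$, dividing by $H$ gives $MN=Ha_0D\,I$. Taking determinants, $\det M\det N=(Ha_0D)^3$, so
\[
 \det N=(Ha_0D)^3/(H^2a_0D)=Ha_0^2D^2 .
\]
For the adjugate of $N$, note that $N=(\det M)M^{-1}/H$. Hence
\[
 \adj N=(\det N)N^{-1}=Ha_0^2D^2\cdot\frac{M}{Ha_0D}=a_0DM .
\]
This is a polynomial identity, since $\det M\ne0$. For the last matrix identity, apply $N^{[P]}$ to $Hg=M^{[P]}V$ and use $(NM)^{[P]}=(Ha_0D)^PI$; this needs $NM=MN$, which holds because both equal the scalar matrix. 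We get $HN^{[P]}g=(Ha_0D)^PV$, and dividing by $H$ gives $N^{[P]}g=H^{P-1}a_0^PD^PV$.

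The derivation statements use the proof of Proposition~\ref{p1:pp:identities}. There, applying $\delta$ to $Hg=M^{[P]}V$ and using independence of the coordinates of $g$ over $\F_q(V)^p$ already gave $\delta H=-H\beta$. Since $c=H^{P-1}a_0^P$, we have $c/H^{P-1}=a_0^P$. Every $\delta$ is a combination of ordinary partial derivatives, and these kill $P$th powers, so $\delta(a_0^P)=0$.

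For $\delta^p=0$, write $\delta=\sum G_i\partial_i$ with $G=g^{[P]}$. The coefficients $G_i$ are $p$th powers, so $\partial_jG_i=0$, and the $\partial_i$ commute. In characteristic $p$ this gives $\delta^p=\sum G_i^p\partial_i^p$, and $\partial_i^p=0$ on polynomials. (Directly: $\partial_i(G_j)=0$ lets us expand $\delta^p$ multinomially as a commuting sum, and the Frobenius-type identity $(\sum G_i\partial_i)^p=\sum G_i^p\partial_i^p$ holds for commuting operators.)

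The only point needing care is that the $G_i$ really commute with the $\partial_j$. Each $G_i=g_i^P$ with $P\ge p$, so every partial derivative of $G_i$ vanishes, and $G_i$ commutes with every $\partial_j$. Once this is recorded, every remaining step is a direct algebraic verification.
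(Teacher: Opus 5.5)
Your proof is correct and takes the same route as the paper. The matrix identities come from the dimension-three adjugate relations; you phrase them through $N=(Ha_0D)M^{-1}$ rather than $\adj(\adj M)=(\det M)M$, which is equivalent. The vector identity comes from applying $N^{[P]}$ to $Hg=M^{[P]}V$, using that entrywise $P$th powers respect matrix products. Then $\delta H=-\beta H$ is taken from the proof of Proposition~\ref{p1:pp:identities}, and $\delta(c/H^{P-1})=\delta(a_0^P)=0$ holds directly. Finally, $\delta^p=\sum_i g_i^{Pp}\partial_i^p=0$ follows from the binomial formula for commuting operators in characteristic $p$.
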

\begin{proof}
The matrix identities follow from the adjugate formulas in dimension
three, and the last vector identity follows by applying $N^{[P]}$ to
$Hg=M^{[P]}V$. The differential identity for $H$ was obtained in the
proof of the reciprocal representation. The quotient is $a_0^P$,
so its derivative is zero. Finally the coefficients $g_i^P$ of
$\delta$ are constants for all ordinary partial derivatives. The
commuting-operator binomial formula in characteristic $p$ gives
$\delta^p=\sum_i g_i^{Pp}\partial_i^p=0$.
\end{proof}

\paragraph{Limits of geometric shortcuts.}
The tangent-map assertion is deliberately weaker than preservation of
tangents. For $q=p^3$, the linear R\'edei set associated with $x\mapsto
x^p$ has, up to scalar,
\[
 g=(-Y^{p+1},X^{p+1},X^pZ-YZ^p).
\]
In odd characteristic the tangent $[1:1:0]$ is sent to the secant
$[1:-1:0]$, and $[0:0:1]$, the dual point of the R\'edei line,
is a common zero of $g$. For $q=27$ this set has $40$ points and
line spectrum $1^{639},4^{117},13^1$. Thus neither tangent
preservation nor absence of common zeros is a permissible general input.
Likewise, multiplying $g$ by $H$ creates a polynomial representative
with extra common zeros; these are not additional common zeros of the
primitive rational map. The first and second matrix identities do not
by themselves construct a constant finite-field subspace. The descent
and compatibility arguments in the companion papers supply that extra
step where it is currently available.

\section{General reconstruction tools}\label{p1:secmap:7}\label{p1:I:reconstruction}

For $P=p$, signed indicators with zero line sums are codewords in
the dual of the point--line incidence code\TermNote{exp:p1:dual-code}{the vector $1_A-1_C$; the dual code consists of vectors whose coordinate sum on each line is zero}; compare~\cite{p1:LSVcode}.
The elementary argument below also works modulo the prime power $P$.
\begin{lemma}[A signed-pencil bound]\label{p1:I:signed}
Let $A,C$ be disjoint point sets in a projective plane of order $q$,
and suppose $|A\cap\ell|\equiv|C\cap\ell|\pmod P$ on every line.
If $A\cup C\ne\varnothing$, then
\[
 |A|+|C|\ge\frac{2(P-1)q}{P}+2.
\]
If only one support is nonempty, its size is at least $(P-1)q+P$.
\end{lemma}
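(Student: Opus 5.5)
Fix a point $x\in A\cup C$; by symmetry we may assume $x\in A$. Put $N=|A|+|C|$ and $w(\ell)=|A\cap\ell|-|C\cap\ell|$, so that $w(\ell)\equiv0\pmod P$ on every line. The plan is a weighted count over the pencil of $q+1$ lines through $x$. On such a line $\ell$ we have $x\in A\cap\ell$, hence $|A\cap\ell|\ge1$. Since $w(\ell)\equiv0\pmod P$, either $w(\ell)=0$, which forces $|C\cap\ell|\ge1$ and so $|(A\cup C)\cap\ell|\ge2$, or $|w(\ell)|\ge P$, which forces $|(A\cup C)\cap\ell|\ge P$. Thus every line through $x$ carries at least one further point of $A\cup C$ besides $x$. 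The trivial estimate obtained this way, $N\ge q+2$, is too weak, so the two cases must be weighted against each other.

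Let $\alpha$ be the number of lines through $x$ with $w(\ell)=0$, and $\beta=q+1-\alpha$ the number with $|w(\ell)|\ge P$. Counting points through $x$ gives
\[
 N-1\ge\alpha+(P-1)\beta .
\]
Summing $w$ over the pencil gives a second relation. Each point other than $x$ lies on exactly one line through $x$, while $x$ lies on all of them, so
\[
 \sum_{\ell\ni x}w(\ell)=(|A|-|C|)+q .
\]
Lines with $w=0$ contribute nothing to the left side. The remaining lines contribute at most $\sum_{\ell\ni x}\bigl(|(A\cup C)\cap\ell|-2\bigr)$ in absolute value on the large lines, which is bounded by $N-1-\beta$. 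This step has to be made precise: on a large line $|w|\le|(A\cup C)\cap\ell|$, and $x$ is counted on every line.

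The intended way to combine the two relations is to choose $x$ on the side maximizing $|A|-|C|$, that is, assume $|A|\ge|C|$. Then $q\le\sum_{\ell\ni x}|w(\ell)|$, and this forces $\beta$ to be of order $q/P$ unless $N$ is already large. Feeding this into $N-1\ge q+1+(P-2)\beta$ should give $N\ge 2(P-1)q/P+2$.

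I expect the main obstacle to be exactly this balancing of the two estimates. A cleaner alternative is to take $x\in A$ and $y\in C$, or to double count over all points of $A\cup C$. Concretely, sum over all pencils the inequality $|(A\cup C)\cap\ell|\ge2$ or $\ge P+\ldots$, and use $\sum_\ell w(\ell)^2$ computed from incidences:
\[
 \sum_\ell w(\ell)^2=(q+1)(|A|+|C|)+(|A|-|C|)^2-(|A|+|C|),
\]
coming from pairs on common lines. Every nonzero $w(\ell)$ satisfies $w(\ell)^2\ge P|w(\ell)|$, and this is what yields the factor $(P-1)/P$. I would try the second-moment route first, since it is symmetric in $A$ and $C$.

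For the one-sided case, say $C=\varnothing$, every line meets $A$ in $0\pmod P$ points. A pencil through $x\in A$ then has $q+1$ lines, each containing at least $P-1$ further points of $A$, so $|A|\ge(q+1)(P-1)+1=(P-1)q+P$ directly.
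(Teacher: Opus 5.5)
Your one-sided argument is correct and is exactly the paper's. The gap is in the two-sided bound $|A|+|C|\ge 2(P-1)q/P+2$, which you never derive; you leave it as ``should give''.

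\textbf{Route one has the wrong key estimate.} A line through $x$ with $|w(\ell)|\ge P$ can have $|w(\ell)|=|(A\cup C)\cap\ell|$, for instance $P$ points of $A$ and none of $C$. So the ``$-2$'' bound is false. Moreover, an estimate of the form $q\le N-1-\beta$ would only give $N\ge q+1+\beta$. That is increasing in $\beta$, so together with $N\ge q+2+(P-2)\beta$ it yields nothing beyond $N\ge q+2$. You need an estimate that decreases in $\beta$. Take $x\in A$ with $|A|\ge|C|$. Each of the $\alpha$ balanced lines carries a point of $C$ other than $x$, so
\[
 q\le\sum_{\ell\ni x}w(\ell)\le\sum_{\ell\ni x,\ w(\ell)\ne0}|(A\cup C)\cap\ell|\le (N-1-\alpha)+\beta=N-q-2+2\beta ,
\]
that is, $N\ge 2q+2-2\beta$. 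Add $(P-2)$ times this to twice $N\ge q+2+(P-2)\beta$. This eliminates $\beta$ and gives $PN\ge 2(P-1)q+2P$, which is the claim. So route one can be repaired, but as written the balancing step is unproved and rests on a false inequality.

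\textbf{Route two does not work as sketched.} The identity $\sum_\ell w(\ell)^2=qN+(|A|-|C|)^2$ together with $w^2\ge P|w|$ only yields
\[
 qN+(|A|-|C|)^2\ge P\sum_\ell|w(\ell)|\ge P(q+1)\bigl||A|-|C|\bigr| .
\]
This is vacuous when $|A|=|C|$. The bound is really a statement about a single pencil. The global moments are sums of the pencil identities over all points, and they lose the per-line case split.

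\textbf{The paper's proof avoids the balancing entirely.} It uses a per-line linear inequality. For $x\in A$ and $\ell\ni x$,
\[
 (|A\cap\ell|-1)+(P-1)|C\cap\ell|\ge P-1 ,
\]
because either $\ell$ meets $C$, or $|A\cap\ell|\equiv0\pmod P$ forces $|A\cap\ell|\ge P$. Summing over the pencil gives $|A|+(P-1)|C|\ge(P-1)q+P$. If $C\ne\varnothing$, the same holds with $A$ and $C$ interchanged. Adding the two gives the result directly, with no assumption $|A|\ge|C|$ and no parameter $\beta$.
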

\begin{proof}
For a point of $A$, each line through it has either another $P-1$
points of $A$, or enough $C$-points to make the congruence hold.
More precisely $(|A\cap\ell|-1)+(P-1)|C\cap\ell|\ge P-1$.
Summing gives $|A|+(P-1)|C|\ge(P-1)q+P$. If $C$ is nonempty,
interchange $A,C$ and add. The one-support assertion follows from
the same inequality.
\end{proof}

\begin{lemma}[Recognition by a constant space]\label{p1:I:cone}
The vector cone of $B$ means $\{0\}\cup\{v\ne0:[v]\in B\}$.
For $q=P^n$ and a minimal blocking set $B\subset\PG(2,q)$, linearity over
$\F_P$ is equivalent to the existence of an $(n+1)$-dimensional
$\F_P$-subspace of its vector cone.
\end{lemma}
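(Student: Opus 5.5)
The statement has two directions, and I would prove them separately.

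\emph{Linear implies a subspace.} Suppose $B=L(U)$ for some $\F_P$-subspace $U\le\F_q^3$. Every nonzero $u\in U$ spans a point of $B$, so $U$ lies in the vector cone. It remains to get the dimension to $n+1$.
\begin{itemize}
\item If $\dim_{\F_P}U\le n$, then $|L(U)|\le(P^n-1)/(P-1)<q+1$, so $L(U)$ cannot block all lines. Hence $\dim U\ge n+1$.
\item Any $(n+1)$-dimensional $\F_P$-subspace $U'\le U$ still lies in the cone.
\end{itemize}
So the required subspace exists. (One could also show $L(U')$ is blocking and use minimality to conclude $L(U')=B$, but the statement only asks for the subspace.)

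\emph{A subspace implies linear.} Let $U$ be an $(n+1)$-dimensional $\F_P$-subspace of the vector cone. By definition of the cone, $L(U)\subseteq B$. I will show $L(U)$ is a blocking set; minimality of $B$ then forces $L(U)=B$, which is exactly $\F_P$-linearity.

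The blocking step is the core of the argument, and it is a dimension count. Take any line, with $\F_q$-vector plane $W$ of dimension $2n$ over $\F_P$. Then
\[
\dim_{\F_P}(U\cap W)\ge (n+1)+2n-3n=1,
\]
so $U\cap W$ contains a nonzero vector $u$. The point $\langle u\rangle_{\F_q}$ lies in $L(U)$ and on the line. Hence $L(U)$ meets every line.

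Finally, minimality gives $L(U)=B$. If some point $x\in B$ were not in $L(U)$, then $B\setminus\{x\}\supseteq L(U)$ would still be blocking, contradicting the minimality of $B$.

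The whole proof is routine. The only steps needing care are the dimension count over $\F_P$ and the order of inclusions: $L(U)\subseteq B$ holds by definition of the cone, and minimality is used only at the last step.
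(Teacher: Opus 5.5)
Your proof is correct and takes essentially the same approach as the paper. In one direction, the count $(n+1)+2n-3n=1$ shows that $L(U)\subseteq B$ blocks every line, and minimality then forces $L(U)=B$; in the other, a representing space of dimension at most $n$ has fewer than $q+1$ projective points, so you pass to an $(n+1)$-dimensional subspace of it.
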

\begin{proof}
An $(n+1)$-space meets each $2n$-space representing a line in the
ambient $3n$-space. Its projectivization therefore blocks, and minimality
forces equality with $B$. Conversely, a representing space has dimension
at least $n+1$, since a space of dimension at most $n$ has at most
$(P^n-1)/(p-1)<q+1$ projective directions. Take an $(n+1)$-subspace.
\end{proof}

\begin{proposition}[Moore certificate]\label{p1:I:Moore}
For $q=P^n$, a minimal blocking set $B$ has a rank-$(n+1)$ linear
representation over $\F_P$ if and only if there are $\F_P$-independent
linear forms $\ell_0,\ldots,\ell_n$ and a polynomial $W_0$, supported
on the arrangement components, such that
\[
 \det(\ell_j^{P^i})_{0\le i,j\le n}=c_0 R W_0^P,
 \qquad c_0\ne0.
\]
If a point has vector-space weight $w$\TermNote{exp:p1:fiber}{$w=\dim_{\F_P}(U\cap\F_qv)$ for the point $[v]$; its fiber consists of the nonzero vectors in this intersection}, its multiplicity in the Moore
product is $(P^w-1)/(P-1)$, and
\[
 \deg W_0=\frac{(P^{n+1}-1)/(P-1)-|B|}{P}.
\]
\end{proposition}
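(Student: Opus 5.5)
The plan is to reduce everything to the classical factorization of the Moore determinant and then read off multiplicities by grouping factors according to $\F_q$-points.

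\emph{Step 1: the generic Moore determinant.} Over $\F_P$, take indeterminates $T_0,\ldots,T_n$ and set $\mathcal M(T)=\det(T_j^{P^i})_{0\le i,j\le n}$.

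For $\lambda\in\F_P^{n+1}\setminus\{0\}$, adding $\F_P$-multiples of columns to a column $j$ with $\lambda_j\ne0$ replaces that column by $\bigl((\sum\lambda_kT_k)^{P^i}\bigr)_i$. This uses $\lambda^{P^i}=\lambda$. Hence every linear form $\sum\lambda_kT_k$ divides $\mathcal M$.

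Distinct projective classes $[\lambda]\in\PG(n,P)$ give non-associate irreducible factors. Their number is $(P^{n+1}-1)/(P-1)$, which equals $\deg\mathcal M=1+P+\cdots+P^n$. Comparing the coefficient of the diagonal monomial gives
\[
 \mathcal M(T)=c\prod_{[\lambda]\in\PG(n,P)}\sum_k\lambda_kT_k,\qquad c\ne0 .
\]

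\emph{Step 2: specialization.} Write $\ell_j=u_j^tV$ and let $U=\Span_{\F_P}(u_0,\ldots,u_n)$. Substituting $T_j=\ell_j$ gives
\[
 \det(\ell_j^{P^i})=c\prod_{[u]\in\PG(U)}u^tV .
\]
This product is nonzero exactly when the $u_j$ are $\F_P$-independent, that is, when $\dim U=n+1$.

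Group the factors by $\F_q$-points. A point $[v]$ of weight $w=\dim_{\F_P}(U\cap\F_qv)$ receives $(P^w-1)/(P-1)$ factors, all proportional to $L_v$. This gives the multiplicity statement, and shows that the Moore product is, up to a scalar, $\prod_{a\in L(U)}L_a^{(P^{w_a}-1)/(P-1)}$.

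\emph{Step 3: the two implications.} Suppose $B=L(U)$ with $\dim U=n+1$, and choose an $\F_P$-basis of $U$ as the $\ell_j$. For every $w\ge1$,
\[
 \frac{P^w-1}{P-1}-1=P+\cdots+P^{w-1}\equiv0\pmod P .
\]
So the product equals $R\,W_0^P$ with
\[
 W_0=\prod_{a\in B}L_a^{(P+\cdots+P^{w_a-1})/P},
\]
which is supported on the arrangement components. Comparing degrees, $(P^{n+1}-1)/(P-1)=|B|+P\deg W_0$, which is the displayed formula for $\deg W_0$.

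Conversely, suppose the identity holds with independent $\ell_j$. The product is then nonzero. Every linear factor $L_a$ with $a\in L(U)$ divides $RW_0^P$, which is supported on the components, so it is some $L_b$ with $b\in B$. Hence $L(U)\subseteq B$. Since $\dim U=n+1$, Lemma~\ref{p1:I:cone} shows that $L(U)$ blocks every line, and minimality of $B$ forces $L(U)=B$.

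\emph{Main obstacle.} The only delicate point is Step 1. The factors $u^tV$ for distinct $[u]\in\PG(U)$ may be $\F_q$-proportional, so one cannot argue divisibility directly after substitution. This is why I would prove the factorization for generic indeterminates, where the factors are pairwise non-associate, and only then specialize. After that, all multiplicity bookkeeping is the fiber count in Step 2.
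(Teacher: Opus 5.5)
Your proposal is correct and follows the paper's proof. In both, the Moore determinant factors into one linear form per $\F_P$-projective point of $U$, the factors are grouped by $\F_q$-points with multiplicity $(P^w-1)/(P-1)\equiv1\pmod P$, and the converse is read off the support. Your derivation over indeterminates, followed by specialization, makes explicit the factorization the paper cites from Lidl--Niederreiter. In the converse you could drop the blocking-and-minimality step: $R$ dividing the product already gives $B\subseteq L(U)$ directly.
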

\begin{proof}
The Moore determinant (see, for example, \cite{p1:LidlNiederreiter})
factors, up to scalar, as the product of one
representative of each $\F_P$-projective linear combination of the
$\ell_j$. Indeed each such linear form divides the determinant, and
the product has the same degree; independence makes the determinant
nonzero. Collect equal $\F_q$-projective factors. A weight-$w$ fiber has
$(P^w-1)/(P-1)$ such factors, a number congruent to one modulo $P$.
This proves the forward assertion and the degree formula. Conversely,
the determinant factorization says that the projective support of the
coefficient space of the forms is exactly $B$.
\end{proof}

\begin{lemma}[Compatible representatives over $\F_q$]\label{p1:I:normalizer}
Assume the uniform Frobenius representation, and let $O$ be the ordinary
points. Suppose a rational homogeneous function $W$ of degree $d-1$ has a representation by a quotient whose numerator and denominator
are nonzero at every short vertex, and for each $a\in O$,
\[
 (a^{[1/P]t}g/L_a)|_{L_a}=\beta_aW|_{L_a},\qquad\beta_a\ne0.
\]
Then there are representatives $u_a$ over a finite extension such that
on every short secant the sets $\F_P^*u_a$ are precisely the nonzero
vectors of a two-dimensional $\F_P$-space. If the graph joining two points of $O$ when they lie on a short secant is connected, one common rescaling puts all $u_a$ in $\F_q^3$.
\end{lemma}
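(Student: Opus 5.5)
The plan follows the description given in Part~0, Section~\ref{G:graph}: first choose normalizing scalars, then solve a local semilinear fixed-vector problem on each short secant, and finally propagate along the graph.

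\emph{Normalization.} For each $a\in O$ choose $\lambda_a$ in a finite extension of $\F_q$ with $\lambda_a^{P-1}=\beta_a^P$. One finite extension suffices, since $O$ is finite. Put $u_a=\lambda_a a$. The point of this choice is that the normal identity transforms as follows. Raise $a^{[1/P]t}g=L_ah_a$ to the $P$th power and use $a^tg^{[P]}=(a^{[1/P]t}g)^P$. Then on $L_a$ the restriction becomes
\[
 (u_a^tg^{[P]}/u_a^tV)\big|_{L_a}=\lambda_a^{P-1}\beta_a^{-P}\cdot\text{(normalized)}=W^P|_{L_a}\cdot\text{const},
\]
so the dependence on $a$ disappears. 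The normalized representatives then satisfy one common relation
\[
 u_a^tg^{[P]}\equiv (u_a^tV)\,W^P\pmod{(u_a^tV)^2}.
\]

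\emph{Local step.} Fix a short vertex $v$, normalized to $(0,0,1)$, and let $a$ range over the $P+1$ incident ordinary points; by Lemma~\ref{p1:ppg:short} all incident lines are ordinary. Write $u_a=(\alpha,\beta,0)^t$ in the normal plane. Take the linear part at $v$ of the relation above, using the local description~\eqref{p1:pp:cone}: $g_1,g_2$ have independent linear initial forms $A_1,B_1$. The relation says that the linear form $\alpha^PA_1^P+\beta^PB_1^P$, taken as a $P$th root, is proportional to $\alpha x+\beta y$, with the factor $W(v)$. This factor is a finite nonzero constant, because the numerator and denominator of $W$ are nonzero at short vertices. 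After taking $P$th roots coefficientwise, this becomes an equation
\[
 T w^{[P]}=w,\qquad w=(\alpha,\beta)^t,
\]
where $T$ is an invertible $2\times2$ matrix independent of $a$. Its invertibility comes from the independence of $A_1,B_1$ and from $W(v)\ne0$.

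The solution set is an $\F_P$-space. Its dimension over $\F_P$ is at most $2$: take a shortest $\F_P$-dependence-free family, apply the semilinear map to a shortest linear relation, and subtract, which forces the coefficients into $\F_P$. The $P+1$ points of the secant give $P+1$ pairwise nonproportional solutions, so the dimension is exactly $2$. Each projective direction then accounts for exactly $P-1$ nonzero vectors, namely $\F_P^*u_a$. These sets exhaust the $P^2-1$ nonzero vectors of the two-dimensional space, which is the compatibility claim.

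\emph{Global step.} Suppose $a,b$ are adjacent. Then $u_a+u_b$ lies in the same two-dimensional space, so it equals $\mu u_c$ for a third point $c$ on the secant, with $\mu\in\F_P^*$. Writing $\lambda_aa+\lambda_bb=\mu\lambda_cc$, where $a,b,c\in\F_q^3$ and $a,b$ are $\F_q$-independent, the coefficients of $c=\xi a+\eta b$ (with $\xi,\eta\in\F_q^*$) give $\lambda_a/\lambda_b=\xi/\eta\in\F_q^*$. Along a path these ratios multiply, so if the graph is connected then $\lambda_a/\lambda_{a_0}\in\F_q^*$ for all $a$. Dividing every $u_a$ by $\lambda_{a_0}$ keeps all the local structures intact, since it is a scalar multiple of each $\F_P$-space, and puts all the $u_a$ in $\F_q^3$.

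The main obstacle is the local step: one must extract the exact first-order identity at the vertex from the restriction identities on the $P+1$ components. The risk is that the relation holds only modulo $L_a$ on each line separately, so it needs care to obtain a single matrix $T$. My approach is to evaluate the restricted identity along each line at its tangent direction at $v$, using the degree-one initial forms. This should give $(\alpha A_1+\beta B_1)(\text{direction})^P\propto W(v)^P\cdot(\ldots)$ uniformly in $a$. I would check carefully that the scalar coming from $W$ is the same for all incident lines, which holds because $W$ is a single function evaluated at $v$.
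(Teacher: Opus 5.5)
Your architecture is the paper's: normalize by $\lambda_a^{P-1}=\beta_a^P$, extract one invertible $P$-semilinear fixed-vector equation on the normal plane of each short vertex, bound its $\F_P$-dimension by the shortest-dependence Frobenius argument, let the $P+1$ directions force dimension two, and propagate $\lambda_a/\lambda_b\in\F_q^*$ along the graph. Your global step is correct and more explicit than the paper's.

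The gap is in the Frobenius bookkeeping of the step you flagged. It is not cosmetic, because the twist is exactly what produces an $\F_P$-structure.

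\emph{The relation you display is false.} Your ``common relation'' $u_a^tg^{[P]}\equiv(u_a^tV)W^P\pmod{(u_a^tV)^2}$ fails because $u_a^tg^{[P]}=\lambda_a(a^{[1/P]t}g)^P=\lambda_aL_a^Ph_a^P$ is divisible by $L_a^P$. Hence $(u_a^tg^{[P]}/u_a^tV)|_{L_a}=0$, while $W^P$ is nonzero at short vertices.

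\emph{The local equation you derive is linear, not semilinear.} For $u_a=(\alpha,\beta,0)^t$, the lowest term of $u_a^tg^{[P]}$ is $\alpha A_1^P+\beta B_1^P$, not $\alpha^PA_1^P+\beta^PB_1^P$. Read literally, your version has $P$th root $\alpha A_1+\beta B_1$. Writing $(A_1,B_1)^t=\mathcal A(x,y)^t$, this gives the linear equation $\mathcal A^tw=W(v)w$. Its solutions form a subspace over the extension field, and it is the whole plane if it contains $P+1$ directions. So no $\F_P$-dimension bound holds, and no coefficientwise root extraction turns this into $Tw^{[P]}=w$.

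\emph{The repair stays at the root level, as in the paper.} Since $u_a^{[1/P]t}g/(u_a^tV)=\lambda_a^{1/P-1}h_a$ and $\lambda_a^{1/P-1}\beta_a=1$, one has $u_a^{[1/P]t}g-(u_a^tV)W\in(L_a^2)$ locally at every short vertex. At $v=(0,0,1)$ only the value $h_a(v)=\beta_aW(v)$ is needed, because $v\in L_a$. This also removes your worry about working modulo $L_a$ on separate lines: $\mathcal A$ comes from $g$ alone and $W(v)$ is a single number. The first-order part then gives $\alpha^{1/P}A_1+\beta^{1/P}B_1=W(v)(\alpha x+\beta y)$, i.e.\ $\mathcal A^tw^{[1/P]}=W(v)w$. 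Raising to the $P$th power gives $w=Tw^{[P]}$ with $T=W(v)^P\bigl((\mathcal A^{[P]})^t\bigr)^{-1}$, which is invertible and independent of $a$. With this correction the rest of your proof goes through unchanged and coincides with the paper's.
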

\begin{proof}
Choose $\lambda_a^{P-1}=\beta_a^P$ and put $u_a=\lambda_a a$.
The normal divisibility to order two is
$u_a^{[1/P]t}g-(u_a^tV)W\in(L_a^2)$ locally at a short vertex.
The invertible matrix of first derivatives in the two transverse coordinates gives a common invertible
$P$-semilinear fixed-vector equation on its two-dimensional normal
plane. An $\F_P$-independent family of fixed vectors is independent
over the extension field: normalize a shortest dependence, apply
Frobenius, and subtract. Its coefficients must lie in $\F_P$.
Thus the fixed space has dimension at most two; the $P+1$ distinct
directions force dimension two and give exactly its projective points.
For adjacent $a,b$, the vector $u_a+u_b$ has a rational direction.
Since $a,b$ are independent over $\F_q$, this implies
$\lambda_a/\lambda_b\in \F_q^*$. Propagate along the connected graph.
\end{proof}

\begin{proposition}[A dimension bound from pairs and triples]\label{p1:I:code-bound}
Suppose $N_0$ distinct points of $\PG(r-1,P)$ span that space.
Through each point suppose there are at least $m_0$ specified lines,
each containing all its $P+1$ points in the set. If $2+2Pm_0-N_0>0$, then
\[
 P^r\le\frac{PN_0((P-1)m_0+1)}{2+2Pm_0-N_0}.
\]
\end{proposition}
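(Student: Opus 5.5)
The plan is to run a moment (averaging) argument over all linear functionals on $\F_P^r$, in the spirit of the binary third-moment count of Part~0, Section~5. The same scheme is to be used with $P$ in place of $2$ and with full $\F_P$-lines in place of zero-sum triples. I have not checked that the per-hyperplane inequality I use is strong enough to produce the stated constant; that is the main risk.

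\textbf{Set-up.} Fix representatives $v_a\in\F_P^r$ of the $N_0$ points. For every $z\in\F_P^r$, including $z=0$, let
\[
 k(z)=\#\{a: z\cdot v_a=0\},
\]
the number of points in the hyperplane $\ker z$ (all $N_0$ points when $z=0$). Let $T(z)$ be the number of incident pairs (point $a$, specified line $\ell\ni a$) with $\ell\subseteq\ker z$. Because the points are distinct and span, the elementary moments over all $P^r$ vectors $z$ are
\[
 \sum_z k(z)=N_0P^{r-1},\qquad
 \sum_z k(z)^2=N_0P^{r-1}+N_0(N_0-1)P^{r-2},\qquad
 \sum_z T(z)\ge N_0m_0P^{r-2},
\]
since a projective line is a two-dimensional vector subspace and lies in $P^{r-2}$ kernels.

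\textbf{Local inequality.} For $z\ne0$ and a point $a\in\ker z$, each of its $\ge m_0$ specified lines either lies in $\ker z$ or meets it only in $a$. The lines through $a$ are otherwise disjoint and each contains all its $P+1$ points in the set. Write $t_a$ for the number lying in $\ker z$. Then
\[
 k(z)\ge 1+Pt_a,\qquad N_0-k(z)\ge P(m_0-t_a).
\]
Summing over $a\in\ker z$ bounds $T(z)=\sum t_a$ from both sides in terms of $k(z)$. The aim is a single inequality $Q(k(z),T(z))\ge0$, valid for every $z\ne0$, where $Q$ is quadratic in $k$ and linear in $T$, for instance a combination of $k(N_0-k)\ge P(m_0k-T)$ and $k(k-1)\ge PT$. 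The combination should be chosen so that summing over $z$ uses exactly the three moments above.

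\textbf{Isolating $z=0$.} At $z=0$ one has $k=N_0$ and $T\ge N_0m_0$, and $Q$ takes a definite negative value of size about $N_0^2$. This value plays the role of the exceptional zero word in the binary argument. Summing $Q\ge0$ over $z\ne0$, inserting the moments, and moving the $z=0$ term across should give
\[
 P^{r-2}\cdot\bigl(\hbox{linear combination of }N_0^2,\ N_0,\ N_0m_0\bigr)\ \ge\ \hbox{(the $z=0$ correction)}.
\]
Under the hypothesis $2+2Pm_0-N_0>0$ this rearranges to the stated bound on $P^r$; the hypothesis is exactly what keeps the coefficient of $P^r$ positive.

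\textbf{Main obstacle.} The hard step is choosing the coefficients of $Q$. They must make the contributions of $z\ne0$ nonnegative termwise, and they must make the resulting coefficient of $P^r$ equal to $2+2Pm_0-N_0$ and the right-hand side equal to $PN_0((P-1)m_0+1)$. Note that the naive local inequality $k(N_0-k)\ge P(m_0k-T)$ holds with equality at $z=0$, so it is homogeneous in $P^{r-2}$ and gives no bound on $r$. I would therefore first combine the inner count $k-1\ge Pt_a$ with the outer count, weighting them so that the $z=0$ term becomes strictly exceptional. I would determine the weights by requiring equality in the extremal configuration, a full $\F_P$-linear set in which every point lies on exactly $m_0$ lines.
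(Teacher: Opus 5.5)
\textbf{There is a genuine gap, and it is the step you leave open.}

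\emph{The inequalities you combine cannot isolate $z=0$.} Both $k(k-1)\ge PT$ and $k(N_0-k)\ge P(m_0k-T)$ come from points \emph{inside} $\ker z$. Both remain true at $z=0$, where they read $N_0(N_0-1)\ge PI$ and $0\ge P(m_0N_0-I)$, with $I$ the number of incident point--line pairs. Any nonnegative combination is therefore also nonnegative at $z=0$, so no weighting makes the $z=0$ term exceptional; your $Q(N_0,I)$ is $\ge0$, not negative. The fact that does fail at $z=0$ comes from a point \emph{outside} the hyperplane. For $z\ne0$ choose $v_i$ with $z\cdot v_i\ne0$; each of its $m_0$ specified lines meets $\ker z$ in at most one point, so $N_0-k(z)\ge\delta:=(P-1)m_0+1$.

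\emph{Even with this added, your class of inequalities cannot reach the stated range.}
\begin{enumerate}
\item You use the lower bound on $\sum_zT$, so $T$ has a negative coefficient in $Q$. If $Q\ge0$ is derived from the local inequalities, substituting the largest admissible value $T=k(k-1)/P$ leaves a pure quadratic in $k$ that is still nonnegative for $z\ne0$.
\item Summing that quadratic gives at least as much as your version, because $\sum_zk(k-1)=N_0(N_0-1)P^{r-2}\ge PN_0m_0P^{r-2}$. So $T$ adds nothing, and the argument collapses to a second-moment, Plotkin-type bound.
\item With $y=Pk-N_0$ one has $\mathbb Ey=0$ and $\mathbb Ey^2=(P-1)N_0$. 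For $z\ne0$ the only constraint is $-N_0\le y\le u:=(P-1)N_0-P\delta$.
\item Once $u\ge P-1$, i.e.\ $N_0\ge Pm_0+3$, a distribution on $[-N_0,u]$ with the required moments exists for arbitrarily large $r$. So the summed inequalities cannot bound $r$.
\end{enumerate}
The hypothesis allows $N_0$ up to $2Pm_0+1$, and in the applications $N_0-Pm_0\ge(P-1)d+2$.

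\textbf{The missing idea is a \emph{cubic} weight, as in the binary count you cite; this is how the paper argues.} Write $y(z)=\sum_iy_i(z)$, with $y_i=P-1$ if $z\cdot v_i=0$ and $y_i=-1$ otherwise, so $y=Pk-N_0$. Averaging over all $z\in\F_P^r$, and counting the values of $z$ on one, two or three vectors, gives exactly
\[
 \mathbb Ey^2=(P-1)N_0,\qquad \mathbb Ey^3=(P-1)(P-2)N_0+6(P-1)A_3 .
\]
Here $A_3$ counts all collinear triples, and the specified lines give $6A_3\ge N_0P(P-1)m_0$. For $z\ne0$ the bound $N_0-k\ge\delta$ means $y\le u$, hence $y^2(y-u)\le0$. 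Only $z=0$ contributes positively; there $y=(P-1)N_0$ and $y-u=P\delta$. Therefore
\[
 (P-1)^2N_0(2+2Pm_0-N_0)\le\mathbb E\bigl[y^2(y-u)\bigr]\le\frac{(P-1)^2N_0^2P\delta}{P^r},
\]
which is the proposition.

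The specified lines are used twice, in $\delta$ and in $A_3$. Your $T(z)$ essentially records the collinear triples inside $\ker z$. But when $T$ enters linearly beside a quadratic in $k$ it adds nothing, as shown above. The collinear triples must enter through the cubic moment.
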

\begin{proof}
Choose nonzero representatives $v_i\in\F_P^r$. For a linear functional
$a$, let $w(a)$ count the indices with $a(v_i)\ne0$, and put
$y(a)=(P-1)N_0-Pw(a)$. If $a\ne0$, some $a(v_i)\ne0$.
On each specified line through that point, at least $P$ points have
nonzero value. The lines share only that point, so
$w(a)\ge\delta:=(P-1)m_0+1$.

Let $A_3$ count unordered collinear triples of the $N_0$ points.
Counting at each point the pairs on its specified lines gives
$A_3\ge N_0P(P-1)m_0/6$. Average over all $P^r$ functionals, including
zero. An individual summand in $y$ is $P-1$ when $a(v_i)=0$ and
$-1$ otherwise. Counting values of $a$ on one, two, or three independent
vectors gives
\[
 \mathbb Ey^2=(P-1)N_0,\qquad
 \mathbb Ey^3=(P-1)(P-2)N_0+6(P-1)A_3.
\]
Here a single index contributes $(P-1)(P-2)$ to the third moment;
a distinct collinear triple contributes $P-1$ in each of its six
orders. All other mixed terms have average zero.
Put $u=(P-1)N_0-P\delta$. For $a\ne0$, $y(a)^2(y(a)-u)\le0$.
Only $a=0$ can contribute positively, and consequently
\[
 (P-1)^2N_0(2+2Pm_0-N_0)
 \le\mathbb E\bigl(y^2(y-u)\bigr)
 \le\frac{((P-1)N_0)^2P\delta}{P^r}.
\]
Cancellation gives the assertion. This is an integer count and also
applies when the characteristic is two.
\end{proof}

\section{From a matrix formula to compatible point representatives}\label{p1:secmap:8}\label{p1:I:endpoint}
We prove an implication from an explicit matrix formula to compatible
point representatives. It is a general reconstruction theorem; its
matrix hypothesis is supplied by Part~2 for $P\ge5$, and already
follows here when $q=P^3$, $P\ge4$.
Assume $P\ge3$, $q=P^n$, $n\ge3$, put $S=P^{n-3}$, and suppose
a nonzero polynomial $\Theta$ and a $3\times3$ matrix polynomial
$\mathsf C(X_0,\ldots,X_{k_{\rm slot}-1})$ with constant coefficients satisfy
\begin{equation}\label{p1:I:endpoint-hyp}
 \Theta M=\mathsf C(V,V^{[P]},\ldots,V^{[S]}),\qquad
 d_{\rm nat}:=\deg(\Theta M)=\frac{PS-1}{P-1}.
\end{equation}
Here $k_{\rm slot}=n-2$ and each $X_i$ is an independent column of
three variables. Each entry of $\mathsf C$ is homogeneous of degree
one in each $X_i$ separately. This is the meaning of \emph{multilinear}
throughout this section. For example, with two arguments it has the
form $\sum_{a,b}(X_0)_a(X_1)_b C_{ab}$, with constant matrices $C_{ab}$.
\label{exp:p1:frobenius-endpoint}We call this constant-coefficient, multilinear representation
\eqref{p1:I:endpoint-hyp} the \emph{Frobenius endpoint}. It is the final
form sought by the successive extraction of coordinate powers;
compatible representatives and their span still have to be constructed.
The subscript $\Delta$ denotes the explicit substitution
$X_i=V^{[P^i]}$, called the Frobenius diagonal.
The letter $m$ still denotes $\deg M$.

\label{exp:p1:pencil}A matrix pencil is a linear combination
$C(Y)=Y_1C_1+Y_2C_2+Y_3C_3$ of fixed matrices. Its
\emph{coefficient dimension} is the dimension of the span of the
three matrices $C_i$. Its evaluation at a column $W$ is the linear
map $Y\mapsto C(Y)W$, with matrix $[C_1W\ C_2W\ C_3W]$.
When other arguments are present, they are treated as independent
variables in the coefficient field.

\ReadingNote{Independent vector variables}{The three coordinates in each $X_i$ are ordinary polynomial variables. Keeping the groups independent makes coefficient comparison possible; the powers of $V$ are substituted only after the identities have been proved.}

The ordinary normal restrictions\TermNote{exp:p1:normal-restriction}{the equations obtained on $L_a=0$ after removing the known factor $L_a$} are
\begin{equation}\label{p1:I:ordinary-normal-package}
 a^{[1/P]t}g=L_a h_a,\quad
 a^{[1/P^2]t}M=\mu_a a^{[1/P]t}\pmod {L_a},\quad
 (H|_{L_a})(h_a|_{L_a})=\mu_a^P.
\end{equation}
We prove that the Frobenius endpoint produces the common rational function
required by Lemma~\ref{p1:I:normalizer}. All matrix arguments below are
valid in characteristic two as well as in odd characteristic. The
restriction $P\ge3$ permits separation of argument degrees at most two.

\begin{theorem}[The Frobenius endpoint gives compatible representatives]
\label{p1:I:endpoint-theorem}
Under \eqref{p1:I:endpoint-hyp}, there is a rational homogeneous function
$W$ of degree $d-1$, represented by a quotient whose numerator and
denominator are nonzero at every short vertex, such that
$h_a|_{L_a}=\beta_aW|_{L_a}$ with $\beta_a\ne0$ for every ordinary
point $a$. Thus Lemma~\ref{p1:I:normalizer} gives compatible representatives
on all short secants. If their adjacency graph is connected, one common
rescaling puts these representatives in $\F_q^3$.
\end{theorem}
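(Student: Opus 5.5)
The plan follows the recipe in Part~0, Section~3: factor the determinant and adjugate of $\mathsf C$ before substitution, define $W=\mathcal F_\Delta^P/(\Theta^PH)$, and verify the normal restrictions on each normal plane. Only the last two sentences of the theorem are then quoted from Lemma~\ref{p1:I:normalizer}.

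\emph{Step 1: factor $\mathsf C$ before substitution.} Because $\mathsf C$ is multilinear with constant coefficients, $\det\mathsf C$ has degree three in each $X_i$, and $\adj\mathsf C$ has degree two in each $X_i$. Both degrees are below $P$ when $P\ge3$.

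Substituting gives $\det(\Theta M)=\Theta^3H^2a_0D$ and $\adj(\Theta M)=\Theta^2HN$. The $\F_P$-digit argument shows that distinct monomials in the independent arguments remain distinct on the Frobenius diagonal, since all exponents are $\le2<P$. Hence divisibility and factorization statements can be compared before and after substitution.

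From this I want to extract a factor $\mathcal F$ with $\det\mathsf C=\mathcal F^2\mathcal G$ and $\adj\mathsf C=\mathcal F\mathsf B$. The model is the way $H$ was shown to divide $\adj M$ in Proposition~\ref{p1:pp:identities}: a rank-one reduction modulo each factor. I then expect $D=\Theta^{P-2}\mathcal F_\Delta$ up to scalar, checking it against $\Phi=H^{P-2}D^P$ and the degree formulas. The degrees should confirm $\deg W=P\deg\mathcal F_\Delta-P\deg\Theta-\deg H=d-1$, using $\deg D=q/P^2-(P-2)m-1$ and $d_{\rm nat}$.

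\emph{Step 2: restrict to a normal plane.} Fix an ordinary $a$, normalized to $a=(0,0,1)$ with $L_a=Z$. By \eqref{p1:I:ordinary-normal-package}, $\mu_a^P=H|_{Z=0}\,h_a|_{Z=0}$, and $\mu_a=M_{33}|_{Z=0}$ while $M_{31},M_{32}$ vanish there. So it suffices to show that $(\Theta M_{33})|_{Z=0}$ equals $\beta'_a\,\mathcal F_\Delta|_{Z=0}$ for a constant $\beta'_a\ne0$. Then
\[
h_a|_{Z=0}=\mu_a^P/H=(\beta'_a)^P\mathcal F_\Delta^P/(\Theta^PH),
\]
restricted to $Z=0$.

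On $Z=0$ all substituted arguments $V^{[P^i]}$ have vanishing third coordinate. So before substitution I restrict each $X_i$ to the plane $\{(X_i)_3=0\}$. The identity $Z\mid(\Theta M)_{31},(\Theta M)_{32}$ lifts to the statement that the corresponding multilinear entries of $\mathsf C$ vanish identically on these planes. By the degree-$\le2$ separation argument, they vanish as formal multilinear polynomials and not merely on the diagonal.

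On the restricted arguments, $\mathsf C$ is block triangular, so $\det\mathsf C$ factors as $\mathsf C_{33}$ times the $2\times2$ block determinant. Comparing this with $\mathcal F^2\mathcal G$, and with $\adj\mathsf C=\mathcal F\mathsf B$ (whose $(3,3)$ entry is the $2\times2$ minor), should identify $\mathsf C_{33}$ restricted with a constant multiple of $\mathcal F$ restricted. The constant is forced by unique factorization together with degree counting in each argument.

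\emph{Step 3: nonvanishing.} The numerator $\mathcal F_\Delta$ and the denominator $\Theta H$ must be nonzero at every short vertex. This follows because $H$ and $D$ are nonzero there by Lemma~\ref{p1:ppg:short}, and $\Theta\mid$ stuff only through $D=\Theta^{P-2}\mathcal F_\Delta$. I still need $\Theta\ne0$ at short vertices; I expect to get this from $\Theta^3H^2a_0D=\det\mathsf C_\Delta$ together with a local count.

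The final two sentences of the theorem are then exactly Lemma~\ref{p1:I:normalizer}.

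\emph{Main obstacle.} The hard step is Step 1 combined with the identification in Step 2. Specifically, one must prove that the square factor $\mathcal F$ exists at the formal level and that its restriction, rather than some other factor of $\mathcal G$, matches $\mathsf C_{33}$ on every normal plane with a \emph{nonzero constant}. This has to be shown uniformly in $a$, so that a single $W$ works for all ordinary points. I would first attempt it by lifting the rank-one-modulo-$f$ property of $M^{[P]}=H\operatorname{Jac}(g)+g(\nabla H)^t$ to the formal arguments.
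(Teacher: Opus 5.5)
Your architecture matches the paper's: define $W=\mathcal F_\Delta^P/(\Theta^PH)$, make the normal restriction block triangular, and match $\mu_a$ with the restriction of $\mathcal F$. However, both load-bearing steps are left without a working mechanism, and the mechanisms you suggest do not work.

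\emph{Step~1: the formal factor $\mathcal F$.} Base-$P$ digit separation transports \emph{identities} from the Frobenius diagonal to the formal level, but not divisibility or factorization. For instance, $(X_0)_1(X_1)_2-(X_0)_2(X_1)_1$ is irreducible, while its diagonal $XY^P-YX^P$ splits into linear factors. Also, $\det\mathsf C$ has degree three in each argument, which is not below $P$ when $P=3$.

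The rank-one-modulo-$H$ argument targets the wrong factor. On the diagonal, $\adj(\Theta M)=\Theta^2HN$ splits as $\mathcal F_\Delta\mathsf B_\Delta$ with $\mathcal F_\Delta=D/\Theta^{P-2}$ and $\mathsf B_\Delta=\Theta^PHN/D$. The divisibility by $H$ lives in $\mathsf B_\Delta$; the factor you need is of $D$-type.

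The missing idea is the crossed identity. Let $k=n-2$, and let $\varphi$ raise only the constant coefficients to $P$th powers. Since $q/P^2=PS$, the two uniform identities give on the diagonal
\[
 \mathsf C(X_0,\ldots,X_{k-1})\,\mathsf C^\varphi(X_1,\ldots,X_k)X_0\;\mapsto\;\Theta M(\Theta M)^{[P]}V=\Theta^{P+1}H^2a_0V^{[PS]}.
\]
All argument degrees here are at most two. Hence the cross products with $X_k$ vanish formally, and $\mathsf C\,\mathsf C^\varphi(X_1,\ldots,X_k)X_0=fX_k$ with $f$ multilinear.

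Define $\mathsf B$ by $\mathsf BX_k=\mathsf C^\varphi(X_1,\ldots,X_k)X_0$. Then $\mathsf C\mathsf B=fI$, so $\adj\mathsf C=(\det\mathsf C/f)\mathsf B$. Thus $\mathcal F:=\det\mathsf C/f$ is a polynomial as soon as $\mathsf B$ is primitive.

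That primitivity is the real work (Lemma~\ref{p1:lem:balanced}):
\begin{itemize}
\item in each argument, $\mathsf C\mathsf B=fI$ yields $C^2-\alpha C+\beta I=0$;
\item this gives common-row form $\lambda I+uv^t$;
\item nonsingular evaluation is transferred by the shift in the definition of $\mathsf B$;
\item this gives coefficient dimension three in every argument.
\end{itemize}
Then $\mathcal F_\Delta=\det(\Theta M)/f_\Delta=D/\Theta^{P-2}$. This also gives $\Theta\ne0$ at short vertices at once, since $P\ge3$; no local count is needed.

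\emph{Step~2: identifying $\mu'$.} Write $|_E$ for restriction of every argument to its normal plane, and $A$ for the top-left $2\times2$ block of $\mathsf C|_E$. Block triangularity gives
\[
 \det A\cdot\mu'=\mathcal F|_E^2\,\mathcal G|_E,\qquad \det A=\mathcal F|_E\,\mathsf B_{33}|_E .
\]
Here $\mu'$, $\mathcal F|_E$ and $\mathcal G|_E$ all have degree one in every argument. So unique factorization and degree counting cannot exclude $\mu'\doteq\mathcal G|_E$ with $\det A\doteq\mathcal F|_E^2$. They also cannot exclude mixed choices if $\mathcal F,\mathcal G$ split over disjoint sets of arguments.

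What decides it is the crossed identity again, restricted to the normal planes:
\begin{itemize}
\item The normal identity and separation give $\mathsf C_i(E_{i-1})\subseteq E_{i-2}$ for every twist $\varphi^i$.
\item Because $k=n-2$ and $E_{i+n}=E_i$ (as $a\in\F_q^3$), the top-left $2\times2$ blocks satisfy $A_iA_{i+1}x_i=f_ix_{i+k}$.
\item In dimension two, coefficient dimension two in every argument makes $A_i$, and hence $\adj A_i$, primitive.
\item Then $B_i=(f_i/\det A_i)\adj A_i$ forces $f_i/\det A_i$ to be a nonzero constant (Lemma~\ref{p1:lem:binary}).
\end{itemize}
Therefore $\mu'_i$ is a nonzero constant times $\mathcal F_i$, as required.

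With these two inputs, your definition of $W$ and your appeal to Lemma~\ref{p1:I:normalizer} go through as you describe.
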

The next two subsections prove the theorem. The span dimension and
the equality $B=L(U)$ still require the counting tools of
Section~\ref{p1:I:reconstruction}; the corollary below carries these out
when $q=P^3$.

\subsection{The two determinant factors}

\label{exp:p1:coefficient-frobenius}The coefficient-Frobenius twist $\varphi$ raises constant
coefficients to $P$th powers and leaves all variables unchanged\TermNote{exp:p1:twist-example}{$(aX+Y)^\varphi=a^PX+Y$, whereas $(aX+Y)^P=a^PX^P+Y^P$}. Put
$Y=\mathsf C(X_0,\ldots,X_{k_{\rm slot}-1})
\mathsf C^\varphi(X_1,\ldots,X_{k_{\rm slot}})X_0$.
The uniform identities show that, after $X_i=V^{[P^i]}$, this
column is proportional to $V^{[PS]}$. Hence each polynomial
$Y_a(X_{k_{\rm slot}})_b-Y_b(X_{k_{\rm slot}})_a$ becomes zero.
Its degree in each vector argument is at most two, below $P$.
Distinct monomials remain distinct after substitution, by uniqueness
of base-$P$ digits in each coordinate exponent. Thus these three
polynomials were zero before substitution. The coordinates of
$X_{k_{\rm slot}}$ have no common factor, so $Y=fX_{k_{\rm slot}}$
with $f$ polynomial. Linearity in the last argument makes $f$
independent of it. Consequently
\begin{equation}\label{p1:eq:fullcross}
 \mathsf C(X_0,\ldots,X_{k_{\rm slot}-1})
 \mathsf C^\varphi(X_1,\ldots,X_{k_{\rm slot}})X_0
 =f(X_0,\ldots,X_{k_{\rm slot}-1})X_{k_{\rm slot}}.
\end{equation}
The column $Y$ has argument degrees $(2,\ldots,2,1)$; the scalar
$f$ has degrees $(2,\ldots,2)$, all below $P$, and
$f_\Delta=\Theta^{P+1}H^2a_0$ has degree $2d_{\rm nat}<PS$.
This argument also proves the asserted degrees of the scalar $f$.
Define the multilinear partner by
\[
 \mathsf B(X_0,\ldots,X_{k_{\rm slot}-1})X_{k_{\rm slot}}
 =\mathsf C^\varphi(X_1,\ldots,X_{k_{\rm slot}})X_0.
\]
Then $\mathsf C\mathsf B=fI$.

\label{exp:p1:quadratic}\ReadingNote{Quadratic matrix relation}{The minimal polynomial is the monic polynomial of least degree annihilating $C$. For $3\times3$ matrices, $C^2-\alpha C+\beta I=0$ implies $C=\lambda I+uv^t$ for some scalar $\lambda$; $C^2$ is ordinary matrix multiplication.}

\begin{lemma}[Balanced factorization]\label{p1:lem:balanced}
Both matrix polynomials are primitive. There are multilinear polynomials
$\mathcal{F},\mathcal{G}$ such that
\begin{equation}\label{p1:eq:balanced}
 f=\mathcal{F}\mathcal{G},\qquad \det\mathsf C=\mathcal{F}^2\mathcal{G},\qquad \adj\mathsf C=\mathcal{F}\mathsf B.
\end{equation}
On the Frobenius diagonal,
\begin{equation}\label{p1:eq:gauge}
 \boxed{D=\Theta^{P-2}\mathcal{F}_\Delta.}
\end{equation}
Thus $\Theta$ and $\mathcal{F}_\Delta$ are nonzero at every short vertex.
\end{lemma}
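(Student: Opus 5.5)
The plan is to work entirely before substitution, in the unique factorization domain $\F_q[X_0,\ldots,X_{k_{\rm slot}}]$, and to exploit the two relations $\mathsf C\mathsf B=fI$ and $\mathsf C\adj\mathsf C=(\det\mathsf C)I$. Only at the very end do I pass to the Frobenius diagonal and compare with the uniform identities of Theorem~\ref{p1:I:uniform-main}.

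\emph{Primitivity.} Since $\mathsf B$ is obtained from $\mathsf C^\varphi$ by renaming arguments, and $\varphi$ is a ring automorphism on coefficients, primitivity of $\mathsf B$ is equivalent to primitivity of $\mathsf C$. For $\mathsf C$, suppose an irreducible $\psi$ divides all entries. Because the entries are multihomogeneous, $\psi$ is multihomogeneous, and $\psi_\Delta$ divides every entry of $\Theta M$. Primitivity of $M$ then forces $\psi_\Delta\mid\Theta$. I would try to exclude this in one of two ways:
\begin{itemize}
\item use \eqref{p1:eq:fullcross}, where $\psi$ together with its shifted twist must divide $f$ with argument degrees at most two, and compare with $f_\Delta=\Theta^{P+1}H^2a_0$ by base-$P$ digit separation;
\item or note that $\psi$ can be removed from $\mathsf C$ and $\psi_\Delta$ from $\Theta$, which contradicts each entry having degree exactly one in every argument, as fixed by $d_{\rm nat}$.
\end{itemize}
I expect this step to be the main obstacle, since the multilinear normalization must be used carefully; everything else is formal.

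\emph{Factorization.} Multiplying $\mathsf C\mathsf B=fI$ on the left by $\adj\mathsf C$ gives $(\det\mathsf C)\mathsf B=f\adj\mathsf C$. Write $\det\mathsf C/f=a/b$ in lowest terms. Then $b$ divides every entry of $a\mathsf B$, hence of $\mathsf B$, so $b$ is constant by primitivity of $\mathsf B$. Thus $\mathcal F:=\det\mathsf C/f$ is a polynomial, with $\adj\mathsf C=\mathcal F\mathsf B$ and $\det\mathsf C=\mathcal Ff$. Symmetrically, $\mathsf B\mathsf C=fI$ (inverses commute), so the same argument with primitivity of $\mathsf C$ gives $\adj\mathsf B=\mathcal G\mathsf C$ with $\mathcal G:=\det\mathsf B/f$ a polynomial. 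Taking determinants in $\mathsf C\mathsf B=fI$ gives $\det\mathsf C\det\mathsf B=f^3$, that is, $\mathcal Ff\cdot f\mathcal G=f^3$. Since $f\ne0$, this yields $f=\mathcal F\mathcal G$ and then $\det\mathsf C=\mathcal F^2\mathcal G$. Finally, $\mathcal F$ and $\mathcal G$ divide multihomogeneous polynomials, so they are themselves multihomogeneous, with degree at most two in each argument.

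\emph{Diagonal.} Substituting $X_i=V^{[P^i]}$ gives $\det(\Theta M)=\Theta^3H^2a_0D=\mathcal F_\Delta^2\mathcal G_\Delta$ and $f_\Delta=\Theta^{P+1}H^2a_0=\mathcal F_\Delta\mathcal G_\Delta$. Dividing the first by the second gives $\mathcal F_\Delta=\Theta^{2-P}D$, i.e.\ \eqref{p1:eq:gauge}. Since $P\ge3$, both $\Theta$ and $\mathcal F_\Delta$ are polynomial factors of $D$. By Lemma~\ref{p1:ppg:short}, $D$ is nonzero at every short vertex, so $\Theta$ and $\mathcal F_\Delta$ are nonzero there as well.
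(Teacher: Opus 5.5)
Your factorization and diagonal steps are correct and are essentially the paper's. The identity $(\det\mathsf C)\mathsf B=f\adj\mathsf C$ together with primitivity of $\mathsf B$ makes $\mathcal F=\det\mathsf C/f$ a polynomial, the same argument gives $\mathcal G$, and $\mathcal F_\Delta=\Theta^3H^2a_0D/(\Theta^{P+1}H^2a_0)$. But all of this rests on primitivity, and there the proposal has a genuine gap: neither of your routes excludes a common factor $\psi$.

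The second route is circular. Hypothesis \eqref{p1:I:endpoint-hyp} imposes nothing beyond multilinearity, because $d_{\rm nat}$ is automatically the diagonal degree of any multilinear $\mathsf C$ with $k_{\rm slot}$ arguments. If $\mathsf C=\psi\mathsf C'$ with $\psi$ of degree one in $X_i$, then $(\Theta/\psi_\Delta,\mathsf C')$ is just a representation in which $X_i$ does not occur, and nothing in the hypotheses forbids this.

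The first route yields only consistent information. The cross identity \eqref{p1:eq:fullcross} does give $\psi\cdot\psi^\varphi(X_1,\ldots,X_{k_{\rm slot}})\mid f$, and it excludes $\psi$ involving the last argument. On the diagonal, however, this reads $\psi_\Delta^{P+1}\mid\Theta^{P+1}H^2a_0$, which is compatible with $\psi_\Delta\mid\Theta$. There is nothing for digit separation to contradict.

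The missing idea is a structural reason why no argument can collapse to a single scalar linear form, namely coefficient dimension three in every argument. The paper extracts this from $\mathsf C\mathsf B=fI$ as follows.

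\begin{itemize}
\item Fix one argument $s$ over the fraction field of the others and normalize $C(s_0)=B(s_0)=I$. The coefficients of $t$ and $t^2$ in $C(s_0+ts)B(s_0+ts)$ give $B=\alpha I-C$ and $C^2-\alpha C+\beta I=0$.
\item Hence $C-\lambda I$ has rank at most one with $\lambda$ linear, and such a rank-one space has a common row or a common image.
\item In the first argument, the evaluation $X_0\mapsto\mathsf B(X_0,\ldots,X_{k_{\rm slot}-1})W=\mathsf C^\varphi(X_1,\ldots,X_{k_{\rm slot}-1},W)X_0$ is invertible. This forces dimension three for both spaces and excludes the common image. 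The common-row form then gives invertible $C$-evaluation.
\item The shift built into $\mathsf B$ converts invertible $C$-evaluation in argument $i$ into invertible $B$-evaluation in argument $i+1$, and induction reaches every argument.
\end{itemize}

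A common factor would have degree one in some argument and would make that coefficient space one-dimensional, which is now excluded.

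There are two smaller points. First, $\mathsf B$ is not $\mathsf C^\varphi$ with renamed arguments: its definition exchanges the column index with the last argument. Transferring primitivity between them therefore needs an argument, for instance nonsingularity of both together with the cross identity. Second, the lemma asserts that $\mathcal F$ and $\mathcal G$ are multilinear, not merely of degree at most two. This does follow exactly, because $\det\mathsf C$ and $\det\mathsf B$ have degree three and $f$ has degree two in every argument.
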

\begin{proof}
Fix one vector argument over the rational function field of the others and
normalize an invertible value to $C(s_0)=B(s_0)=I$.
Expanding $CB=fI$ around $s_0$ gives
\[
 B=\alpha I-C,\qquad C^2-\alpha C+\beta I=0,
\]
with $\alpha$ linear and $\beta$ quadratic. A three-dimensional
vector space cannot be a vector space over a quadratic field extension,
so the minimal polynomial cannot be an irreducible quadratic.
If $C$ is scalar the conclusion is immediate. Otherwise write the
quadratic as $(T-a)(T-b)$. Since $(C-aI)(C-bI)=0$,
\[
 \rank(C-aI)+\rank(C-bI)\le3.
\]
One rank is at most one; let $\lambda$ be the corresponding root.
Thus $C-\lambda I$ has rank at most one, and $\lambda$ is the
eigenvalue occurring at least twice. The monic
characteristic equation shows that a reduced denominator of $\lambda$
must be constant; homogeneity makes $\lambda$ linear. Subtracting
$\lambda I$ gives a linear space of matrices of rank at most one.
Such a space has a common row or a common image\TermNote{exp:p1:row-image}{all row spaces, respectively all column spaces, lie in one fixed one-dimensional space}: two rank-one matrices
with both independent rows and independent images would have a sum
of rank two.

In the first argument, $s\mapsto B(s)W$ is invertible over the rational function field by
\eqref{p1:eq:fullcross}. Hence the $B$-space has dimension three and
$C$ is not a scalar matrix over the rational function field. If $C(s)=0$, then $B(s)=\alpha(s)I$;
the coefficient of $t$ in $C(z+ts)B(z+ts)=f(z+ts)I$ forces
$\alpha(s)=0$. Thus the $C$-space also has dimension three.
A common image would give evaluation rank at most two for $B$.
In the common-row case write
\[
 C(s)=\lambda(s)I+(Us)v^t,\qquad Us_0=0,\quad\lambda(s_0)=1.
\]
Then $\rank U=2$ and
\[
 \det[s\mapsto C(s)W]=(v^tW)^2\lambda^t\adj(U)W\ne0,
\]
since the image of $\adj U$ is the line generated by $s_0$.
The shifted definition of $\mathsf B$ transfers invertible
$C$-evaluation in one argument to invertible $B$-evaluation in the next.
Thus both matrix polynomials have full coefficient dimension and invertible
evaluation in every argument. A common factor would consume an argument
degree and make that dimension one, so both matrix polynomials are primitive.

From $\mathsf C\mathsf B=fI$ we have
$\adj\mathsf C=(\det\mathsf C/f)\mathsf B$.
In the reduced fraction $\det\mathsf C/f$, the denominator must
therefore divide every entry of $\mathsf B$ and hence is constant.
Thus $\mathcal F=\det\mathsf C/f$ is polynomial; interchanging the
matrices gives the same conclusion for $\mathcal G=\det\mathsf B/f$. Their
degree is one in every argument. Taking determinants gives
\eqref{p1:eq:balanced}. Finally,
\[
 \mathcal{F}_\Delta=\frac{\det(\Theta M)}{\Theta^{P+1}H^2a_0}
 =\frac{D}{\Theta^{P-2}}.
\]
Nonvanishing follows from Lemma~\ref{p1:ppg:short}.
\end{proof}

\subsection{Restrictions to two-dimensional normal planes}

\begin{lemma}[Crossed pencils in dimension two]\label{p1:lem:binary}
For each integer $i$, let $A_i$ be a nonsingular $2\times2$ matrix
polynomial, linear in each of the consecutive two-coordinate vectors
$x_i,\ldots,x_{i+k_{\rm slot}-1}$. Suppose
\[
 A_i(x_i,\ldots,x_{i+k_{\rm slot}-1})
 A_{i+1}(x_{i+1},\ldots,x_{i+k_{\rm slot}})x_i=f_i x_{i+k_{\rm slot}}.
\]
Then each $A_i$ is primitive, and $f_i/\det A_i$ is a nonzero constant.
\end{lemma}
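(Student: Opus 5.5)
The plan is to copy, in dimension two, the argument of Lemma~\ref{p1:lem:balanced}: build a multilinear partner $B_i$ with $A_iB_i=f_iI$, show that $A_i$ has invertible evaluation in every argument, deduce primitivity, and then compare $\adj A_i$ with $B_i$. Dimension two helps, because $\adj A_i$ is just a signed rearrangement of the entries of $A_i$. Hence $\adj A_i$ is itself linear in each argument and is primitive exactly when $A_i$ is.

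\emph{Step 1: the partner.} Define $B_i(x_i,\ldots,x_{i+k_{\rm slot}-1})$ by
\[
B_i\,x_{i+k_{\rm slot}}=A_{i+1}(x_{i+1},\ldots,x_{i+k_{\rm slot}})\,x_i .
\]
This is possible because the right side is linear in $x_{i+k_{\rm slot}}$. The hypothesis then reads $A_iB_ix_{i+k_{\rm slot}}=f_ix_{i+k_{\rm slot}}$. Since neither $A_i$ nor $B_i$ involves $x_{i+k_{\rm slot}}$, this gives $A_iB_i=f_iI$.

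Taking determinants gives $\det A_i\det B_i=f_i^2$. Each $A_i$ is nonsingular and both sides of the hypothesis are nonzero, so $f_i\ne0$ and $B_i$ is nonsingular. Counting degrees in the identity, $f_i$ has degree $2$ in each of $x_i,\ldots,x_{i+k_{\rm slot}-1}$ and degree $0$ in $x_{i+k_{\rm slot}}$. These are exactly the argument degrees of $\det A_i$, so $r_i:=\det A_i/f_i$ is homogeneous of degree zero in every argument. Finally $\adj A_i=\det A_i\,A_i^{-1}=r_iB_i$.

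\emph{Step 2: invertible evaluation in every slot.} The definition of $B_i$ says that the evaluation of $A_{i+1}$ in its last argument, $x_{i+k_{\rm slot}}\mapsto A_{i+1}(\ldots,x_{i+k_{\rm slot}})x_i$, has matrix $B_i$. This matrix is invertible, with inverse $A_i/f_i$.

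To reach the earlier slots I would transfer invertibility backwards, as in the proof of Lemma~\ref{p1:lem:balanced}. Fix all arguments but one, and normalize an invertible value to $A(s_0)=B(s_0)=I$. Expanding $AB=fI$ gives $B=\alpha I-A$ and $A^2-\alpha A+\beta I=0$. In size two this is just Cayley--Hamilton, so it gives no rank-one structure; instead I would argue directly. If the evaluation $s\mapsto A(s)w$ in slot $x_j$ were singular, the shifted identity (the relation for $i-1$, or for $i-(j-i)$) would force the evaluation of the neighbouring matrix in the next slot to be singular. Propagating up to the last slot contradicts the invertibility just shown.

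I expect this propagation to be the main obstacle. The $3\times3$ argument relied on the common-row/common-image dichotomy, which is unavailable here. What I would try first is a degree count: a $2\times2$ linear pencil with singular evaluation is, over the rational function field of the other arguments, of the form $c(x_j)A'$ or has a fixed kernel vector. Substituting either form into the crossed identity should make one side's dependence on $x_{i+k_{\rm slot}}$ degenerate, which contradicts the right side $f_ix_{i+k_{\rm slot}}$.

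\emph{Step 3: primitivity and the constant.} Suppose a nonconstant factor $c$ divides all entries of $A_i$. Since $A_i$ is linear in each argument, $c$ has positive degree in some $x_j$, and then $A_i/c$ is free of $x_j$. The evaluation $x_j\mapsto A_iw=c(x_j)\,(A_i/c)w$ then has image in a single line, so it is singular, contradicting Step~2. Thus $A_i$ is primitive, and so is $\adj A_i$. The same argument, with Step~1 applied to the shifted index, shows that $B_i$ is primitive.

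Now write $r_i=u/w$ in lowest terms, so that $w\,\adj A_i=u\,B_i$. Then $w$ divides every entry of $B_i$ and $u$ divides every entry of $\adj A_i$, so both are constants. Hence $f_i/\det A_i$ is a nonzero constant.
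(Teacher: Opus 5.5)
Your outline matches the paper's proof: the partner $B_i$ with $A_iB_i=f_iI$, invertible evaluation in every argument, primitivity, and comparison of $B_i$ with $\adj A_i$. Steps~1 and~3 are sound. The gap is Step~2, which carries the real content of the lemma and which you leave as a plan. The plan also looks for the wrong degeneracy. Substituting $A_i=c(x_j)A'$ into a crossed identity does not make the dependence on $x_{i+k_{\rm slot}}$ degenerate. What degenerates is the dependence of a \emph{neighbouring} matrix on the \emph{same} variable $x_j$.

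Your dichotomy also misidentifies the second alternative. A two-dimensional pencil with a common kernel vector, such as $\begin{pmatrix}s_1&0\\ s_2&0\end{pmatrix}$, has invertible evaluation, with matrix $W_1I$; it is a common image that gives singular evaluation. Since $\det A_i\ne0$, both alternatives are excluded anyway. The fact you need is simply this: a nonsingular $2\times2$ pencil has singular evaluation only if its two coefficient matrices are proportional. To see it, normalize to $\langle I,M\rangle$ with $M$ nonscalar; then $\det(W,MW)\not\equiv0$.

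With that fact, the transfer is one line in either direction, for two reasons. First, the evaluation of $B_{i-1}$ in $x_j$ at a test vector $W$ is $x_j\mapsto A_i(x_i,\ldots,x_{i+k_{\rm slot}-2},W)x_{i-1}$, which is the evaluation of $A_i$ in the same variable. Second, $A_{i-1}=f_{i-1}B_{i-1}^{-1}$ has proportional coefficient matrices in $x_j$ exactly when $B_{i-1}$ does.

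The paper runs the induction forward from the first argument:
\begin{itemize}
\item The first-argument evaluation of $B_i$ is the nonsingular matrix $A_{i+1}(x_{i+1},\ldots,x_{i+k_{\rm slot}-1},W)$.
\item Hence $A_i$ cannot have proportional coefficients in $x_i$, since otherwise $B_i=f_iA_i^{-1}$ would have rank-one evaluation there. So $A_i$ has invertible first-argument evaluation.
\item This makes the second-argument evaluation of $B_{i-1}$ invertible, so $A_{i-1}$ has coefficient dimension two in its second argument, and so on, simultaneously for all $i$.
\end{itemize}

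Your backward direction works equally well. Suppose $A_i$ has proportional coefficients at position $p<k_{\rm slot}-1$. Then the evaluation of $B_{i-1}$ in $x_j$ is singular, so $B_{i-1}$ and $A_{i-1}$ have proportional coefficients in $x_j$, which is position $p+1$ of $A_{i-1}$. Iterating reaches the last position. That contradicts your correct base case: the last-argument evaluation of $A_{i'+1}$ is the invertible matrix $B_{i'}$.

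Once this is written out, Step~3 goes through. There, primitivity of $\adj A_i$ together with your degree count already suffices, so you do not need primitivity of $B_i$.
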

\begin{proof}
Define $B_ix_{i+k_{\rm slot}}=A_{i+1}x_i$, so $A_iB_i=f_iI_2$.
First-argument evaluation of $B_i$ is invertible. The coefficient matrices of $A_i$ in the first argument cannot all
be proportional: otherwise $A_i=\ell(x_i)A_0$, and $A_iB_i=f_iI_2$
would make $B_i$ a scalar linear form times $A_0^{-1}$, whose
evaluation has rank one.
A two-dimensional nonsingular $2\times2$ pencil has invertible
evaluation with an independent column: normalize to $\langle I,M\rangle$ with
$M$ nonscalar and use $\det(W,MW)\not\equiv0$.
The shift transfers this property to the next argument. Induction,
simultaneously for all $i$, gives coefficient dimension two in every
argument. Hence $A_i$ is primitive, and so is $\adj A_i$ in dimension two.
Now $B_i=(f_i/\det A_i)\adj A_i$ makes that scalar polynomial.
Its degree in every argument is zero, so it is a nonzero constant.
\end{proof}

Fix an ordinary normal $a$. Use integer indices, and put
$a_i=a^{[P^i]}$, $E_i=\ker a_i^t$. Choose $T_0\in\mathrm{SL}_3(\F_q)$
with $a^tT_0=e_3^t$ and set $T_i=T_0^{[P^i]}$.
Restrict $X_i$ to $E_i$ and write
$\mathsf C_i=\mathsf C^{\varphi^i}(X_i,\ldots,X_{i+k_{\rm slot}-1})$.
To pass from \eqref{p1:I:ordinary-normal-package} to independent normal
planes, first take $i=0$ and write $V=T_0(y_1,y_2,0)^t$.
Then $V^{[P^j]}=T_j(y_1^{P^j},y_2^{P^j},0)^t$.
For $c=1,2$, consider
\[
 a_{-2}^t\mathsf C(T_0\widetilde x_0,\ldots,
 T_{k_{\rm slot}-1}\widetilde x_{k_{\rm slot}-1})T_{-1}e_c,
 \qquad \widetilde x_j=(x_{j1},x_{j2},0)^t.
\]
It has constant coefficients and degree one in each pair of variables.
Its diagonal substitution is zero by the ordinary normal identity
multiplied by $\Theta$. Distinct monomials give distinct exponents
$\sum_jP^j\beta_j$ in each coordinate, since every digit is below $P$.
Thus the polynomial itself is zero. Apply the coefficient automorphism
$\varphi^i$ to obtain the same conclusion for every integer $i$:
$\mathsf C_i(E_{i-1})\subseteq E_{i-2}$. Consequently
\[
 T_{i-2}^{-1}\mathsf C_iT_{i-1}
 =\begin{pmatrix}A_i&*\\0&\mu'_i\end{pmatrix}.
\]
These matrices and scalars are nonsingular or nonzero over the rational function field:
each ordinary component has a short vertex where $\Theta M$ is
invertible. Only the normal planes are periodic: $E_{i+n}=E_i$, since $a\in \F_q^3$.
The coefficients need not have period $n$. Raising them to the
$P^i$th power for every integer $i$ gives the required family;
the proof of Lemma~\ref{p1:lem:binary} is finite induction on the argument
position and uses no coefficient periodicity.
Since $k_{\rm slot}=n-2$, so that $E_{i+k_{\rm slot}}=E_{i-2}$, \eqref{p1:eq:fullcross} restricts
to the crossed identity of Lemma~\ref{p1:lem:binary}.
For the restricted factors $\mathcal{F}_i,\mathcal{G}_i$, the determinant-one frames give
\[
 \det A_i\,\mu'_i=\mathcal{F}_i^2\mathcal{G}_i,\qquad f_i=\mathcal{F}_i\mathcal{G}_i.
\]
Hence $\mu'_i=c_i\mathcal{F}_i$ with $c_i\in\overline{\F_q}^*$. Returning to the original
ordinary component gives, with no deleted components,
\begin{equation}\label{p1:eq:glue}
 (\Theta|_{L_a})\mu_a=c_a\mathcal{F}_\Delta|_{L_a}.
\end{equation}
Define the homogeneous rational function
\begin{equation}\label{p1:eq:W}
 \boxed{W=\frac{\mathcal{F}_\Delta^P}{\Theta^PH},\qquad\deg W=d-1.}
\end{equation}
Equations~\eqref{p1:I:ordinary-normal-package} and \eqref{p1:eq:glue} give
\begin{equation}\label{p1:eq:normalizer}
 h_a|_{L_a}=\beta_a W|_{L_a},\qquad \beta_a=c_a^P\in\overline{\F_q}^*
\end{equation}
on every ordinary component. By \eqref{p1:eq:gauge}, $W$ is defined
and nonzero at every short vertex; $W$ need not be polynomial.

This proves Theorem~\ref{p1:I:endpoint-theorem}.

\subsection{An unconditional application in a cubic extension}
\begin{corollary}\label{p1:I:cubic-extension}
Let $q=P^3$, where $P=p^e\ge4$, and let $B$ be a nontrivial small
minimal blocking set of exact exponent $e$ in $\PG(2,q)$. Then
\[
 B=L(U),\qquad \dim_{\F_P}U=4,
 \qquad |B|\in\{P^3+P^2+1,\ P^3+P^2+P+1\}.
\]
In particular, a small minimal blocking set in $\PG(2,64)$ of
exact exponent two is $\F_4$-linear.
\end{corollary}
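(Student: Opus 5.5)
The plan is to verify the hypothesis \eqref{p1:I:endpoint-hyp} of Theorem~\ref{p1:I:endpoint-theorem} directly from degrees, and then finish with the counting tools of Section~\ref{p1:I:reconstruction}, following the outline of Part~0, Section~3.

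\emph{Degrees.} For $q=P^3$ we have $q/P^2=P$. Theorem~\ref{p1:I:uniform-main} gives
\[
\deg D=P-(P-2)m-1\ge0,\qquad \deg H=Pm-d+1\ge0 .
\]
Since $P\ge4$, the first inequality forces $m=1$ (note $m\ge1$, because $m=0$ would make $\deg H=1-d<0$ when $d\ge P$). Proposition~\ref{p1:I:ordinary-bound} gives $d\ge q/P^2=P$, and $\deg H\ge0$ then gives $d\le P+1$. Hence $|B|=q+Pd+1\in\{P^3+P^2+1,\ P^3+P^2+P+1\}$.

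\emph{Endpoint and representatives.} With $m=1$, $M$ is linear with constant coefficients. So $\Theta=1$, $\mathsf C(X_0)=M(X_0)$ realises \eqref{p1:I:endpoint-hyp} with $k_{\rm slot}=1$, $S=1$ and $d_{\rm nat}=1$. Theorem~\ref{p1:I:endpoint-theorem} then supplies $W$ and compatible representatives on all short secants.

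\emph{Connectivity and the rank bound.} Lemma~\ref{p1:ppg:gap} gives
\[
m_0=P^2+1-\lfloor (d-1)/(P-1)\rfloor = P^2,
\]
since $(d-1)/(P-1)\in[1,P/(P-1)]\subset[1,2)$. Lemma~\ref{p1:ppg:special} gives
\[
\sigma\le d-1-(P-3)\le 3 .
\]
As $|B|\le P^3+P^2+P+1<2P^3=2Pm_0$, the graph is connected, by the minimum-degree argument of Part~0, Section~4. So all $u_a$ lie in $\F_q^3$. Let $U$ be their $\F_P$-span, of dimension $r$.

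Apply Proposition~\ref{p1:I:code-bound} to the $N_0=|O|$ projective points, using the short secants as the specified lines. The denominator satisfies
\[
2+2P^3-N_0\ \ge\ P^3-P^2-P+1 .
\]
This yields
\[
P^r\le \frac{PN_0\bigl((P-1)P^2+1\bigr)}{P^3-P^2-P+1}<P^5 .
\]
The final inequality needs a routine check, which holds for $P\ge4$: roughly $P\cdot P^3\cdot P^3/P^3\cdot(1+o(1))\approx P^4(1+2/P)$. Hence $r\le4$. On the other hand, $N_0>q$ distinct directions (Lemma~\ref{p1:ppg:special}) force $(P^r-1)/(P-1)>P^3$, so $r\ge4$. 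Thus $r=4=n+1$.

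\emph{Equality $B=L(U)$.} Lemma~\ref{p1:I:cone} says $B'=L(U)$ blocks, and every line meets it in $1\pmod P$ points. Both $B$ and $B'$ contain $O$, and $|B'|\le (P^4-1)/(P-1)$. Apply Lemma~\ref{p1:I:signed} to $A=B\setminus B'$ and $C=B'\setminus B$; the congruences hold line by line. We get
\[
|A|+|C|\le \frac{P^4-1}{P-1}-N_0+\sigma\le P^3+P^2+P+1-(|B|-\sigma)+\sigma,
\]
which is at most about $P+7$. This is below $2(P-1)P^2+2$, so $A=C=\varnothing$.

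The main obstacle is the moment-bound arithmetic giving $r\le4$ uniformly for $P\ge4$, including the smallest case $P=4$. I would first try the crude estimate $N_0\le P^3+P^2+P+1$ and check $P=4$ numerically. Everything else follows directly from results already stated.
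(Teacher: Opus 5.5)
Your proof is correct and follows the paper's argument step for step. Degree nonnegativity forces $m=1$ and $d\in\{P,P+1\}$; the linear $M$ is already the Frobenius endpoint with $\Theta=1$; $m_0=P^2$ gives connectivity; the pair/triple moment bound gives $r_U\le4$; the direction count gives $r_U\ge4$; and the signed-pencil bound yields $B=L(U)$. The differences are only in the arithmetic: the paper bounds the denominator below by $2q/3$ using $N_0<4q/3$ and treats $\sigma$ by cases. Your deferred check also goes through: with $N_0\le (P+1)(P^2+1)$ and denominator at least $(P-1)^2(P+1)$, the inequality reduces to $P^6-3P^5+2P^4-P^3-1>0$, which holds for all $P\ge4$.
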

\begin{proof}
Proposition~\ref{p1:I:ordinary-bound} gives $d\ge P$. The nonnegative
degrees in Theorem~\ref{p1:I:uniform-main} give
\[
 0\le\deg D=P-(P-2)m-1,\qquad
 0\le\deg H=Pm-d+1.
\]
The first implies $m\le1$ for $P\ge4$; the second excludes $m=0$.
Hence $m=1$ and $P\le d\le P+1$. The matrix $M$ is linear, so
\eqref{p1:I:endpoint-hyp} holds with $\Theta=1$ and one argument.
Theorem~\ref{p1:I:endpoint-theorem} applies.

By Lemma~\ref{p1:ppg:gap}, each ordinary point is on at least
\[
 m_0=P^2+1-\left\lfloor\frac{d-1}{P-1}\right\rfloor=P^2
\]
short secants. All their points are ordinary. Put $N_0=|O|$ and
$b_{\max}=P^3+P^2+P+1$. The preceding bounds give
$q<N_0\le |B|\le b_{\max}<4q/3$.
The short-secant graph has minimum degree at least $Pm_0=q$;
two different components would contain more than $2q$ vertices,
which is impossible. Lemma~\ref{p1:I:normalizer} therefore puts all
compatible representatives in $\F_q^3$ by one rescaling.

Let $r_U$ be their $\F_P$-span dimension. Proposition~\ref{p1:I:code-bound}
applies, since $2+2Pm_0-N_0>2q/3$. Also
$(P-1)m_0+1=P^3-P^2+1<q$, so
\[
 P^{r_U}\le\frac{PN_0((P-1)m_0+1)}{2+2Pm_0-N_0}
 <2Pq<P^2q=P^5.
\]
Thus $r_U\le4$. Since $N_0>q$, a space of dimension at most three,
with at most $(q-1)/(P-1)$ projective directions, cannot contain
the representatives. Hence $r_U=4$.

For their span $U$, put $B'=L(U)$. A line is a six-dimensional
$\F_P$-space in the ambient nine-dimensional space, so it meets $U$.
Thus $B'$ blocks and $|B'|\le b_{\max}$. Its intersections are
$1\pmod P$: in the vector-space intersection with a line, partition
the nonzero vectors into point fibers, divide their sizes by $P-1$,
and reduce modulo $P$.
The special-point bound is
$\sigma\le d-1-(P-3)m=d-P+2$. Since $O\subseteq B\cap B'$,
\[
 |B\mathbin\triangle B'|\le b_{\max}-|B|+2\sigma
 \le\begin{cases}P+4,&d=P,\\6,&d=P+1.\end{cases}
\]
Both bounds are less than $2(P-1)P^2+2$ for $P\ge4$.
The signed-pencil bound, Lemma~\ref{p1:I:signed}, forces $B=B'$.
The two possible sizes follow from $d\in\{P,P+1\}$.
\end{proof}

The cubic-extension classifications in~\cite{p1:Polverino,p1:PolverinoStorme}
assume characteristic at least seven; see also~\cite{p1:LSV} for the
higher-dimensional result. The proof above includes characteristics
two, three and five when $P\ge4$. We make no priority claim for the
individual small orders. For $P=3$, the degree of $D$ allows $m=2$,
so this argument does not give the Frobenius endpoint. Nor does it settle
$P=4$ for arbitrary $n$.

\section{The binary case: notation and exact size}\label{p1:secmap:9}
Throughout the remaining sections $P=2$. Let $q=2^n\ge4$,
$s=q/2$, and let $B\subset\PG(2,q)$ be a
nontrivial small minimal blocking set of exact exponent one.
Thus every line meets $B$ oddly, some intersection is $3\pmod4$, and
$|B|<3(q+1)/2$. For $n=1$ there is no nontrivial small example.

We begin with the polynomial identities proved in
Theorem~\ref{p1:I:uniform-main}. Put $V=(X,Y,Z)^t$ and
$L_{\mathsf A}={\mathsf A}^tV$. Entrywise powers are denoted by brackets. The general construction gives a primitive homogeneous vector $g$ with
\begin{gather}
 |B|=q+2d+1,\quad 1\le d\le \frac{q}{4},\qquad
 R:=\prod_{{\mathsf A}\in B}L_{\mathsf A}=\det(V,\fr Vq,\fr g2),\quad \deg g=d, \label{p1:B:inputR}\\
 \mathcal M_1=\{h:L_{\mathsf A}\mid \fr {\mathsf A}{1/2}^th\ ({\mathsf A}\in B)\},
 \quad \mathcal M_1\text{ has a polynomial basis of degrees }d,s,s+d+1. \label{p1:B:inputM}
\end{gather}
In particular $\mathcal M_1$ has no nonzero homogeneous element of degree
less than $d$. Further, with $\delta=\fr g2\cdot\nabla$,
\begin{gather}
 \delta g=bg+c\fr Vs,\quad \delta b=b^2,\quad\delta c=bc,
 \quad c\ne0,\quad \deg b=2d-1,\quad\deg c=3d-1-s, \label{p1:B:inputdelta}\\
 J=\det(g,g_X,g_Y)/Z\ne0,\qquad J=cD^2,
 \quad\deg D=\frac{q}{4}-1. \label{p1:B:inputJ}
\end{gather}
The identities $\delta b=b^2$ and $\delta c=bc$ follow by applying
$\delta$ to $\delta g=bg+cV^{[s]}$ and using $\delta^2=0$
from Proposition~\ref{p1:I:extra-identities}; independence of
$g,V^{[s]}$ identifies the two coefficients.
The vector $V\times\fr g2$ is primitive. 

All local coordinate changes below respect these identities. Explicitly,
under $V=AW$, $A\in\operatorname{GL}_3(\F_q)$, put $a=\det A$.
For $R'=R(AW)$ the transformed data are
\begin{gather*}
 g'=a^{1/2}(A^{[1/2]})^{-1}g(AW),\qquad c'=a^{3/2}c(AW),\\
 J'=a^2J(AW),\qquad D'=a^{1/4}D(AW).
\end{gather*}
The canonical representatives defined below transform by the single
binary-linear map $u\mapsto a^{1/2}A^tu$, so their sum relations are
independent of the chosen coordinates.

\begin{corollary}\label{p1:B:size}
For every such $B$, one has
\[
 \boxed{|B|=3q/2+1.}
\]
\end{corollary}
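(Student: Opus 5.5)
With $P=2$ and $|B|=q+2d+1$, the claim $|B|=3q/2+1$ is equivalent to $d=q/4$. I will get this by squeezing $d$ between two bounds that are already available.

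\textbf{Lower bound.} Proposition~\ref{p1:I:ordinary-bound} gives $d\ge q/P^2=q/4$ when applied with $P=2$. That proof uses only the arrangement identity~\eqref{p1:pp:R}, the basis of Lemma~\ref{p1:pp:module}, the existence of an ordinary point, and the pencil identity $\sum r=q/P+d$. None of these needs $P\ge3$, so I only have to check that nothing characteristic-sensitive enters. Two points deserve a look. First, the coefficient of $Z^{P-1}=Z$ in $R/Z$ has order at least $2r-1$ at a vertex of weight $r$: at most one of the $2r$ incident factors can contribute its normal term. Second, the existence of an ordinary point rests on the $p$th-power exclusion in Lemma~\ref{p1:pp:module}, which holds for every $P$.

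\textbf{Upper bound.} Smallness gives $q+2d+1<3(q+1)/2$, so $2d<(q+1)/2$. Since $d$ is an integer and $q$ is even, this yields $d\le q/4$. This is the bound $d\le q/(2P)$ recorded in~\eqref{p1:pp:linebound} for $P=2$, and it is also stated in~\eqref{p1:B:inputR}.

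\textbf{Combining.} Together the two bounds give $d=q/4$, hence $|B|=q+q/2+1=3q/2+1$. For the smallest case $q=4$ I will verify the arithmetic directly: here $d=1$ and $|B|=7$, the Fano subplane. The integrality of $q/4$ needs $n\ge2$, which holds because nontriviality gives $n\ge2$.

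\textbf{Main obstacle.} I expect the only real work to be confirming that the ordinary-pencil estimate $\ord h_0\ge r-1$ and the bound $k_a\ge q/2+1$ survive in characteristic two. Once they do, the equality $d=q/4$ is immediate.
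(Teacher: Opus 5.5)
Your proposal is correct and is exactly the paper's proof: Proposition~\ref{p1:I:ordinary-bound} at $P=2$ gives $d\ge q/4$, smallness via \eqref{p1:pp:linebound} gives $d\le q/4$, and so $d=q/4$ and $|B|=3q/2+1$. Your check that the ordinary-pencil argument survives characteristic two is a reasonable sanity check but is not strictly needed, since that proposition is already stated and proved without any restriction on $P$.
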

\begin{proof}
Proposition~\ref{p1:I:ordinary-bound} gives $d\ge q/4$, while
\eqref{p1:pp:linebound} gives $d\le q/4$. Thus $d=q/4$.
\end{proof}

\section{Sharp binary restrictions from complete splitting}\label{p1:secmap:10}
Fix ${\mathsf A}\in B$ and use compatible coordinates with $L_{\mathsf A}=Z$. Write
\[
 g_3=Zh,\quad \gamma_1=g_1|_{Z=0},\quad \gamma_2=g_2|_{Z=0},\quad h_0=h|_{Z=0},
\]
and put
\[
 \mathcal T=X^s \gamma_2+Y^s \gamma_1,\quad E_{\mathsf A}=X\gamma_2^2+Y\gamma_1^2,
 \quad \mathcal Q=XY^q+YX^q.
\]
Expanding \eqref{p1:B:inputR} gives
\[
 R/Z=(X^s g_2+Y^s g_1)^2+Zh^2\mathcal Q+
 Z^{q-1}(Xg_2^2+Yg_1^2).
\]
Consequently
\begin{equation}\label{p1:B:normal}
 (R/Z)|_{Z=0}=\mathcal T^2,\qquad
 \partial_Z(R/Z)|_{Z=0}=h_0^2\mathcal Q.
\end{equation}
The roots of $\mathcal T$ correspond exactly to the secants through ${\mathsf A}$. A secant
with $2r+1$ points gives a root of multiplicity $r$, and
\begin{equation}\label{p1:B:sumr}
 \sum r=s+d.
\end{equation}
Suppose $h_0\ne0$, and let $k_{\mathsf A}$ be the number of these roots.
Proposition~\ref{p1:I:ordinary-bound} gives $\ord h_0\ge r-1$ at every
root and $k_{\mathsf A}\ge s+1$.
Every root of $\mathcal T$ is rational and is a root of $E_{\mathsf A}$. The latter is
nonzero: $X\gamma_2^2=Y\gamma_1^2$ would force $\gamma_1=\gamma_2=0$ by parity, contrary to
primitivity of $g$. Therefore
\begin{equation}\label{p1:B:pencilbound}
 s+1\le k_{\mathsf A}\le2d+1\le s+1.
\end{equation}
Call ${\mathsf A}$ \emph{ordinary} if $L_{\mathsf A}^2\nmid\fr {\mathsf A}{1/2}^tg$, and
\emph{special} otherwise. These terms refer to this algebraic definition.
Let $O$ be the ordinary set. Equality throughout \eqref{p1:B:pencilbound}
shows that, on each ordinary line, $E_{\mathsf A}$ has exactly $\frac{q}{2}+1$ distinct
rational roots, $\gamma_1,\gamma_2$ have no common zero, and
\begin{equation}\label{p1:B:sharp}
 \mathcal T=\kappa_{\mathsf A} h_0 E_{\mathsf A},\qquad
 \ord_{\ell^*}h_0=(|B\cap\ell|-3)/2.
\end{equation}
In particular $g$ has no common zero, even over an extension, on an ordinary line.

\section{The canonical denominator}\label{p1:secmap:11}
The bivariate polynomial $\gamma_1(\gamma_2)_X+\gamma_2(\gamma_1)_X$
(homogeneous in $X$ and $Y$) equals $Y\omega^2$ for a form
$\omega$ of degree $\frac{q}{4}-1$. Restricting \eqref{p1:B:inputdelta} gives
\[
 (c|_{Z=0})\mathcal T=E_{\mathsf A}\omega^2,
 \qquad J|_{Z=0}=h_0\omega^2.
\]
Here $\omega\ne0$: otherwise the coprime forms $\gamma_1,\gamma_2$ are squares,
so every root of $\mathcal T$ has multiplicity at least two, contradicting
$\deg \mathcal T=\frac{3q}{4}<2(\frac{q}{2}+1)$. Cancellation using \eqref{p1:B:sharp} and
\eqref{p1:B:inputJ} therefore yields
\begin{equation}\label{p1:B:Drestrict}
 c|_{L_{\mathsf A}}\ne0,\quad D|_{L_{\mathsf A}}=\sqrt{\kappa_{\mathsf A}}\,h_0,
 \quad \ord_{\ell^*}(D|_{L_{\mathsf A}})=(|B\cap\ell|-3)/2.
\end{equation}
A special line divides $J$, hence divides $cD$. Since $\deg(cD)=\frac{q}{2}-2$,
initially $|O|\ge q+3$.

\section{Special factors, canonical triples, and local multiplicity}\label{p1:secmap:12}
Every ordinary point lies on at least $\frac{q}{4}+2$ three-secants: it has $\frac{q}{2}+1$
secants and their parameters $r$ sum to $\frac{3q}{4}$. A three-secant containing
an ordinary point consists entirely of ordinary points. At its dual
vertex ${\mathsf V}$, $g({\mathsf V})\ne0$ and $D({\mathsf V})\ne0$. In local coordinates the leading
transverse terms of $g$ are two linear forms. Their independence follows
from squarefreeness of the cubic tangent cone of $R$. Thus $J({\mathsf V})\ne0$,
whereas a special line divides $J$.

Each ordinary point has a unique representative $u_{\mathsf A}$ normalized by
\begin{equation}\label{p1:B:canonical}
 L_{\mathsf A}^2\mid \fr{u_{\mathsf A}}{1/2}^tg+(u_{\mathsf A}^tV)D.
\end{equation}
Existence and uniqueness follow from \eqref{p1:B:Drestrict}: if the initial
normal quotient is $\alpha D$, scale the representative by $\alpha^2$.
In fact $\alpha=1/\sqrt{\kappa_{\mathsf A}}$, so the scaling factor
is $1/\kappa_{\mathsf A}\in\F_q$. The canonical vectors already
belong to $\F_q^3$.
Differentiating \eqref{p1:B:canonical} at a triple vertex gives an invertible
semilinear fixed-vector equation on its two-dimensional normal plane.
Its fixed space has binary dimension at most two. To see this, a shortest
linear dependence among binary-independent fixed vectors, normalized to
have one coefficient equal to one, is unchanged by squaring; minimality
then makes every coefficient binary. Therefore the three canonical
nonzero vectors satisfy
\begin{equation}\label{p1:B:triples}
 u_{\mathsf A}+u_{\mathsf A'}+u_{\mathsf A''}=0.
\end{equation}

\begin{proposition}\label{p1:B:support}
A point ${\mathsf A}\in B$ is special if and only if $L_{\mathsf A}\mid D$. Consequently
\[
 |B\setminus O|\le \frac{q}{4}-1,\qquad |O|\ge\frac{5q}{4}+2.
\]
\end{proposition}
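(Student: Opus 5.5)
The plan is to prove the two directions of the equivalence separately, and then read off the counts from $\deg D=\frac q4-1$.

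\emph{Ordinary implies $L_{\mathsf A}\nmid D$.} Let ${\mathsf A}$ be ordinary. By \eqref{p1:B:Drestrict}, $D|_{L_{\mathsf A}}=\sqrt{\kappa_{\mathsf A}}\,h_0$. Here $h_0\ne0$ by the definition of ordinary, and $\kappa_{\mathsf A}\ne0$ because \eqref{p1:B:sharp} expresses the nonzero form $\mathcal T$ as $\kappa_{\mathsf A}h_0E_{\mathsf A}$. Hence $D|_{L_{\mathsf A}}\ne0$, so $L_{\mathsf A}\nmid D$.

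\emph{Special implies $L_{\mathsf A}\mid D$.} Let ${\mathsf A}$ be special and take coordinates with $L_{\mathsf A}=Z$, so that $Z^2\mid g_3$ and $h_0=0$. We already know from Section~\ref{p1:secmap:11} that $Z\mid J=cD^2$, so $Z$ divides $c$ or $D$. It remains to exclude the case $Z\mid c$ with $Z\nmid D$. I would analyse this on $Z=0$ through the relation $\delta g=bg+cV^{[s]}$.
\begin{itemize}
\item Since $g_3=Z^2h'$, the third coordinate reads $\delta g_3=bg_3+cZ^s$.
\item Reducing modulo $Z^2$ gives $\delta g_3\equiv g_3^2\,\partial_Z(Z^2h')\equiv0$, because $\partial_Z(Z^2)=0$ in characteristic two.
\item This is consistent with either alternative, so it does not decide the question.
\end{itemize}
The more useful relations are the first two coordinates restricted to $Z=0$: with $\gamma_i=g_i|_{Z=0}$ they give $\delta_0\gamma_i=b_0\gamma_i+c_0X^s$ and the analogous identity with $Y^s$.

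The aim is then to show that $c_0\ne0$ forces $D|_{Z=0}=0$, or that $c_0=0$ leads to a contradiction. The route I would try first compares degrees through the pencil through ${\mathsf A}$. On a special line, $(R/Z)|_{Z=0}=\mathcal T^2$ still holds, and $\partial_Z(R/Z)|_{Z=0}=h_0^2\mathcal Q=0$. So $\mathcal T$ still encodes the secants with weights summing to $s+d=\frac{3q}{4}$, but the normal-order information is now different. I would then compute $J|_{Z=0}$ via the identity $J|_{Z=0}=h_0\omega^2$ from Section~\ref{p1:secmap:11}, now with $h_0=0$. The derivation there gives $J|_{Z=0}=0$ independently of $c$. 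To go further I would expand $J$ to first order in $Z$ and compare $Z$-adic orders: $\operatorname{ord}_Z J=\operatorname{ord}_Z c+2\operatorname{ord}_Z D$, and I would compute $\operatorname{ord}_Z J$ directly from $g_3=Z^2h'$. The target is to show that $\operatorname{ord}_Z J\ge2$ while $\operatorname{ord}_Z c\le1$, or more generally that $\operatorname{ord}_Z J-\operatorname{ord}_Z c\ge2$; either of these forces $Z\mid D$.

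Concretely, $\det(g,g_X,g_Y)$ has third row $(g_3,\partial_Xg_3,\partial_Yg_3)$, all divisible by $Z^2$. Hence $\det(g,g_X,g_Y)$ is divisible by $Z^2$, so $J$ is divisible by $Z$ and in fact $\operatorname{ord}_Z J\ge1$. This seems too weak on its own, so I expect to need the information about $c$ as well. Using $c=H a_0^2$ when $P=2$, with $H$ squarefree, I would bound $\operatorname{ord}_Z c$, aiming to show it is at most $1$ and odd. Then $\operatorname{ord}_Z J$ must be at least $3$ with the same parity, which would force $\operatorname{ord}_Z D\ge1$. The parity check $\operatorname{ord}_ZJ\equiv\operatorname{ord}_Zc\pmod2$, together with a sharper expansion of the determinant using Euler's relation on $Z=0$, is the step I expect to be the main obstacle.

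\emph{The counts.} Once the equivalence is established, distinct special lines divide $D$, and $\deg D=\frac q4-1$. Hence $|B\setminus O|\le\frac q4-1$, and $|O|=|B|-|B\setminus O|\ge\frac{3q}2+1-\frac q4+1=\frac{5q}4+2$.
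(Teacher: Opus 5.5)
Your proof of the easy direction (ordinary implies $L_{\mathsf A}\nmid D$, via \eqref{p1:B:Drestrict} with $h_0\ne0$ and $\kappa_{\mathsf A}\ne0$) is correct, and so is the final count; both agree with the paper. The gap is the converse, which is the real content of the proposition. You never show that a special $\mathsf A$ has $L_{\mathsf A}\mid D$, and the $Z$-adic route you sketch cannot close it.

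Because $J=cD^2$ identically, $\ord_ZJ-\ord_Zc=2\ord_ZD$. So your target $\ord_ZJ-\ord_Zc\ge2$ is just a restatement of $Z\mid D$, and the congruence $\ord_ZJ\equiv\ord_Zc\pmod 2$ holds automatically and carries no information. Aiming for ``$\ord_Zc=1$'' would need $Z\mid H$, for which you give no argument. Even then you would still need $\ord_ZJ\ge3$, which is again equivalent to $Z\mid D$. The third-row divisibility only gives $\ord_ZJ\ge1$, i.e.\ $Z\mid cD$, which was already noted in Section~\ref{p1:secmap:11}. And, as you observed, $\delta g=bg+cV^{[s]}$ restricted to $Z=0$ gives nothing.

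The missing idea is a count in the pencil through $\mathsf A$, applied to $D|_{L_{\mathsf A}}$ itself. Suppose $\mathsf A$ is special and $D|_{L_{\mathsf A}}\ne0$, a form of degree $\frac q4-1$. Take a secant $\ell\ni\mathsf A$ of size $2r+1$ with $k>0$ ordinary points. Let $v$ be the order of $D|_{L_{\mathsf A}}$ at $\ell^*$, and $r_D\le v$ the full local order of $D$ there.

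\begin{itemize}
\item By \eqref{p1:B:Drestrict}, the restriction of $D$ to each of the $k$ ordinary lines through $\ell^*$ has order exactly $r-1$. So if $v<r-1$, the degree-$r_D$ initial form of $D$ vanishes in $k$ distinct directions, and $k\le r_D\le v$.
\item Triple closure (a three-secant containing an ordinary point is entirely ordinary) gives $r\ge2$. So if $v\ge r-1$, then $v\ge1$ and $k\le2r\le2v+2\le4v$.
\end{itemize}

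Summing over the pencil gives $|O|\le4(\frac q4-1)=q-4$. This contradicts $|O|\ge q+3$, which follows from the very fact you quoted: special lines divide $cD$, and $\deg(cD)=\frac q2-2$.

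So all the ingredients were available to you. What you need is to compare the vertex orders of $D$ along $L_{\mathsf A}$ with its exact orders on the ordinary lines through those vertices, rather than the $Z$-adic orders of $J$ and $c$.
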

\begin{proof}
An ordinary line does not divide $D$ by \eqref{p1:B:Drestrict}. Suppose a
special ${\mathsf A}$ has $D|_{L_{\mathsf A}}\ne0$. At a secant through ${\mathsf A}$ with $k>0$
ordinary points and size $2r+1$, let $v$ be the order of $D|_{L_{\mathsf A}}$
at the dual vertex. Triple closure gives $r\ge2$. Write $r_D$ for the
full local multiplicity of $D$, so $r_D\le v$. If $v<r-1$, the degree-$r_D$
initial form vanishes in the $k$ ordinary directions, by
\eqref{p1:B:Drestrict}; hence $k\le r_D\le v$. If $v\ge r-1$, then
$k\le2r\le2v+2\le4v$. Summing over the pencil gives
$|O|\le q-4$, contrary to the already established $|O|\ge q+3$.
\end{proof}

At any vertex incident with an ordinary line, of multiplicity $2r+1$,
one furthermore has
\begin{equation}\label{p1:B:localmult}
 \ord_{\mathsf V} D=r-1.
\end{equation}
Here is the needed local argument. The coprime transverse leading pair
has degree $r$: locally the initial term of $R$ is
$xB_r^2+yA_r^2$, and its squarefreeness makes $A_r,B_r$ coprime.
Choose infinity outside the incident directions and put
$A(t)=A_r(t,1)$, $B(t)=B_r(t,1)$. At an incident direction $t_i$,
$B(t_i)\ne0$ and $A(t_i)=\sqrt{t_i}B(t_i)$. The polynomial
$\Omega=A'B+AB'=(AB)'$ is a square of degree at most $2r-2$. At an ordinary
direction, \eqref{p1:B:sharp} makes
$A'(t_i)+\sqrt{t_i}B'(t_i)$ nonzero, hence $\Omega(t_i)\ne0$.
At a special direction that derivative is zero, because the normal
component is divisible by the square of its defining line. Thus $\Omega$
is nonzero, and each special direction is a root of multiplicity at
least two. There are at most $r-1$ special directions and at least
$r+2$ ordinary directions. A smaller-degree
initial form of $D$ would vanish in all of these directions, impossible.
This proves \eqref{p1:B:localmult}. In particular every irreducible component
of $D$ meets an ordinary line with its minimum local intersection order\TermNote{exp:p1:intersection}{the multiplicity of the zero after restriction to the line; for two curves it is the usual local count in B\'ezout's theorem}.

\begin{proposition}\label{p1:B:indices}
Put $w_{\mathsf V}=r_{\mathsf V}-1$ at every arrangement vertex of multiplicity
$2r_{\mathsf V}+1$, and $t=\frac{q}{4}-1$. There are nonnegative integers $\epsilon_i$,
zero on ordinary components, such that
\[
 \mathcal W:=\sum_{\mathsf V}w_{\mathsf V}=t+\sum_i\epsilon_i,
 \qquad \mathcal W_i:=\sum_{{\mathsf V}\in L_i}w_{\mathsf V}=t+\epsilon_i.
\]
These identities allow common zeros of $g_1,g_2,g_3$.
\end{proposition}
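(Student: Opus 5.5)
The plan is to argue purely by incidence counting, using only the binary size $d=q/4$ from Corollary~\ref{p1:B:size} and the pencil bounds of Section~\ref{p1:secmap:10}. In particular no statement about common zeros of $g$ is needed, which is why the identities tolerate them. Throughout, $s=q/2$, $|B|=3q/2+1$, and a vertex of multiplicity $2r+1$ has weight $r$ and $w=r-1$.

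\emph{Step 1: the per-component identity.} Fix a component $L_i=L_{\mathsf A}$ and let $k_i$ be its number of vertices, that is, the number of secants through ${\mathsf A}$. The pencil identity \eqref{p1:B:sumr} gives $\sum_{{\mathsf V}\in L_i}r_{\mathsf V}=s+d=3q/4$. Hence
\[
 \mathcal W_i=\frac{3q}{4}-k_i .
\]
Next I would show $k_i\le s+1$ for every component, special or not. The roots of $\mathcal T$ are roots of $E_{\mathsf A}=X\gamma_2^2+Y\gamma_1^2$, and $E_{\mathsf A}\ne0$ by the parity argument with primitivity of $g$; that argument never uses $h_0\ne0$. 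Since $\deg E_{\mathsf A}=2d+1=s+1$, this gives $k_i\le s+1$. Put $\epsilon_i=s+1-k_i\ge0$. Then $\mathcal W_i=3q/4-(q/2+1)+\epsilon_i=t+\epsilon_i$. On an ordinary component, the equality in \eqref{p1:B:pencilbound} gives $k_i=s+1$, so $\epsilon_i=0$.

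\emph{Step 2: the global identity.} Let $N$ be the number of vertices and $n_r$ the number of vertices of weight $r$. The first moment in \eqref{p1:pp:moments} with $P=2$ gives $\sum_{\mathsf V}r_{\mathsf V}=qd+s+d$, so $\mathcal W=qd+s+d-N$. To compute $N$, double count incidences between vertices and components:
\[
 \sum_{\mathsf V}(2r_{\mathsf V}+1)=\sum_ik_i=|B|(s+1)-\sum_i\epsilon_i .
\]
This yields $N=|B|(s+1)-\sum_i\epsilon_i-2(qd+s+d)$, and therefore
\[
 \mathcal W=3(qd+s+d)-|B|(s+1)+\sum_i\epsilon_i .
\]

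\emph{Step 3: arithmetic.} Substitute $d=q/4$, $s=q/2$ and $|B|=3q/2+1$. The first term is $3(q^2/4+3q/4)=3q^2/4+9q/4$, and $|B|(s+1)=3q^2/4+2q+1$. Their difference is $q/4-1=t$, which proves $\mathcal W=t+\sum_i\epsilon_i$.

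I expect the only delicate point to be Step~1 on special components. There one must confirm that the upper bound $k_i\le 2d+1$ from Proposition~\ref{p1:I:ordinary-bound} does not rely on ordinarity. It needs only the nonvanishing of $E_{\mathsf A}$ and the fact that every root of $\mathcal T$ is a rational root of $E_{\mathsf A}$; I would re-check both from the restricted expansion of $R/Z$. The pencil identity \eqref{p1:B:sumr} is just the count of points of $B$ on lines through ${\mathsf A}$, so it holds for every point. Everything else is bookkeeping.
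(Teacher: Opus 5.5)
Your proof is correct, and it takes a more elementary route than the paper.

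\textbf{The per-component step is essentially shared.} The paper also restricts $V\times g^{[2]}$ to $L_i$, where only $E_{\mathsf A}$ survives. It counts the $q/2+1$ zeros of $E_{\mathsf A}$ with multiplicity and gets $k_i+\epsilon_i=q/2+1$, with $\epsilon_i=\sum_{\mathsf V}(e_i-1)+f_i$, i.e.\ excess vanishing order at vertices plus non-vertex zeros. So its $\epsilon_i$ is exactly your $s+1-k_i$. Your check of the delicate point is also right: $E_{\mathsf A}\ne0$ uses only $L_{\mathsf A}\mid{\mathsf A}^{[1/2]t}g$ and primitivity of $g$, never ordinarity.

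\textbf{The routes differ in the global identity.} The paper's steps are:
\begin{enumerate}
\item It proves that the common zeros of $V\times g^{[2]}$ number $q^2/4+q/2+1$ with intersection multiplicity, by B\'ezout after choosing an auxiliary line avoiding them.
\item It computes the local multiplicity $\mu_{\mathsf V}=1+\sum_i(e_i-1)$ at each vertex through a Cramer decomposition with $B(0,0)\ne0$.
\item It shows that smooth zeros are simple, and then eliminates the vertex count $v$.
\end{enumerate}
You instead take $\sum_{\mathsf V}r_{\mathsf V}=qd+s+d$ from the first incidence moment, and $\sum_{\mathsf V}(2r_{\mathsf V}+1)=\sum_i k_i$ from double counting vertex--component incidences.

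Given the summed per-line identity, the paper's B\'ezout total is equivalent to that first-moment identity: subtracting the two gives $2\sum_{\mathsf V}r_{\mathsf V}=q^2/2+3q/2$. So your argument shows that the intersection-theoretic count is not needed for this proposition. It also makes the closing remark about common zeros of $g$ transparent, since you never use anything about them.

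\textbf{What each route buys.} The paper's route gives a geometric reading of $\epsilon_i$ as excess zero multiplicity of $V\times g^{[2]}$ along $L_i$. However, the later applications in Proposition~\ref{p1:B:specialexists} and Theorem~\ref{p1:B:fullsupport} use only the displayed identities, so your version suffices for them.
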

\begin{proof}

The common zeros of the coordinates of $V\times g^{[2]}$ are finite,
by primitivity. Count each with its local intersection multiplicity;
the total is $q^2/4+q/2+1$. To prove this, put $\mathcal G=g^{[2]}$,
$m_{\mathcal G}=q/2$, and choose a line $Z=0$, over a finite extension
if necessary, avoiding these common zeros. The forms $X\mathcal G_3-Z\mathcal G_1$ and $Y\mathcal G_3-Z\mathcal G_2$ have degree
$m_{\mathcal G}+1$ and no common component. At infinity their simultaneous vanishing has the same multiplicity
as $Z=\mathcal G_3=0$: the combination
$X\mathcal G_2-Y\mathcal G_1$ is nonzero there and can be cancelled.
These intersections contribute $m_{\mathcal G}$, so B\'ezout leaves $(m_{\mathcal G}+1)^2-m_{\mathcal G}=m_{\mathcal G}^2+m_{\mathcal G}+1$.
Here $\mathcal G_3|_{Z=0}$ is nonzero: otherwise the induced map of that line
would have a fixed point, contrary to its choice.

This count includes zeros over the algebraic closure. Every such zero
lies on $R$. We now use rational coordinates at a vertex of multiplicity
$h=2r+1$. Put $\mathcal E=(g_1^2+xg_3^2,g_2^2+yg_3^2)$ and let
$F_h$ be the product of the incident line equations through the origin.
For each such line $\ell$, the polynomial
$\ell_x\mathcal E_1+\ell_y\mathcal E_2$ is divisible by $\ell$.
Cramer's rule therefore gives
\[
 \mathcal E=A\cdot(x,y)+B\cdot((F_h)_y,(F_h)_x),\qquad B(0,0)\ne0,
\]
where $A,B$ are rational functions with denominators nonzero at the vertex.
The two displayed column vectors have determinant $F_h$. Each
Cramer numerator is divisible by every incident line, proving the
asserted cancellation of its denominator. Moreover
$R=(y+y^q)\mathcal E_1+(x+x^q)\mathcal E_2$. Thus the determinant
with $(x,y)$ equals $BF_h$, and its initial term is the nonzero
tangent cone of $R$ because $h<q$, proving $B(0,0)\ne0$.
Here and below, $I(F,G)$ denotes the intersection multiplicity of
two curves at the current point. Let $e_i$ be the order at
$\mathsf V$ of the restriction of the vector $V\times g^{[2]}$
to incident line $\ell_i$. With $x=0$ chosen outside all incident
directions, this is $I(\mathcal E_1,\ell_i)$: on $\ell_i$ the two
coordinates of $\mathcal E$ are proportional, with a nonzero constant
of proportionality. The local intersection multiplicity is
\[
 \mu_{\mathsf V}=\sum_i e_i-(h-1)=1+\sum_i(e_i-1).
\]
To verify this, choose $x=0$ outside the incident lines and use
$x\mathcal E_2+y\mathcal E_1=BF_h$:
$I(\mathcal E_1,F_h)=I(\mathcal E_1,x)+I(\mathcal E_1,\mathcal E_2)$,
where the first term on the right is $h-1$.

On each arrangement line put
$\epsilon_i=\sum_{\mathsf V}(e_i-1)+f_i$, where $f_i$ counts all geometric smooth zeros of
$\mathcal E$ on that line. Such smooth zeros are simple: they cannot
be common zeros of $g$, because such a zero makes $\nabla R=0$, and at a
zero with $g\ne0$ the matrix of first derivatives of $\mathcal E$ is a nonzero scalar
matrix. On an ordinary line, \eqref{p1:B:sharp} shows that
$E_{\mathsf A}$ has exactly $q/2+1$ simple roots, all at vertices;
hence $e_i=1$ there and there are no additional smooth zeros.
Thus ordinary lines have $\epsilon_i=0$. If $v$ is the number of
vertices, the local multiplicity formula and the degree-$\frac{q}{2}+1$ count of zeros, with multiplicity,
on each line give, respectively,
\[
 v+\sum_i\epsilon_i=\frac{q^2}{4}+\frac{q}{2}+1,\qquad
 \sum_{\mathsf V}(2r_{\mathsf V}+1)+\sum_i\epsilon_i=(\frac{3q}{2}+1)(\frac{q}{2}+1).
\]
Eliminating $v$, and using $\sum_{{\mathsf V}\in L_i}r_{\mathsf V}=\frac{3q}{4}$ on a single
component, gives
\[
 \mathcal W:=\sum_{\mathsf V}(r_{\mathsf V}-1)=\frac{q}{4}-1+\sum_i\epsilon_i,
 \qquad \mathcal W_i:=\sum_{{\mathsf V}\in L_i}(r_{\mathsf V}-1)=\frac{q}{4}-1+\epsilon_i.
\]
\end{proof}

\begin{proposition}\label{p1:B:specialexists}
For $n\ge3$, there is at least one special point. Thus
$1\le |B\setminus O|\le \frac{q}{4}-1$.
\end{proposition}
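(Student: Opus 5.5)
The plan is to argue by contradiction with the weight identities of Proposition~\ref{p1:B:indices}, so essentially only a double count is needed. Suppose every point of $B$ is ordinary. The integers $\epsilon_i$ in Proposition~\ref{p1:B:indices} vanish on ordinary components, so under this assumption all of them are zero. Then Proposition~\ref{p1:B:indices} gives
\[
 \mathcal W=\sum_{\mathsf V}w_{\mathsf V}=t,\qquad \mathcal W_i=\sum_{\mathsf V\in L_i}w_{\mathsf V}=t\quad\text{for every component }L_i,
\]
where $t=\frac q4-1$.

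Next we sum the second identity over all $|B|=\frac{3q}{2}+1$ components, using Corollary~\ref{p1:B:size}. A vertex of multiplicity $2r_{\mathsf V}+1$ lies on exactly $2r_{\mathsf V}+1$ components, so
\[
 \sum_{\mathsf V}(2r_{\mathsf V}+1)\,w_{\mathsf V}=\Bigl(\frac{3q}{2}+1\Bigr)t=\Bigl(\frac{3q}{2}+1\Bigr)\sum_{\mathsf V}w_{\mathsf V}.
\]
Every line meets $B$ in at most $2d+1=\frac q2+1$ points, by \eqref{p1:pp:linebound} with $d=q/4$. Hence each coefficient satisfies $2r_{\mathsf V}+1\le\frac q2+1<\frac{3q}{2}+1$. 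All $w_{\mathsf V}\ge0$, so the two sides can agree only if $\sum_{\mathsf V}w_{\mathsf V}=0$, that is, only if $t=0$.

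For $n\ge3$ we have $q\ge8$ and therefore $t=\frac q4-1\ge1$. This is the contradiction, so a special point exists. The upper bound $|B\setminus O|\le\frac q4-1$ is already contained in Proposition~\ref{p1:B:support}. The only point needing care is that Proposition~\ref{p1:B:indices} indeed gives $\epsilon_i=0$ on every ordinary line, with no extra smooth zeros; this was established in its proof from \eqref{p1:B:sharp}, so I expect no real obstacle. The hypothesis $n\ge3$ is used exactly to make $t>0$; for $q=4$ one has $t=0$ and no contradiction arises, which is consistent with the separate treatment of $q=4$.
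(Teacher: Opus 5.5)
Your argument is correct and is essentially the paper's: both rest on Proposition~\ref{p1:B:indices} giving $\mathcal W_i=\mathcal W=t$ for every component once all points are ordinary. The paper reads this pointwise: a positive-weight vertex exists since $t>0$, and it would have to lie on all $\frac{3q}{2}+1>q+1$ components, which is impossible. You instead sum over the components and use the secant-size bound $2r_{\mathsf V}+1\le\frac q2+1$, which is the same observation in averaged form.
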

\begin{proof}
If all components were ordinary, Proposition~\ref{p1:B:indices} would give
$\mathcal W=\mathcal W_i=\frac{q}{4}-1$ for every $i$. Therefore each vertex of positive weight
would lie on every arrangement component. Such a vertex exists when
$\frac{q}{4}>1$, but more than $q+1$ distinct rational lines cannot be concurrent.
\end{proof}

\section{The denominator splits completely}\label{p1:secmap:13}
Put $\mathcal S=B\setminus O$, $\sigma=|\mathcal S|$, and $t=\frac{q}{4}-1$.
For now $\frac{q}{4}>1$, so $1\le \sigma\le t$.

First, $D$ has no rational line factor outside the arrangement. Indeed,
if $L_A\mid D$ for $A\notin B$, every line through $A$ containing
an ordinary point has at least five $B$-points, by
\eqref{p1:B:Drestrict}. Write its intersection size as $2r_\ell+1$.
The external pencil gives $\sum_{\ell\ni A}r_\ell=\frac{q}{4}$. Hence
\[
 |O|\le\sum_{\substack{\ell\ni A\\\ell\cap O\ne\varnothing}}
 (2r_\ell+1)\le\frac52\sum_{\ell\ni A}r_\ell
 =\frac{5q}{8},
\]
contrary to $|O|\ge\frac{5q}{4}+2$.

\begin{theorem}\label{p1:B:fullsupport}
There are positive integers $e_{\mathsf A}$ such that, up to a nonzero scalar,
\[
 \boxed{D=\prod_{{\mathsf A}\in\mathcal S}L_{\mathsf A}^{\nu_{\mathsf A}},\qquad
        \nu_{\mathsf A}=2^{e_{\mathsf A}}-1,\qquad \sum_{{\mathsf A}\in\mathcal S}\nu_{\mathsf A}=\frac{q}{4}-1.}
\]
Every arrangement vertex has an ordinary incident line; $g$ has
no common projective zero. At a secant of size $2r+1$,
\begin{equation}\label{p1:B:rweights}
 r=1+\sum_{{\mathsf A}\in\ell\cap\mathcal S}\nu_{\mathsf A}.
\end{equation}

\end{theorem}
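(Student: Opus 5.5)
We prove Theorem~\ref{p1:B:fullsupport} by showing that $D$ has no residual factor beyond its special-line factors. By Proposition~\ref{p1:B:support}, every special line divides $D$, and no ordinary line does. The preceding paragraph shows that no non-arrangement rational line divides $D$. We therefore write
\[
 D=\prod_{{\mathsf A}\in\mathcal S}L_{\mathsf A}^{\nu_{\mathsf A}}\cdot G,
\]
where $G$ has no rational line factor, and put $h_G=\deg G$. The goal is $h_G=0$. The weights are then read off from \eqref{p1:B:localmult}, and the exponents $\nu_{\mathsf A}$ from a separate local calculation.

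\textbf{Step 1: a weight count along each ordinary line.} Restrict to an ordinary line $L_a$. By \eqref{p1:B:Drestrict}, $D|_{L_a}$ has degree $t=q/4-1$, and its zeros are exactly the vertices of weight $r\ge2$, with order $r-1$. Each special factor $L_{\mathsf A}$ contributes $\nu_{\mathsf A}$ at the vertex $L_a\cap L_{\mathsf A}$, and $G$ contributes its intersection with $L_a$. Hence at a vertex $V$ on an ordinary line,
\[
 r_V-1=\sum_{{\mathsf A}\in\mathcal S,\ V\in L_{\mathsf A}}\nu_{\mathsf A}+I_V(G,\ell),
\]
where by \eqref{p1:B:localmult} the intersection number $I_V(G,\ell)$ equals the full local order of $G$ at $V$.

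\textbf{Step 2: comparison with Proposition~\ref{p1:B:indices}.} Restrict $D$ to each special line $L_i$. On $L_i$ the other special factors and $G$ give zeros whose total is at most $\deg D-\nu_i$, by B\'ezout. On the other hand, $\mathcal W_i=t+\epsilon_i\ge t$. For concurrent special lines the count forces every special line through a common point, and there $D$ has full-degree multiplicity. When there is only one special line, we use two distinct points on it. In both cases $G$ is forced to be constant. In the nonconcurrent case we sum $\mathcal W_i$ over the special lines and compare with the global $\mathcal W=t+\sum\epsilon_i$. A vertex lying on several special lines is counted repeatedly in the sum of the $\mathcal W_i$ but only once in $\mathcal W$. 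A residual $G$ contributes weight at vertices, through its local orders from Step 1, that is too small relative to its degree: the deficit per special line is $h_G/(\sigma-1)$. This contradicts the equalities unless $h_G=0$. I expect the main obstacle to be this global bookkeeping. One must control the vertices of $G$ that lie on no ordinary line, and handle the possibility that $\epsilon_i>0$ arises from smooth zeros.

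\textbf{Step 3: the remaining assertions.} Once $G=1$, every vertex of weight at least two lies on a special line, and every zero of $D$ is accounted for along ordinary lines. To show that every vertex has an ordinary incident line, suppose a vertex lies only on special lines. Such a vertex has at least $3$ special lines through it, which over-consumes the degree $t$ of $D$ in the sum $\sum\nu$. Formula \eqref{p1:B:rweights} then follows from Step 1 with $I_V(G,\ell)=0$. The absence of common zeros of $g$ follows because on ordinary lines $g$ has none, by \eqref{p1:B:sharp}, and every common zero lies on the arrangement at a vertex, which now has an ordinary incident line.

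\textbf{Step 4: the form $\nu_{\mathsf A}=2^{e_{\mathsf A}}-1$.} Consider a special line $L_{\mathsf A}=Z$. From $Z^{\nu}\,\|\, D$ and the identity $J=cD^2$, the order of vanishing of the normal component along $Z$ has a binary structure: Frobenius pairing of residues, as in Lemma~\ref{p1:pp:normalization}, gives a normal order of the form $2^{e}$. Hence $\nu+1=2^e$. Summing the weights over one ordinary line gives $\sum\nu_{\mathsf A}=t$.
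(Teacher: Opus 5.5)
Your overall architecture is the paper's: split off the special line factors, treat concurrent special lines by full-degree multiplicity, and remove a residual $G$ in the nonconcurrent case by a B\'ezout deficit of $h_G/(\sigma-1)$. However, Step~4 and the all-special-vertex part of Step~3 are argued incorrectly, and Step~2 is only an outline.

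\emph{Step~4.} The residue pairing in Lemma~\ref{p1:pp:normalization} says nothing about normal orders being powers of two. Moreover, the exact normal order $\alpha$ of ${\mathsf A}^{[1/2]t}g$ along $L_{\mathsf A}$ is only known to satisfy $\alpha\ge\nu_{\mathsf A}+1$ (Proposition~\ref{p1:B:normalhigher}, which is itself deduced from this theorem). So even a power-of-two statement about $\alpha$ would not yield $\nu_{\mathsf A}+1=2^{e_{\mathsf A}}$. The paper's argument is a count. Once \eqref{p1:B:rweights} holds, sum it over the $k_i$ vertices of a special line $L_i$. Every other special line meets $L_i$ in exactly one vertex, and $\sum_{{\mathsf V}\in L_i}r_{\mathsf V}=3q/4$. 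Hence $3q/4=k_i(\nu_i+1)+(q/4-1-\nu_i)$, that is, $(k_i-1)(\nu_i+1)=q/2$, so $\nu_i+1$ is a power of two.

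\emph{Step~3.} A vertex lying only on special lines does not ``over-consume'' the degree. Nothing in $\sum\nu_{\mathsf A}=t$ excludes three concurrent special lines (say with $\nu=1$) once $t\ge3$. What excludes such a vertex is a strict inequality in the weight count. All $m_{\mathsf V}=j_{\mathsf V}$ incident lines are special there, so $w_{\mathsf V}=(j_{\mathsf V}-3)/2<j_{\mathsf V}\le\ord_{\mathsf V}D$. After $h_G=0$, the weight chain described next becomes an equality, which forces $w_{\mathsf V}=\ord_{\mathsf V}D$ at every vertex with $j_{\mathsf V}\ge2$. In the concurrent cases, two special lines meet only at the concurrency point, which has weight $t$ and hence ordinary lines through it.

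\emph{Step~2.} You describe the global bookkeeping rather than carry it out, and the two obstacles you flag are exactly the points to settle. The terms $\epsilon_i$ cancel exactly. By Proposition~\ref{p1:B:indices}, $\sum_{i\in\mathcal S}\mathcal W_i-\mathcal W=(\sigma-1)t$, while the left side equals $\sum_{\mathsf V}(j_{\mathsf V}-1)w_{\mathsf V}$. Hence
\[
 \sum_{j_{\mathsf V}\ge2}(j_{\mathsf V}-1)w_{\mathsf V}
 =(\sigma-1)t+\sum_{j_{\mathsf V}=0}w_{\mathsf V}\ge(\sigma-1)t .
\]
Vertices on no ordinary line are controlled by $w_{\mathsf V}\le\ord_{\mathsf V}D$, valid at every vertex: equality holds by \eqref{p1:B:localmult} when an ordinary line is incident, and the inequality is strict otherwise, as above.

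The line factors then contribute exactly $\sum_{j_{\mathsf V}\ge2}(j_{\mathsf V}-1)\sum_{i:{\mathsf V}\in L_i}\nu_i=(\sigma-1)(t-h_G)$. B\'ezout on each special line gives $\sum_{\mathsf V}j_{\mathsf V}\ord_{\mathsf V}G\le\sigma h_G$. Together with $j_{\mathsf V}\le\sigma-1$, this bounds $\sum_{j_{\mathsf V}\ge2}(j_{\mathsf V}-1)\ord_{\mathsf V}G$ by $(\sigma-1)h_G-h_G/(\sigma-1)$, and that is where $h_G=0$ comes from.

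In the concurrent case with $\sigma\ge2$, concurrency is the case hypothesis, not a consequence of the count. What the count gives is that the weight outside the union of the special lines equals $(\sigma-1)(w_{\mathsf V}-t)$. Together with $w_{\mathsf V}\le t$, this forces $w_{\mathsf V}=t$. Moreover, $\ord_{\mathsf V}D=t=\deg D$ only makes $D$ a product of lines through ${\mathsf V}$, possibly defined over an extension, so it does not yet make your $G$ constant. You still need an ordinary line avoiding ${\mathsf V}$, on which $D$ splits over $\F_q$ by \eqref{p1:B:Drestrict}, and projection from ${\mathsf V}$ to see that all these lines are rational, hence special.
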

\begin{proof}
If $q=4$, Proposition~\ref{p1:B:support} gives $\mathcal S=\varnothing$
and $D$ is a nonzero constant; the assertions hold with the empty
product. Assume $q\ge8$, so Proposition~\ref{p1:B:specialexists} gives
$\sigma\ge1$. We first handle the concurrent configurations. If $\sigma=1$, write $L$
for the special component. Proposition~\ref{p1:B:indices} gives $\mathcal W=\mathcal W_L$,
so every positive weight lies on $L$. Each ordinary component has
total weight $t$ and meets $L$ once. Its intersection with $L$ thus
has weight $t$, or arrangement multiplicity $\frac{q}{2}+1$. There are $\frac{3q}{2}$
ordinary components, with $\frac{q}{2}$ through each such vertex, giving
exactly three vertices on $L$. By \eqref{p1:B:localmult}, $D$ has
multiplicity $t=\deg D$ at each. Having this multiplicity at two
distinct points forces $D$ to be a scalar multiple of $L^t$:
in coordinates placing the points at $(1,0,0),(0,1,0)$, the two
multiplicity conditions leave only the monomial $Z^t$.

Suppose next that $\sigma\ge2$ special components are concurrent at ${\mathsf V}$.
Proposition~\ref{p1:B:indices} and inclusion--exclusion make the weight
outside their union equal to $(\sigma-1)(w_{\mathsf V}-t)$. Since $w_{\mathsf V}\le t$,
nonnegativity forces $w_{\mathsf V}=t$. There are $\frac{q}{2}+1$ components through
${\mathsf V}$, of which at most $\sigma\le \frac{q}{4}-1$ are special. Thus
\eqref{p1:B:localmult} gives $\ord_{\mathsf V}D=t=\deg D$. In coordinates
${\mathsf V}=(0,0,1)$, $D$ depends only on $X,Y$, so all its geometric line
factors pass through ${\mathsf V}$. Choose an ordinary component avoiding ${\mathsf V}$;
one exists since there are more than $q+1$ ordinary components.
Its restriction of $D$ splits over $\F_q$ by \eqref{p1:B:Drestrict}.
Projection from ${\mathsf V}$ then makes all the pencil factors rational.
The preceding exclusion of outside rational factors shows that they
are precisely special components. In either concurrent case every
vertex has an ordinary incident component: for $\sigma\ge2$, the only
possible all-special vertex was ${\mathsf V}$ itself.

It remains to consider nonconcurrent special components, so $\sigma\ge3$.
Proposition~\ref{p1:B:support} permits the factorization
$D=(\prod_{i=1}^\sigma L_i^{\nu_i})G$, where $G$ has no special line
factor. Set $h_G=\deg G$, so $\sum\nu_i=t-h_G$. At a vertex ${\mathsf V}$ let
$j_{\mathsf V}$ be the number of special incident lines and $w_{\mathsf V}=r_{\mathsf V}-1$.
The global index identities in Proposition~\ref{p1:B:indices} give
\begin{equation}\label{p1:B:globalweight}
 \sum_{j_{\mathsf V}\ge2}(j_{\mathsf V}-1)w_{\mathsf V}
       =(\sigma-1)t+\sum_{j_{\mathsf V}=0}w_{\mathsf V}\ge(\sigma-1)t.
\end{equation}
Always $w_{\mathsf V}\le\ord_{\mathsf V}D$ at such a vertex. Equality holds when an
ordinary line is incident, by \eqref{p1:B:localmult}. Otherwise all
$m_{\mathsf V}=j_{\mathsf V}$ incident lines are special, and
$w_{\mathsf V}=(j_{\mathsf V}-3)/2<j_{\mathsf V}\le\ord_{\mathsf V}D$.

The contribution of the known line factors is exactly
\[
 \sum_{j_{\mathsf V}\ge2}(j_{\mathsf V}-1)\sum_{i:{\mathsf V}\in L_i}\nu_i
     =(\sigma-1)\sum_i\nu_i=(\sigma-1)(t-h_G).
\]
Indeed each other special line is counted once on any fixed $L_i$;
concurrency is counted by $j_{\mathsf V}-1$. B\'ezout on the $\sigma$ special lines
also gives $\sum j_{\mathsf V}\ord_{\mathsf V}G\le \sigma h_G$. They are not concurrent, so
$j_{\mathsf V}\le \sigma-1$, and therefore
\[
 \sum_{j_{\mathsf V}\ge2}(j_{\mathsf V}-1)\ord_{\mathsf V}G
 \le\frac{\sigma-2}{\sigma-1}\sigma h_G=(\sigma-1)h_G-\frac{h_G}{\sigma-1}.
\]
Combining these inequalities with \eqref{p1:B:globalweight} yields
$(\sigma-1)t\le(\sigma-1)t-h_G/(\sigma-1)$. Hence $h_G=0$.

Equality now holds throughout. There are no all-special vertices,
and $w_{\mathsf V}=0$ whenever $j_{\mathsf V}=0$.

We have therefore proved, in all configurations, that $D$ is supported
on the special components and every vertex has ordinary incidence.
Every common zero of $g$ is a vertex of $R$ and would have only special
incident lines; hence none exists. Formula \eqref{p1:B:localmult} proves
\eqref{p1:B:rweights} at every vertex.

Fix a special line $L_i$ and let $k_i$ be its number of vertices.
Summing \eqref{p1:B:rweights} along it, using $\sum r_{\mathsf V}=\frac{3q}{4}$, gives
\[
 \frac{3q}{4}=k_i(\nu_i+1)+(\frac{q}{4}-1-\nu_i),\qquad
 \boxed{(k_i-1)(\nu_i+1)=\frac{q}{2}.}
\]
Thus $\nu_i+1$ is a power of two, and $\nu_i$ is odd.

\end{proof}

\section{Higher normal multiplicities}\label{p1:secmap:14}
\begin{proposition}\label{p1:B:normalhigher}
For a special ${\mathsf A}$ one has
\[
 L_{\mathsf A}^{\nu_{\mathsf A}+1}\mid {\mathsf A}^{[1/2]t}g.
\]
\end{proposition}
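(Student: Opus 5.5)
We work in coordinates with $L_{\mathsf A}=Z$, so the claim becomes $Z^{\nu+1}\mid g_3$, where $\nu=\nu_{\mathsf A}$. Write $g_3=Z^kh$ with $Z\nmid h$. Specialness gives $k\ge2$. We assume $k\le\nu$ and look for a contradiction. The natural tool is the local information at vertices on $Z=0$, which we now know completely. By Theorem~\ref{p1:B:fullsupport}, every vertex $\mathsf V$ on $L_{\mathsf A}$ has an ordinary incident line. Equation~\eqref{p1:B:rweights} then gives $r_{\mathsf V}-1=\nu+\sum_{\mathsf A'\in\ell\cap\mathcal S,\,\mathsf A'\ne\mathsf A}\nu_{\mathsf A'}$. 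By~\eqref{p1:B:localmult}, $\ord_{\mathsf V}D=r_{\mathsf V}-1$, and the factorization of $D$ shows that this order is attained by the line factors alone.

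\textbf{Step 1 (local form).} We take the local expansion at a vertex $\mathsf V$ of multiplicity $2r+1$ on $Z=0$, as in the proof of~\eqref{p1:B:localmult}. The leading transverse pair $A_r,B_r$ of $g$ is coprime, and $\Omega=(AB)'$ is a nonzero square. At the special direction $t_{\mathsf A}$, the vanishing order of $A'+\sqrt{t}B'$ is governed by the normal order $k$ of $g_3$. Concretely, the derivative of the normal component along $L_{\mathsf A}$ vanishes to order $k-1$. This should give $\ord_{t_{\mathsf A}}\Omega\ge 2(k-1)$, or something of that form.

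\textbf{Step 2 (degree count).} Next we count zeros of $\Omega$, which has degree at most $2r-2$. It is nonzero at ordinary directions. At each special direction $\mathsf A'$ it has a root of multiplicity tied to that direction's normal order. We then compare $\Omega$ with the initial form of $D$, which is $\prod L_{\mathsf A'}^{\nu_{\mathsf A'}}$. The goal is an identity of the shape $\Omega\doteq(\text{initial form of }D)^2$, up to the local normalization. The canonical normalization~\eqref{p1:B:canonical}, applied along ordinary incident lines, relates the order of $D$ to $\Omega$ at the ordinary directions. Interpolation over the $r+2$ or more ordinary directions then forces this identity. If it holds, the multiplicity of $t_{\mathsf A}$ in $\Omega$ is exactly $2\nu$, which forces the order of the normal component to be at least $\nu+1$ at that vertex.

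\textbf{Step 3 (globalize).} Step 2 yields only that $h|_{Z=0}$ vanishes to high order at each vertex on $L_{\mathsf A}$, with order at least $\nu+1-k$ in the appropriate sense. We therefore sum over the $k_{\mathsf A}$ vertices on $L_{\mathsf A}$ and use $(k_{\mathsf A}-1)(\nu+1)=q/2$ together with $\deg h=d-k=q/4-k$. If $k\le\nu$, the required vanishing at all $k_{\mathsf A}$ vertices totals more than $\deg h|_{Z=0}$. This contradicts $Z\nmid h$. Alternatively, we could run an induction on $k$: at each stage apply the restriction argument of Section~\ref{p1:secmap:10} to the coefficient of $Z^k$ in the expansion of $R/Z$, as in~\eqref{p1:B:normal}, now carried to higher $Z$-order.

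\textbf{Main obstacle.} The hard step is Step 2: extracting exact local multiplicities at the special direction from the vertex expansion in characteristic two, where derivatives of squares vanish. If the differential approach loses information, the fallback is the induction on $k$. Expanding $R/Z$ to order $Z^{k}$, the coefficient is $h_0^2\mathcal Q$ times a power of $Z$, and it must vanish to the order forced by the $Pr+1$ incident factors. This mimics Proposition~\ref{p1:I:ordinary-bound} with $Z^{2k-1}$ in place of $Z^{P-1}$.
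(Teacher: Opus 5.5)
\textbf{There is a genuine gap in your main line.} Steps~1--3 do not yet prove anything.

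\emph{Step~2.} This step only states a goal. The identity $\Omega\doteq(\text{initial form of }D)^2$ is not derived, and nothing established in the paper forces it. Locally, the paper only knows $d_0\mid\Omega$ (Section~\ref{p1:secmap:15}), and it obtains that divisibility \emph{from} the present proposition. Running the local analysis in the reverse direction therefore has no foundation.

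\emph{Step~3.} Even granting a local statement, the count does not close with the bound you state. If $\ord h_0\ge\nu+1-k$ at each of the $k_{\mathsf A}$ vertices, the total $k_{\mathsf A}(\nu+1-k)$ need not exceed $\deg h_0=q/4-k$. At $k=\nu$ this would require $k_{\mathsf A}+\nu>q/4$, which does not follow from $(k_{\mathsf A}-1)(\nu+1)=q/2$. For instance, $q=64$, $\nu=7$, $k_{\mathsf A}=5$ gives $5<9$. What is needed is the per-vertex bound $r_{\mathsf V}-k$, together with the pencil identity $\sum_{\mathsf V}r_{\mathsf V}=3q/4$ from \eqref{p1:B:sumr}. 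Your Step~3 never uses that identity.

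\textbf{Your fallback is exactly the paper's proof.} It is complete as it stands, with no induction and no differential analysis, so make it the argument. With $g_3=Z^kh$ and $h_0=h|_{Z=0}\ne0$,
\[
 R/Z=(X^sg_2+Y^sg_1)^2+Z^{2k-1}h^2\mathcal Q+Z^{q-1}(Xg_2^2+Yg_1^2).
\]
In characteristic two the first summand and $h^2$ contain only even powers of $Z$. Hence the coefficient of $Z^{2k-1}$ (not of $Z^k$) is exactly $h_0^2\mathcal Q$.

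At a vertex of parameter $r$ on $Z=0$, the quotient $R/Z$ has $2r$ incident factors $\ell_b+\lambda_bZ$, not $Pr+1$. At most $2k-1$ of them contribute a normal term, so this coefficient has order at least $2r-2k+1$ there. Since $\mathcal Q$ has a simple zero at the vertex, $\ord h_0\ge r-k$.

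By \eqref{p1:B:rweights}, every secant through $\mathsf A$ has $r\ge\nu+1$. So if $k\le\nu+1$, all these local bounds are nonnegative, and summing them gives
\[
 \frac q4-k\ \ge\ \frac{3q}{4}-kk_{\mathsf A},
 \qquad\text{that is,}\qquad
 k(k_{\mathsf A}-1)\ge \frac q2=(k_{\mathsf A}-1)(\nu+1).
\]
Hence $k\ge\nu+1$.
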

\begin{proof}
Take $L_{\mathsf A}=Z$ and write $g_3=Z^\alpha h$ with $h_0=h|_{Z=0}\ne0$.
Here $\alpha\le \frac{q}{4}$: absence of common zeros prevents $g_3$ from vanishing
identically, since the two remaining positive-degree forms would
have a common projective zero. The coefficient of
$Z^{2\alpha-1}$ in $R/Z$ is $h_0^2\mathcal Q$: the other terms below degree
$q-1$ in $Z$ are squares. At a secant root of parameter $r$, complete
splitting gives order at least $2r-(2\alpha-1)$ for this coefficient.
Thus $\ord h_0\ge r-\alpha$ whenever $r\ge\alpha$.

If $\alpha\le\nu_{\mathsf A}+1$, all secants through ${\mathsf A}$ have
$r\ge\nu_{\mathsf A}+1\ge\alpha$ by \eqref{p1:B:rweights}. Summing gives
$\frac{q}{4}-\alpha\ge\frac{3q}{4}-\alpha k_{\mathsf A}$, or
$\alpha(k_{\mathsf A}-1)\ge\frac{q}{2}$. The exact pencil formula in
Theorem~\ref{p1:B:fullsupport} forces $\alpha\ge\nu_{\mathsf A}+1$. If
$\alpha>\nu_{\mathsf A}+1$ the claim is immediate.

\end{proof}

\section{Every secant satisfies the canonical sum relation}\label{p1:secmap:15}
For every secant $\ell$ one now has
\begin{equation}\label{p1:B:allsecants}
 \boxed{\sum_{{\mathsf A}\in O\cap\ell}u_{\mathsf A}=0.}
\end{equation}
Here is the full local proof. Put the dual vertex at $(0,0,1)$.
Choose the direction at infinity outside the incident lines, and
write the other directions as $x+t_i y=0$. Write $A(t),B(t)$ for
the coprime transverse leading forms of $g$, of degree $r$,
evaluated at $(t,1)$, and $d_0(t)$ for the degree-$(r-1)$ initial
form of $D$ evaluated there. Set
$E(t)=tB(t)^2+A(t)^2$ and $\Omega=A'B+AB'$. Full splitting makes $d_0$
exactly the product of the special direction forms to their weights
$\nu_{\mathsf A}$. By Proposition~\ref{p1:B:normalhigher}, the corresponding normal
combination of $A,B$ is divisible by the direction form to order at
least $\nu_{\mathsf A}+1$. The derivative combination $A'B+AB'$ vanishes to order at least $\nu_{\mathsf A}$;
since $\Omega$ is a square and $\nu_{\mathsf A}$ is odd, this is at least
$\nu_{\mathsf A}+1$. Thus $C=\Omega/d_0$ is polynomial, has degree at most $r-1$,
and vanishes at every special direction.

At an ordinary root $t_i$ the canonical normal, computed from
\eqref{p1:B:canonical}, is $u_i=\lambda_i(1,t_i)$ in the two-dimensional
normal plane. Extract the degree-$r$ initial part of
\eqref{p1:B:canonical}, divide by $x+t_i y$, and evaluate at this
direction. Since $A(t_i)=\sqrt{t_i}B(t_i)$, this gives
\[
 \sqrt{\lambda_i}
 =\frac{A'(t_i)+\sqrt{t_i}B'(t_i)}{d_0(t_i)}
 =\frac{\Omega(t_i)}{B(t_i)d_0(t_i)}.
\]
Consequently
\[
 u_i=\frac{C(t_i)^2}{E'(t_i)}(1,t_i).
\]
The polynomial $E$ is squarefree of degree $2r+1$, and
$E'=B^2$. Lagrange interpolation says the sums of these two
coordinates over all its roots are zero, since the numerators have
degrees at most $2r-2$ and $2r-1$. Special roots contribute zero,
proving \eqref{p1:B:allsecants} for both parities of $r$.

\section{A general code of secant relations}\label{p1:secmap:16}
Let $\mathcal C$ be the binary code of subsets $C\subseteq O$ meeting
every secant of $B$ evenly\TermNote{exp:p1:code}{identify a subset with its binary indicator vector; addition is symmetric difference}. Formula \eqref{p1:B:allsecants} places all
binary coordinate words of the canonical representatives in
$\mathcal C$. A coordinate word records the values of one $\F_2$-linear
coordinate on all canonical representatives. The dimension of $\mathcal C$
is thus an upper bound for their binary span dimension.

\begin{proposition}\label{p1:B:code}
Assume $q\ge8$. Every nonempty $C\in\mathcal C$ has even size $k$ with
\[
 \frac{q}{2}+2\le k\le q.
\]
If $S_C=\prod_{{\mathsf A}\in C}L_{\mathsf A}$, there is a homogeneous polynomial $A_C$
of degree $k/2-\frac{q}{4}-1$ such that
\[
\nabla S_C=A_C^2(V\times g^{[2]}),\qquad \delta S_C=0.
\]
Moreover, if $\sigma=|\mathcal S|$, then
\[
 \prod_{{\mathsf A}\in\mathcal S}L_{\mathsf A}^{(\nu_{\mathsf A}-1)/2}\mid A_C,
 \qquad k\ge\frac{3q}{4}+1-\sigma.
\]
\end{proposition}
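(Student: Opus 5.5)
We first show that a nonempty $C\in\mathcal C$ has even size with $k\le q$. A pencil of lines through a point of $B$ outside $C$ gives parity: every line through that point meets $C$ evenly. Summing over this pencil gives $|C|$ even. For the upper bound $k\le q$, a line through a point outside $B$ meets $C$ evenly, and we expect a similar pencil count to supply it. The alternative is to first obtain the polynomial identity below and then read off the degree bound $\deg A_C\ge0$ in reverse. In any case, the lower bound $k\ge q/2+2$ will follow from $\deg A_C\ge0$.

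\textbf{The polynomial identity.} Since $C$ meets every secant evenly, $S_C$ vanishes to even order at each arrangement vertex along each incident direction. More precisely, on each line $L_{\mathsf A}$ with ${\mathsf A}\in C$, the restriction of $S_C/L_{\mathsf A}$ is, up to scalar, a square times a product over secants. We aim to show that $\nabla S_C$ lies in the module of vectors annihilating both $V$ and $g^{[2]}$. Euler gives $V^t\nabla S_C=kS_C=0$ in characteristic two, because $k$ is even. For $\delta S_C=0$, we would write
\[
\delta S_C=\sum_{{\mathsf A}\in C}(\mathsf A^{[1/2]t}g)^2\,S_C/L_{\mathsf A}.
\]
Each summand is divisible by $L_{\mathsf A}$ except in the omitted factor. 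We then argue that $\delta S_C$ is divisible by all $L_{\mathsf A}$, ${\mathsf A}\in C$, and exceeds no admissible degree. Concretely, $\delta S_C$ has degree $k-1+q/2-1$. On $L_{\mathsf A}$ only the term ${\mathsf A}$ survives after factoring out the normal $L_{\mathsf A}h_{\mathsf A}$. We would then use the even-intersection property to show that $\delta S_C/S_C$ is a polynomial of degree $q/2-1<d$ lying in $\mathcal M_1$-type constraints, which forces it to be zero.

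\textbf{Reduction to $A_C$.} Once $\nabla S_C$ annihilates $V$ and $g^{[2]}$, primitivity of $V\times g^{[2]}$ (of degree $q/2+1$) gives $\nabla S_C=F\,(V\times g^{[2]})$, where $\deg F=k-q/2-2$. All partial derivatives of $\nabla S_C$'s source coincide in characteristic two, so $F$'s derivatives satisfy the analogous closedness condition. Taking the curl-type identity $\nabla\times\nabla S_C=0$ then shows that $F$ is a square, $F=A_C^2$. This uses that $V\times g^{[2]}$ has zero curl, which follows from $g^{[2]}$ having square coordinates. Nonnegativity of $\deg A_C=k/2-q/4-1$ gives $k\ge q/2+2$.

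\textbf{Divisibility by special lines.} This is the main obstacle. Let ${\mathsf A}\in\mathcal S$ with $L_{\mathsf A}=Z$. Since ${\mathsf A}\notin C$, $S_C$ is a unit along $Z$. Proposition~\ref{p1:B:normalhigher} gives $Z^{\nu_{\mathsf A}+1}\mid g_3$. Expanding $V\times g^{[2]}$ near $Z=0$, we compare orders: $\nabla S_C$ restricted to $Z=0$ is generic, while $V\times g^{[2]}$ has a component vanishing only to order related to $\nu$. Using the third coordinate $Xg_2^2-Yg_1^2$ together with the first two, which carry $Z g_3^2$-type terms, we aim to show that $A_C^2$ must absorb $Z^{\nu_{\mathsf A}-1}$. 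Square parity then gives $Z^{(\nu_{\mathsf A}-1)/2}\mid A_C$. Counting degrees with $\sum\nu_{\mathsf A}=q/4-1$ yields
\[
k/2-q/4-1\ge(q/4-1-\sigma)/2,
\]
which is exactly $k\ge 3q/4+1-\sigma$. I expect this local order comparison at special lines, possibly also involving the vertex-wise interpolation of Section~\ref{p1:secmap:15}, to be the delicate step.
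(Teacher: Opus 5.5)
Your parity argument (pencil through a point of $B\setminus C$, whose tangents miss $C$) is fine. However, there are genuine gaps at each of the three substantive steps.

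\textbf{The gradient formula.} Your expansion actually gives $\delta S_C=S_C\,T$ with $T=\sum_{\mathsf A\in C}L_{\mathsf A}h_{\mathsf A}^2$ (where $\mathsf A^{[1/2]t}g=L_{\mathsf A}h_{\mathsf A}$), a scalar of degree $2d-1=q/2-1$. The step meant to kill $T$ fails for two reasons. First, $q/2-1<d=q/4$ is false for $q\ge8$. Second, $\mathcal M_1$ is a family of vectors, so it imposes no constraint on a scalar.

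What is needed is that, for $\mathsf B\in B\setminus C$, the restriction of $S_C$ to $L_{\mathsf B}$ is a square, since its root multiplicities are the even numbers $|C\cap\ell|$. In coordinates $L_{\mathsf B}=Z$ one has $Z\mid g_3$, so $\delta S_C|_{Z=0}$ involves only the $X$- and $Y$-derivatives of this square and vanishes. Hence each of these $|B|-k$ lines divides $T$, and $|B|-k\ge q/2+1>\deg T$ forces $T=0$, \emph{provided} $k\le q$.

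Your curl identity has a similar gap. It only yields $\nabla F\times(V\times g^{[2]})=0$, i.e.\ that $\nabla F$ is a polynomial multiple of $V\times g^{[2]}$. This alone does not make $F$ a square: $F=S_C$ has this property by the proposition itself. To get $\nabla F=0$ you need $\deg\nabla F=k-q/2-3<q/2+1$, which again comes from $k\le q$.

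With these two repairs your route becomes a legitimate alternative. The paper argues differently. It gets $\nabla S_C=V\times h^{[2]}$ with $\deg h=k/2-1$ directly from monomial parity, as in Lemma~\ref{p1:pp:normalization}. It then uses the same squareness of restrictions to show $h\in\mathcal M_1$. Finally, $\deg h\le q/2-1$ lies below the second basis degree $q/2$, which forces $h=A_Cg$; the square multiplier and $\delta S_C=0$ come out together.

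\textbf{The bound $k\le q$ and the special lines.} Both repairs, and the paper's argument, depend on $k\le q$. So it cannot be read off ``in reverse'' from the identity, and $\deg A_C\ge0$ only gives the lower bound. Your suggested pencil through an external point also fails: such a point lies on tangents of $B$ at points of $C$, and these meet $C$ once. The correct count uses the pencil through a point of $C$. Each of its $q/2+1$ secants, of size $2r+1$, carries at most $2r-1$ further points of $C$, and $\sum r=3q/4$, so $k-1\le q-1$.

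For the special divisibility you give no argument, and the heuristic points the wrong way. On a special line $Z=0$ the restriction of $\nabla S_C$ is normal to the line, not generic, again because $S_C|_{Z=0}$ is a square. Moreover, Proposition~\ref{p1:B:normalhigher} is not the input. The paper's argument runs as follows.
\begin{itemize}
\item Set $v=\ord_ZA_C$. The gradient formula identifies the first odd $Z$-coefficient of $S_C$ as $((A_C/Z^v)|_{Z=0})^2E_{\mathsf A}$.
\item $E_{\mathsf A}$ has simple zeros at the $k_{\mathsf A}=1+q/(2(\nu_{\mathsf A}+1))$ vertices on the line, by Theorem~\ref{p1:B:fullsupport} and the absence of common zeros of $g$.
\item At a vertex whose secant contains $c_{\mathsf V}$ points of $C$, at most $2v+1$ of those line factors can contribute their $Z$-term. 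So that coefficient has order at least $c_{\mathsf V}-2v-1$ there.
\item Summing over the vertices with $\sum c_{\mathsf V}=k$ gives $(v+1)(k_{\mathsf A}-1)\ge q/4$, i.e.\ $v\ge(\nu_{\mathsf A}-1)/2$.
\end{itemize}
Once this divisibility is established, your final degree count giving $k\ge 3q/4+1-\sigma$ is correct.
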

\begin{proof}
Fix ${\mathsf A}\in C$. Each of its $\frac{q}{2}+1$ secants has at least one further
$C$-point, proving the lower bound. A secant of size $2r+1$ has at
most $2r$ points of $C$, and hence at most $2r-1$ other $C$-points.
Since $\sum r=\frac{3q}{4}$, this gives
$k-1\le\frac{3q}{2}-(\frac{q}{2}+1)=q-1$.
The pencil through any special point partitions $C$ into even parts,
so $k$ is even.

Parity of monomial exponents in the even-degree form $S_C$ gives
$\nabla S_C=V\times h^{[2]}$ for a tuple $h$ of degree $k/2-1$.
For every ${\mathsf A}\in B$, the restriction of $S_C$ to $L_{\mathsf A}$ is a square
or zero: its nonzero root multiplicities are the even intersections
with $C$. Thus its gradient modulo $L_{\mathsf A}$ is normal to $L_{\mathsf A}$.
In coordinates $L_{\mathsf A}=Z$, this says $Yh_3^2=Xh_3^2=0$ modulo $Z$,
so $Z\mid h_3$. Therefore $h\in\mathcal M_1$.
Since $\deg h\le\frac{q}{2}-1$, the basis degrees in \eqref{p1:B:inputM} force
$h=A_Cg$. The gradient formula follows, and its scalar product with
$g^{[2]}$ is zero. Also $A_C\ne0$, since a nonconstant squarefree
product of lines cannot have zero gradient.

For the extra divisibility, fix a special line $Z=0$, with
$m=\nu_{\mathsf A}+1$, and set $v=\ord_Z A_C$. The first odd normal
coefficient of $S_C$ is that of $Z^{2v+1}$; the gradient formula
identifies it as
$((A_C/Z^v)|_{Z=0})^2E_{\mathsf A}$. The bivariate polynomial $E_{\mathsf A}$ (homogeneous in $X$ and $Y$) has simple
zeros at the $k_{\mathsf A}=1+\frac{q}{2m}$ vertices, since $g$ has no common zero.
If $c_{\mathsf V}=|C\cap\ell_{\mathsf V}|$, the product of the defining lines gives
\[
 \ord_{\mathsf V}((A_C/Z^v)|_{Z=0})\ge c_{\mathsf V}/2-v-1.
\]
Summing, with $\sum c_{\mathsf V}=k$, yields
$\deg A_C-v\ge k/2-(v+1)k_{\mathsf A}$, hence
$(v+1)(k_{\mathsf A}-1)\ge \frac{q}{4}$. Thus $v\ge m/2-1=(\nu_{\mathsf A}-1)/2$.
The product of these factors has degree $(\frac{q}{4}-1-\sigma)/2$;
comparison with $\deg A_C=k/2-\frac{q}{4}-1$ proves the final bound.
\end{proof}

\section{The third moment closes the dimension bound}\label{p1:secmap:17}\label{p1:B:sec:closure}
We now prove Theorem~\ref{p1:I:binary-main}. If $q=2$ there is no
nontrivial small example. For $q=4$, Corollary~\ref{p1:B:size} gives seven
points, all ordinary by Proposition~\ref{p1:B:support}, and every secant
has three points. The resulting seven-point linear space is the Fano
plane. Choose three noncollinear points. Relation~\eqref{p1:B:triples}
makes the seven canonical representatives exactly the nonzero sums
of their three representatives. Their binary span therefore has
dimension three and represents $B$.

Assume $q\ge8$. Proposition~\ref{p1:B:specialexists} excludes an empty
special set, and Theorem~\ref{p1:B:fullsupport} applies to every remaining
configuration. Put
\[
 \sigma=|\mathcal S|,\qquad N_0=|O|=\frac{3q}{2}+1-\sigma,
 \qquad 1\le\sigma\le \frac{q}{4}-1.
\]
Let $k_{\mathcal C}=\dim_2\mathcal C$ and $K=2^{k_{\mathcal C}}$. We will show $K<4q$.

\medskip\noindent\textit{Counting the three-secants.}
Write $b_3$ for the number of three-secants of $B$.
For a line $\ell$ meeting $\mathcal S$, put
$t_\ell=|\ell\cap\mathcal S|$ and $o_\ell=|\ell\cap O|$.
Every ordinary point lies on $\frac{q}{2}+1$ secants. The secants with no
special point are exactly the three-secants, by \eqref{p1:B:rweights}.
Since the pencil through an ordinary point partitions $\mathcal S$,
summing the exact three-secant count over $O$ gives
\begin{equation}\label{p1:B:improvedtriple}
 3b_3=N_0(\frac{q}{2}+1-\sigma)
       +\sum_{\ell:\,t_\ell\ge2}o_\ell(t_\ell-1).
\end{equation}
For each line in the sum, \eqref{p1:B:rweights} gives
\[
 o_\ell=3+2\sum_{{\mathsf A}\in\ell\cap\mathcal S}\nu_{\mathsf A}-t_\ell
       \ge t_\ell+3\ge t_\ell.
\]
Counting ordered pairs of special points therefore gives the essential
improvement
\begin{equation}\label{p1:B:triplelower}
 3b_3\ge N_0(\frac{q}{2}+1-\sigma)+\sigma(\sigma-1).
\end{equation}

\medskip\noindent\textit{The binary moments.}
Choose a generator matrix of $\mathcal C$\TermNote{exp:p1:generator}{its rows form a basis of $\mathcal C$; $\operatorname{wt}$ counts nonzero coordinates, and a moment is an average of powers}, with columns
$v_{\mathsf A}\in\F_2^{k_{\mathcal C}}$ indexed by ${\mathsf A}\in O$. These columns are nonzero and
pairwise distinct. Indeed, every binary coordinate functional on the
canonical representatives $u_{\mathsf A}$ gives a word of $\mathcal C$ by
\eqref{p1:B:allsecants}; such functionals distinguish their distinct
nonzero vectors.

For $a\in\F_2^{k_{\mathcal C}}$, define
\[
 x(a)=\sum_{{\mathsf A}\in O}(-1)^{a\cdot v_{\mathsf A}}
      =N_0-2\operatorname{wt}(aG).
\]
The elementary identity $\sum_{a\in\F_2^{k_{\mathcal C}}}(-1)^{a\cdot v}=0$ for $v\ne0$ gives
\[
 \frac1K\sum_a x(a)^2=N_0,\qquad
 \frac1K\sum_a x(a)^3=6A_3,
\]
where $A_3$ is the number of unordered triples of distinct columns
whose sum is zero. In the second identity, terms with repeated indices
average to zero because every column is nonzero; a distinct zero-sum
triple contributes its six orderings. Every three-secant is a check
equation of the code, so $A_3\ge b_3$.

Proposition~\ref{p1:B:code} says that every nonzero word has weight at least
$\frac{3q}{4}+1-\sigma$. Thus, for $a\ne0$,
\[
 x(a)\le\sigma-1=:u,\qquad x(a)^2(x(a)-u)\le0.
\]
The zero word has $x(0)=N_0$. Averaging this inequality and using
\eqref{p1:B:triplelower} gives
\[
 N_0(q+3-3\sigma)+2\sigma(\sigma-1)
 \le 6A_3-uN_0
 \le \frac{N_0^2(N_0-u)}K.
\]
The left side is positive. Consequently
\begin{equation}\label{p1:B:codebound}
 K\le
 \frac{N_0^2(N_0-u)}
 {N_0(q+3-3\sigma)+2\sigma(\sigma-1)}
 <4q.
\end{equation}
Here is an explicit verification of the strict last inequality.
Set $h=\frac{q}{4}-1-\sigma$, so $0\le h\le \frac{q}{4}-1$. Subtracting the numerator
from $4q$ times the denominator gives
\begin{align*}
 &\frac{3q^3}{16}+\frac{59q^2}{4}+40q-16\\
 &\quad +(\frac{51q^2}{8}+48q-24)h+h^2(14q-12-2h)>0.
\end{align*}
Every displayed contribution is nonnegative, and the constant term
is positive; in particular $14q-12-2h\ge\frac{27q}{2}-10>0$.
Since $K$ is a power of two, \eqref{p1:B:codebound} implies
\[
 \boxed{\dim_2\mathcal C\le n+1.}
\]

\medskip\noindent\textit{Reconstructing the blocking set.}
Let $U=\langle u_{\mathsf A}:{\mathsf A}\in O\rangle_2$. Its binary coordinate words
belong to $\mathcal C$, so $\dim_2U\le n+1$.
There are at least $\frac{5q}{4}+2>q-1$ distinct nonzero vectors $u_{\mathsf A}$,
forcing $\dim_2U=n+1$.

Put $B'=L(U)$. Every projective line is a binary $2n$-space in the
ambient $3n$-space, so it meets $U$ nontrivially. Thus $B'$ blocks.
Its line intersections are odd: the $2^r-1$ nonzero vectors in each
intersection with $U$ partition into point fibers of odd sizes.
Also $|B'|\le2^{n+1}-1=2q-1$, and $O\subseteq B\cap B'$.
Hence $B\mathbin\triangle B'$ is an even set of size at most
\[
 (\frac{3q}{2}+1)+(2q-1)-2(\frac{5q}{4}+2)=q-4.
\]
A nonempty even set in a plane of order $q$ has at least $q+2$
points: through any one of its points, its $q+1$ lines require
distinct further points. Therefore $B\mathbin\triangle B'$ is empty,
and $B=L(U)$. This proves Theorem~\ref{p1:I:binary-main}.

\section*{Acknowledgements}
OpenAI ChatGPT assisted with mathematical exploration, computations and
drafting. The author retains responsibility for verifying the
proofs and the submitted text.

\label{part1:last}
\StartPaper{2}{Part 2: the planar Linearity Conjecture for P at least 5}
{Part 2: The planar Linearity Conjecture\\for intersection exponents $P\ge5$}
{Let $B$ be a nontrivial small minimal blocking set of exact exponent $e$
in $\PG(2,p^h)$, and put $P=p^e$. Using the Frobenius representation
and local geometry developed in the first paper, we give a proof of
linearity for $P\ge5$ throughout the full classical exponent interval.
The representing space has dimension $h/e+1$ over $\F_P$, and
$\F_P$ is the maximum geometric field of linearity\TermNote{exp:p2:field}{the largest subfield allowing a linear representation of the same point set}. The proof successively groups monomials by their exponent residues
and obtains a matrix polynomial linear in each of several independent
vector variables. Its restrictions to lines provide point representatives which
have $\F_P$-multiples filling two-dimensional $\F_P$-spaces on the $(P+1)$-secants. Counting pairs and triples then bounds their span.
For $P=5$ a determinant of degree three supplies the additional step. The argument for $P\ge7$ includes characteristic
two, in particular every $P=2^e\ge8$.}
\begin{PartContents}
\ContentsEntry{1}{Introduction}{p2:secmap:1}
\ContentsEntry{2}{The size interval and the polynomial data from Paper I}{p2:secmap:2}
\ContentsEntry{3}{The first Frobenius extraction}{p2:secmap:3}
\ContentsEntry{4}{The repeated coefficient step and its degree bounds}{p2:secmap:4}
\ContentsEntry{5}{Cancelling parameters and extracting the next scale}{p2:secmap:5}
\ContentsEntry{6}{The Hasse coefficient calculation at each step}{p2:secmap:6}
\ContentsEntry{7}{Completion of the descent}{p2:secmap:7}
\ContentsEntry{8}{Reconstruction and identification for exponents at least seven}{p2:secmap:8}
\ContentsEntry{9}{The entire exponent-one interval in characteristic five}{p2:secmap:9}
\ContentsEntry{10}{The upper endpoint is attained}{p2:secmap:10}
\ContentsEntry{11}{A known strictly intermediate example}{p2:secmap:11}
\end{PartContents}
\section{Introduction}\label{p2:secmap:1}
For $q=p^h$, let $B\subset\PG(2,q)=\PG(\F_q^3)$ be a small minimal blocking set,
where small means $|B|<3(q+1)/2$. Its exact exponent is the largest
$e$ for which all line intersections are $1\pmod{p^e}$.
Put $P=p^e$. The exponent theorem of Sz\H onyi and Sziklai gives $1\leq e\mid h$
\cite[Corollary~11.35]{p2:DSc};
we therefore write $q=P^n$.
\label{exp:p2:linear-rank}We use $L(U)=\{\langle u\rangle_{\F_q}:0\ne u\in U\}$;
the rank of this representation is $\dim_{\F_P}U$.

\begin{theorem}\label{p2:II:main}
If $P\ge5$, there is a subspace $U\le\F_q^3$ over $\F_P$ such that
\[
 B=L(U),\qquad\dim_{\F_P}U=n+1.
\]
Moreover $\F_P$ is the maximum geometric field of linearity of $B$.
\end{theorem}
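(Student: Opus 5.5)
The plan is to reduce everything to the Frobenius endpoint of Part~1, Section~8, and then to run the reconstruction sequence already carried out in Corollary~\Iref{I:cubic-extension}.

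\textbf{Small $n$.} The case $n=2$ (Baer) follows from the size interval: $d=1$ and the classical description of the set. For $n=3$, $M$ is already linear, so the endpoint holds with $\Theta=1$ and the cubic-extension corollary applies directly.

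\textbf{Descent for $n\ge4$.} Put $S=q/P^3$, $d=PS+\rho$ and $\deg M=S+t$. The moment bound (Lemma~\Iref{I:quadratic} and Corollary~\Iref{I:coarse-upper}) gives $0\le\rho<(1+3/P)S$ and $t\ge0$, so all of $\deg H,\deg a_0,\deg D,\deg N$ are controlled.

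The first extraction starts from the reciprocal identity $Dg=NV^{[q/P^2]}$. Expanding by exponent residues mod $P$ yields a multiplier $G$, nonzero at short vertices, with $GM=\sum_i V_i^SA_i$. Short vertices are available by Lemma~\Iref{I:normalizer}'s hypotheses and Lemma~\Iref{I:ppg:gap}: every ordinary line carries at least $m_0$ of them. Interpolation is legitimate because the degrees involved stay below $m_0$. At short vertices, the Hasse coefficients of the local normal identities show that the pencil $\sum u_iA_i$ has common-row form $\lambda(u)I+u'(u)v^t$. Its repeated determinant factor then forces $(S-1)/(P+1)\le\deg A_i\le(S-1)/(P-2)$.

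I would then iterate. At scale $s$, keep a multilinear matrix $C(Z_1,\dots,Z_j;V)$ together with its reciprocal $U\mathcal D=\phi I$, with primitivity, coefficient dimension three and nonsingular evaluation. Each step does three things:
\begin{enumerate}
\item extract the squarefree and repeated parts of the adjugate;
\item cancel factors involving older arguments, by separating base-$P$ digit blocks, which needs argument degrees below $P$;
\item re-extract at scale $s/P$.
\end{enumerate}
After $n-4$ steps the coefficients are linear in $V$. This gives $\Theta M=\mathsf C(V,V^{[P]},\dots,V^{[S]})$, and Theorem~\Iref{I:endpoint-theorem} supplies $W$ and compatible representatives.

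\textbf{Expected obstacle.} The main obstacle is showing that the common-row form and nonsingular evaluation survive each step. Without them the next adjugate step collapses. My first attempt would be a local Hasse computation at short vertices producing a determinant identity of degree six, interpolated globally because its degree is below $m_*$, and then lifted to formal arguments because $6<P$. This works for $P\ge7$ only. For $P=5$ I would replace the degree-six test with a cubic criterion. For the first extraction error, which has the form $E[V^{[5S]}]_\times$, I would show $K=ME$ is skew-symmetric via $K+K^t$ vanishing on normal planes, and then kill it by degree.

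\textbf{Reconstruction.} Lemma~\Iref{I:ppg:gap} gives each ordinary point at least $m_0$ short secants. The estimate $2Pm_0>|B|$ then gives both connectivity of the short-secant graph, placing all representatives in $\F_q^3$, and positivity of the denominator in Proposition~\Iref{I:code-bound}. That proposition gives $P^{r_U}<P^2q$, so $r_U\le n+1$. Since $N_0>q$, also $r_U\ge n+1$.

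With $B'=L(U)$, the set $B'$ blocks and meets every line in $1\pmod P$ points. The signed-pencil Lemma~\Iref{I:signed} applied to $B\setminus B'$ and $B'\setminus B$ forces $B=B'$: their total size is at most $(Pq-1)/(P-1)-|B|+2\sigma<2(P-1)q/P+2$, using $\sigma\le d-1$.

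\textbf{Maximal field.} A representation over a larger field $\F_{P'}$ would make the points on a short secant contain an $\F_{P'}$-subline. But these $P+1$ points form an $\F_P$-subline (Lemma~\Iref{I:ppg:short}). Normalizing three common points to $0,1,\infty$, this is impossible.
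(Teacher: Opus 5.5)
\textbf{Verdict: genuine gap at $P=5$.} For $P\ge7$ your outline is the paper's proof in compressed form. Your treatment of $n=2$, and of $n=3$ via Corollary~\Iref{I:cubic-extension}, is fine. The problem is the case $P=5$.

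There you patch two of the places where $P\ge7$ is used:
\begin{enumerate}
\item the degree-six test, which you replace by a cubic criterion;
\item the seed separation $6S-2-3z<PS$, which you handle through the Koszul error $E[V^{[5S]}]_\times$.
\end{enumerate}
But you keep step~2 of your iteration unchanged, and the condition you name for it (argument degrees below $P$) is not the binding one.

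To cancel a parameter factor $f=au+b$ of the residual $E$, one substitutes $u=-b/a$ in $K$, clears $a^2$, and separates the last argument. The requirement is that the coefficient degree in $V$ be below $s$. That degree is at most $2w-\ell-h_*+2e=2s-2(P-3)w-3\ell-h_*-2$. Since only $(P+1)w+1\ge s$ is known, it is below $s$ when $2(P-3)\ge P+1$, i.e.\ $P\ge7$ (Lemma~\IIref{lem:cancel}), but not at $P=5$.

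Restoring the crossed identity $U'\mathcal D'=\phi'I$ has the same defect. The bound $\tau+(P+1)v+1<s$ fails at $P=5$, where only $v<5\tau/6$ is available. So as written, your $P=5$ iteration does not close.

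The missing idea is the backward transfer of Section~\IIref{sec:five-entire-interval}.
\begin{enumerate}
\item The shifted definition of $\mathcal D$ turns nonsingular evaluation in argument $j$ into nonsingular evaluation of the $\mathcal D$-pencil in argument $j-1$.
\item Normalizing $U\mathcal D=\phi I$ at an invertible value gives $\widetilde U^2-\alpha\widetilde U+\beta I=0$. So each member is a scalar plus a matrix of rank at most one, and nonsingular evaluation of the reciprocal excludes a common image.
\item The output cubic lifts to the formal $C$ because $3w<s$. This bound is available only after $E$ has been constructed with $e=s-3w-\ell-1\ge0$, from $L^2\mid\det U\ne0$.
\item Lemma~\IIref{five:cubic-criterion} then gives common-row form in every older argument, inductively down to $Z_1$.
\item A parameter factor $f$ of $E$ involving $Z_i$ divides $\adj U$. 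For a common-row pencil, the greatest common divisor of the adjugate entries is exactly its repeated linear factor. Hence $f$ is associate to the repeated factor $\Pi_i$ of the $Z_i$-pencil.
\item $\Pi_i$ does not involve the last argument, because $\Pi_i^2L^2\mid\det C$ and $\det C$ has degree three there. Therefore $f\mid\adj C$, and since $f$ is coprime to $LH_*$, $f\mid K$ before substitution.
\end{enumerate}
For the crossed identity, the paper first cancels $G$ and compares the old $C$ with the new partner. The relevant degree is then $w+5v+1\le s-2\ell-1<s$. Your proposal needs both replacements to cover $P=5$.
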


The theorem covers the full size interval permitted by the exact
exponent. No hypothesis $|B|\le(Pq-1)/(P-1)$ is imposed: this bound
is a consequence of the reconstructed representation. The distinction
is essential near the upper end of the classical interval.

The prime and prime-square cases are covered by
\cite{p2:Blokhuis,p2:Szonyi}. The cubic-extension theorems of
\cite{p2:Polverino,p2:PolverinoStorme,p2:LSV} have characteristic at least seven.
For R\'edei-type sets, see \cite{p2:BBBSS,p2:Ball,p2:Missing}.
The phrase ``maximum geometric field of linearity'' has the meaning
of \cite[Definitions~1.3--1.4]{p2:JenaVDV}: the largest field allowing
some representation of the same point set.

The polynomial approach starts with the established theory of
Sz\H{o}nyi~\cite{p2:Szonyi}, Sz\H{o}nyi--Weiner~\cite{p2:SW} and
Sziklai~\cite{p2:Sziklai}. The first paper~\cite{p2:PUB1} supplies the
Frobenius identities, their integration in every characteristic, local
regularity and the weight gap\TermNote{exp:p2:weight-gap}{the exclusion of line-intersection sizes $Pr+1$ for $2\le r<P$}, and the reconstruction lemmas. We cite
those results here and prove the coefficient descent\TermNote{exp:p2:descent}{successive extraction of coordinate powers, reducing the degree of the remaining coefficients}. References prefixed
by \textup{I.} refer to numbered results or equations in~\cite{p2:PUB1}.

\paragraph{Notation and the geometric picture.}
We use $V=(X,Y,Z)^t$, $L_a=a^tV$, and entrywise powers
$V^{[P^i]}=(X^{P^i},Y^{P^i},Z^{P^i})^t$; $[1/P]$ takes the
unique coordinatewise $P$th roots in $\F_q$.
A point $a\in B$ corresponds to the dual line $L_a=0$.
A short secant has $P+1$ points; its dual point is a short vertex.
\label{exp:p2:ordinary-special}
Write $a^{[1/P]t}g=L_a h_a$. The point $a$ and its dual line are
ordinary if $h_a|_{L_a}$ is not the zero polynomial, and special
if it is zero, equivalently if $L_a^2\mid a^{[1/P]t}g$.
In coordinates $L_a=Z$, special means that the first normal coefficient
of $g_3=Zh_a$ vanishes identically. The local results of Part~1 show
that special points lie on no short secant, whereas ordinary points
lie on many; this is why the final identification must recover the
special points as well.
\label{exp:p2:matrix-terms}
A primitive polynomial vector or matrix has no common nonconstant
factor among its entries. Nonsingular means that the determinant is
not the zero polynomial; squarefree means that no irreducible factor
is repeated. The adjugate is the transposed cofactor matrix, with
$M\adj M=(\det M)I$.

The proof has three stages. First we extract the largest coordinate
powers from the matrix $M$ supplied by Paper I. We then repeat the
same coefficient comparison at successively smaller powers until the
remaining coefficients are linear. Finally Paper I turns this matrix
formula into compatible representatives of the ordinary points;
a dimension count and a pencil count recover the entire set.
\label{exp:p2:frobenius-endpoint}For $n\ge3$, the target formula is the \emph{Frobenius endpoint}:
\[
 \Theta M=\mathsf C(V,V^{[P]},\ldots,V^{[P^{n-3}]}),\qquad\Theta\ne0,
\]
where $\mathsf C$ has constant coefficients and is linear in each
independent vector argument separately (multilinear).
It completes the coefficient descent; the representing space and
its dimension are obtained afterwards.

For $P\ge7$, the repeated step uses a determinant identity whose
degree in each independent vector variable is six, below $P$.
At $P=5$ we use a different determinant, of degree three, and transfer
its identity from the last vector variable to the preceding ones.
Neither argument assumes odd characteristic in the case $P\ge7$.
All characteristic-dependent derivative calculations are in Paper I.

The theorem concerns the planar conjecture. Published reductions for
higher dimensional blocking sets have additional hypotheses~\cite{p2:VDV};
no unrestricted higher dimensional conclusion is asserted here.
Together with the binary exponent-one theorem in~\cite{p2:PUB1}, the
present result leaves the unrestricted cases $P=3$ and $P=4$
outside the completed arguments. Part~1, Corollary~\Iref{I:cubic-extension},
does cover $q=P^3$, $P\ge4$, including $(P,q)=(4,64)$.
The ternary case requires a separate analysis.

\section{The size interval and the polynomial data from Paper I}\label{p2:secmap:2}
If $B$ is a line, its exact exponent has $P=q$ and $n=1$,
and the theorem follows from a two-dimensional space over $\F_q$.
Henceforth assume that $B$ is nontrivial.

The size theorem of Paper I and a convenient classical upper bound give,
for $P\ge7$,
\begin{equation}\label{p2:full:literature}
 q+q/P+1\le |B|<q+q/P+q/P^2+3q/P^3.
\end{equation}
Both estimates follow from Part~1: the lower one is
Proposition~\Iref{I:ordinary-bound}, and the upper one is
Corollary~\Iref{I:coarse-upper}, proved from its quadratic moment bound.
The latter recovers \cite[Lemma~1]{p2:VDV} with $k=1$, $p_0=P$;
neither its proof here nor that bound requires odd characteristic.
For $P=5$ we use Lemma~\Iref{I:quadratic} directly.
Throughout the proof the exponent is exact; a mere cardinality label is not substituted
for the intersection congruence.

For $n=2$, the quadratic in Lemma~\Iref{I:quadratic} has roots
$P^2+P+1$ and $P^3+1$. Smallness selects the smaller-root interval,
and the lower bound forces $|B|=q+P+1$. Bruen's equality
characterization~\cite{p2:Bruen} makes $B$ a Baer subplane.
These reductions apply for every $P\ge5$, so assume $n\ge3$.
Until the end of Section~\ref{p2:sec:reconstruction}, assume $P\ge7$;
the case $P=5$ is treated in Section~\ref{p2:sec:five-entire-interval}.
Put
\[
 S=P^{n-3},\quad
 |B|=q+Pd+1,\quad d=PS+\rho.
\]
The interval gives
\begin{equation}\label{p2:full:rho}
 0\le\rho<\left(1+\frac3P\right)S,
 \qquad PS\le d<(P+2)S.
\end{equation}
The nonnegativity of $\rho$ is useful throughout the proof.

\begin{lemma}[Input from the first paper]\label{p2:lem:input}
There are a primitive vector $g$, a primitive nonsingular homogeneous
matrix $M$, and nonzero polynomials $H,a_0,D$ with $H$ squarefree and
\begin{align}
 Hg&=M^{[P]}V,&Mg&=Ha_0V^{[q/P^2]},&\det M&=H^2a_0D,\label{p2:eq:uniform}\\
 Dg&=NV^{[q/P^2]},&N&=\adj M/H.\nonumber
\end{align}
If $\deg M=S+t$, then
\begin{align}
 \deg H&=Pt-\rho+1,&\deg a_0&=S+2\rho-(P-1)t-1,\nonumber\\
 \deg D&=2S-(P-2)t-1,&\deg N&=2S+\rho-(P-2)t-1.\label{p2:eq:degrees}
\end{align}
Every ordinary component contains at least
\begin{equation}\label{p2:eq:mzero}
 m_*=(P^2-2)S+2
\end{equation}
short vertices. There are at least $q+(P-1)d+2>q$ ordinary components
and at most $\sigma\le d-1-(P-3)\deg M\le d-1$ special components.
At every short vertex $H,a_0,D$ and $\det M$ are nonzero, all incident
components are ordinary, and their directions form an $\F_P$-subline.
On an ordinary component,
\begin{equation}\label{p2:eq:normals}
 a^{[1/P]t}g=L_a h_a,\quad
 a^{[1/P^2]t}M=\mu_a a^{[1/P]t}\pmod{L_a},\quad
 (H|_{L_a})(h_a|_{L_a})=\mu_a^P.
\end{equation}
\end{lemma}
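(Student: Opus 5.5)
This lemma mostly collects results already proved in Part~1. The plan is to cite Theorem~\Iref{I:uniform-main} for the identities and then specialize its degree formulas to the parameters $S,\rho,t$. The new content is the bookkeeping and the explicit value of $m_*$.

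\textbf{Identities and degrees.} The identities \eqref{p2:eq:uniform} are exactly \eqref{p1:I:main-pair}--\eqref{p1:I:main-reciprocal}, together with the primitivity, nonsingularity and squarefreeness assertions there. For the degrees, substitute $q/P^2=PS$, $d=PS+\rho$ and $m=S+t$ into the formulas of Proposition~\Iref{pp:identities}. For example,
\[
 \deg H=P(S+t)-PS-\rho+1=Pt-\rho+1,\qquad
 \deg D=PS-(P-2)(S+t)-1=2S-(P-2)t-1.
\]
The formulas for $a_0$ and $N$ follow in the same way. The normal package \eqref{p2:eq:normals} is Lemma~\Iref{ppg:normals}. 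The short-vertex statements (nonvanishing of $H,a_0,D,\det M$; ordinary incidence; the $\F_P$-subline) are Lemma~\Iref{ppg:short}. The bounds on ordinary and special components are Lemma~\Iref{ppg:special}, using $|B|-\sigma\ge q+(P-1)d+2$.

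\textbf{The count $m_*$.} This is the only step needing an estimate. By Lemma~\Iref{ppg:gap}, every ordinary component has at least
\[
 m_0=\frac{q}{P}+1-\left\lfloor\frac{d-1}{P-1}\right\rfloor
\]
short vertices. With $q/P=P^2S$ and $d<(P+2)S$ from \eqref{p2:full:rho}, we get $d-1\le (P+2)S-2$. Then
\[
 \left\lfloor\frac{(P+2)S-2}{P-1}\right\rfloor\le S+\frac{3S-2}{P-1}\le 2S-1
\]
when $P\ge7$, since $3S-2\le(P-1)(S-1)$ for $P\ge4$. Hence $m_0\ge P^2S+1-(2S-1)=(P^2-2)S+2=m_*$. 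The case $S=1$ is checked directly: the floor is at most $\lfloor P/(P-1)\rfloor=1$, so $m_0\ge P^2\ge P^2$.

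The main obstacle is only this floor estimate. It must use the strict upper bound on $\rho$ from Corollary~\Iref{I:coarse-upper}, and not any linearity assumption.
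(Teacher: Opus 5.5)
Your proposal is correct and follows the paper's proof. Like the paper, you cite the uniform theorem and the local lemmas of Part~1, substitute $q/P^2=PS$, $d=PS+\rho$, $m=S+t$, and bound the floor in $m_0$ by $2S-1$. The paper obtains that bound for every $S$ at once from the strict inequality $(d-1)/(P-1)<2S$, which makes your separate $S=1$ case unnecessary.

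One small slip: your justification $3S-2\le(P-1)(S-1)$ is equivalent to $(P-4)S\ge P-3$. It therefore fails at $S=1$ and at $P=4$, so the clause ``for $P\ge4$'' is wrong as written. It does hold for $S\ge P\ge7$, and since $S$ is a power of $P$, your case split still covers everything.
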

\begin{proof}
The uniform representation, local regularity, special-component and
normal-restriction assertions are Theorem~\Iref{I:uniform-main} and
Lemmas~\Iref{ppg:short}--\Iref{ppg:normals} of~\cite{p2:PUB1}.
Their short-vertex lower bound is
$q/P+1-\lfloor(d-1)/(P-1)\rfloor$. Here
$(d-1)/(P-1)<2S$, by~\eqref{p2:full:rho}, which gives~\eqref{p2:eq:mzero}.
\end{proof}

\label{exp:p2:H-reciprocal}The reciprocal matrix $N=\adj M/H$ is polynomial because
every irreducible factor of $H$ divides every cofactor of $M$ and
$H$ is squarefree. Thus the whole product $H$ cancels entrywise;
also $MN=Ha_0D\,I$. This is the divisibility proved in
Part~1, Proposition~\Iref{pp:identities}.

Nonnegativity of $\deg H$ gives the useful lower bound
\begin{equation}\label{p2:full:t-lower}
 t\ge(\rho-1)/P,\qquad t\ge0.
\end{equation}

The second inequality follows from $\rho\ge0$ and the integrality of $t$.

\paragraph{Coordinate changes used in the local calculations.}
Let $V_{\rm old}=CV$, $C\in\operatorname{GL}_3(\F_q)$, and choose
the compatible normalization of Part~1, Section~\Iref{I:normalization}.
For $A=B_0M$ it gives
\[
 A_{\rm new}=(C^{-1})^{[1/P^2]}A_{\rm old}(CV)C^{[1/P]}.
\]
If $\widehat A(u)=\sum_i u_i\widehat A_i$ comes from an extraction
at scale $\tau$\TermNote{exp:p2:scale}{$\tau$ is the coordinate-power exponent being extracted}, the same substitution gives
\[
 \widehat A_{\rm new}(u;V)=
 (C^{-1})^{[1/P^2]}
 \widehat A_{\rm old}(C^{[\tau]}u;CV)C^{[1/P]}.
\]
Consequently, for $T(u)=\sum_i u_i(A^{[P]})^{-1}\widehat A_i^{[P]}$,
\[
 T_{\rm new}(u;V)=C^{-1}T_{\rm old}(C^{[P\tau]}u;CV)C.
\]
The parameter change is invertible. Thus an identity quantified over
all $u$ at a normalized short vertex transfers back to the original
coordinates. These rules also apply to the first extraction, with
$\tau=S$; harmless nonzero scalar normalizations may be absorbed in
the corresponding multipliers.

\subsection{Algebraic tools used at each descent step}\label{p2:II:toolkit}
All coefficient fields used for taking roots are finite fields or
their algebraic closure, so every element has a unique $P$th root.
Auxiliary choices may be made in a finite extension of $\F_q$.
The final point representatives will be brought back to $\F_q$.
\label{exp:p2:power-ideal}We use the abbreviation
$J_s=(X^s,Y^s,Z^s)$: membership means that every monomial is divisible
by at least one of $X^s,Y^s,Z^s$. Thus $F\in J_s$ is equivalent to
$F=X^sF_1+Y^sF_2+Z^sF_3$. No general theory of ideals is needed.

A matrix polynomial $C(Z_1,\ldots,Z_j;V)$ will be linear and
homogeneous in each three-coordinate vector $Z_i$ separately.
For example, with two arguments it is $\sum_{a,b}(Z_1)_a(Z_2)_bC_{ab}(V)$.
\label{exp:p2:degree-diagonal}Its \emph{coefficient degree} is the degree in $V$ of the matrices
$C_{ab}$, with all the $Z_i$ independent. Substituting prescribed
powers of $V$ for the $Z_i$ will be indicated explicitly, or by a
subscript $\Delta$ (the Frobenius diagonal). Degree comparisons are always made before this
substitution unless stated otherwise.

\label{exp:p2:pencil}For $C(Y)=\sum Y_iC_i$, a \emph{matrix pencil} means just this linear
combination of fixed matrices. Its coefficient dimension is the
dimension of their span. Evaluation at $W$ means the map
$Y\mapsto C(Y)W$, with matrix $[C_1W\ C_2W\ C_3W]$.
When other arguments occur, their coordinates are independent variables
in the coefficient field. We say the evaluation is nonsingular when
this determinant is a nonzero polynomial in $W$. A pencil is in
\emph{common-row form} if, after multiplication by a fixed invertible
matrix, it is $\lambda(Y)I+u(Y)v^t$, where $v^t$ is fixed and
$\lambda,u$ are linear in $Y$. In common-image form, the
column $u$ is fixed and the row $v(Y)^t$ varies linearly instead.

\ReadingNote{The matrix form used below}{After multiplying by a fixed invertible matrix, many pencils have the form $\lambda(Y)I+u(Y)v^t$, with one fixed row $v^t$. This is called common-row form; the extra term has rank at most one and its determinant is easy to factor.}

\begin{lemma}[Power basis and extraction]\label{p2:II:power-basis}
Let $P$ be a power of the characteristic. Every polynomial has a unique
expansion
\[
 F(V)=\sum_{0\le\alpha_i<P}V^\alpha F_\alpha(V)^P.
\]
The same monomials form a basis of $\F_q(V)$ over $\F_q(V)^P$. If $s=P\tau$
and $Ff^P\in (X^s,Y^s,Z^s)$, then
$F_\alpha f\in(X^\tau,Y^\tau,Z^\tau)$ for each $\alpha$.
If $F(W)\in \F_q(V)^P[W]$ has test-vector degree less than $P$ and
$F(V)=0$, then all its coefficients vanish.
\end{lemma}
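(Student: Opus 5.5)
The plan is to reduce all three assertions to one fact about exponent residues modulo $P$. Every exponent vector $\beta\in\mathbb Z_{\ge0}^3$ decomposes uniquely as $\beta=\alpha+P\gamma$ with $0\le\alpha_i<P$. Moreover, the coefficient field (a finite field, or its algebraic closure as in Section~\ref{p2:II:toolkit}) is perfect, so every coefficient has a unique $P$th root.

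\emph{Unique expansion.} Write $F=\sum_\beta c_\beta V^\beta$. For each residue class $\alpha$, collect the monomials with $\beta\equiv\alpha\pmod P$ and put
\[
 F_\alpha=\sum_{\beta=\alpha+P\gamma}c_\beta^{1/P}V^\gamma .
\]
Then $V^\alpha F_\alpha^P$ is exactly the $\alpha$-part of $F$, because $(\sum e_\gamma V^\gamma)^P=\sum e_\gamma^PV^{P\gamma}$ in characteristic $p$.

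For uniqueness, note that all monomials of $V^\alpha G^P$ have exponent residue $\alpha$, so distinct $\alpha$ cannot interact. Hence the $\alpha$-part of $F$ determines $F_\alpha^P$. Since the Frobenius map is injective on polynomials, it determines $F_\alpha$.

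\emph{Basis of the rational function field.} For spanning, write $f/h=fh^{P-1}/h^P$ and expand the polynomial numerator. For independence, take a relation $\sum_\alpha V^\alpha r_\alpha^P=0$ with $r_\alpha\in\F_q(V)$. Multiply it by $u^P$, where $u$ is a common denominator of the $r_\alpha$. This gives a relation with polynomial coefficients $(ur_\alpha)^P$, and uniqueness of the expansion forces $ur_\alpha=0$ for every $\alpha$.

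\emph{Extraction.} The expansion of $Ff^P$ is $\sum_\alpha V^\alpha(F_\alpha f)^P$. The monomials of the $\alpha$-summand are the $V^{\alpha+P\gamma}$, where $V^\gamma$ runs over the monomials of $F_\alpha f$, with coefficients the $P$th powers of the nonzero coefficients. Different $\alpha$ give disjoint monomial sets, so no cancellation occurs, and each such monomial appears in $Ff^P$.

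Membership in $J_s$ is a monomial-wise condition. Suppose, say, $X^{P\tau}\mid V^{\alpha+P\gamma}$. Then $\alpha_1+P\gamma_1\ge P\tau$, and since $\alpha_1<P$ this gives $\gamma_1\ge\tau$, so $X^\tau\mid V^\gamma$. The same holds for $Y$ and $Z$. Hence every monomial of $F_\alpha f$ lies in $(X^\tau,Y^\tau,Z^\tau)$.

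\emph{Test-vector statement.} Write $F(W)=\sum_\beta c_\beta W^\beta$ with $c_\beta\in\F_q(V)^P$. The degree hypothesis ensures that each exponent satisfies $\beta_i<P$. Substituting $W=V$ gives $\sum_\beta V^\beta c_\beta=0$, a relation among distinct basis monomials $V^\beta$ over $\F_q(V)^P$. By the independence just proved, all $c_\beta$ vanish.

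I do not expect a real obstacle. The only step needing care is the no-cancellation claim in the extraction, which depends on the disjointness of residue classes and on Frobenius preserving the monomial support of $F_\alpha f$; this I would state explicitly.
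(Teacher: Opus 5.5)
Your proof is correct and follows essentially the same route as the paper. Both use the unique splitting of exponents as $\alpha+P\gamma$ with $P$th roots of coefficients in the perfect field, clear a common $P$th-power denominator for the rational basis, check survival of each residue class monomial by monomial for the extraction (your $\alpha_i<P$, $\alpha_i+P\gamma_i\ge P\tau\Rightarrow\gamma_i\ge\tau$ just makes that step explicit), and obtain the test-vector claim directly from the rational independence.
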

\begin{proof}
Write each monomial exponent uniquely as $\alpha_i+P\beta_i$ and take
$P$th roots of its coefficient in the perfect field. This proves the
polynomial assertion. Multiplying a rational relation by a common
denominator lying in the power subfield reduces it to that assertion.
Discard the monomials divisible by $X^{P\tau}$, $Y^{P\tau}$, or
$Z^{P\tau}$. Exactly those with all three exponents below $P\tau$ remain. A term with residue
$\alpha$ survives precisely when the corresponding exponent of
$F_\alpha f$ has all coordinates below $\tau$. Distinct residues
cannot cancel. The last assertion is the rational independence just
proved, with the coefficient field kept fixed.
\end{proof}

\begin{lemma}[Distinct coordinate powers do not cancel]\label{p2:II:separation}
Let $s_i=P^{j-i}s$ for $1\le i\le j$. Suppose
$F(Z_1,\ldots,Z_j;V)$ has degree less than $P$ in each vector argument
$Z_i$ and degree in $V$ of the coefficients less than $s$. Then
\[
 F(V^{[s_1]},\ldots,V^{[s_j]};V)=0\quad\Longrightarrow\quad F=0.
\]
The assertion also holds componentwise for vector or matrix polynomials.
\end{lemma}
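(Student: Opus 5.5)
The plan is to reduce the statement to uniqueness of base-$P$ digit expansions of exponents, in the same way as the separation arguments in Part~1, Section~\ref{p1:I:endpoint}. Expand
\[
 F=\sum_{\beta_1,\ldots,\beta_j}Z_1^{\beta_1}\cdots Z_j^{\beta_j}\,F_{\beta}(V),
\]
where each $\beta_i\in\mathbb Z_{\ge0}^3$ is a multi-exponent with $|\beta_i|<P$ (in particular every coordinate of $\beta_i$ is below $P$), and each $F_\beta$ is a polynomial in $V$ of degree less than $s$. Thus every exponent coordinate of a monomial in $F_\beta$ lies in $[0,s)$. After the substitution $Z_i=V^{[s_i]}$ with $s_i=P^{j-i}s$, a monomial $Z^\beta V^\gamma$ becomes the monomial $V^{\gamma+\sum_i s_i\beta_i}$, computed coordinatewise.

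The key step is to check that the map $(\beta_1,\ldots,\beta_j,\gamma)\mapsto \gamma+\sum_i s_i\beta_i$ is injective on the allowed ranges. In each coordinate $c$, the resulting exponent is $\gamma_c+s\sum_i P^{j-i}(\beta_i)_c$. Since $0\le\gamma_c<s$, the coordinate $\gamma_c$ is recovered as the remainder modulo $s$. The quotient is $\sum_i P^{j-i}(\beta_i)_c$, and because each digit $(\beta_i)_c$ lies in $[0,P)$, the digits are recovered uniquely from its base-$P$ expansion. Hence distinct monomials of $F$ before substitution go to distinct monomials after substitution. No cancellation can occur, so $F_\Delta=0$ forces every coefficient of $F$ to vanish.

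For vector or matrix polynomials the same argument applies entry by entry, which gives the componentwise claim. The one point needing care is that the hypothesis bounds the total degree in $V$ of the coefficients, not each coordinate separately. This is harmless, since total degree below $s$ implies each coordinate exponent is below $s$, and the argument only uses the per-coordinate bound. Similarly, a degree below $P$ in each $Z_i$ bounds every coordinate of $\beta_i$ by $P-1$, which is what the digit argument uses. I expect no real obstacle; the only thing to get right is placing $\gamma$ as the lowest-order digit block, with modulus $s$ rather than $P$.
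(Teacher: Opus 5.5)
Your proposal is correct and follows the paper's proof: write the diagonal exponent coordinatewise as $\alpha+s\sum_iP^{j-i}\beta_i$, recover $\alpha$ as the remainder modulo $s$, then recover the $\beta_i$ from the base-$P$ digits, so that distinct monomials cannot cancel. Your remarks that total degree below $s$ in $V$ and degree below $P$ in each $Z_i$ bound every exponent coordinate are exactly the hypotheses this digit argument needs.
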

\begin{proof}
A monomial on the diagonal has exponent vector
$\alpha+s\sum_iP^{j-i}\beta_i$, where each coordinate of $\alpha$
is below $s$ and each coordinate of $\beta_i$ is below $P$.
Reduction modulo $s$, followed by uniqueness of the base-$P$ digits,
recovers $\alpha$ and every $\beta_i$. Thus distinct formal monomials
have distinct diagonal monomials. In particular no cancellation is
created by the specialization. Notice that degree bounds must hold
before specialization; vanishing of ordinary derivatives alone is not
a substitute for these bounds.
\end{proof}

\begin{lemma}[Relations among three coordinate powers]\label{p2:II:koszul}
\label{exp:p2:cross-matrix}Write $[u]_\times$ for the matrix satisfying $[u]_\times v=u\times v$.
If $f_1X^s+f_2Y^s+f_3Z^s=0$, then $(f_1,f_2,f_3)$ is a polynomial
combination of
\[
 (Y^s,-X^s,0),\quad (Z^s,0,-X^s),\quad (0,Z^s,-Y^s).
\]
If the $f_i$ are homogeneous of degree $d_0$, the coefficients have
degree $d_0-s$. Consequently there is no nonzero such polynomial relation for
$d_0<s$. For matrices, $F V^{[s]}=0$ therefore means
$F=E[V^{[s]}]_\times$, with $\deg E=\deg F-s$.
\end{lemma}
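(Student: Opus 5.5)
The plan is to recognize the statement as the Koszul exactness of the regular sequence $X^s,Y^s,Z^s$ in $\F_q[X,Y,Z]$, and to prove it directly by monomial bookkeeping. Given $f_1X^s+f_2Y^s+f_3Z^s=0$, the first move is to reduce modulo $Z^s$. Write $f_1=f_1'+Z^sf_1''$ and $f_2=f_2'+Z^sf_2''$, where $f_1',f_2'$ contain only monomials with $Z$-exponent below $s$. The relation then gives $X^sf_1'+Y^sf_2'\equiv0 \pmod{Z^s}$.

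The left side has every monomial with $Z$-exponent below $s$, so it vanishes identically. Hence $X^sf_1'=-Y^sf_2'$. Since $X^s$ and $Y^s$ are coprime, unique factorization gives $f_1'=Y^sk$ and $f_2'=-X^sk$ for some polynomial $k$.

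Substituting back, the relation becomes $Z^s(X^sf_1''+Y^sf_2''+f_3)=0$, so $f_3=-X^sf_1''-Y^sf_2''$. Therefore
\[
 (f_1,f_2,f_3)=k(Y^s,-X^s,0)+f_1''(Z^s,0,-X^s)+f_2''(0,Z^s,-Y^s),
\]
which is exactly the asserted combination.

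For the degree statement, when the $f_i$ are homogeneous of degree $d_0$, I would take homogeneous components throughout. Then $k$, $f_1''$ and $f_2''$ may be chosen homogeneous of degree $d_0-s$; the truncation by $Z$-exponent preserves homogeneity. If $d_0<s$, no coefficient can have negative degree unless it is zero, so all three coefficients vanish and the relation is trivial. One can also see this directly: $f_1X^s=-(f_2Y^s+f_3Z^s)$ forces $X^s\mid f_2Y^s+f_3Z^s$, and after reduction each $f_i$ would have to be divisible by an $s$-th power, which is impossible in degree below $s$.

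For matrices, apply the vector statement to each row $F_{i\cdot}$ of $F$, using $F_{i\cdot}V^{[s]}=0$. This writes each row as a polynomial combination of the three generators. Those generators are, up to sign, exactly the rows (equivalently the columns) of $[V^{[s]}]_\times$; for instance $(0,Z^s,-Y^s)$ is minus the first row of $[V^{[s]}]_\times$. So $F_{i\cdot}=e_i^t[V^{[s]}]_\times$ for a row $e_i^t$ of degree $\deg F-s$. Stacking these rows gives $F=E[V^{[s]}]_\times$.

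No step is a real obstacle. The only points needing care are the sign conventions matching $[u]_\times$ and keeping all coefficients homogeneous.
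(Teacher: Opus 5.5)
Your proof is correct and is essentially the paper's argument: split off monomials, then use coprimality of $X^s,Y^s$, and handle the matrix case row by row. The only cosmetic difference is that the paper writes $f_3=AX^s+BY^s$ (each monomial of $f_3Z^s$ is divisible by $X^s$ or $Y^s$), whereas you split $f_1,f_2$ by $Z$-exponent; the generators, the degree $d_0-s$ of the coefficients, and the sign-matched identification with $[V^{[s]}]_\times$ come out the same.
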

\begin{proof}
Every monomial of $f_3Z^s$ is divisible by $X^s$ or $Y^s$, since
the same is true of $-f_1X^s-f_2Y^s$. Multiplication by $Z^s$
does not change those two exponents, so $f_3=AX^s+BY^s$. Substitution gives
$(f_1+AZ^s)X^s+(f_2+BZ^s)Y^s=0$. Coprimality of $X^s,Y^s$
gives $f_1+AZ^s=CY^s$ and $f_2+BZ^s=-CX^s$.
These formulas give the three displayed generators and their degrees.
Apply this row by row for the matrix assertion, adjusting signs to the
convention $[u]_\times v=u\times v$.
\end{proof}

\begin{lemma}[Finite choices and interpolation]\label{p2:II:interpolation}
(i) Given finitely many geometric points\TermNote{exp:p2:geometric}{points over the algebraic closure, not necessarily over the original finite field} at which a polynomial family
does not vanish simultaneously, there is a homogeneous linear
combination, after equalizing degrees, nonzero at all those points.
The choice can be made over a finite extension of $\F_q$.

(ii) Suppose a homogeneous form $F$ of degree $d_0$ vanishes at more
than $d_0$ distinct specified points on each of more than $d_0$ distinct
lines. Then $F=0$.
\end{lemma}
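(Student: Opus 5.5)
For (i), I would first reduce to a single form. Multiply members of the family by suitable monomials, or by powers of one fixed linear form chosen nonzero at all the given points, so that all members have a common degree $\delta$. Multiplying by a factor that is nonzero at every given point does not create a simultaneous zero there. Take the resulting forms $F_1,\dots,F_k$ and consider $F=\sum c_jF_j$ with indeterminate coefficients $c_j$. At each of the finitely many points $v_1,\dots,v_N$, the condition $F(v_i)=0$ is a single linear equation in $c=(c_1,\dots,c_k)$. This equation is nonzero, because the family does not vanish simultaneously at $v_i$, so it cuts out a proper hyperplane $\mathcal H_i$ of coefficient space. All the $v_i$ and all the coefficients of these equations lie in some finite extension $\F_{q^s}$. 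A union of $N$ proper hyperplanes cannot cover $\F_{q^t}^k$ once $q^t>N$: count points, or restrict to a line that avoids the common origin structure, on which each hyperplane meets at most one point. Choosing $t$ to be a multiple of $s$ with $q^t>N$ then gives coefficients in $\F_{q^t}$ avoiding every $\mathcal H_i$, as required. The only care needed is the degree equalization, and I expect no real obstacle in this part.

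For (ii), I would restrict $F$ to each of the given lines. Suppose the given lines are $\ell_1,\dots,\ell_k$ with $k>d_0$. If $F$ does not vanish identically on $\ell_j$, then its restriction is a binary form of degree $d_0$ and has at most $d_0$ zeros on that line. The hypothesis supplies more than $d_0$ distinct zeros, so the linear form of $\ell_j$ divides $F$. The lines are distinct, so their linear forms are pairwise coprime. Hence $\prod_j\ell_j$ divides $F$, and this product has degree $k>d_0$. Therefore $F=0$. One technical point: the points may be geometric rather than rational, and the lines may be defined over an extension field. I would therefore carry out the argument over $\overline{\F_q}$, where unique factorization in the polynomial ring still holds; the conclusion $F=0$ does not depend on the field. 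Both parts are elementary, and the main thing to check is that the hyperplane-avoidance count in (i) is made over a field large enough to contain all the points.
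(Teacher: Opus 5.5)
Your proposal is correct and follows the paper's proof essentially step for step. In (i) you equalize degrees by powers of a linear form nonzero at all the points and then avoid finitely many proper hyperplanes in coefficient space; your explicit count over $\F_{q^t}$ with $q^t>N$ just makes quantitative the paper's passage from $\overline{\F_q}$ to a finite extension. In (ii) you restrict to each line, deduce divisibility by each line equation, and compare degrees, exactly as the paper does.

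One small correction: drop the ``suitable monomials'' option for equalizing degrees. A monomial can vanish at one of the given points and so create a simultaneous zero there; only the linear-form choice is covered by your own justification.
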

\begin{proof}
For (i), first choose a linear form avoiding the finite set, and use
its powers to equalize degrees. At each point, the bad coefficient
choices form one proper hyperplane. A finite union of such hyperplanes
does not cover the coefficient space over $\overline{\F_q}$. The finitely
many coordinates of a good choice belong to one finite extension of
the finite field $\F_q$.
For (ii), restriction to each line is a bivariate form (homogeneous in its two coordinates) with more roots
than its degree, hence zero. Each line equation divides $F$.
The product of the distinct line equations has degree greater than
$d_0$, forcing $F=0$.
\end{proof}

In every application below, $d_0<m_*$ (or $d_0<m_0$ for $P=5$) verifies
the first interpolation threshold; the number of ordinary components
is larger than $q$, which verifies the second. Auxiliary choices are
not asserted to exist over the original finite field; their eventual
descent is part of the reconstruction argument.

\begin{lemma}[Recognition of a quadratic matrix space]\label{p2:II:quadratic-space}
\label{exp:p2:quadratic}Let $\mathcal A$ be a three-dimensional matrix space\TermNote{exp:p2:matrix-space}{a vector space whose elements are matrices} over an infinite
field, containing $I$. Suppose the polynomial identity
$\det(W,TW,T^2W)=0$ holds for its member with independent parameters $T$ and an independent
test vector $W$. Then subtracting a scalar linear function of the
parameters gives a rank-at-most-one matrix space. This space has a
common image or a common row. Nonsingular evaluation at an independent vector excludes
the common-image alternative.
\end{lemma}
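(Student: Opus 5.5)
The plan has three steps: show that every member of $\mathcal A$ satisfies a quadratic minimal polynomial, deduce from this that a scalar linear shift of the space has rank at most one, and then read off the structure of a linear space of such matrices.

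\emph{Step one: a quadratic relation.} Work over the rational function field $K$ in the parameters, and regard $T=\sum_i Y_iT_i$ as a single $3\times3$ matrix over $K$. The polynomial identity $\det(W,TW,T^2W)=0$ in the independent vector $W$ says that no vector is cyclic for $T$. A $3\times3$ matrix over a field has a cyclic vector exactly when its minimal polynomial equals its characteristic polynomial. Hence the minimal polynomial of $T$ over $K$ has degree at most two, so
\[
 T^2-\alpha T+\beta I=0
\]
with $\alpha,\beta\in K$. If $T$ is scalar over $K$, the space is spanned by $I$, which contradicts $\dim\mathcal A=3$. So the degree is exactly two, and $\alpha,\beta$ are determined rationally by the entries of $T$.

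\emph{Step two: the scalar shift.} I follow the Balanced-factorization argument of Part~1 (Lemma~\Iref{lem:balanced}). A three-dimensional $K$-space cannot be a vector space over a quadratic field extension of $K$, so the quadratic cannot be irreducible; it splits as $(T-a)(T-b)$. Since $(T-aI)(T-bI)=0$, we get $\rank(T-aI)+\rank(T-bI)\le3$. Let $\lambda$ be the root with $\rank(T-\lambda I)\le1$; it is the eigenvalue occurring twice. The main obstacle is to show that $\lambda$ is a linear form in the parameters $Y$. The characteristic polynomial $(x-\lambda)^2(x-\mu)$ has coefficients that are polynomial in $Y$ and homogeneous of degrees $1,2,3$, and $\lambda$ is a root of a monic polynomial over $\F[Y]$. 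Since $\F[Y]$ is integrally closed, $\lambda$ is a polynomial, and homogeneity makes it linear. I would also check that $\lambda=\mu$ causes no problem. Then $T-\lambda(Y)I$ is a linear family of matrices, each of rank at most one.

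\emph{Step three: common image or common row, and the evaluation.} Over an infinite field, specialization gives a linear space of rank-at-most-one matrices. If two members had independent images and independent rows, their sum would have rank two, so every such space has a common image or a common row. Finally, suppose the image is a common column $u$, so $T(Y)=\lambda(Y)I+u\,v(Y)^t$. Then the evaluation matrix $[T_1W\ T_2W\ T_3W]$ has columns in $\operatorname{span}(W,u)$. Its determinant therefore vanishes identically in $W$, which contradicts nonsingular evaluation. Hence the common-row alternative holds.
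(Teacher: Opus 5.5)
Your proposal is correct and follows essentially the same route as the paper. The absence of a cyclic vector forces a minimal polynomial of degree at most two (the paper checks the cyclic-vector fact case by case over an algebraic closure, while you cite it); the quadratic cannot be irreducible because the dimension is odd; the doubly occurring eigenvalue is integral over the polynomial ring, hence a linear form by scaling; and the rank-two-sum argument together with the plane $\Span(W,u)$ gives the last two assertions. The one step you compress is the passage from ``any two members share an image or a row'' to a single common image or row for the whole space. As in the paper, close it by noting that a member sharing only the image of a fixed nonzero member and another sharing only its row would themselves sum to a matrix of rank two.
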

\begin{proof}
Recall that the minimal polynomial is the monic polynomial of least
degree annihilating the matrix. A quadratic relation means
$T^2-\alpha T+\beta I=0$, with ordinary matrix multiplication. If its degree is three, there is a
column $W$ for which $W,TW,T^2W$ are independent. Here is the
three-dimensional verification over an algebraic closure. With three
distinct eigenvalues, take the sum of their eigenvectors. With two
eigenvalues and a quadratic repeated factor, choose vectors $u,v,w$
with $(T-\lambda I)v=u$, $(T-\lambda I)u=0$, $Tw=\mu w$, and
$\mu\ne\lambda$; take $W=v+w$. With one eigenvalue, choose $W$
with $(T-\lambda I)^2W\ne0$. In each case the three columns are
independent. Thus the assumed determinant identity forces degree at
most two, in the original field as well.

Thus the minimal polynomial over the rational function field of the
parameters has degree at most two. An irreducible quadratic is impossible:
it would make the three-dimensional space a vector space over a
quadratic field extension. The eigenvalue $\lambda$ that occurs at
least twice is therefore a rational function of the parameters.
It is polynomial: substitute its reduced numerator and denominator
in the monic characteristic equation; any irreducible denominator
factor would then divide the numerator. Scaling the parameters shows
that $\lambda$ is homogeneous of degree one. If the two eigenvalues
are distinct, $T-\lambda I$ has rank one. If there is just one,
$(T-\lambda I)^2=0$, which in dimension three again implies rank at most
one. The rank minors vanish identically, including at special parameters.

Finally write two nonzero rank-one matrices as $uv^t$ and $xy^t$.
If both column pairs and row pairs are independent, their sum has rank
two. Fixing one nonzero member therefore forces every other member to
share either its image or its row. Members of opposite types would
again have a rank-two sum, so one choice holds for the whole space.
In the common-image case $C(u)W$ lies in the fixed plane
$\Span(W,u_0)$, and its evaluation matrix is singular. This proves
the last assertion in every characteristic.
\end{proof}

\section{The first Frobenius extraction}\label{p2:secmap:3}\label{p2:sec:seed}

Write
$N=C_N\overline N$, $D=C_N\overline D$, with $\overline N$
primitive, and put
\[
 z=\deg C_N,\qquad
 \delta_0=\deg\overline D=2S-(P-2)t-1-z.
\]
The reciprocal identity gives
$\overline D M^{[P]}V=H\overline N V^{[PS]}$.
Differentiate it in coordinate $V_j$, multiply by $\overline D$,
and subtract $(\partial_j\overline D)$ times the undifferentiated
identity. Since the derivatives of $V^{[PS]}$ and $M^{[P]}$ vanish,
\[
 \overline D^2M^{[P]}e_j=
 \bigl(\overline D\,\partial_j(H\overline N)
       -(\partial_j\overline D)H\overline N\bigr)V^{[PS]}\in J_{PS}.
\]
Expand
\[
 \overline D^2
 =\sum_{0\le\alpha_i<P}V^\alpha D_\alpha^P.
\]
Lemma~\ref{p2:II:power-basis} gives
\[
 D_\alpha M_{ij}\in(X^S,Y^S,Z^S)
 \quad\hbox{for every }\alpha,i,j,
 \qquad
 \deg D_\alpha\le\left\lfloor\frac{2\delta_0}{P}\right\rfloor.
\]
Because $\overline D$ is nonzero at every short vertex, its residue
coefficients do not all vanish at any such vertex.  Choose a linear
form nonzero at all of these finitely many vertices, homogenize the
nonzero $D_\alpha$ to their largest degree, and choose a linear
combination $G$ outside the finitely many forbidden hyperplanes.
Over a finite extension of the coefficient
field this gives simultaneously
\[
 GM\in(X^S,Y^S,Z^S)\quad\hbox{entrywise},\qquad
 g_0:=\deg G\le\left\lfloor\frac{2\delta_0}{P}\right\rfloor,
\]
and $G$ is nonzero at every short vertex.  Write
$GM=\sum_iV_i^S\widetilde A_i$.

The coefficient degree needed to separate the $PS$ blocks is at most
\begin{align*}
 \delta_0+P(t+g_0)+1
 &\le6S-2(P-3)t-2-3z\\
 &\le6S-2-3z<PS.
\end{align*}
Here we used $t\ge0$ and $P\ge7$.
Thus
\[
 \overline D\widetilde A_i^{[P]}V
 =G^PH\overline N_i.
\]
Primitivity of $\overline N$ implies $\overline D\mid G^PH$.
Cancel the greatest common factor $C$ of all entries of the
$\widetilde A_i$, and write $\widetilde A_i=CA_i$. Since $M$ is
primitive and $C\mid GM$, we have $C\mid G$; put $B_0=G/C$.
The rational scalar $H_2=B_0^PH/\overline D$ is polynomial because
the identities $A_i^{[P]}V=H_2\overline N_i$ and primitivity of
$\overline N$ exclude a reduced denominator. A repeated irreducible
factor of $H_2$ would divide all derivatives of $H_2\overline N_i$,
and hence every entry of every $A_i^{[P]}$, contrary to collective
primitivity. Thus $H_2$ is squarefree. We have therefore
proved the same seed identities
\begin{equation}\label{p2:full:seed}
 A=B_0M=\sum_iV_i^SA_i,\quad
 A_i^{[P]}V=H_2\overline N_i,\quad
 \overline D=B_0^PH/H_2,\quad
 AB_*=FI,
\end{equation}
with collectively primitive $A_i$, all actual scalar factors nonzero
at short vertices, and
\[
 B_*=[A_1^{[P]}V\ A_2^{[P]}V\ A_3^{[P]}V],
 \qquad F=B_0^{P+1}H^2a_0.
\]

Let $r=\deg A_i=t+\deg B_0$ and $h_2=\deg H_2$.  Then
\[
 h_2=Pr+(P-2)t-2S-\rho+z+2.
\]
Modulo a prime factor of $H_2$, differentiating
$A_i^{[P]}V=H_2\overline N_i$ shows that the $A_i$ have rank at most
one and share a right row. More explicitly, modulo an irreducible
factor $f$ of $H_2$, differentiation gives
$A_i^{[P]}\equiv\overline N_i(\nabla H_2)^t$.
The gradient is not identically zero modulo $f$, since the coefficient
field is perfect and $H_2$ is squarefree. Frobenius is injective on
the fraction field of $\overline{\F_q}[V]/(f)$, so the vanishing
$2\times2$ minors descend to the $A_i$. Their common row descends
in the same way, or after taking unique $P$th roots in an algebraic
closure of that fraction field. Thus $H_2^2$ divides
$\det(\sum u_iA_i)$, giving $2h_2\le3r$.
Combining it with $t\ge(\rho-1)/P$ yields
\begin{align*}
 (2P-3)r
 &\le4S+2\rho-2(P-2)t-2z-4\\
 &\le4S+\frac{4\rho}{P}-2-\frac4P-2z\\
 &<\left(4+\frac4P+\frac{12}{P^2}\right)S.
\end{align*}
For $P\ge7$ this implies $0\le r<S/2$.  \begin{lemma}[Seed identities]\label{p2:lem:seed}
The construction above gives~\eqref{p2:full:seed}, with primitive
coefficient family, $0\le r<S/2$, and all displayed actual scalar
factors nonzero at every short vertex.
\end{lemma}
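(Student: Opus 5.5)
The Seed identities lemma is essentially a summary of the construction carried out immediately before it. I will prove it by checking, in order, each ingredient of the preceding construction, and by recording precisely where the hypotheses $P\ge7$, $t\ge0$ and nonvanishing at short vertices enter.

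\emph{Extraction.} Start from the reciprocal identity $Dg=NV^{[q/P^2]}$ of Lemma~\ref{p2:lem:input}. Write $N=C_N\overline N$ and $D=C_N\overline D$. Multiplying by $H$ and using $Hg=M^{[P]}V$ gives $\overline DM^{[P]}V=H\overline NV^{[PS]}$. Next form the quotient-rule combination of this identity and its partial derivative in $V_j$. This places $\overline D^2M^{[P]}e_j$ in $J_{PS}$. Expand $\overline D^2=\sum_\alpha V^\alpha D_\alpha^P$; the power-basis Lemma~\ref{p2:II:power-basis} then gives $D_\alpha M_{ij}\in J_S$. Since $\overline D$ divides $D$ up to the constant-free factor $C_N$, and $D$ is nonzero at short vertices, the $D_\alpha$ have no common zero at any short vertex. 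Lemma~\ref{p2:II:interpolation}(i) therefore yields a single combination $G$ that is nonzero at all short vertices and satisfies $GM\in J_S$. This gives the decomposition $GM=\sum V_i^S\widetilde A_i$, with $\deg G\le\lfloor 2\delta_0/P\rfloor$.

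\emph{Separation.} Substitute the decomposition back into $G^P\overline DM^{[P]}V=G^PH\overline NV^{[PS]}$. The left side becomes $\sum V_i^{PS}\overline D\widetilde A_i^{[P]}V$, so the difference is a relation among the three coordinate powers $V_i^{PS}$. By Lemma~\ref{p2:II:koszul}, such a relation vanishes once the coefficient degree is below $PS$. That degree is at most $\delta_0+P(t+g_0)+1\le 6S-2(P-3)t-2-3z$, which is below $PS$ because $t\ge0$ and $P\ge7$. Hence $\overline D\widetilde A_i^{[P]}V=G^PH\overline N_i$ holds for each $i$.

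\emph{Normalization.} Primitivity of $\overline N$ gives $\overline D\mid G^PH$. Let $C$ be the common factor of all entries of the $\widetilde A_i$, and set $\widetilde A_i=CA_i$. Since $M$ is primitive and $C$ divides every entry of $GM$, we get $C\mid G$; set $B_0=G/C$. Now define $H_2=B_0^PH/\overline D$. It is polynomial, because $A_i^{[P]}V=H_2\overline N_i$ and $\overline N$ is primitive. It is squarefree, because a repeated factor would divide every entry of the $A_i^{[P]}$, contradicting collective primitivity. The identity $AB_*=FI$ comes from $MN=Ha_0DI$: multiply by $B_0^{P+1}$ and use $\adj M=HN$ together with the definition of $B_*$. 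The factors $B_0\mid G$, $H$, $a_0$ and $\overline D$ are all nonzero at short vertices, so every actual scalar factor is as well.

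\emph{Degree bound.} This is the main obstacle. Modulo an irreducible factor $f$ of $H_2$, differentiation gives $A_i^{[P]}\equiv\overline N_i(\nabla H_2)^t$. The gradient does not vanish modulo $f$, since $H_2$ is squarefree over a perfect field. Injectivity of Frobenius then lets rank at most one, and the shared row, descend from the $A_i^{[P]}$ to the $A_i$. It follows that $H_2^2\mid\det(\sum u_iA_i)$; this determinant is nonzero by the nonsingularity of $A=B_0M$. Hence $2h_2\le3r$. Substituting $h_2=Pr+(P-2)t-2S-\rho+z+2$, then $t\ge(\rho-1)/P$, and then $\rho<(1+3/P)S$ gives $(2P-3)r<(4+4/P+12/P^2)S$. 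For $P\ge7$ this is less than $(2P-3)S/2$, so $0\le r<S/2$. Nonnegativity of $r$ is immediate from $r=\deg A_i$.
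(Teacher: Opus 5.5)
Your proposal is correct and follows the paper's own construction essentially step for step: the residue expansion of $\overline D^2$ and the choice of $G$, separation of the $PS$ blocks (your appeal to Lemma~\ref{p2:II:koszul} is the same degree test $6S-2-3z<PS$), cancellation of the content $C$, and the bound $2h_2\le 3r$ obtained from the common row modulo the factors of $H_2$. The only divergence is in the nonvanishing at short vertices, where you use $B_0\mid G$ rather than the paper's observation that every factor of $B_0$ divides $\overline D$ with multiplicity at least $P-1$; both arguments work, and the nonvanishing of $H_2$, which you leave implicit, follows immediately from $H_2\overline D=B_0^PH$.
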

\begin{proof}
Only the last nonvanishing assertion remains to be made explicit. The chosen
$G$, and $C_N,\bar D$, are nonzero at every short vertex. Every factor
of $B_0$ occurs in $\bar D=B_0^PH/H_2$ with multiplicity at least
$P-1$. Thus $B_0,H_2$ and the crossed scalar are nonzero there.
\end{proof}

\label{exp:p2:hasse}Here $\mathcal H_w^{[j]}F$ means the coefficient of $t^j$ in
$F(V+tw)$; for a vector or matrix it is taken entrywise.
The transverse directions at $(0,0,1)$ are the affine directions
$(w_1,w_2,0)$.

\begin{lemma}[First common-row pencil and strong bound]\label{p2:lem:firstrow}
The pencil $A_u=\sum_i u_iA_i$, after multiplication by the inverse of an invertible member,
has the form $\lambda(u)I+u'(u)v^t$ with one common row $v^t$.
Its coefficient dimension is three and its evaluation at an independent vector is
nonsingular. There is a primitive repeated linear determinant factor
$L(u)$ whose coefficients have degree $\ell_0$ in $V$, such that
\begin{equation}\label{p2:eq:strongseed}
 (P-2)r+\ell_0\le S-1,
 \qquad \frac{S-1}{P+1}\le r\le\frac{S-1}{P-2}.
\end{equation}
\end{lemma}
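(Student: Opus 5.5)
The plan is to work with the normalized pencil $T(u)=A_{u_0}^{-1}A_u$, where $A_{u_0}$ is an invertible member chosen over a finite extension so that it is nonsingular at every short vertex (Lemma~\ref{p2:II:interpolation}(i)). I would first show that $T(u)$ satisfies the quadratic-space hypothesis of Lemma~\ref{p2:II:quadratic-space}, namely $\det(W,T(u)W,T(u)^2W)=0$ identically, and then read the common-row form and the degree bounds off the determinant.

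\emph{Local step.} Move a short vertex to $(0,0,1)$ by the coordinate rules recorded after Lemma~\ref{p2:lem:input}. Take the first Hasse coefficients $\mathcal H_w$ in the transverse directions of the seed identities~\eqref{p2:full:seed}, that is, of $A_i^{[P]}V=H_2\overline N_i$ and $AB_*=FI$, using $A=\sum_iV_i^SA_i$. At a short vertex $H,a_0,D,B_0,H_2$ are all nonzero and $(A_1,B_1)$ is invertible (Lemma~\Iref{ppg:short}). So the transverse data reduce to an invertible $2\times 2$ semilinear block acting on the normal plane, together with the normal-row identity~\eqref{p2:eq:normals}. From this I expect that, at each short vertex, $T(u)$ acts on the normal plane as a scalar plus something of rank at most one with a fixed row. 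Then the cubic polynomial $\Gamma(u,W)=\det(W,T(u)W,T(u)^2W)$, after clearing $\det A_{u_0}$, vanishes at every short vertex for all $u,W$.

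\emph{Globalization.} Its $V$-degree is at most about $3(P+1)(S+r)$. Using $r<S/2$ from Lemma~\ref{p2:lem:seed}, this is below $\tfrac92(P+1)S<m_*$, which is the ``seed determinant'' entry in the budget table. Lemma~\ref{p2:II:interpolation}(ii) then gives $\Gamma\equiv0$ coefficientwise in $u,W$. Lemma~\ref{p2:II:quadratic-space} yields $T(u)=\lambda(u)I+u'(u)v^t$ or the common-image form.

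\emph{Excluding common image and bounding the dimension.} Nonsingular evaluation comes from $AB_*=FI$: evaluating $u\mapsto A_u W$ at the Frobenius diagonal reproduces the invertible $B_*$-type columns. This excludes the common-image form. It also forces coefficient dimension three, since a smaller span would make the evaluation matrix singular.

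\emph{Determinant and degree bounds.} In common-row form,
\[
 \det A_u=\det A_{u_0}\,\lambda(u)^2\bigl(\lambda(u)+v^tu'(u)\bigr).
\]
Clearing denominators, I would let $L(u)$ be the primitive part of the numerator of $\lambda$, linear in $u$ with coefficient degree $\ell_0$, so that $L^2\mid\det A_u$. Together with $H_2^2\mid\det A_u$ from the seed section, and using coprimality of the parameter-free factor $H_2$ with the primitive $L$, we get $H_2^2L^2\mid\det A_u$. Substituting $u=V^{[S]}$ gives $\det A=B_0^3H^2a_0D$ with $\overline D=B_0^PH/H_2$. I then form the quotient $E=\det A_u/(H_2^2L^2\cdot\text{known factors})$ and compute its degree from $h_2=Pr+(P-2)t-2S-\rho+z+2$ and the degree table~\eqref{p2:eq:degrees}. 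I expect this to give $\deg E=S-1-(P-2)r-\ell_0\ge0$, which is the first inequality and hence $r\le(S-1)/(P-2)$. For the lower bound I would use that $\ell_0\le r$ and that the cofactor $\lambda+v^tu'$ has degree at most $3r$ minus the known part. This should give $S\le(P+1)r+1$, i.e.\ $r\ge(S-1)/(P+1)$.

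\emph{Main obstacle.} The step I expect to be hardest is the local Hasse calculation: extracting from the seed identities exactly the determinant identity $\Gamma=0$ at short vertices, and not merely a statement on the normal plane. The second delicate point is the degree bookkeeping that makes $E$ polynomial with the exact degree $S-1-(P-2)r-\ell_0$. For the first, I would try the transverse expansion of $A_u^{[P]}V=H_2\,\overline N(u)$ first.
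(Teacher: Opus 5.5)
Your proposal has a genuine gap at its central step. The local claim that $\det(W,T(u)W,T(u)^2W)$ vanishes at each short vertex \emph{for all} $W$ is not available from the vertex data. Normality along the $P+1$ incident lines controls only $T(u)e_3$ and $T(u)w$, that is, the plane $\Span(e_3,w)$ spanned by the vertex and the transverse part $w$ of $u$. Nothing at the vertex controls $T(u)W$ for a general $W$, so locally you get only the Krylov determinant whose test vector is the vertex itself.

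The mechanism is also not a first-order Hasse computation, which sees only the total matrix $A^{[P]}w$. The paper expands $\widetilde g=A^{[P]}V=B_0^PHg$, which inherits normality, in orders $PS$ and $PS+1$. Since $Pr+1<PS$, exactly $\sum_iw_i^{PS}A_i^{[P]}e_3$ and $\sum_iw_i^{PS}A_i^{[P]}w$ occur there. After normalizing the directions to $\PG(1,P)$ one has $w_i^{PS}=w_i$. With $b(w)=(w_2,-w_1,0)^t$, the cubic $b(w)^t\sum_iw_iA_i^{[P]}(e_3)w$ vanishes on $P+1>3$ directions, hence identically. Taking $T_i=(A^{[P]})^{-1}A_i^{[P]}$, so that $T_3=I$ at the vertex, this gives $\det(e_3,T(u)e_3,T(u)^2e_3)=0$.

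The paper globalizes this through the coordinate-free polynomial $\det(V^{[PS]},u,C_u\adj(P_*)u)$, where $P_*=AA^{[P]}$ and $C_u=A\sum_iu_iA_i^{[P]}$; its degree is $3(P+1)(S+r)<m_*$. The relation $P_*[T_1V\ T_2V\ T_3V]=FI$ turns it into $\det(V,T(u)V,T(u)^2V)=0$. Only then is $V$ replaced by an independent test vector, using $P$-power-basis independence: the entries of the $T_i$ lie in $\overline{\F_q}(V)^P$ and the test-vector degree is $3<P$. Your untwisted normalization $A_{u_0}^{-1}A_u$ by a constant member has neither $T_3=I$ at the vertex nor entries in the $P$th-power field, so this promotion is not open to you.

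The degree bounds also need different ideas. Dividing $\det A_u$ by $H_2^2L^2$ yields only $2h_2+2\ell_0\le3r$. That is essentially the rough bound $(2P-3)r\le4S+2\rho-2(P-2)t-2z-4$ behind $r<S/2$, and nothing further may be divided out before specialization.

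The strong upper bound uses the adjugate together with primitivity of $\bar N$. One has $L\mid\adj A_u$, and the rank-one reduction modulo $H_2$ gives $H_2\mid\adj A_u$, so $\adj A_u=LH_2K_u$. On the diagonal $\adj A=B_0^2\adj M=B_0^2HC_N\bar N$, hence $K_{V^{[S]}}=E\bar N$ with $E=B_0^2HC_N/(L(V^{[S]})H_2)$, a polynomial because $\bar N$ is primitive. Since $\deg L(V^{[S]})=S+\ell_0$, degree comparison gives $\deg E=S-1-(P-2)r-\ell_0\ge0$.

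The lower bound is not a degree count at all. Suppose $(P+1)r+1<S$. Then every entry of $A_iB_*$ has degree below $S$. Separating the $V_i^S$ blocks in $\sum_iV_i^SA_iB_*=FI$ makes each $A_iB_*$ scalar, and invertibility of $B_*$ then makes all $A_i$ proportional, contradicting coefficient dimension three. Your sketch via $\ell_0\le r$ and the degree of $\lambda+v^tu'$ cannot produce $r\ge(S-1)/(P+1)$.

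Your derivation of nonsingular evaluation from $\det B_*\ne0$ is fine. So are its consequences: coefficient dimension three and exclusion of the common-image form.
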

\begin{proof}
Set $P_*=AA^{[P]}$ and $C_u=A\sum_i u_iA_i^{[P]}$.
Then $P_*V=FV^{[PS]}$ and $C_uV=Fu$.
At a short vertex normalize the point to $e_3$ and its $P+1$
directions to $\PG(1,P)$. Normality makes the transverse block of
$A$ scalar and nonzero. For $\widetilde g=A^{[P]}V$ the exact
Hasse coefficients in a direction $w$ are
\[
 \mathcal H_w^{[PS]}\widetilde g=\sum_i w_i^{PS}A_i^{[P]}e_3,
 \qquad
 \mathcal H_w^{[PS+1]}\widetilde g=\sum_i w_i^{PS}A_i^{[P]}w.
\]
These are the pure coefficient of order $PS$ and the mixed
coefficient of order $PS+1$, evaluated at $e_3$. No other term can
occur, since $Pr+1<PS$.
For $w=(w_1,w_2,0)^t$ over $\F_P$, $w_i^{PS}=w_i$. With
$b(w)=(w_2,-w_1,0)^t$, normality makes the bivariate cubic
\[
 b(w)^t\sum_i w_iA_i^{[P]}(e_3)w
\]
zero on $P+1>3$ directions. It is homogeneous in $w_1,w_2$, so it
vanishes identically. For clarity, set $T_i=(A^{[P]})^{-1}A_i^{[P]}$ at this vertex.
Then $T_3=I$, and the pure Hasse coefficient gives a scalar
transverse block for $[T_1e_3\ T_2e_3\ T_3e_3]$.
Write $u=w+u_3e_3$. The cubic says $T(w)w$ belongs to
$\operatorname{Span}(e_3,w)$, and the scalar block gives
$T(u)e_3=\mu w+\eta e_3$. Thus
$\det(e_3,T(u)e_3,T(u)^2e_3)=0$.
Using $P_*[T_1V\ T_2V\ T_3V]=FI$ and $C_u=P_*T(u)$
transforms this into the coordinate-independent identity
\begin{equation}\label{p2:eq:firstkrylov}
 \det(V^{[PS]},u,C_u\adj(P_*)u)=0.
\end{equation}
Its coefficient degree is
$3(P+1)(S+r)<\tfrac92(P+1)S<(P^2-2)S<m_*$ for every $P\ge7$.
Restriction to each ordinary component, then the number of ordinary
components, makes \eqref{p2:eq:firstkrylov} a global identity.

Put $K_0=\overline{\F_q}(V)^P$ and $T_i=(A^{[P]})^{-1}A_i^{[P]}$.
The matrix $B'=[T_1V\ T_2V\ T_3V]$ is nonsingular by
\eqref{p2:full:seed}, and $P_*B'=FI$. Thus
$P_*^{-1}u=T(u)V/F$, $P_*^{-1}V^{[PS]}=V/F$, and
$C_u=P_*T(u)$, where $T(u)=\sum_i u_iT_i$.
Multiplying the columns of \eqref{p2:eq:firstkrylov} by
$P_*^{-1}$ therefore gives the determinant identity
$\det(V,T(u)V,T(u)^2V)=0$.
Its test-vector degree is three, below $P$, and its matrix entries
belong to $K_0$; $P$-power-basis independence therefore replaces $V$ by
an independent vector. Every member of the normalized matrix space
has minimal polynomial of degree at most two.

Lemma~\ref{p2:II:quadratic-space} now gives common-row form. Taking
$P$th roots of the coefficients gives the assertion for $A_u$.

Choose its primitive repeated factor $L(u)$, so
$L^2\mid\det A_u$, $L\mid\adj A_u$, and $2\ell_0\le3r$.
The argument modulo $H_2$ also gives $H_2\mid\adj A_u$;
$L$ and $H_2$ are coprime. Thus
\[
 \adj A_u=L(u)H_2K_u,\qquad
 K_{V^{[S]}}=E\bar N,\qquad
 E=\frac{B_0^2HC_N}{L(V^{[S]})H_2}.
\]
The diagonal of $L$ is nonzero because $\ell_0<S$.
Primitivity of $\bar N$ makes $E$ polynomial. Comparing degrees gives
\[
 \deg E=S-(P-2)r-\ell_0-1\ge0,
 \qquad D=B_0^{P-2}L(V^{[S]})E.
\]
It remains to prove the lower bound, which replaces $r\ge S/P$.
If $(P+1)r+1<S$, every entry of $A_iB_*$ has degree below $S$.
In $\sum_iV_i^SA_iB_*=FI$, separation of the $S$ blocks in the
off-diagonal and diagonal-difference equations makes each $A_iB_*$
scalar. Invertibility of $B_*$ would then make all $A_i$ proportional,
contrary to coefficient dimension three. Therefore
$(P+1)r+1\ge S$, proving~\eqref{p2:eq:strongseed}.
\end{proof}

\section{The repeated coefficient step and its degree bounds}\label{p2:secmap:4}\label{p2:sec:repeated-step}

\begin{definition}[Admissible data at scale $s$]\label{p2:II:admissible}
At depth $j\ge1$, let $s_i=S/P^{i-1}$, $1\le i\le j$, and $s=s_j$.
At this stage we have the matrix polynomial
\[
 C(Z_1,\ldots,Z_j;V)
 =\sum_{i_1,\ldots,i_j}(Z_1)_{i_1}\cdots(Z_j)_{i_j}
 C_{i_1\ldots i_j}(V),
\]
whose coefficients have degree $w<5s/(P+1)$ in $V$. Its coefficient
family is primitive; every parameter argument has coefficient dimension
three; and its last pencil has the common-row quadratic property
and nonsingular evaluation. On the full Frobenius diagonal,
\begin{equation}\label{p2:eq:actual}
 C(V^{[S]},\ldots,V^{[s]};V)=\Xi(V)A(V),
\end{equation}
with a polynomial $\Xi$ which is nonzero at short vertices.
Writing $\mathbf Z=(Z_1,\ldots,Z_{j-1})$, define
\begin{align}
 U(\mathbf Z;V)&=C(\mathbf Z,V^{[s]};V),\nonumber\\
 \calD(\mathbf Z;V)W&=C^{\langle P\rangle}(W,\mathbf Z;V)V.
 \label{p2:eq:partner}
\end{align}
If $C=\sum Z^I C_I(V)$, where $Z^I$ denotes a monomial in the
formal vector arguments, then
\[
 C^{\langle P\rangle}=\sum Z^I C_I(V)^{[P]}.
\]
Thus each entry of each whole coefficient polynomial matrix $C_I(V)$
is raised to its $P$th power, while the formal arguments remain unchanged.
\label{exp:p2:coefficient-frobenius}\ReadingNote{Frobenius on coefficient polynomials}{For example,
$(Z_1)_1(aX+Y)$ becomes $(Z_1)_1(a^PX^P+Y^P)$.
The degree in $V$ is multiplied by $P$, while the degree in each
formal argument stays the same.}
The remaining invariant is
\begin{equation}\label{p2:eq:partial}
 U\calD=\phi I,\qquad \phi_\Delta=\Xi^{P+1}F.
\end{equation}
The degrees in $V$ of $U,\calD$ are $s+w,Pw+1$; both are multilinear
in the old arguments. Their actual diagonals are invertible at short
vertices, and $U$ has coefficient dimension three in each old argument.
The pair $(C,\Xi)$ is called admissible when all these requirements
hold\TermNote{exp:p2:admissible}{this lists precisely the properties that must survive each repetition of the argument}. For $P=5$ the same definition has $w<5s/6$.
\end{definition}
For $j=1$ the crossed identity is \eqref{p2:full:seed}.

Table~\ref{p2:tab:retained-properties} records why each property is
retained. Section~\ref{p2:sec:completion} restores all of them after
one extraction; Section~\ref{p2:sec:five-entire-interval} uses the same
properties with its separate degree estimates at $P=5$.
\begin{table}[!htb]
\small
\renewcommand{\arraystretch}{1.12}
\caption{Properties retained at each coefficient step.}\label{p2:tab:retained-properties}
\centering
\begin{tabular}{@{}p{0.36\textwidth}p{0.58\textwidth}@{}}
\toprule
Retained property & Use in the next step\\\midrule
The coefficient family is primitive. & A common factor of the new
coefficients can be cancelled from the common multiplier.\\
Every argument has coefficient dimension three. & No argument
collapses to one scalar linear form; parameter-dependent common
factors are excluded.\\
The last pencil has common-row form and nonsingular evaluation. &
Its repeated determinant factor gives the adjugate factor and the
residual degree bounds.\\
$C_\Delta=\Xi A$, with $\Xi$ nonzero at all short vertices. &
The local normal restrictions and invertibility at short vertices
remain available after extraction.\\
$U\calD=\phi I$, with $\phi_\Delta=\Xi^{P+1}F$. & This reciprocal
identity supplies the next simultaneous coefficient extraction.\\
$U$ has coefficient dimension three in every old argument. &
Parameter factors cannot divide every entry of $\calD$; this also
supports the backward argument for $P=5$.\\\bottomrule
\end{tabular}
\end{table}

There is no parameter-dependent common factor of $C$: such a factor
would consume degree one in some argument and make its coefficient
dimension one. Thus primitivity in $V$ and the argument conditions
give primitivity in the full polynomial ring.

\begin{lemma}[Remaining factors and degree bounds]\label{p2:lem:gap}
There are a squarefree polynomial $H_*(V)$, a primitive last-argument
linear form $L(\mathbf Z,Y;V)$, and polynomial matrices $K,\calN$ with
\begin{align}
 \calD&=H_*\calN,& \adj C&=LH_*K,\label{p2:eq:adjrecursive}\\
 K(\mathbf Z,V^{[s]};V)&=E(\mathbf Z;V)\calN(\mathbf Z;V).
 \label{p2:eq:E}
\end{align}
If $\ell=\deg_V L$ and $e=\deg_V E$, then
\begin{equation}\label{p2:eq:interval}
 \boxed{(P+1)w+1\ge s,\qquad
 e=s-(P-2)w-\ell-1\ge0.}
\end{equation}
The degree of $E$ in each old parameter group is zero or one.
\end{lemma}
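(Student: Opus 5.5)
The plan is to copy the seed argument of Lemma~\ref{p2:lem:firstrow}, now inside the multilinear setting, with the old arguments $\mathbf Z$ kept as independent coefficients. Over the field $\overline{\F_q}(\mathbf Z,V)$ the last pencil $Y\mapsto C(\mathbf Z,Y;V)$ is in common-row form: after multiplication by a fixed invertible member it reads $\lambda(Y)I+u(Y)v^t$. Hence $\det C$ has a repeated factor linear in $Y$. Take $L$ to be its primitive part; this gives $L^2\mid\det C$ and $L\mid\adj C$ entrywise, since the adjugate of $\lambda I+uv^t$ is $\lambda$ times a matrix. Nonsingular evaluation guarantees that $L$ really involves $Y$ and is not identically zero.

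The squarefree factor $H_*$ is found as in the seed, by reducing modulo primes. Let $f$ range over the irreducible factors of the common content of $\calD$ in $V$, and define $H_*$ as their squarefree product. By \eqref{p2:eq:partner}, differentiating $\calD W=C^{\langle P\rangle}(W,\mathbf Z;V)V$ in $V$ kills the $P$th-power coefficients. So modulo such an $f$, every $C_I^{[P]}$ has rank at most one with a common row, and by injectivity of Frobenius the same holds for the $C_I$ modulo $f$. Therefore $f$ divides every cofactor of $C$, which gives $\adj C=LH_*K$ once we know $L$ and $H_*$ are coprime. Coprimality holds because $L$ is primitive and depends on $Y$, while $H_*$ depends on $V$ alone. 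The identity $\calD=H_*\calN$ then holds by construction.

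Next, substitute $Y=V^{[s]}$. From $U\calD=\phi I$ we get $\adj U=(\det U/\phi)\calD$. Comparing this with $\adj C$ on the last-argument diagonal gives
\[
 L(\mathbf Z,V^{[s]};V)\,K(\mathbf Z,V^{[s]};V)=\frac{\det U}{\phi}\,\calN .
\]
We need the specialization $L(\mathbf Z,V^{[s]};V)$ to be nonzero. This follows because its square divides $\det U$, and $\det U\ne0$ since the actual diagonals are invertible at short vertices. The entries of $\calN$ have no common factor in $V$ beyond those already removed, and $\calN$ has coefficient dimension three in each old argument, so it is primitive in the full ring. Primitivity then forces $E:=\det U/(\phi L_{\rm diag})$ to be a polynomial, which is \eqref{p2:eq:E}.

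Degrees come next. Counting $V$-degrees in $\adj C=LH_*K$ and in \eqref{p2:eq:E} gives
\[
 2w=\ell+h_*+\deg_VK,\qquad \deg_VK+s=e+(Pw+1-h_*).
\]
Subtracting these yields $e=s-(P-2)w-\ell-1$. The adjugate has degree two in each old argument and $L$ has degree one there, so $E$ has degree at most one in each old group. Nonnegativity of $e$ holds because $E$ is a nonzero polynomial. For the lower bound $(P+1)w+1\ge s$, reuse the seed separation argument. If $(P+1)w+1<s$, then in $U\calD=\phi I$ the entries of $C_I\calD$ have degree below $s$. Lemma~\ref{p2:II:separation} then separates the $V^{[s]}$ blocks, so each $C(\mathbf Z,e_i;V)\calD$ is scalar. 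Since $\calD$ is invertible, the last pencil would have coefficient dimension one, contradicting admissibility.

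I expect the main obstacle to be the primitivity and coprimality bookkeeping that makes $E$ a polynomial. We must exclude factors involving old parameters in $\calN$, and check that $H_*$ is squarefree. The coefficient-dimension-three hypothesis on $U$ is exactly the tool I would try first for both.
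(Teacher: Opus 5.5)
Your route is the paper's route. You take $L$ from the common-row last pencil, and $H_*$ from the rank-one reduction of $C_I^{[P]}=\operatorname{Jac}(C_I^{[P]}V)$ modulo prime factors of the $V$-content of $\mathcal D$. You then specialize at $Y=V^{[s]}$, with $L(\mathbf Z,V^{[s]};V)\ne0$ because its square divides $\det U\ne0$, and get polynomiality of $E$ from primitivity of $\mathcal N$. The degree count and the separation argument for $(P+1)w+1\ge s$ also match. The bookkeeping you flag as the obstacle needs two short arguments you did not supply, and one step is misstated.

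\emph{Radical versus content.} Defining $H_*$ as the radical of the content makes it squarefree by fiat. But then $\mathcal N$ need not be primitive, and $E$ need not be polynomial. You must show the content itself is squarefree, and your own derivative identity does this. If $f^2$ divides every coefficient column $d_I=C_I^{[P]}V$ of $\mathcal D$, then $f$ divides $\operatorname{Jac}(d_I)=C_I^{[P]}$. Hence $f$ divides every $C_I$, contradicting coefficient primitivity.

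\emph{Parameter factors.} Do not assert that $\mathcal N$ has coefficient dimension three in the old arguments; that is not among the hypotheses. Argue through $U$ instead. A common factor $f$ of the entries of $\mathcal D$ involving $Z_i$ is linear in $Z_i$, since $\mathcal D$ is, so $\mathcal D=f\mathcal D'$ with $\mathcal D'$ free of $Z_i$. Then $U=\phi\mathcal D^{-1}=(\phi/f)\mathcal D'^{-1}$ is projectively constant in $Z_i$. This contradicts coefficient dimension three of $U$ in $Z_i$.

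\emph{Old-argument degree of $L$.} $L$ need not have degree one in an old argument. Since $L^2\mid\det C$ and $\det C$ is cubic in $Z_i$, you only get $\deg_{Z_i}L\le1$. The correct count is $\deg_{Z_i}E=(2-\deg_{Z_i}L)-\deg_{Z_i}\mathcal N=1-\deg_{Z_i}L\in\{0,1\}$, using that $H_*$ depends on $V$ alone. Under your premise $E$ would never involve old parameters. Those parameter factors, which arise exactly when $\deg_{Z_i}L=0$, are what Lemma~\ref{p2:lem:cancel} has to remove.
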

\begin{proof}
The common-row last pencil gives a primitive repeated factor $L$ with
$L^2\mid\det C$, $L\mid\adj C$, $2\ell\le3w$, and earlier-argument degrees
at most one. The greatest common factor of the entries of $\calD$ has no parameter-dependent factor:
such a factor consumes an argument, and \eqref{p2:eq:partial} would make $U$
projectively constant in that argument. Let its greatest common factor in $V$ be $H_*$.
It is squarefree. A repeated factor would divide all derivatives
with respect to $V$ of $\calD$, hence all entries of the $C_I^{[P]}$,
contradicting coefficient primitivity.

Modulo an irreducible factor $f$ of $H_*$, differentiation of
$\calD=H_*\calN$ gives, for each parameter coefficient,
$C_I^{[P]}\equiv n_I(\nabla H_*)^t$. Here
$\nabla H_*\not\equiv0\pmod f$: the factor is simple and the
coefficient field is perfect. The $2\times2$ minors of
$C_I^{[P]}$ are $P$th powers of those of $C_I$. Injectivity of
Frobenius modulo the prime $(f)$ therefore gives the asserted rank
bound for $C_I$; taking roots also gives their common row.
Consequently $H_*\mid\adj C$. Since $L$ is primitive
as a last-argument form, it is coprime to $H_*$. This proves the first
line. The last-argument specialization $L(\mathbf Z,V^{[s]};V)$
is nonzero: its square divides $\det U$ by $L^2\mid\det C$, whereas
$U\calD=\phi I$ with $\phi\ne0$ gives $\det U\ne0$.
This proof works also for $P=5$ and uses no bound on $\ell$.
Now specialize the last argument and use \eqref{p2:eq:partial}; the
rational scalar in \eqref{p2:eq:E} is polynomial since $\calN$ is
primitive. Comparing degrees in $V$ gives the formula for $e$;
comparing earlier-argument degrees gives
$\deg_{Z_i}E=1-\deg_{Z_i}L\in\{0,1\}$.

If $(P+1)w+1<s$, write $U=\sum_iV_i^s C_i(\mathbf Z;V)$.
Each $C_i\calD$ has degree below $s$. Separation in the off-diagonal
and diagonal-difference equations of \eqref{p2:eq:partial} makes
each $C_i\calD$ scalar. Invertibility of $\calD$ makes all $C_i$
proportional, contradicting coefficient dimension three.
\end{proof}

\section{Cancelling parameters and extracting the next scale}\label{p2:secmap:5}

\begin{lemma}[Parameter cancellation]\label{p2:lem:cancel}
Every parameter-dependent irreducible factor of $E$ divides $K$
before the last-argument substitution. After cancellation,
\begin{equation}\label{p2:eq:Ezero}
 K_0(\mathbf Z,V^{[s]};V)=E_0(V)\calN(\mathbf Z;V),
 \qquad e_0:=\deg E_0\le e,
\end{equation}
where $K_0$ is still linear in the last argument.
\end{lemma}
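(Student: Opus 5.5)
The plan is to prove the cancellation one factor at a time. Fix an irreducible factor $f$ of $E$ that genuinely involves an older parameter group $Z_i$. By Lemma~\ref{p2:lem:gap}, $\deg_{Z_i}E\le1$, so $f$ is linear in $Z_i$. Since $L$ is primitive, $\deg_{Z_i}L=0$ in that case, because the two degrees sum to one. The goal is to show that $f$ divides every entry of $K(\mathbf Z,Y;V)$ with the last argument $Y$ kept formal. Then $f$ can be cancelled from $K$ and from $E$ at once. Repeating this over all parameter-dependent factors leaves $K_0$, still linear in $Y$, and a residual $E_0(V)$ with $\deg E_0\le e$. Linearity in $Y$ survives because $f$ does not involve $Y$: $E$ is defined after the substitution $Y=V^{[s]}$.

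To get the divisibility, I would expand $K$ in the last argument, $K=\sum_a Y_aK_a(\mathbf Z;V)$. Then \eqref{p2:eq:E} reads $\sum_a V_a^sK_a\equiv0\pmod f$.

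The key step is to show that the $K_a$ themselves vanish modulo $f$, not only this combination. I would use a degree separation modulo $f$. The entries $K_a$ have $V$-degree
\[
 2(s+w)-\ell-h_*-(s+w)\cdot0-\ldots ,
\]
that is, the degree of $\adj C$ minus $\ell+h_*$, which is roughly $2w+2s-2(P-3)w$ after the crossed identity is used. The needed inequality is the table entry $2s-2(P-3)w-3\ell-h_*-2<s$, which requires $2(P-3)\ge P+1$, so $P\ge7$. Under it, the relation $\sum_a V_a^sK_a\equiv0$ becomes a three-coordinate-power relation of degree below $s$.

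Lemma~\ref{p2:II:koszul} then forces each $K_a$ to be divisible by $f$. To apply it correctly, I would first clear $f$: write $\sum_aV_a^sK_a=fQ$. Because $f$ is linear in $Z_i$, I would compare coefficients of the $Z_i$-monomials, treating $f$ as $f_0+\sum_b (Z_i)_bf_b$ with $f_b$ parameter-free in $Z_i$. This reduces everything to genuine polynomial relations among $X^s,Y^s,Z^s$ with coefficients of degree below $s$, and those relations must vanish.

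The main obstacle is exactly this reduction from ``zero modulo $f$'' to a relation where Lemma~\ref{p2:II:koszul} applies. If this separation is too weak, I would fall back on coefficient dimension three of $U$ in $Z_i$ together with $U\calD=\phi I$. If $f\nmid K_a$, then evaluating at a generic zero of $f$ (solving the linear equation for $Z_i$) would make $K$ have rank one in $Y$ and would force $\adj U$, and hence $U$, to be projectively constant in $Z_i$. That contradicts the dimension-three condition, in the same way parameter factors of $\calD$ were excluded in Lemma~\ref{p2:lem:gap}.
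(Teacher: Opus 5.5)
Your overall plan matches the paper's: kill $E$ modulo $f$, then separate in the last argument using the $P\ge7$ inequality. But there is a genuine gap at exactly the step you flag as the main obstacle.

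\textbf{The key reduction does not work as stated.} Comparing coefficients of $Z_i$-monomials in $\sum_aV_a^sK_a=fQ$ does not produce relations among $X^s,Y^s,Z^s$ with zero right-hand side, because every monomial equation still contains the unknown coefficients of $Q$. Nor can Lemma~\ref{p2:II:koszul} be applied in the quotient by $f$. When $f$ has positive $V$-degree, $X^s,Y^s,Z^s$ are not a regular sequence modulo $f$, so that lemma gives no information there.

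\textbf{Your degree count reflects the same issue.} The entries $K_a$ have $V$-degree $2w-\ell-h_*$. The bound you quote, $2s-2(P-3)w-3\ell-h_*-2$, is $(2w-\ell-h_*)+2e$. The extra $2e$ is the price of precisely the step you have not carried out.

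\textbf{The paper's device.} Write $f=au+b$, where $u$ is one coordinate of $Z_i$ and $a,b$ are polynomials in $V$ and the remaining parameter coordinates.
\begin{enumerate}
\item Since $K$ has degree at most two in $u$, the matrix $a^2K|_{u=-b/a}$ is polynomial and still linear in the last argument.
\item Its $V$-degree is at most $2w-\ell-h_*+2\deg_Vf\le 2w-\ell-h_*+2e<s$. The last inequality uses $(P+1)w+1\ge s$ and $2(P-3)\ge P+1$.
\item Its last-argument diagonal is $a^2(E\calN)|_{u=-b/a}=0$, because $f\mid E$.
\item Separation, with the other parameters kept as coefficients, then makes the matrix vanish formally.
\item So $f\mid K$ once powers of $a$ are allowed as denominators. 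Since $f$ and $a$ are coprime, $f\mid K$ in the polynomial ring.
\end{enumerate}
Eliminating $Q$ from your monomial equations would amount to this same substitution, with the same degree cost. You would then still have to recover divisibility from the vanishing of the eliminants, which is where coprimality of $f$ and $a$ enters.

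\textbf{Your fallback does not close the gap either.} Vanishing of $K(\mathbf Z,V^{[s]};V)=E\calN$ on $f=0$ says only that $\adj U$ vanishes there. So $U$ has rank at most one along that hypersurface, which does not contradict coefficient dimension three in $Z_i$.

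An argument of this type does appear in the paper, but only for $P=5$. There it comes after common-row form has been established in \emph{every} argument, by the backward induction with the cubic criterion. Then the greatest common divisor of the adjugate entries of the $Z_i$-pencil is its repeated linear factor $\Pi_i$, and $f$ is identified with $\Pi_i$. For $P\ge7$ the admissible data give common-row form only in the last argument, so that route would need substantial extra work. The substitution argument avoids it.
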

\begin{proof}
Since $E$ has degree at most one in each old parameter group,
such a factor occurs with multiplicity one and is linear in some
old coordinate $u$, say $f=au+b$. The coefficients $a,b$ may depend
on $V$ and all the other parameter coordinates. They remain in the
coefficient ring; separation below concerns only the final argument. The degree of $K$ in
$u$ is at most two. Substitute $u=-b/a$ and multiply by $a^2$.
The resulting matrix is linear in the last argument, with coefficient
degree in $V$ at most, writing $h_*=\deg H_*$,
\begin{align*}
 2w-\ell-h_*+2\deg_V f
 &\le2w-\ell-h_*+2e\\
 &=2s-2(P-3)w-3\ell-h_*-2<s.
\end{align*}
The strict inequality follows from \eqref{p2:eq:interval} and
\begin{equation}\label{p2:eq:threshold}
 2(P-3)\ge P+1\qquad(P\ge7).
\end{equation}
Its last-argument diagonal is zero by \eqref{p2:eq:E}; separation makes
the unspecialized matrix zero. Hence $f\mid K$ after allowing powers of $a$ as denominators. Clearing these
denominators proves the same divisibility for polynomials, since
$f$ and $a$ are coprime.
Cancel all such factors. This includes factors with zero derivatives
with respect to $V$; no perfect closure of the parameter field is used.
\end{proof}

\begin{lemma}[Uniform coefficient extraction]\label{p2:lem:extract}
Assume $s>1$, so that $P\mid s$, and put $\tau=s/P$.
There is one polynomial $G(V)$, with
$\gamma=\deg G$, such that simultaneously for all coefficient matrices
\begin{equation}\label{p2:eq:extract}
 GC_I=\sum_{a=1}^3V_a^\tau C'_{Ia},\qquad
 v:=\deg C'_{Ia}=w+\gamma-\tau.
\end{equation}
After a common-factor cancellation the new family is primitive,
$G$ is nonzero at every short vertex, and
\begin{align}
 0\le\gamma&\le\frac{2e_0}{P}
 \le\frac{6(s-1)}{P(P+1)}<\tau,\label{p2:eq:gamma}\\
 v&\le\frac{s-(P-4)w-2\ell-2}{P}
 \le\frac{5s-P-6-2(P+1)\ell}{P(P+1)}
 <\frac{5\tau}{P+1}.\label{p2:eq:v}
\end{align}
In particular,
\begin{equation}\label{p2:eq:allgaps}
 v<w,\quad w+v<s,\quad6v<s,\quad\tau+(P+1)v+1<s.
\end{equation}
\end{lemma}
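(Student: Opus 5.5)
I will follow the pattern of the first extraction in Section~\ref{p2:sec:seed}, now applied to the reciprocal identity \eqref{p2:eq:Ezero} at scale $s$.

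\textbf{Step 1: the key relation.} Specialize \eqref{p2:eq:Ezero} and use $\calD=H_*\calN$ from Lemma~\ref{p2:lem:gap} together with the definition \eqref{p2:eq:partner} of $\calD$. On the last argument this gives a relation of the form
\[
 E_0(V)\,C^{\langle P\rangle}(W,\mathbf Z;V)V=H_*(V)\,K_0(\mathbf Z,V^{[s]};V)W,
\]
whose right side lies in $J_s$ entrywise, since $K_0$ is linear in its last argument. To remove $V$, differentiate in $V_j$, multiply by $E_0$, and subtract $\partial_jE_0$ times the undifferentiated relation. Because the derivatives of the $P$th-power coefficients $C_I^{[P]}$ vanish, this yields
\[
 E_0^2\,C_I^{[P]}e_j\in J_s=J_{P\tau}\qquad\text{for every } I,j.
\]

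\textbf{Step 2: extraction and the multiplier $G$.} Expand $E_0^2=\sum_\alpha V^\alpha E_\alpha^P$. Lemma~\ref{p2:II:power-basis} then gives $E_\alpha C_I\in J_\tau$, with $\deg E_\alpha\le 2e_0/P$. Next I must check that the $E_\alpha$ do not all vanish at a short vertex, which amounts to $E_0$ being nonzero there. For this, compare $E_0$ with the diagonal identity $\phi_\Delta=\Xi^{P+1}F$, using the factorization $D=B_0^{P-2}L_\Delta E$ and its analogue at depth $j$: $E_0$ divides a product of factors that are nonzero at short vertices. Lemma~\ref{p2:II:interpolation}(i) then produces a single combination $G$ with $\gamma\le 2e_0/P$, nonzero at all short vertices, defined over a finite extension. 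Writing $GC_I=\sum_a V_a^\tau C'_{Ia}$ gives \eqref{p2:eq:extract}. Cancelling the common factor of the family, exactly as $C\mid G$ in the seed, restores primitivity; it only lowers $\gamma$ and $v$ and keeps $G$ nonzero at short vertices.

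\textbf{Step 3: degree bounds.} Use $e_0\le e=s-(P-2)w-\ell-1$. This gives $v=w+\gamma-\tau\le w+2(s-(P-2)w-\ell-1)/P-s/P$, which rearranges to the first expression in \eqref{p2:eq:v}. Substituting the lower bound $w\ge(s-1)/(P+1)$ from \eqref{p2:eq:interval} then gives the second expression, which is below $5\tau/(P+1)$. The bound on $\gamma$ in \eqref{p2:eq:gamma} comes the same way, with $e\le s-1-(P-2)(s-1)/(P+1)=3(s-1)/(P+1)$. The gaps \eqref{p2:eq:allgaps} are then arithmetic with $P\ge7$:
\begin{itemize}
\item $v<5\tau/(P+1)\le(s-1)/(P+1)\le w$;
\item $6v<30s/(P(P+1))<s$;
\item $\tau+(P+1)v+1<\tau+5\tau+1\le s$ for $P\ge7$ and $s\ge P$;
\item $w+v<s$ follows from $w<5s/(P+1)$.
\end{itemize}

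\textbf{Main obstacle.} The delicate point is Step~2's nonvanishing of $E_0$, and hence of $G$, at short vertices. This requires tracking the scalar identity $\phi_\Delta=\Xi^{P+1}F$ through the factorization $\adj C=LH_*K$ and verifying that $L_\Delta$, $H_*$ and $E_0$ are all divisors of quantities already known to be nonzero there. I would write $\det U$ on the diagonal as a multiple of $\det(\Xi A)$ to pin this down.
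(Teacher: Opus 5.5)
Your proposal is correct and follows the paper's proof essentially step for step. The steps match as follows: the same differentiation-and-elimination trick gives $E_0^2f^P\in J_s$; the residue expansion of $E_0^2$ with the power-basis lemma gives $E_\alpha C_I\in J_\tau$; nonvanishing of $E_0$ at short vertices comes from the diagonal identity $E_\Delta L_\Delta(\Xi B_0)^{P-2}=D$, obtained exactly as you suggest by comparing $\det U_\Delta=(\Xi B_0)^3\det M$ with the adjugate identities; and the same degree arithmetic uses $e_0\le s-(P-2)w-\ell-1$ and $(P+1)w+1\ge s$. The only cosmetic points are that $v<w$ follows at once from $v-w=\gamma-\tau<0$, and that for $w+v<s$ you should pair $w<5s/(P+1)$ with $v<5\tau/(P+1)$ (giving $w+v<5s/P$), not with $v<w$.
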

\begin{proof}
Indeed, \eqref{p2:eq:Ezero} gives
\[
 E_0\calD=E_0H_*\calN
 =H_*K_0(\mathbf Z,V^{[s]};V)\in J_s
\]
entrywise, since $K_0$ is linear in its last argument.
Ordinary differentiation and elimination of the undifferentiated
term give $E_0^2\partial_a\calD\in J_s$.
Parameter-coefficient comparison yields
\[
 E_0^2f^P\in J_s=(X^s,Y^s,Z^s)
\]
for every scalar coefficient polynomial $f(V)$ of the old matrix polynomial.
We do not infer that a function with zero ordinary derivatives is
a $P$th power. Instead write the actual expansion by exponent residues modulo $P$
\[
 E_0^2=\sum_{0\le\alpha_1,\alpha_2,\alpha_3<P}
 V^\alpha G_\alpha^P.
\]
Lemma~\ref{p2:II:power-basis} gives, simultaneously,
\[
 G_\alpha f\in J_\tau,
 \qquad \deg G_\alpha\le\left\lfloor\frac{2e_0}{P}\right\rfloor.
\]
On the actual diagonal at a short vertex the adjugate and crossed
identities give
\[
 E_\Delta L_\Delta(\Xi B_0)^{P-2}=D.
\]
To see this, on the full diagonal use $U_\Delta=\Xi B_0M$
and $\phi_\Delta=(\Xi B_0)^{P+1}H^2a_0$ in the two adjugate
identities, and cancel the nonzero matrix $HN$.
Every factor on the left is therefore nonzero at a short vertex.
Since $E_0$ divides $E_\Delta$, it too is nonzero there. Hence the $G_\alpha$ do not all vanish
there. Choose a linear form nonzero at the finitely many short
vertices, homogenize these coefficients to their maximum degree,
and take a linear combination $G$ as in Lemma~\ref{p2:II:interpolation}(i). It remains nonzero at
every short vertex and satisfies $Gf\in J_\tau$ for every $f$.
These coefficient choices can be made over a finite algebraic extension
of $\F_q$. All later constants are allowed to lie in $\overline{\F_q}$;
the compatible point representatives are descended to $\F_q$ at the end.
The extraction and the bound $\gamma\le2e_0/P$ follow.

The remaining degree bounds follow from
$e_0\le s-(P-2)w-\ell-1$ and $(P+1)w+1\ge s$.
For the last bound in \eqref{p2:eq:allgaps}, explicitly,
\[
 \tau+(P+1)v+1
 \le\frac{6s-6-2(P+1)\ell}{P}<s.
\]
Any common factor of the new coefficients divides $G$, by old
primitivity. Cancel it from both; $\gamma,v$ only decrease.
All cancelled factors are nonzero at short vertices because $G$ is.
\end{proof}

\section{The Hasse coefficient calculation at each step}\label{p2:secmap:6}

\begin{lemma}[Preservation of the last coefficient pencil]\label{p2:lem:hasse}
The new matrix polynomial from \eqref{p2:eq:extract} has the quadratic common-row
property and nonsingular evaluation in its last argument. These assertions
hold with all older vector arguments independent.
\end{lemma}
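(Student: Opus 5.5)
The plan follows the template of Lemma~\ref{p2:lem:firstrow}, now at scale $\tau$ with the older arguments $\mathbf Z$ kept as independent parameters. Write $C'(\mathbf Z,Y;V)=\sum_a Y_a C'_a(\mathbf Z;V)$ for the new last pencil, where $C'_a=\sum_I Z^IC'_{Ia}$. From \eqref{p2:eq:extract} we have $GC(\mathbf Z,W;V)=\sum_a V_a^\tau C'_a(\mathbf Z,W;V)$; here $W$ is the old last argument, which becomes the new next-to-last one. On the full diagonal this gives $G\Xi A=C'(V^{[S]},\ldots,V^{[\tau]};V)$.

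\textbf{Step 1: local cubic identity.} Work at a short vertex normalized to $e_3$ with directions $\PG(1,P)$. Take the Hasse coefficients of order $P\tau$ and $P\tau+1$ of the vector $\widetilde g=(G\Xi A)^{[P]}V$, but form them before substituting the older arguments. Keep the older arguments formal and substitute only the last diagonal power $V^{[\tau]}$. Because $Pv+1<P\tau$, by \eqref{p2:eq:allgaps}, only the pure and mixed terms survive. Normality at each of the $P+1$ incident ordinary components, through \eqref{p2:eq:normals} multiplied by $(G\Xi)$, makes the transverse block of the pure coefficient scalar. It also makes the bivariate cubic $b(w)^tT(w)w$ vanish on more than three directions, hence identically. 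Here
\[
 T(u)=\bigl((G\Xi A)^{[P]}\bigr)^{-1}\sum_a u_a C'^{[P]}_a .
\]
As in Lemma~\ref{p2:lem:firstrow}, this yields $\det(e_3,T(u)e_3,T(u)^2e_3)=0$ at the vertex.

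\textbf{Step 2: globalize and lift.} Using the crossed identity $U\calD=\phi I$ together with the extracted form of $C$, rewrite the local identity in a coordinate-free way. The result is an identity of Krylov type, $\det(V^{[P\tau]},u,\mathcal C_u\adj(\mathcal P_*)u)=0$, built from $C'$ and its partner. Its entries are polynomial in $V$, multilinear of degree at most six in each older argument, and cubic in $u$.

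This identity holds at all short vertices of every ordinary component. Its $V$-degree is below $m_*$: this is the ``iterative determinant'' budget, obtained after dividing out the known factors $H_*$, $L$ and $G$. Lemma~\ref{p2:II:interpolation}(ii) then makes it hold globally, first with the older arguments specialized to their diagonal powers.

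Next undo that specialization. Since the degrees in the old arguments are at most six, below $P$, and $6v<s$, Lemma~\ref{p2:II:separation} gives the identity with all older arguments independent. Multiplying columns by $\mathcal P_*^{-1}$ turns it into $\det(V,T(u)V,T(u)^2V)=0$. Its test-vector degree is three, below $P$, so Lemma~\ref{p2:II:power-basis} replaces $V$ by an independent $W$. Lemma~\ref{p2:II:quadratic-space} then gives the quadratic common-row-or-image alternative, and taking $P$th roots transfers it from $T$ to $C'$.

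\textbf{Step 3: nonsingular evaluation.} Nonsingular evaluation excludes the common-image case, and I expect proving it to be the main obstacle. The evaluation determinant $\det[C'_1W\ C'_2W\ C'_3W]$ must be shown not identically zero.

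The first thing to try is the following. At a short vertex, the pure Hasse coefficient identifies $[T_1e_3\ T_2e_3\ T_3e_3]$ with an invertible block matrix: its transverse part is scalar and nonzero, and its $e_3$-column comes from $G\Xi A$ being invertible there. So the evaluation at $e_3$ is nonsingular locally.

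To make this global, suppose the evaluation minors vanished identically. Their diagonal specializations would then vanish at every short vertex, contradicting the local invertibility. The degree bound on these minors, $<(2P+P/(P-1)+P/(P-2))S+2<m_*$, is needed only to state the interpolation cleanly. Finally, independence of the older arguments is recovered by the same separation step used in Step 2.
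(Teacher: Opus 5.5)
Your Steps 1--2 follow the paper's outline: the local cubic from the Hasse coefficients of orders $P\tau$ and $P\tau+1$, then interpolation, power-basis promotion, and Lemma~\ref{p2:II:quadratic-space}. There are, however, two genuine gaps.

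\textbf{Step~3 assumes what has to be proved.} Normality and the pure Hasse coefficient give less than you claim. At a normalized short vertex, $\widehat B=[C_1V\ C_2V\ C_3V]$, with $C_i=\widehat A_i^{[P]}$, has transverse block $\mu I$ and radial column $ke_3$. Here $k\ne0$ because $\Xi'A$ is invertible, so locally $\det\widehat B=\mu^2k$. Nothing local forces $\mu\ne0$. If every actual $\widehat A_i$ were a scalar multiple of $A$, then $\widehat B$ would have rank one and $\mu$ would vanish at every short vertex.

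So the degree bound on the minors is not a convenience for stating the interpolation; it carries the argument. The paper argues by contradiction.
\begin{enumerate}
\item If $\det\widehat B\equiv0$, then $\mu=0$ at every short vertex, so all columns of $\widehat B$ are radial there.
\item The minors $a_j(b_i)_k-a_k(b_i)_j$, with $a=A^{[P]}V$ and $b_i=C_iV$, vanish at all short vertices. Their degree is below $m_*$, so they vanish globally.
\item Lemma~\ref{p2:II:power-basis} then makes every $T_i=(A^{[P]})^{-1}C_i$ scalar, so each actual $\widehat A_i$ is proportional to $A$.
\item The contradiction comes from the formal structure. The polynomials $(C'_i)_aC_b-(C'_i)_bC_a$ vanish after the older powers are substituted. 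They have $V$-degree $w+v<s$ and degree at most two in each older argument. Lemma~\ref{p2:II:separation} therefore makes them vanish formally.
\item Primitivity of the old $C$ then gives $C'_i=\eta_iC$ with $\deg\eta_i=v-w=\gamma-\tau<0$, which is impossible.
\end{enumerate}
This comparison with the old primitive matrix, using $\gamma<\tau$, is the missing idea.

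\textbf{The lift to independent older arguments in Step~2 fails as stated.} Lemma~\ref{p2:II:separation} requires $V$-degree below $s$ before substitution. Your Krylov determinant contains $V^{[P\tau]}$, of degree $s$, and matrices taken from the actual diagonal, of degree of order $PS$. So the bound $6v<s$ does not apply to it.

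The older arguments cannot be kept formal in Step~1 either. Normality \eqref{p2:eq:normals} and the interpolation hold only on the full diagonal. The older blocks are harmless for a different reason: they start at order $P^2\tau$, beyond $P\tau+1$.

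The first-row form \eqref{p2:eq:firstkrylov} also does not transfer, since it rested on $AB_*=FI$. Here $\Xi'A\widehat B\,V^{[P\tau]}=\Xi'^{P+1}FV^{[PS]}$, so $\Xi'A\widehat B$ is not scalar when $\tau<S$. The paper instead clears $\det(V,T(u)V,T(u)^2V)$ directly as $\det(D_1V,C(u)V,C(u)\adj(D_1)C(u)V)$ with $D_1=A^{[P]}$. It needs no crossed identity. It divides by the forced content $B_0^{3P}H^{P+1}C_N^P$ (not $H_*,L,G$) to reach $d_{\rm Kr}<m_*$.

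Only after the actual quadratic property and $\det\widehat B\ne0$ are established does the paper lift. It uses the fact that common-row form survives a change of the normalizing invertible member. The identity $\det(C'_{y_0}W,C'_yW,C'_y\adj(C'_{y_0})C'_yW)=0$ involves only the formal $C'$. It has degree six, below $P$, in each older argument and $V$-degree $6v<s$, so separation applies.

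Formal nonsingular evaluation then follows by plain specialization, not separation. Set the older arguments to their powers and $W=V^{[1/P]}$, raise to the $P$th power, and use $\det\widehat B\ne0$.
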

\begin{proof}
Write its actual matrix as
\[
 \mathcal A=\Xi'A=\sum_iV_i^\tau\widehat A_i,\qquad c=\deg \Xi'.
\]
The older Frobenius blocks in $\widehat A_i$ have scales at least
$P\tau$, and the residual coefficient degree is $v<\tau$.
Total degree gives the exact formula
\begin{equation}\label{p2:eq:c}
 c=\frac{S-\tau}{P-1}+v-r.
\end{equation}
No estimate $c\le r$ is used.
Put
\[
 D_1=A^{[P]},\qquad C_i=\widehat A_i^{[P]},\qquad
 T_i=D_1^{-1}C_i,\qquad \widehat B=[C_1V\ C_2V\ C_3V].
\]
Normalize a short vertex to $e_3$ with its directions in $\PG(1,P)$.
By normality, $D_1$ preserves the radial line, has a nonzero scalar
transverse block, and preserves every incident normal plane
$\Span(e_3,w)$. Also $T_3=\Xi'^PI$ at the vertex.
For $\widehat g=\mathcal A^{[P]}V$ the exact Hasse coefficients are
\[
 \mathcal H_w^{[P\tau]}\widehat g=\widehat B w,\qquad
 \mathcal H_w^{[P\tau+1]}\widehat g=\sum_iw_i C_iw.
\]
The residual degree is $Pv+1<P\tau$; older blocks start at
$P^2\tau$. Thus there are no other terms at these orders.

Here $w$ is first chosen in the normalized $\F_P$-direction plane,
so $w_i^{P\tau}=w_i$. In the expansion of
$\sum_i V_i^{P\tau}\widehat A_i^{[P]}V$, order $P\tau$ comes
only from $V_i^{P\tau}$; order $P\tau+1$ additionally uses the
linear final vector. A residual term has degree at most $Pv+1$,
and changing an older block first contributes at order $P^2\tau$.
This proves the two formulas without an unrecorded mixed term.

Normality gives a scalar transverse block for $\widehat B$.
The mixed identity gives a bivariate cubic (homogeneous in the two direction coordinates)
\[
 \det(e_3,w,\sum_iw_iT_iw)=0
\]
on $P+1>3$ directions, hence on the whole transverse plane.
Using $T_3=\Xi'^PI$ extends it to arbitrary parameter vectors.
Write $u=w+u_3e_3$, with $w=(u_1,u_2,0)^t$.
The scalar transverse block gives $T(u)e_3=\mu w+\eta e_3$.
The cubic identity says $T(w)w\in\operatorname{Span}(e_3,w)$;
since $T_3$ is scalar, the same holds for $T(u)w$.
Consequently both $T(u)e_3$ and
$T(u)^2e_3=\mu T(u)w+\eta T(u)e_3$ lie in this plane. Therefore
\[
 \det(e_3,T(u)e_3,T(u)^2e_3)=0,
\]
including when $\mu=0$. The construction is
coordinate covariant: changing $V$ conjugates the $T_i$ and changes
their parameter index by the corresponding $P\tau$-Frobenius twist.

The cleared global polynomial is
\begin{equation}\label{p2:eq:krylov}
 \det(D_1V,C(u)V,C(u)\adj(D_1)C(u)V).
\end{equation}
Its raw degree in $V$ is $6P(S+r)+3P(c-\tau)+3$.
But
\[
 D_1V=B_0^PHg,\qquad
 \adj D_1=B_0^{2P}H^P C_N^P\bar N^{[P]},
\]
so it has the forced content $B_0^{3P}H^{P+1}C_N^P$.
This is nonzero at short vertices. Division gives
\begin{align*}
 d_{\rm Kr}
 &=4PS+\rho+P(P+3)r+3P(c-\tau)+P+2-Ph_2\\
 &=\left(4P+\frac{3P}{P-1}\right)S+\rho+P^2r
   +3Pv-\frac{3P^2}{P-1}\tau+P+2-Ph_2.
\end{align*}
The coefficient of $\tau$ after substituting
$v<5\tau/(P+1)$ is
\[
 3P\left(\frac5{P+1}-\frac P{P-1}\right)
 =-\frac{3P(P^2-4P+5)}{P^2-1}<0.
\]
Using $h_2\ge0$, \eqref{p2:full:rho}, and
$r\le(S-1)/(P-2)$ therefore yields
\begin{equation}\label{p2:full:krylov-budget}
 d_{\rm Kr}
 <\left(5P+6+\frac3P+\frac3{P-1}+\frac4{P-2}\right)S
 <(P^2-2)S<m_*.
\end{equation}
The omitted constant is $-4/(P-2)<0$.  To check the middle inequality
uniformly, the left coefficient is at most
$5P+6+3/7+1/2+4/5<5P+8$ for $P\ge7$, while
$5P+8<P^2-2$ already at $P=7$ and thereafter.  More exactly, at
$P=7$ the coefficient in \eqref{p2:full:krylov-budget} is
$2991/70<47$.

Each coefficient of the divided polynomial vanishes at all short
vertices; the count on each ordinary component makes it vanish on
that component, and the number of components makes it vanish globally.
Since $T_i\in\operatorname{Mat}_3(\overline{\F_q}(V)^P)$, $P$-power-basis independence
promotes the degree-three test vector to an independent vector.
Thus Lemma~\ref{p2:II:quadratic-space} applies to the normalized last space.

It remains to check nonsingular evaluation. If $\det\widehat B=0$,
its transverse scalar is zero at every short vertex, while its radial
eigenvalue is nonzero. All its columns are radial there. Put $a=A^{[P]}V$ and $b_i=C_iV$. The minors
$a_j(b_i)_k-a_k(b_i)_j$, for $1\le j<k\le3$, have degree at most
\[
 2P(S+r)+P(c-\tau)+2
 =\left(2P+\frac P{P-1}\right)S+Pr+Pv
   -\frac{P^2}{P-1}\tau+2.
\]
The same negative $\tau$ coefficient and strong bound on $r$ give
\[
 \deg(\text{evaluation minors})
 <\left(2P+\frac P{P-1}+\frac P{P-2}\right)S+2
 <(P^2-2)S+2=m_*\qquad(P\ge7).
\]
The same interpolation makes them zero
globally. The $P$-power-basis argument makes every $T_i$ scalar, so each
actual $\widehat A_i$ is proportional to the old substituted matrix.

Compare the formal new coefficient $C'_i$ with the \emph{old primitive
matrix polynomial} $C$. For any two matrix positions $a,b$, the
polynomials $(C'_i)_a C_b-(C'_i)_b C_a$ vanish after substitution
of the older powers of $V$. Their coefficient degree is $w+v<s$
and their degree in each older argument is at most two, so
Lemma~\ref{p2:II:separation} makes them zero before substitution as well.
Primitivity gives $C'_i=\eta_i C$ with polynomial $\eta_i$, whereas
\[
 \deg_V\eta_i=v-w=\gamma-\tau<0.
\]
This is impossible for a nonzero coefficient. Hence $\widehat B$
is nonsingular, and the common-image alternative in the quadratic
space classification is excluded.

For the formal promotion, first note that the common-row property
survives a change of the invertible member used for normalization. Indeed, for
$C(y)=\lambda(y)I+u(y)v^t$ and
$S_0=\lambda_0I+u_0v^t$ invertible, one has $\lambda_0\ne0$ and
\[
 S_0^{-1}C(y)=\frac{\lambda(y)}{\lambda_0}I+
 S_0^{-1}\left(u(y)-\frac{\lambda(y)}{\lambda_0}u_0\right)v^t.
\]
Thus use arbitrary last-argument vectors $y_0,y$ and test vector $W$ in
\[
 \det(C'_{y_0}W,C'_yW,C'_y\adj(C'_{y_0})C'_yW)=0.
\]
Every earlier-argument degree is six, below $P$, and the degree in $V$ is
$6v<s$. Separation promotes it to independent old arguments.

For this application of Lemma~\ref{p2:II:separation}, each earlier-argument
degree is six and $6<P$. Also $v<5\tau/(P+1)$ and $s=P\tau$,
so $6v<30s/(P(P+1))<s$ for every $P\ge7$.
 Evaluation in the last argument is nonsingular: otherwise specialize the old
arguments and $W=V^{[1/P]}$ in
$\overline{\F_q}[X^{1/P},Y^{1/P},Z^{1/P}]$, then raise to the
$P$th power, contradicting
$\det\widehat B\ne0$. This again excludes a common image and proves
the formal common-row assertion.
\end{proof}

\section{Completion of the descent}\label{p2:secmap:7}\label{p2:sec:completion}

\begin{proposition}[One complete descent step]\label{p2:II:descent-step}
Suppose $P\ge7$, $s>P$, and $(C,\Xi)$ is admissible at scale $s$.
Then the extraction in Lemma~\ref{p2:lem:extract} gives admissible data
$(C',\Xi')$ at scale $s/P$, with one more vector argument and
$\Xi'=G\Xi$ after the common-factor cancellation described there.
\end{proposition}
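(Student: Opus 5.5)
We will assemble the new admissible pair from the pieces already built in Lemmas~\ref{p2:lem:gap}--\ref{p2:lem:hasse}. We also need to verify the remaining items of Definition~\ref{p2:II:admissible} one by one.

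\textbf{Definition of the new data.} Set $\tau=s/P$ and $\Xi'=G\Xi$, after the common-factor cancellation of Lemma~\ref{p2:lem:extract}. Define the new matrix polynomial $C'(Z_1,\ldots,Z_j,Z_{j+1};V)$ by taking the coefficient matrices $C'_{Ia}$ of \eqref{p2:eq:extract}: the index $a$ becomes the coordinate of the new last argument $Z_{j+1}$, and the old formal arguments stay independent.

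\textbf{Items that are already available.}
\begin{itemize}
\item \emph{Actual diagonal.} Substituting $Z_{j+1}=V^{[\tau]}$ gives $C'_\Delta=G\,C_\Delta=G\Xi A=\Xi'A$, which is \eqref{p2:eq:actual}. $\Xi'$ is nonzero at short vertices because both $G$ and $\Xi$ are.
\item \emph{Degree.} The coefficient degree is $v<5\tau/(P+1)$ by \eqref{p2:eq:v}.
\item \emph{Primitivity.} The new family is primitive after the cancellation.
\item \emph{Last pencil.} Common-row form and nonsingular evaluation of the last pencil are exactly Lemma~\ref{p2:lem:hasse}.
\item \emph{Coefficient dimension three in the new last argument.} This follows from nonsingular evaluation: a lower-dimensional pencil has singular evaluation.
\item \emph{Coefficient dimension three in each old argument.} Write $G\,C(\ldots,Z_i,\ldots)=\sum_a (Z_{j+1})_aC'_a$, substituted at $Z_{j+1}=V^{[\tau]}$. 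If some old argument of $C'$ had smaller coefficient dimension, then so would $GC$ in that argument. This would contradict the old dimension three, since multiplying by $G\ne0$ and separating blocks via $w+v<s$ preserves it.
\end{itemize}

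\textbf{Crossed identity.} Define
\[
U'(\mathbf Z';V)=C'(\mathbf Z',V^{[\tau]};V),\qquad
\calD'(\mathbf Z';V)W=C'^{\langle P\rangle}(W,\mathbf Z';V)V,
\]
where $\mathbf Z'=(Z_1,\ldots,Z_j)$.

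By construction, $U'=G\,C(\mathbf Z;V)$ with all old arguments formal. The product $U'\calD'$ is first computed on the partial diagonal $Z_j=V^{[s]}$, where it reduces to $G\,U\,\calD^{\flat}$. Here $\calD^{\flat}$ is obtained from $\calD$ through the identity
\[
C^{\langle P\rangle}(W,\mathbf Z;V)V=\sum_a W\text{-coefficients}\;C'^{\langle P\rangle}(\ldots)V^{[P\tau]},
\]
and the old identity $U\calD=\phi I$ then gives a scalar multiple of $I$.

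To lift this from the partial diagonal to formal arguments, the plan is as follows.
\begin{enumerate}
\item Write the off-diagonal entries and the diagonal differences of $U'\calD'$. These are polynomials of degree at most two in each old argument and one in $Z_{j+1}$, with coefficient degree at most $\tau+(P+1)v+1$.
\item These vanish on the diagonal $Z_j=V^{[s]}$.
\item By \eqref{p2:eq:allgaps} this degree is $<s$, so Lemma~\ref{p2:II:separation} makes them vanish formally.
\end{enumerate}
Hence $U'\calD'=\phi'I$. Evaluating on the full diagonal and comparing with the old identity multiplied by $G^{P+1}$ yields $\phi'_\Delta=(G\Xi)^{P+1}F=\Xi'^{P+1}F$. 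Actual invertibility at short vertices follows from $\phi'_\Delta\ne0$ there, and the degrees in $V$ are $\tau+v$ and $Pv+1$.

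\textbf{Main obstacle.} The delicate step is the crossed identity. One must identify precisely how $\calD'$ relates to $\calD$ after the extraction, including the shift of arguments in the $\langle P\rangle$-twisted partner. The separation also has to be applied in the correct (new older) argument, with the degree bound $\tau+(P+1)v+1<s$, rather than only on the diagonal. I would first check the $j=1$ case against \eqref{p2:full:seed} to fix the indexing conventions.
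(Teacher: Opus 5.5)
Your proposal is correct and follows essentially the paper's proof. The specialized product $U'\calD'|_{Z_j=V^{[s]}}$ is exactly $G^{P+1}\phi I$ (your $\calD^{\flat}$ is just $G^P\calD$, because $C'^{\langle P\rangle}(\ldots,V^{[P\tau]};V)=G^PC^{\langle P\rangle}(\ldots;V)$); separation in the newest old argument, with $V$-degree $\tau+(P+1)v+1<s$, lifts it; and the remaining items come from Lemmas~\ref{p2:lem:extract}--\ref{p2:lem:hasse} together with $U'=GC$. The only cosmetic difference is that you get $\phi'_\Delta=\Xi'^{P+1}F$ from $\phi'|_{Z_j=V^{[s]}}=G^{P+1}\phi$ and the old scalar identity, whereas the paper reads it off from $\calD'_\Delta V^{[PS]}=(\Xi'A)^{[P]}V$ and $AA^{[P]}V=FV^{[PS]}$; both routes work.
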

\begin{proof}
Lemmas~\ref{p2:lem:gap}--\ref{p2:lem:hasse} provide the extraction and the
new last pencil. Specializing the newest old argument to $V^{[s]}$
makes the new crossed product $G^{P+1}\phi I$. Its off-diagonal
entries and diagonal differences have coefficient degree
$\tau+(P+1)v+1<s$ and old-argument degrees at most two. Separation
therefore gives $U'\calD'=\phi'I$ before specialization.

The scalar identity on the full diagonal must also be restored.
By the definition of the shifted partner,
\[
 \calD'_\Delta V^{[PS]}=(C'_\Delta)^{[P]}V
       =(\Xi'A)^{[P]}V.
\]
Since $C'_\Delta=\Xi'A$ and $AA^{[P]}V=FV^{[PS]}$, this gives
\[
 \phi'_\Delta V^{[PS]}=\Xi'^{P+1}F V^{[PS]},
 \qquad \phi'_\Delta=\Xi'^{P+1}F.
\]
All factors on the right are nonzero at short vertices. At the next
stage the identity $E_\Delta L_\Delta(\Xi'B_0)^{P-2}=D$ therefore
again guarantees the nonvanishing required in the extraction.

Specializing the new last argument recovers $GC$, so all old
coefficient dimensions remain three; the new partial matrix is
exactly $GC$, which gives its old-argument dimension condition too.
Lemma~\ref{p2:lem:hasse} gives nonsingular evaluation and common-row
form in the new last argument. The cancelled coefficients are
primitive, and Lemma~\ref{p2:lem:extract} supplies $v<5\tau/(P+1)$
and the required nonvanishing of $G$. Thus every item in the
definition is restored.
\end{proof}

\begin{proposition}[Full Frobenius endpoint]\label{p2:prop:endpoint}
There are a nonzero polynomial $\Theta(V)$ and a constant-coefficient $3\times3$ matrix polynomial $\mathsf C$,
linear in each of $k_{\rm slot}=n-2$ vector arguments with
\begin{equation}\label{p2:eq:endpoint}
 \Theta M=\mathsf C(V,V^{[P]},\ldots,V^{[P^{n-3}]}),\qquad
 \deg(\Theta M)=d_{\rm nat}:=\frac{PS-1}{P-1}.
\end{equation}
\end{proposition}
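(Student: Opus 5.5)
The proof is an induction on the depth of the descent, started by the seed data and closed by reading off the linear coefficients at the last scale. I split by $n$.

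\emph{Case $n=3$.} Here $S=1$, and \eqref{p2:eq:strongseed} forces $r=0$. The seed identity $B_0M=\sum_iV_iA_i$ then has constant $A_i$. So $\Theta=B_0$ and $\mathsf C(X_0)=\sum_i(X_0)_iA_i$ give the endpoint with one argument.

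\emph{Case $n\ge4$, starting data.} The $j=1$ data are admissible at scale $s=S$. I take $C(Z_1;V)=\sum_i(Z_1)_iA_i(V)$ and $\Xi=B_0$. I must check each item of Definition~\ref{p2:II:admissible}:
\begin{itemize}
\item[(a)] primitivity and coefficient dimension three hold by Lemma~\ref{p2:lem:seed} and Lemma~\ref{p2:lem:firstrow};
\item[(b)] the last pencil is common-row with nonsingular evaluation, again by Lemma~\ref{p2:lem:firstrow};
\item[(c)] the coefficient degree $w=r$ satisfies $w<5S/(P+1)$, by $r\le(S-1)/(P-2)$ and $P\ge7$;
\item[(d)] the crossed identity $U\calD=\phi I$ with $\phi_\Delta=B_0^{P+1}F$ is $AB_*=FI$ from \eqref{p2:full:seed}, since for $j=1$ one has $U=A$ and $\calD=B_*$;
\item[(e)] $\Xi$ is nonzero at short vertices by Lemma~\ref{p2:lem:seed}.
\end{itemize}

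\emph{Case $n\ge4$, the induction.} Apply Proposition~\ref{p2:II:descent-step} repeatedly while $s>P$. Each application lowers the scale by a factor $P$ and adds one argument. After $n-4$ steps I reach $s=P$ with $j=n-3$ arguments and admissible $(C,\Xi)$.

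At $s=P$, Lemma~\ref{p2:lem:gap} gives $(P+1)w+1\ge P$, so $w\ge1$. Admissibility gives $w<5P/(P+1)<5$, and the residual inequality $e=P-(P-2)w-\ell-1\ge0$ then forces $w=1$ (and $\ell=0$). So the coefficients $C_I(V)$ are linear forms in $V$.

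Now define
\[
\mathsf C(X_0,X_1,\ldots,X_{n-3})=C(X_{n-3},\ldots,X_1;X_0),
\]
that is, the arguments of scales $S,\ldots,P$ are reordered so that $V$ occupies the slot of scale one. This matrix polynomial is multilinear with constant coefficients. The diagonal identity \eqref{p2:eq:actual} gives $\mathsf C(V,V^{[P]},\ldots,V^{[S]})=\Xi A=\Xi B_0M$, so I set $\Theta=\Xi B_0$. This $\Theta$ is nonzero because it is nonzero at short vertices.

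\emph{The degree.} The diagonal substitution has degree $1+P+\cdots+P^{n-3}=(PS-1)/(P-1)$. This equals $d_{\rm nat}$ and is consistent with \eqref{p2:eq:c} at $\tau=1$, $v=w=1$.

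\emph{Main obstacle.} The delicate point is the terminal step. I need both that $w$ is exactly $1$ rather than $0$, and that no extraction at $s=P$ is needed. $w=0$ is excluded by $(P+1)w+1\ge s=P$. If some constraint unexpectedly allowed $\ell>0$, I would instead recheck $e\ge0$ against $w=1$ directly.
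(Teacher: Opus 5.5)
Your proposal is the paper's proof. The seed lemmas give admissible data at scale $S$, and Proposition~\ref{p2:II:descent-step} is iterated while $s>P$. At $s=P$ the inequalities $(P+1)w+1\ge s$ and $e\ge0$ force $w=1$, and $V$ is then taken as the final argument and the arguments reordered. Your $n=3$ route through $r=0$ in the seed also works; the paper simply notes that $\deg D\ge0$ forces $t=0$, so $M$ is already linear and $\Theta=1$.

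One bookkeeping correction: the starting multiplier must be $\Xi=1$, not $B_0$.
\begin{itemize}
\item At depth one $C(V^{[S]};V)=A=B_0M$ already, so \eqref{p2:eq:actual} holds with $\Xi=1$.
\item Also $AB_*=FI$ gives $\phi=F$, and $F=B_0^{P+1}H^2a_0$ already contains $B_0^{P+1}$.
\item With $\Xi=B_0$, both \eqref{p2:eq:actual} and your item (d) are false, and carrying it through the induction would put a spurious factor $B_0$ into $\Theta$.
\end{itemize}
With $\Xi=1$ initially and $\Xi'=G\Xi$ at each step, your final choice $\Theta=\Xi B_0$ is exactly right.

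Two smaller points:
\begin{itemize}
\item At $w=1$ you get $e=1-\ell$, so only $\ell\le1$, not $\ell=0$. This does not affect the conclusion.
\item Your consistency check via \eqref{p2:eq:c} should use $\tau=P$, $v=1$ (or equivalently $\tau=1$, $v=0$), not $\tau=1$, $v=w=1$. The direct count $1+P+\cdots+P^{n-3}=(PS-1)/(P-1)$ is correct as stated.
\end{itemize}
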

\begin{proof}
Lemmas~\ref{p2:lem:seed} and \ref{p2:lem:firstrow} give admissible data at
scale $S$, with $r<5S/(P+1)$. Apply
Proposition~\ref{p2:II:descent-step} at each scale $s>P$.
Eventually $s=P$, where \eqref{p2:eq:interval} gives
\[
 \frac{P-1}{P+1}\le w\le\frac{P-1}{P-2}<2.
\]
Hence the integer $w$ equals one. The residual coefficients in $V$
are linear in $V$, so the complete list of arguments is $1,P,\ldots,S$,
as asserted. If $n=3$, $M$ is already linear by \eqref{p2:eq:degrees};
if $n=4$, the first strong bound already forces $r=1$.
\end{proof}

\section{Reconstruction and identification for exponents at least seven}\label{p2:secmap:8}\label{p2:sec:reconstruction}

The common-function result in
\cite[Theorem~\Iref{I:endpoint-theorem}]{p2:PUB1} gives a common rational
function on every ordinary component. Lemma~\Iref{I:normalizer}
then gives compatible point representatives.
To obtain representatives over $\F_q$ and bound
their span dimension, use \eqref{p2:eq:mzero} and the coarse upper bound in
\eqref{p2:full:literature}.  They imply
\begin{equation}\label{p2:full:graph}
 2Pm_*-|B|>
 \left(1-\frac1P-\frac5{P^2}-\frac3{P^3}\right)q
 \ge\frac{256}{343}q>\frac q2.
\end{equation}
Join two ordinary points when they lie on a short secant. Each
vertex of this graph has at least $Pm_*$ neighbours; the inequality
$2Pm_*>|B|$ forces the graph to be connected.  One common
rescaling puts all ordinary representatives in $\F_q^3$.
Moreover
\[
 q<N_0\le |B|
 <\left(1+\frac1P+\frac1{P^2}+\frac3{P^3}\right)q
 \le\frac{402}{343}q<\frac65q.
\]
Let $r_U$ be the $\F_P$-dimension of their span and put
$\delta=(P-1)m_*+1<q$.
The dimension bound, Proposition~\Iref{I:code-bound} of~\cite{p2:PUB1}, gives
\[
 P^{r_U}\le
 \frac{PN_0((P-1)m_*+1)}{2+2Pm_*-N_0}
 <\frac{12}{5}Pq<P^2q=P^{n+2}.
\]
Thus $r_U\le n+1$.  Since $N_0>q$, whereas an $\F_P$-space of
dimension at most $n$ has at most $(q-1)/(P-1)$ projective directions,
we have $r_U=n+1$.

Let $U$ be this span and $B'=L(U)$.  The usual dimension argument
makes $B'$ blocking, all its line intersections are $1\pmod P$,
and
\[
 |B'|\le b_{\max}:=\frac{Pq-1}{P-1}.
\]
Every ordinary point of $B$ belongs to $B'$.  It is important not
to assume $b_{\max}-|B|\ge0$: that inequality is precisely one of the
conclusions being sought.  Directly from the intersection count,
\[
 |B\mathbin\triangle B'|
 =|B|+|B'|-2|B\cap B'|
 \le b_{\max}-|B|+2\sigma.
\]
Set $\rho_C=(PS-1)/(P-1)$.  Since $b_{\max}-|B|=P(\rho_C-\rho)$,
$\sigma\le d-1$, and $\rho\ge0$, it follows that
\begin{align*}
 |B\mathbin\triangle B'|
 &\le P\rho_C+2PS-(P-2)\rho-2\\
 &\le P\rho_C+2PS-2<4PS<\frac{2(P-1)q}{P}+2.
\end{align*}
Lemma~\Iref{I:signed} of~\cite{p2:PUB1} excludes a
nonempty symmetric difference between two sets whose line sums
agree modulo $P$.  Therefore $B=B'=L(U)$.

This proves Theorem~\ref{p2:II:main} when $P\ge7$.
If $B=L(U')$ for an $\F_{p^f}$-space $U'$, let $\widehat\ell$
be the vector plane of a short secant. The intersection
$U'\cap\widehat\ell$ represents $P+1$ distinct projective points,
so it contains two vectors independent over $\F_q$, hence also over
$\F_{p^f}$. Their $\F_{p^f}$-span gives an $\F_{p^f}$-subline
inside the $\F_P$-subline $B\cap\ell$. Normalize three common points to
$0,1,\infty$. The sublines become the standard sublines, so
$\F_{p^f}\subseteq\F_P$. Thus $\F_P$ is the maximum geometric field of linearity.
\section{The entire exponent-one interval in characteristic five}\label{p2:secmap:9}
\label{p2:sec:five-entire-interval}

The remaining prime power below seven is $P=5$.
The following argument replaces the extraction, recognition and
parameter-cancellation steps; the Frobenius foundation is cited from
the first paper throughout.
\begin{theorem}[Characteristic five]
\label{p2:thm:five-entire-interval}
Let $q=5^n$, $n\ge3$, and let $B$ be a small minimal blocking set in
$\mathrm{PG}(2,q)$ of exponent one. Then there is an
$\mathbb F_5$-subspace $U\le\mathbb F_q^3$ of dimension $n+1$ such that
$B=L(U)$. In particular, the conclusion covers the whole classical
exponent-one interval, not only sizes at most $(5q-1)/4$.
\end{theorem}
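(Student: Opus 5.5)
The plan is to run the same three-stage argument as for $P\ge7$: a first extraction, repeated admissible descent steps, and reconstruction. Each step whose numerical margin used $P\ge7$ gets a five-specific replacement, following the outline in Part~0, Section~\ref{G:five}.

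\emph{Degree bounds and first extraction.} First I will obtain the degree inputs. Write $q=125S$ and $d=5S+\rho$. Lemma~\Iref{I:quadratic} gives $\rho<7S/4-1/5$, as already computed in Part~1, Section~4. Substituting into Lemma~\Iref{ppg:gap} gives $m_0>373S/16$, and Proposition~\Iref{I:ordinary-bound} gives $\rho\ge0$. The first extraction of Section~\ref{p2:sec:seed} then proceeds verbatim up to the separation step. Here $6S-2-3z<5S$ may fail, so $\overline D\widetilde A_i^{[P]}V-G^PH\overline N_i$ is only known to be an error term annihilating $V^{[5S]}$. By Lemma~\ref{p2:II:koszul} this error is $E[V^{[5S]}]_\times$.

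\emph{Killing the error term.} I multiply by $M$ and set $K=ME$. At each short vertex the normal restrictions \eqref{p2:eq:normals} force $K+K^t$ to vanish on the transverse normal plane. The form $[V^{[5S]}]_\times^t(K+K^t)[V^{[5S]}]_\times$ has degree at most $12S-2<m_0$, so Lemma~\ref{p2:II:interpolation}(ii) makes it vanish identically. Separation then shows $K$ is skew, so $K=[w]_\times$. The reciprocal identity makes $\overline D$ divide each coordinate of $w$, while $\deg(w/\overline D)<0$. Hence $E=0$, and the seed identities \eqref{p2:full:seed} hold. The bound $2h_2\le3r$ gives $r<5S/6$ in place of $r<S/2$.

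\emph{Common-row recognition at each step.} The degree-six Krylov determinant is unavailable, because $6>5$. Instead I recognise common-row form from a degree-three determinant (the cubic criterion, stated over an infinite field of odd characteristic, applied over $\overline{\F_5}$). Its vanishing at short vertices is proved exactly as in Lemma~\ref{p2:lem:hasse}. Its cleared degree is bounded by $3h+15S<75S/4<m_0$, so interpolation makes it global. The evaluation minors are handled the same way, with bound $5(S+r+h)+2<m_0$. Promotion to independent old arguments uses degree three and $3v<s$.

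\emph{Parameter cancellation.} This is where $2(P-3)\ge P+1$ fails. I will argue backwards through the arguments. Each matrix $T$ of the reciprocal pencil satisfies $T^2-\alpha T+\beta I=0$, so Lemma~\ref{p2:II:quadratic-space} applies to it. The common-image alternative would give evaluation rank at most two, so the common-row alternative holds. The shifted reciprocal then transfers nonsingular evaluation from the last argument to the preceding one. This yields repeated determinant factors in every old argument, and those factors absorb the parameter-dependent factors of $E$.

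The order of steps needs care. Nonvanishing of the specialization of $L$ comes from $\det U\ne0$, before $E$ is built. Then $e\ge0$ gives $3w<s$, which lifts the cubic before substitution. After the next extraction I cancel $G$ before comparing degrees, which gives $w+5v+1\le s-2\ell-1<s$. This bound restores $U'\calD'=\phi'I$, and I then recheck $\phi'_\Delta=\Xi'^6F$ directly as in Proposition~\ref{p2:II:descent-step}.

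\emph{Endpoint and reconstruction.} Iterating to $s=5$ forces $w=1$, which gives the Frobenius endpoint. Reconstruction follows Section~\ref{p2:sec:reconstruction}. The margin $2Pm_0-|B|>119q/200$ gives connectivity of the short-secant graph and a positive denominator. Proposition~\Iref{I:code-bound} then gives $5^{r_U}<(1270/119)q<25q$, so $r_U=n+1$. Finally, $|B\triangle B'|$ is less than $8q/5+2$, and Lemma~\Iref{I:signed} forces $B=L(U)$.

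I expect the main obstacle to be the parameter-cancellation step, because the quadratic-space argument has to be transferred through all older arguments while keeping every degree under the threshold $3w<s$.
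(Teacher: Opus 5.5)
Your proposal is correct and follows essentially the same route as the paper's proof in Section~II.9. That route is: removal of the Koszul error through $K=ME$ and skew-symmetry, the degree-three detector with budget $3h+15S<m_0$, backward transfer of the common-row property through the reciprocal pencil before cancelling the parameter factors of $E$, and the same reconstruction margins. You leave two points implicit: the detector vanishes a priori only on the moving hyperplane $a^tV^{[q/25]}=0$, so the extra inequality $3h<q/P^2$ is what forces it to vanish identically; and the trivial case $S=1$, where $\deg D=1-3t\ge0$ gives $t=0$ and $M$ is already linear.
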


\subsection{The interval and the uniform inputs}

Put $P=5$ and $S=5^{n-3}$, so $q/P^2=5S$ and $q/P=25S$. Write
\[
 q=125S,\qquad |B|=q+5d+1,\qquad d=5S+\rho.
\]
The size theorem of Paper I gives
\begin{equation}\label{p2:five:lower}
 d\ge5S,\qquad\rho\ge0.
\end{equation}
After substituting $|B|=q+5d+1$, the quadratic defining the upper endpoint,
divided by $25$, is
\[
 f_S(\rho)=\rho^2-(90S+1)\rho+150S^2-30S.
\]
The allowed $\rho$ lies below its smaller root. Direct evaluation gives
\[
 f_S\left(\frac{7S}{4}-\frac15\right)
 =-\frac{71S^2}{16}-\frac{289S}{20}+\frac6{25}<0.
\]
Consequently
\begin{equation}\label{p2:five:upper}
 \rho<\frac{7S}{4}-\frac15,
 \qquad d<\frac{27S}{4}-\frac15,
 \qquad |B|<\frac{127}{100}q.
\end{equation}
These bounds suffice for the estimates below.

Theorem~\Iref{I:uniform-main} of~\cite{p2:PUB1} applies
and supplies the Frobenius representation. The local results in
Section~\Iref{I:local} of that paper apply in characteristic five.
In the present notation, its identities are
\begin{align}
 Hg&=M^{[5]}V,& Mg&=Ha_0V^{[q/P^2]},& \det M&=H^2a_0D,
 \label{p2:five:uniform}\\
 N&=\operatorname{adj}(M)/H,&Dg&=NV^{[q/P^2]},
 &MN&=Ha_0D I.\nonumber
\end{align}
Here $M,g$ are primitive, $M$ is nonsingular, $H$ is squarefree, and
\begin{align}
 \deg M&=S+t,&\deg H&=5t-\rho+1,\nonumber\\
 \deg a_0&=S+2\rho-4t-1,&\deg D&=2S-3t-1,
 &\deg N&=2S+\rho-3t-1.\label{p2:five:degrees}
\end{align}
Nonnegativity of these degrees gives, in particular,
\begin{equation}\label{p2:five:t-range}
 t\ge\frac{\rho-1}{5},\qquad 0\le t\le\frac{2S-1}{3}.
\end{equation}

The local leading-block, short-vertex regularity, special-component, and
weight-gap arguments of Lemmas~\Iref{ppg:leading}--\Iref{ppg:gap} apply
unchanged: their comparison excluding weights $2,3,4$ is
$5(9-2r)>0$ for $2\le r<5$. All original scalar factors in
\eqref{p2:five:uniform} are nonzero at short vertices. Every short has six
ordinary incident components and an $\mathbb F_5$-subline of directions.
Every ordinary component has at least
\begin{equation}\label{p2:five:mzero}
 m_0=25S+1-\left\lfloor\frac{d-1}{4}\right\rfloor
 >\frac{373}{16}S
\end{equation}
short vertices. If $\sigma$ is the number of special components and $N_0$
the number of ordinary components, then
\begin{equation}\label{p2:five:ordinary-count}
 \sigma\le d-1-2(S+t)=3S+\rho-2t-1,
 \qquad N_0\ge q+4d+2>q.
\end{equation}
We repeatedly use the following consequence: a polynomial of degree
less than $m_0$ vanishing at all short vertices is zero. Its restriction
to every ordinary component has too many zeros; all $N_0>q>m_0$
ordinary linear factors would therefore divide it.

\subsection{The first extraction: removing the error term}

Write $N=C_N\bar N$, $D=C_N\bar D$, with $\bar N$ primitive, and set
\[
 z=\deg C_N,\qquad \delta_0=\deg\bar D=2S-3t-1-z.
\]
Differentiate $\bar D M^{[5]}V=H\bar N V^{[q/P^2]}$ and eliminate the
undifferentiated term. This gives
\[
 \bar D^2M^{[5]}e_j\in(X^{q/P^2},Y^{q/P^2},Z^{q/P^2}).
\]
Write the expansion by exponent residues modulo five
$\bar D^2=\sum_{0\le\alpha_i<5}V^\alpha G_\alpha^5$.
Uniqueness of this residue expansion gives
$G_\alpha M_{ij}\in(X^S,Y^S,Z^S)$ and
$\deg G_\alpha\le\lfloor2\delta_0/5\rfloor$.
Since $\bar D$ is nonzero at shorts, these coefficients do not vanish
simultaneously there. Homogenize them to their largest degree using a
linear form avoiding the finite short set, and choose a linear
combination avoiding the forbidden hyperplanes over a finite extension. It gives a homogeneous
$G$, nonzero at every short, with
\[
 GM=\sum_iV_i^S\widetilde A_i,\qquad
 g_0:=\deg G\le\frac{2\delta_0}{5},\qquad
 r_0:=\deg\widetilde A_i=t+g_0.
\]
Since $GM$ is nonzero, $r_0\ge0$, while \eqref{p2:five:t-range} gives
\begin{equation}\label{p2:five:seed-rough}
 r_0\le\frac{4S-t-2-2z}{5}
 \le\frac{4S-2}{5}<\frac45S<S.
\end{equation}
Set
\[
 \widetilde B=[\widetilde A_1^{[5]}V\ \widetilde A_2^{[5]}V\
 \widetilde A_3^{[5]}V],\qquad U_{q/P^2}=V^{[q/P^2]}.
\]
The multiplied reciprocal identity says
$(\bar D\widetilde B-G^5H\bar N)U_{q/P^2}=0$.
Lemma~\ref{p2:II:koszul}, applied row by row, therefore gives
a polynomial matrix $E$ such that
\begin{equation}\label{p2:five:koszul-error}
 \bar D\widetilde B=G^5H\bar N+E[U_{q/P^2}]_\times,
 \qquad \deg E=\delta_0+5r_0+1-q/P^2
 \le S-4t-2-3z.
\end{equation}
Put $K=ME$. Its degree satisfies
\begin{equation}\label{p2:five:k-degree}
 \kappa:=\deg K=5r_0-2S-2t-z
 \le2S-3t-2-3z\le2S-2<q/P^2.
\end{equation}
A negative expected degree means that the corresponding matrix is zero.

Here is the local implication needed to remove this error. On an ordinary
component choose rational coordinates making it $Z=0$. The normal
restriction for $M$ says $M_{31}=M_{32}=0$ there. In $GM=\sum V_i^S
\widetilde A_i$, the coefficient degrees are $r_0<S$, so separation of
$X^S,Y^S$ on $Z=0$ gives $(\widetilde A_1)_{3j}=
(\widetilde A_2)_{3j}=0$ there for $j=1,2$.
It follows that $\widetilde B_{31}=\widetilde B_{32}=0$ there.
Thus $\widetilde B$ has the opposite-twist ordinary normality.
At a short vertex normalize the point to $e_3$ and its six directions
to $\mathrm{PG}(1,5)$. For every incident normal
$a=(a_1,a_2,0)$ over $\mathbb F_5$, both $a^tM$ and
$a^t\widetilde B$ are proportional to $a^t$ at that vertex.
Consequently both transverse $2\times2$ blocks are scalar and both
upper-right columns vanish. Thus
$M\widetilde B=cI+e_3v^t$ there, without requiring
$\widetilde B$ nonsingular.

Multiplying \eqref{p2:five:koszul-error} by $M$ gives
\[
 \bar D M\widetilde B=G^5H^2a_0\bar D I+K[U_{q/P^2}]_\times.
\]
At the normalized vertex $U_{q/P^2}=e_3$, write
\[
 K=\begin{pmatrix}a&b&c\\d&e&f\\h&i&j\end{pmatrix},\qquad
 K[e_3]_\times=\begin{pmatrix}b&-a&0\\e&-d&0\\i&-h&0\end{pmatrix}.
\]
Since the transverse block of this matrix is scalar, $a=e=0$ and
$d=-b$. Thus the restriction of $K+K^t$ to the transverse plane is
zero. Equivalently,
\begin{equation}\label{p2:five:symmetric-local}
 [U_{q/P^2}]_\times^t(K+K^t)[U_{q/P^2}]_\times=0
\end{equation}
there. This assertion is coordinate invariant: for $V_{\rm old}=CV$, put
$R_C=C^{[q/P^2]}$; the multiplied reciprocal matrix is conjugated by $R_C$,
whereas $K$ transforms by a nonzero constant times
$R_C^{-1}KR_C^{-t}$.
The degree of \eqref{p2:five:symmetric-local} is
\[
 2q/P^2+\kappa\le12S-2<m_0.
\]
Interpolation makes it a polynomial identity. Since $\kappa<q/P^2$, Lemma~\ref{p2:II:separation} makes its quadratic
$q/P^2$-power blocks independent. Replace $U_{q/P^2}$ by an independent vector
$u$. Substitution $u=e_1,e_2,e_3$ shows respectively that each of the
three complementary $2\times2$ principal restrictions of $K+K^t$
is zero. These restrictions contain all entries, so $K+K^t=0$.
Write $K=[w]_\times$. The cross-product identity yields
\[
 \bar D M\widetilde B=G^5H^2a_0\bar D I
       +U_{q/P^2}w^t-(w^tU_{q/P^2})I.
\]
For each $j$, the two off-diagonal entries give
$\bar D\mid (U_{q/P^2})_i w_j$ for both $i\ne j$.
Those two coordinate powers are coprime, so $\bar D\mid w_j$.
But
\[
 \deg(w/\bar D)=5r_0+t+1-4S
 \le6t+2\delta_0+1-4S=-1-2z<0.
\]
Hence $w=K=E=0$.

Primitivity of $\bar N$ now gives $\bar D\mid G^5H$.
Cancel the greatest common factor of the entries of all the $\widetilde A_i$ and the same
factor of $G$; it divides $G$ because $M$ is primitive.
The resulting identities are
\begin{align}
 A:=B_0M&=\sum_iV_i^S A_i,& A_i^{[5]}V&=H_2\bar N_i,
 &\bar D&=B_0^5H/H_2,\label{p2:five:seed}\\
 AB_*&=F I,& B_*&=[A_i^{[5]}V]=H_2\bar N,
 &F&=B_0^6H^2a_0.\nonumber
\end{align}
The coefficient family is primitive, and $H_2$ is squarefree:
a repeated prime factor of $H_2$ would, after differentiation, divide
every entry of every $A_i^{[5]}$, contradicting primitivity.
All these actual scalar factors are nonzero at short vertices. In particular,
each factor of $B_0$ occurs with multiplicity at least four in
$\bar D=B_0^5H/H_2$.
Writing $r=\deg A_i$, we retain $r<S$ and have
\[
 h_2:=\deg H_2=5r+3t-2S-\rho+z+2.
\]

\subsection{A cubic determinant and its Hasse coefficient test}

\begin{lemma}[Output cubic criterion]\label{p2:five:cubic-criterion}
Let $C(Y)=\sum_{i=1}^3Y_iC_i$ be a $3\times3$ matrix pencil with nonzero determinant over an infinite field of odd characteristic, with three
independent coefficient matrices. Define $S_C(a)$ by
$a^tC(Y)W=Y^tS_C(a)W$. If $\det S_C(a)=0$ identically, then after
multiplication by the inverse of one invertible member
\[
 C(Y)=\lambda(Y)I+u(Y)v^t
\]
with a fixed row $v^t$. The evaluation map $Y\mapsto C(Y)W$ is
nonsingular for an independent column $W$.
\end{lemma}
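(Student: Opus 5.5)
The plan is to translate the hypothesis into a local linear dependence for the transposed matrix space, and then reduce to Lemma~\ref{p2:II:quadratic-space}. Since $\det C\ne0$, some member $C(Y_0)$ is invertible. Multiplying on the left by $C(Y_0)^{-1}$ replaces $S_C(a)$ by $S_C(C(Y_0)^{-t}a)$, so the hypothesis is preserved, and we may change basis so that $C_3=I$. Then $S_C(a)$ has rows $a^tC_1,\ a^tC_2,\ a^t$. Put $X_i=C_i^t$ and let $\mathcal W=\langle X_1,X_2,I\rangle$, which is three-dimensional. The hypothesis $\det S_C(a)\equiv0$ then says exactly that
\[
 \det(a,\,X_1a,\,X_2a)=0\qquad\text{identically in }a .
\]
Equivalently, for every $X,Y\in\mathcal W$ the vectors $a,Xa,Ya$ are dependent. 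We work over the algebraic closure; a polynomial identity there descends to the infinite ground field.

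The key step is to show that every $X\in\mathcal W$ has minimal polynomial of degree at most two. Suppose instead that some $X\in\mathcal W$ is nonderogatory, i.e.\ cyclic, so $a,Xa,X^2a$ are independent for generic $a$. Take $Y\in\mathcal W\setminus\langle I,X\rangle$. Then for generic $a$ we can write
\[
 Ya=\alpha(a)a+\beta(a)Xa ,
\]
with $\alpha,\beta$ rational of degree zero. We would show that $\alpha$ and $\beta$ are constant. Apply the relation to $a$, $b$ and $a+b$, use linearity of $Y$, and use the independence of $a,b,Xa,Xb$ for generic pairs, which holds since $X$ is cyclic in dimension three. Alternatively, polarize the cubic identity $\det(a,Xa,Ya)=0$ in $a$ and evaluate the polarization at $b=Xa$. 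This is where odd characteristic should enter, to divide by $2$ in the second polarization. Once $\alpha,\beta$ are constant, $Y=\alpha I+\beta X\in\langle I,X\rangle$, contradicting $\dim\mathcal W=3$. Hence $\det(W,XW,X^2W)=0$ for all $X\in\mathcal W$ and an independent test vector $W$, which is the hypothesis of Lemma~\ref{p2:II:quadratic-space}.

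Lemma~\ref{p2:II:quadratic-space} then gives $X(Y)=\lambda(Y)I+R(Y)$ with $\{R(Y)\}$ a rank-at-most-one space having either a common image or a common row. We must select the right alternative. If the $R(Y)$ had a common row $v^t$, say $R(Y)=x(Y)v^t$, then for $a\in\ker v^t$ all of $\mathcal W a$ equals $\langle a\rangle$. Take instead generic $a$: then $\mathcal Wa=\langle a,x(Y)\rangle$ would range over a space of dimension three whenever the $x(Y)$ span two dimensions, contradicting the local dependence. If the $x(Y)$ span only one dimension, $\mathcal W$ has dimension at most two, again a contradiction. So the $R(Y)$ share a common image $\langle v\rangle$: $X(Y)=\lambda(Y)I+v\,u(Y)^t$. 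Transposing gives $C(Y)=\lambda(Y)I+u(Y)v^t$ with fixed row $v^t$, which is the claimed common-row form. (Transposing back through the normalization by $C(Y_0)^{-1}$ is harmless, by the remark in Section~\ref{p2:secmap:6} that common-row form survives changing the normalizing member.)

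For nonsingular evaluation, first note that $\lambda$ and $u$ are both linear in $Y$. Independence of $C_1,C_2,C_3$ forces $Y\mapsto(\lambda(Y),u(Y))$ to be injective. Then compute as in the proof of Lemma~\ref{p1:lem:balanced}: the evaluation matrix of $Y\mapsto C(Y)W=\lambda(Y)W+(v^tW)u(Y)$ has determinant $(v^tW)^2$ times a nonzero linear expression in $W$. This expression comes from the $2\times2$ data of $u$ restricted to $\ker\lambda$. It is nonzero because $u$ is injective on $\ker\lambda$ (otherwise some nonzero $C(Y)$ would vanish), and because $\lambda\ne0$ (otherwise the $C(Y)=u(Y)v^t$ would all be singular, contradicting $\det C\ne0$). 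I expect the main obstacle to be the constancy of $\alpha,\beta$ in the local-linear-dependence step. If the direct argument fails, I would fall back on the classification of three-dimensional spaces of $3\times3$ matrices in which every evaluation $\mathcal Wa$ has dimension at most two, and use the hypothesis $I\in\mathcal W$ to rule out the compression-space alternatives.
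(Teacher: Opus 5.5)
Your reduction is correct. After normalizing $C_3=I$ and transposing, the hypothesis is exactly $\det(a,X_1a,X_2a)\equiv0$ for $\mathcal W=\langle X_1,X_2,I\rangle$.

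\emph{Once} every member of $\mathcal W$ is known to have minimal polynomial of degree at most two, the rest of your argument goes through: Lemma~\ref{p2:II:quadratic-space}, your exclusion of a common row for $\mathcal W$ (that is, of a common image for $C$), and your evaluation determinant. For the last step, note that $C(e_3)=I$ forces $\lambda(e_3)=1$ and $u(e_3)=0$. So the determinant is $(v^tW)^2\det(W,u(Y_1),u(Y_2))$, with $Y_1,Y_2$ spanning $\ker\lambda$.

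The gap is that key step, which carries the whole content of the lemma, and neither of your routes proves it.
\begin{itemize}
\item The first route needs $a,b,Xa,Xb$ to be independent for generic $a,b$. These are four vectors in a three-dimensional space, so comparing $Y(a+b)$ with $Ya+Yb$ only reproduces the one relation they always satisfy.
\item The second route is too weak. Evaluate the first polarization of $\det(a,Xa,Ya)$ at $b=Xa$ and insert $Ya=\alpha(a)a+\beta(a)Xa$ and $YXa=\alpha(Xa)Xa+\beta(Xa)X^2a$. This gives only $(\beta(Xa)-\beta(a))\det(a,Xa,X^2a)=0$, i.e.\ invariance of $\beta$ under $a\mapsto Xa$. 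Here $\alpha$ drops out, and no further polarization or division by $2$ yields constancy. Odd characteristic plays no role in this step.
\end{itemize}

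The step can be closed directly. From $Ya=\alpha a+\beta Xa$ you get $a\times Ya=\beta(a)\,(a\times Xa)$.
\begin{itemize}
\item If $X$ is cyclic, the three quadratic coordinates of $a\times Xa$ have no common factor. A common factor would vanish at infinitely many projective points, each an eigenvector of $X$, whereas a cyclic $3\times3$ matrix has at most three projective eigenvectors.
\item Write $\beta=p/q$ in reduced form. From $q\,(a\times Ya)=p\,(a\times Xa)$, $q$ divides every coordinate of $a\times Xa$, so $q$ is constant.
\item Thus $\beta$ is a polynomial of degree zero, hence a constant. Then $a\times(Y-\beta X)a\equiv0$ makes $Y-\beta X$ scalar, contradicting $Y\notin\langle I,X\rangle$.
\end{itemize}

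With this repair your proof is a valid alternative to the paper's, which avoids minimal polynomials altogether. The paper shows $S_C$ has rank two and factors the rank-one adjugate as $\adj S_C=h\,u_0(a)v_0(a)^t$ with linear columns. It then splits on $\rank u_0$. Rank three gives $S_C=R[a]_\times$ and hence only singular members. Rank two yields the common-row normal form directly.

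Your fallback to the classification of bounded-rank matrix spaces would also work. It is essentially the same case analysis, imported from outside.
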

\begin{proof}
There is no fixed nonzero column $w$ with $S_C(a)w=0$ for all $a$:
that would give $C(Y)w=0$ for all $Y$, contradicting invertibility.
There is no fixed nonzero row $b^t$ with $b^tS_C(a)=0$ either:
that would give $\sum b_iC_i=0$. A linear space of matrices of rank
at most one has a common row or a common image, by the rank-one
argument in Lemma~\ref{p2:II:quadratic-space}, and would have one of
these fixed null vectors. Thus $S_C$ has rank two over the rational
function field in $a$.

Its cofactor matrix has rank one and homogeneous entries of degree
two. Factoring its entries gives
$\adj S_C=h\,u_0(a)v_0(a)^t$, with primitive polynomial columns
$u_0,v_0$. For example, take a nonzero column, remove its greatest
common factor, and use the vanishing $2\times2$ minors to factor
all other columns. Neither $u_0$ nor $v_0$ is constant, since they
satisfy $S_Cu_0=0$ and $v_0^tS_C=0$ and fixed null vectors were
excluded. Degree two therefore makes both columns linear and $h$
constant. Each linear map $a\mapsto u_0(a),v_0(a)$ has rank at least two.

If the rank of $u_0$ is three, change coordinates to make $u_0=a$.
The relation $S_C(a)a=0$ then gives
$S_C(a)=R[a]_\times$ with $R$ constant: each linear row annihilating
$a$ is a constant combination of $(a_2,-a_1,0)$,
$(a_3,0,-a_1)$, and $(0,a_3,-a_2)$.
The absence of a fixed left null vector makes $R$ invertible.
For fixed $Y$, the expression
$Y^tR[a]_\times W=(R^tY)^t(a\times W)$ has an alternating
coefficient matrix in $a,W$, hence a singular $3\times3$ matrix.
Thus every $C(Y)$ would be singular, a contradiction.

The remaining rank is two. Normalize $u_0=(a_1,a_2,0)^t$.
The first two columns of $S_C$ must then be $a_2c,-a_1c$ for one
nonzero constant column $c$. Whenever $c^tY=0$ and $W_3=0$,
we have $a^tC(Y)W=0$ for every $a$. Choose bases in these two
planes. The four corresponding entries of this bilinear expression
are zero, so
$C(Y)=\lambda(Y)M+u(Y)v^t$, with $\lambda=c^tY$ and a fixed row $v^t$.
At an invertible value $Y_0$, $\lambda(Y_0)\ne0$.
Subtract $(\lambda(Y)/\lambda(Y_0))u(Y_0)$ from $u(Y)$ and absorb
the removed term into $M$. Normalize the invertible matrix $M$ to $I$.
We may then take $u(Y_0)=0$ and $\lambda(Y_0)=1$.
Independence of the three coefficient matrices forces
$u|_{\ker\lambda}$ to have rank two. In a basis with first vector
$Y_0$, the evaluation determinant is
\[
 (v^tW)^2\det(W,u(Y_1),u(Y_2))\not\equiv0,
\]
where $Y_1,Y_2$ span $\ker\lambda$. This proves the assertion.
\end{proof}

We record the actual detector with its twist, since the new scale is
not the scale of its moving normal hyperplane. Let
\[
 \widehat A=\sum_i V_i^\tau\widehat A_i
\]
be an actual extraction. In each $\widehat A_i$, older Frobenius blocks
have weights at least $5\tau$, and residual coefficients in $V$ have
degree $v<\tau$. Suppose $\widehat A$ is a scalar multiple of $M$ with
a scalar nonzero at shorts. Put
\[
 \mathcal F(a;V)=\det\begin{pmatrix}
 a^t\widehat A_1\\a^t\widehat A_2\\a^t\widehat A_3
 \end{pmatrix}.
\]
At a normalized short point $e_3$, write
$B_i=\widehat A_i(e_3)^{[5]}$.
Normality at order one makes the transverse block of $B_3$
equal to $kI$, with $k\ne0$, and its upper-right column zero.
The pure Hasse coefficient of order $5\tau$ in
$\widehat A^{[5]}V$ gives
\[
 \operatorname{top}_2(B_1e_3,B_2e_3)=\mu I.
\]
The mixed coefficient of order $5\tau+1$ gives
\[
 (B_1)_{\rm top}=\begin{pmatrix}a&b\\0&d\end{pmatrix},
 \qquad
 (B_2)_{\rm top}=\begin{pmatrix}e&0\\a-d&b+e\end{pmatrix}.
\]
Indeed, the relevant bivariate quadratic and cubic (homogeneous in the two direction coordinates) vanish in the six
$\mathbb F_5$ directions and hence identically. There are no interfering
Hasse terms: the residual degrees are below $\tau$, while older blocks
start at $5\tau$ before taking fifth powers.
The three rows
\[
 (ua,ub+vd,u\mu),\quad
 (ue+v(a-d),v(b+e),v\mu),\quad
 (uk,vk,0)
\]
have zero determinant by expansion. Taking fifth roots proves that
$\mathcal F(a;e_3)$ vanishes for $a_3=0$.
Under $V_{\rm old}=CV$, the covariance is
\[
 M_{\rm new}=(C^{-1})^{[1/25]}M_{\rm old}(CV)C^{[1/5]},
\]
and the new coefficient index changes by $C^{[\tau]}$.
Thus, at the short vertex $\mathsf V$ in the original rational coordinates,
the vanishing hyperplane is
\begin{equation}\label{p2:five:moving-hyperplane}
 a^t\mathsf V^{[1/25]}=a^t\mathsf V^{[q/25]}=0.
\end{equation}

If $h=\deg\widehat A_i$, then substituting $a=W\times V^{[q/P^2]}$
in $\mathcal F$ has coefficient degree $3h+3q/P^2$.
If $3h+3q/P^2<m_0$, interpolation makes
$\mathcal F(W\times V^{[q/P^2]};V)=0$ identically.
Over the rational function field in $V$, these cross products fill
the plane $a^tV^{[q/P^2]}=0$, so its linear equation divides
$\mathcal F$. Its three coefficients have greatest common factor one;
clearing denominators therefore proves polynomial divisibility.
If also $3h<q/P^2$, the factor has greater $V$-degree than
$\mathcal F$, forcing $\mathcal F=0$.
For the first seed $h=r<S$, both inequalities hold:
$3r+15S<18S<m_0$ and $3r<5S$.
The exact $B_*=H_2\bar N$ is nonsingular, so the first pencil has
coefficient dimension three, and the cubic criterion applies.

Let $L(Y;V)$ be its primitive repeated linear determinant factor,
$\ell=\deg_V L$. Then $L\mid\operatorname{adj}C$.
Differentiating \eqref{p2:five:seed} modulo a prime of $H_2$ gives rank one
with a common row, so $H_2\mid\operatorname{adj}C$ as well, and
$(L,H_2)=1$. Write
\[
 \operatorname{adj}C=LH_2K,
 \qquad K(V^{[S]};V)=E\bar N.
\]
The scalar $E$ is polynomial by primitivity of $\bar N$, and degree
comparison gives
\begin{equation}\label{p2:five:strong-seed}
 \deg E=S-3r-\ell-1\ge0,
 \qquad 3r+\ell\le S-1.
\end{equation}
If $6r+1<S$, separation in $AB_*=FI$ makes every $A_iB_*$ scalar,
contrary to coefficient dimension three. Hence
\begin{equation}\label{p2:five:seed-interval}
 (S-1)/6\le r\le(S-1)/3.
\end{equation}

\subsection{The repeated step: cancellation before substitution}

We retain the properties in Table~\ref{p2:tab:retained-properties},
now with $P=5$, and establish the degree bounds needed in this case.
At scales $S,S/5,\ldots,s$, retain a matrix polynomial
$C(Z_1,\ldots,Z_j;V)$, linear in each $Z_i$ and of coefficient
degree $w$ in $V$. Its coefficient family is primitive, and the
coefficient dimension in every argument is three. Its last pencil
is common-row with nonsingular evaluation at an independent vector. Put
\begin{align*}
 U&=C(Z_1,\ldots,Z_{j-1},V^{[s]};V),\\
 \mathcal D(Z_1,\ldots,Z_{j-1};V)W
 &=C^{\langle5\rangle}(W,Z_1,\ldots,Z_{j-1};V)V.
\end{align*}
Here, as in Section~\ref{p2:sec:repeated-step}, if
$C=\sum Z^I C_I(V)$, then
$C^{\langle5\rangle}=\sum Z^I C_I(V)^{[5]}$.
Each entry of each whole coefficient polynomial matrix is raised
to its fifth power; the formal arguments remain unchanged.
The invariant includes
\begin{equation}\label{p2:five:invariant}
 U\mathcal D=\phi I,\qquad \phi_\Delta=\Xi^6F,\qquad C_\Delta=\Xi A.
\end{equation}
Also $U$ has coefficient dimension three in every old argument.
\begin{equation*}
 \Xi\ne0\text{ at every short vertex.}
\end{equation*}
On the full actual diagonal, $C$ is a scalar multiple of the first
matrix $A$, with scalar nonzero at every short vertex. The scalar on the right of the matrix product is nonzero there as well.
Primitivity in $V$ and coefficient dimension three imply primitivity
in the full polynomial ring: a parameter factor would consume the
degree of an argument and reduce that argument's coefficient dimension to one.

The residual adjugate argument, which does not require $P\ge7$, gives
\begin{align}
 \mathcal D&=H_*\mathcal N,&
 \operatorname{adj}C&=LH_*K,&
 K(\ldots,V^{[s]};V)&=E\mathcal N,\label{p2:five:residual}\\
 6w+1&\ge s,&e:=\deg_VE&=s-3w-\ell-1\ge0,\nonumber\\
 &&\deg_{Z_i}E&=1-\deg_{Z_i}L\in\{0,1\}.\nonumber
\end{align}
Here $\mathcal N$ is primitive, $H_*$ is a squarefree polynomial in
$V$, and $L$ is the primitive repeated factor for the last pencil, with
$\ell=\deg_V L$. To recall the common-factor arguments, a parameter factor
dividing every entry of $\mathcal D$ would make $U$ projectively constant in
that argument, contrary to \eqref{p2:five:invariant}. A repeated factor depending only on $V$
would divide all derivatives with respect to $V$ of $\mathcal D$ and thus all fifth
powers of the primitive coefficient family. Finally, reduction modulo
a factor of $H_*$ gives common-row rank one, yielding
$H_*\mid\operatorname{adj}C$. Specialization and primitivity give the
polynomial $E$. Here the last-argument specialization
$L(\mathbf Z,V^{[s]};V)$ is nonzero because its square divides
$\det U\ne0$, exactly as in Lemma~\ref{p2:lem:gap};
there is no use of $3w<s$ to establish $E$. Only after $E$ has been
obtained does its nonnegative degree imply $3w<s$.
The full diagonal identity gives $D=E_\Delta L_\Delta(\Xi B_0)^3$;
thus every residual factor needed at a short vertex is nonzero.

We now prove parameter cancellation for $P=5$ without using
$2(P-3)\ge P+1$, which is false here. Start with the last old argument.
The evaluation of its $\mathcal D$-pencil is the shifted fifth-power
evaluation of the last pencil of $C$. It is nonsingular after its test
vector is specialized to $V$: the determinant has test-vector degree
three, strictly below five, with coefficients in the field of fifth powers of rational functions in $V$. Fifth-power basis independence prevents this specialization from
killing a nonzero determinant.

Over the field of the remaining variables, choose $Y_0$ with
$U(Y_0)\mathcal D(Y_0)=\phi(Y_0)I\ne0$, and put
\[
 \widetilde U(Y)=U(Y_0)^{-1}U(Y),\qquad
 \widetilde{\mathcal D}(Y)=\mathcal D(Y)U(Y_0)/\phi(Y_0).
\]
Both have value $I$ at $Y_0$ and their product is scalar. The
coefficient of $t$ in the product at $Y_0+tY$ is
$\widetilde U(Y)+\widetilde{\mathcal D}(Y)=\alpha(Y)I$;
its coefficient of $t^2$ gives
$\widetilde U(Y)^2-\alpha(Y)\widetilde U(Y)+\beta(Y)I=0$.
The proof of Lemma~\ref{p2:II:quadratic-space} now gives a scalar linear
form plus a rank-at-most-one matrix space. It has a common row or a
common image. In the common-image alternative
$\widetilde U=\lambda I+u v(Y)^t$, with $u$ fixed, the reciprocal
has the form $(\lambda+v(Y)^tu)I-u v(Y)^t$. Its value at any test
vector $W$ belongs to $\Span(W,u)$, contradicting nonsingular
evaluation. This is the specific reason for choosing the common row.

Its output cubic vanishes. Before the last-argument substitution this cubic has coefficient degree $3w<s$ and its last-argument
degree is three. Disjoint $s$-power blocks promote the identity to the
formal matrix polynomial $C$. The cubic criterion gives the earlier-argument common-row
property and nonsingular evaluation. For clarity, the backwards induction starts with old argument $j-1$.
The evaluation property in argument $j$ of $C$ is the evaluation property
in argument $j-1$ of $\mathcal D$, by the displayed shift in its definition.
The reciprocal-pencil calculation therefore proves the property for
argument $j-1$ of $U$, and its output cubic lifts to $C$ by
Lemma~\ref{p2:II:separation}. With that argument proved, the same shift supplies
the hypothesis for argument $j-2$. Continue until argument $1$. All other
parameters remain in the coefficient field at each step; the bound
$3w<s$ is the same at every lift. Thus every argument of $C$ is common-row.

Let $f$ be an irreducible parameter factor of $E$ involving $Z_i$.
It occurs once, and $\deg_{Z_i}L=0$.
Let $\Pi_i$ be the primitive repeated linear determinant factor of the
formal $Z_i$-pencil. It is distinct from $L$, and
\[
 \Pi_i^2L^2\mid\det C.
\]
Since $\det C$ has degree three in the final argument and $L$ has degree
one there, $\Pi_i$ is independent of that argument. It therefore survives
specialization unchanged. Also $f\mid\operatorname{adj}U$ by
\eqref{p2:five:residual}. For a three-dimensional common-row pencil the greatest common divisor of the
adjugate entries over the field of the other variables is exactly its
unique repeated linear factor.

Indeed, for $C(Y)=\lambda(Y)I+u(Y)v^t$, direct multiplication gives
\[
 \adj C=\lambda\bigl((\lambda+v^tu)I-u v^t\bigr).
\]
After taking $v=e_3$, the entries in parentheses include $\lambda$
and the coordinates of $u$. A common nonconstant factor would divide
all these linear forms, making the coefficient space one-dimensional.
Thus the bracket has no nonconstant common factor and the common factor
of the adjugate entries is exactly
$\lambda$, up to nonzero constants. This argument is over the fraction field of
the other parameters; clearing reduced denominators transfers the divisibility to the
polynomial ring, because the chosen factors have coprime coefficients.
 Thus $f$ and $\Pi_i$ are associates over
that field. Clearing denominators in their primitive forms gives
$f\mid\operatorname{adj}C$ in the full polynomial ring.
Since $f$ is coprime to $LH_*$, it divides $K$ before specialization.
Cancel every parameter factor to obtain
\begin{equation}\label{p2:five:spatial-residual}
 K_0(\ldots,V^{[s]};V)=E_0(V)\mathcal N,
 \qquad e_0:=\deg E_0\le e.
\end{equation}
Each earlier-argument degree of $K_0$ is
$(2-\deg_{Z_i}L)-(1-\deg_{Z_i}L)=1$; the final argument is linear as well.

\subsection{The next extraction and its matrix identity}

For $s>1$, put $\tau=s/5$.
Differentiate $E_0\mathcal D\in(X^s,Y^s,Z^s)$ and eliminate the
undifferentiated term. Coefficient comparison gives
$E_0^2f^5\in(X^s,Y^s,Z^s)$ for every scalar coefficient entry $f$ of $C$.
The expansion by exponent residues modulo five, homogenization, and coefficient choice used
above produce a common multiplier $G$, nonzero at all short vertices,
such that
\begin{equation}\label{p2:five:recursive-extraction}
 GC_I=\sum_aV_a^\tau C'_{Ia},\qquad
 \gamma:=\deg G\le\frac{2e_0}{5},\qquad
 v:=\deg C'_{Ia}=w+\gamma-\tau.
\end{equation}
Cancel the greatest common factor of all the new coefficient entries
from both sides. By the old
full-ring primitivity it divides $G$; this only decreases $\gamma,v$
and preserves the nonvanishing at short vertices. From \eqref{p2:five:residual},
\begin{align}
 \gamma&\le(s-1-2\ell)/5<\tau,\nonumber\\
 v&\le(s-w-2\ell-2)/5
 \le(5s-11-12\ell)/30<5\tau/6.\label{p2:five:recursive-gaps}
\end{align}
Consequently $v<w$, $w+v<s$, and $3v<s$.

On the full actual diagonal the new coefficients $\widehat A_i$ have
degree
\[
 h=\frac54(S-\tau)+v<\frac54S.
\]
The detector already proved applies, since
\[
 3h+3q/P^2<75S/4<m_0,\qquad 3h<15S/4<q/P^2.
\]
It makes the actual output cubic vanish.
If the actual evaluation were singular, its local determinant at a
short point would be $\mu^2$ times a nonzero scalar. Thus $\mu=0$ in
the detector's notation, so every $\widehat A_i^{[5]}\mathsf V$ is radial.
Put $a=A^{[5]}V$ and $b_i=\widehat A_i^{[5]}V$. The three minors
$a_j(b_i)_k-a_k(b_i)_j$ vanish at every short vertex and have degree
\[
 5(S+r+h)+2<\frac{155}{12}S+\frac13<m_0,
\]
using $r\le(S-1)/3$. Interpolation makes them identically zero.
Degree-two independence over the fifth-power field of rational functions in $V$ replaces
$V$ by an independent test vector; since $A$ is invertible, each actual
$\widehat A_i$ is a rational scalar multiple of $A$.
For any two matrix positions $a,b$, the expressions
$(C'_i)_aC_b-(C'_i)_bC_a$ have coefficient degree $w+v<s$ and degree
at most two in each older argument. Lemma~\ref{p2:II:separation} makes
them zero before substitution. Primitivity then gives
$C'_i=\eta_i C$ with $\eta_i$ polynomial, but
$\deg_V\eta_i=v-w<0$, a contradiction.
Thus the new evaluation is nonsingular and its coefficient dimension
is three.

The vanishing output cubic promotes to the formal matrix polynomial: each earlier-argument
degree is three, below five, and its residual degree in $V$ is $3v<s$.
The cubic criterion proves the new last-argument common-row property.
Old argument dimensions are retained because specializing the new argument
recovers $GC$. In fact the new partial matrix is exactly $GC$, which
retains coefficient dimension three in every old argument, as required by
\eqref{p2:five:invariant}.

To restore the crossed identity, first cancel the multiplier $G$.
The product of the old $C$ with the new partner $\mathcal D'$ specializes
in the newest old argument to $G^5\phi I$. Before that specialization,
its off-diagonal entries and diagonal differences have degree in $V$
\[
 w+5v+1=6w+5\gamma-s+1
 \le6w+2e_0-s+1\le s-2\ell-1<s.
\]
The relevant argument degree is two. Separation makes these entries vanish
formally. Multiplying by $G$ gives $U'\mathcal D'=\phi'I$.
The scalar on the full diagonal is determined by
\[
 \mathcal D'_\Delta V^{[5S]}=(C'_\Delta)^{[5]}V,
 \qquad C'_\Delta=\Xi'A,
\]
and $AA^{[5]}V=FV^{[5S]}$. Hence
$\phi'_\Delta=\Xi'^6F$, including its nonvanishing at every short
vertex. This restores the scalar identity needed by the next residual
factorization, as well as the matrix identity.
Thus all parts of the recursive invariant, including full primitivity
and the specialized earlier-argument dimension condition, are restored.
Notice that the unnecessary larger bound $\tau+6v+1<s$ was never used.

For $S\ge5$, iterate to $s=5$. The residual integer interval gives
$4/6\le w\le4/3$, hence $w=1$. If $S=5$, the first strong interval
already gives $r=1$; if $S=1$, the initial degree constraints give
$t=0$, and $M$ is linear. We obtain in all cases a full constant matrix polynomial, linear in each vector argument, with
\begin{equation}\label{p2:five:endpoint}
 \Theta M=\mathsf C(V,V^{[5]},\ldots,V^{[S]}),
 \qquad\deg(\Theta M)=\frac{5S-1}{4}.
\end{equation}

\subsection{Constructing the representing space and recovering \texorpdfstring{$B$}{B}}

The common-function result of
\cite[Theorem~\Iref{I:endpoint-theorem}]{p2:PUB1} applies: its argument degrees
are at most two, below five.
They give multilinear factors $\mathcal F,\mathcal G$ satisfying
\[
 \det\mathsf C=\mathcal F^2\mathcal G,
 \qquad \operatorname{adj}\mathsf C=\mathcal F\mathsf B,
 \qquad D=\Theta^3\mathcal F_\Delta.
\]
The restrictions to two-dimensional normal planes then give a common rational function
\[
 W=\frac{\mathcal F_\Delta^5}{\Theta^5H},\qquad\deg W=d-1,
 \qquad h_a|_{L_a}=\beta_aW|_{L_a}
\]
on every ordinary component. It is regular and nonzero at all shorts.
The auxiliary choices may enlarge the coefficient field. The argument
in Paper I uses coefficients raised to $P^i$th powers for all integers
$i$; only the original rational normal planes need period $n$.

Choose compatible ordinary representatives $u_a=\lambda_a a$ with
$\lambda_a^4=\beta_a^5$. The preceding local semilinear fixed-space\TermNote{exp:p2:semilinear}{the solution space of $Aw^{[5]}=w$, which is a vector space over $\F_5$}
argument places the six lifts on every short line in a two-dimensional
$\mathbb F_5$-space. Two adjacent lifts have rational sum-direction,
so $\lambda_a/\lambda_b\in\mathbb F_q^*$.
The short-adjacency graph has minimum degree at least $5m_0$, and
\begin{equation}\label{p2:five:connectivity}
 10m_0-|B|>
 \left(\frac{373}{200}-\frac{127}{100}\right)q
 =\frac{119}{200}q>0.
\end{equation}
It is connected; one common rescaling therefore puts every ordinary
lift in $\mathbb F_q^3$.

Let $r_U$ be their $\mathbb F_5$-span dimension. Every nonzero linear functional is nonzero at at least
$\delta=4m_0+1$ representatives, and the short lines supply the
collinear triples needed for the dimension bound. Proposition~\Iref{I:code-bound} of~\cite{p2:PUB1} gives
\[
 5^{r_U}\le\frac{5N_0(4m_0+1)}{2+10m_0-N_0}.
\]
Here $\delta\le4(q/P+1)+1=100S+5<q$,
$N_0\le |B|<127q/100$, and \eqref{p2:five:connectivity} bounds the denominator
below by $119q/200$. Thus
\[
 5^{r_U}<\frac{1270}{119}q<25q=5^{n+2}.
\]
Hence $r_U\le n+1$. Since $N_0>q$, dimension at most $n$ is impossible
even after allowing projective fibers, so $r_U=n+1$.

Put $B'=L(U)$ for this span. Dimension counting makes $B'$ blocking;
its line intersections are $1\pmod5$, it contains all ordinary points
of $B$, and $|B'|\le(5q-1)/4$. The comparison must allow $|B|$ to exceed
this last bound. Nevertheless the always valid inequality is
\[
 |B\mathbin\triangle B'|\le\frac{5q-1}{4}-|B|+2\sigma.
\]
Using the exact special-component estimate in
\eqref{p2:five:ordinary-count}, then \eqref{p2:five:t-range} and
\eqref{p2:five:lower}, gives
\begin{align*}
 |B\mathbin\triangle B'|
 &\le\frac{49S}{4}-3\rho-4t-\frac{13}{4}\\
 &\le\frac{49S-13}{4}<200S+2=\frac{8q}{5}+2.
\end{align*}
Lemma~\Iref{I:signed} of~\cite{p2:PUB1} says that a nonzero such difference has size at least
$8q/5+2$ (if only one support is nonempty the bound is stronger).
Therefore $B=B'$.
The short six-secants give exponent exactly one and force
$\mathbb F_5$ to be the maximum geometric field of linearity, by the subline
containment argument at the end of Section~\ref{p2:sec:reconstruction}.
This proves Theorem~\ref{p2:thm:five-entire-interval} and completes the proof of Theorem~\ref{p2:II:main}.

\section{The upper endpoint is attained}\label{p2:secmap:10}\label{p2:sec:upper}

The size bound implied by Theorem~\ref{p2:II:main} is the elementary
cardinality bound for a rank-$(n+1)$ linear set. Partitioning the
$P^{n+1}-1$ nonzero vectors into projective-point fibers\TermNote{exp:p2:fiber}{all nonzero vectors of $U$ representing one projective point; weight one means that these are exactly $P-1$ vectors}, each of
size at least $P-1$, gives
\[
 |L(U)|\le\frac{P^{n+1}-1}{P-1}=b_{\max}.
\]
Equality holds exactly when every point has weight one, meaning
$\dim_{\F_P}(U\cap\F_q v)=1$ for every represented point $[v]$.
\label{exp:p2:scattered-first}Such a linear set is called scattered.

For every $P,n$ in the theorem, define
\[
 U_{\max}=\{(x,x^P,t):x\in\F_q,\ t\in\F_P\}\le\F_q^3.
\]
This is an $\F_P$-space of dimension $n+1$. Suppose that
two nonzero vectors in it differ by a scalar $\lambda\in\F_q^*$.
If their first coordinate is nonzero, comparison of the second
coordinates gives $\lambda^P=\lambda$. If the first coordinate
is zero, the third coordinate is nonzero and comparison there
gives $\lambda\in\F_P^*$. Thus every point has weight one and
\[
 \boxed{|L(U_{\max})|=\frac{Pq-1}{P-1}.}
\]
Equivalently, its affine part is the graph
$\{(x,x^P,1):x\in\F_q\}$ of size $q$, and at infinity it has
$(q-1)/(P-1)$ points. This is the classical R\'edei-type construction
from $x\mapsto x^P$. It is small for $P\ge3$, since
$(Pq-1)/(P-1)<3(q+1)/2$ in that range.

The rank argument shows that $L(U_{\max})$ blocks every line.
All its line intersections are $1\pmod P$. If a point had no
tangent, its pencil would contain at least $1+P(q+1)$ points,
contrary to the displayed size. Hence the set is minimal.
The line $Y=X$ contains the $P$ affine points with $x\in\F_P$
and the single point $(1,1,0)$ at infinity. It is a $(P+1)$-secant,
so the exponent is exactly $e$.

The theorem also shows that any blocking set attaining the upper
endpoint under its hypotheses admits a scattered rank-$(n+1)$
$\F_P$-linear representation\TermNote{exp:p2:scattered}{every represented point has weight one}. This does not assert that all such
sets are projectively equivalent to the example above.

\section{A known strictly intermediate example}\label{p2:secmap:11}\label{p2:sec:interior}
The following size and construction belong to the classical examples
of Polito--Polverino~\cite[Theorem~1.1 and Section~3]{p2:PolitoPolverino};
see also~\cite{p2:BonoliPolverino}. We give the vector-space count to
explain why a proof cannot restrict attention to two endpoint sizes.

There are minimal blocking sets whose size lies strictly inside
the interval established here. Let $n=4$, $q=P^4$, and choose
$\alpha\in\F_q\setminus\F_{P^2}$. Put
\[
 A=\F_{P^2},\qquad C=\Span_{\F_P}\{1,\alpha\},\qquad
 U=A\times C\times\F_P\le \F_q^3.
\]
Then $\dim_{\F_P}U=5$, so $L(U)$ blocks $\PG(2,q)$ by the
dimension argument used above. Its affine part $Z\ne0$ consists
of the $P^4$ points $(a,c,1)$ with $a\in A,c\in C$.

At infinity the represented vector space is $A\times C$.
The two coordinate-axis points each have weight two. Every other
represented point has weight one. Indeed, a non-axis point of
weight two would require $A$ and $C$ to be scalar-equivalent.
If $C=\lambda A$, then $1\in C$ implies $\lambda\in A^*$, hence
$C=A$, contrary to $\alpha\notin A$.
Counting nonzero vectors in $A\times C$ therefore gives
\[
 |L(U)\cap\{Z=0\}|
 =2+\frac{P^4-1-2(P^2-1)}{P-1}
 =P^3+P^2-P+1.
\]
Consequently
\[
 \boxed{|L(U)|=P^4+P^3+P^2-P+1,}
\]
strictly between $P^4+P^3+1$ and $P^4+P^3+P^2+1$, hence
also below the present upper bound $P^4+P^3+P^2+P+1$.
Every line meets $L(U)$ in $1\pmod P$ points. If a point of
$L(U)$ had no tangent, its pencil would force at least
$1+P(q+1)$ points, more than this displayed size. Thus $L(U)$
is minimal. The line $Y=X$ is a $(P+1)$-secant, since
$A\cap C=\F_P$; its exact exponent is $e$.

This example shows why an interval theorem cannot be proved by
asserting that only the two endpoint cardinalities occur. The
theorem establishes linearity; it does not assert that every
integer $\rho$ between $0$ and $\rho_{\max}$ is realizable.

\section*{Acknowledgements}
OpenAI ChatGPT assisted with mathematical exploration, computations and
drafting. The author retains responsibility for verifying the
proofs and the submitted text.

\label{part2:last}

\begin{thebibliography}{9}
\bibitem{p0:Part1} P. Sziklai, \emph{The Linearity Conjecture, Part~1: Frobenius
methods for small minimal blocking sets and binary linearity in
exponent one}, this collection, pp.~\pageref{part1:first}--\pageref{part1:last}.
\bibitem{p0:Part2} P. Sziklai, \emph{The Linearity Conjecture, Part~2: The planar
Linearity Conjecture for intersection exponents $P\ge5$}, this
collection, pp.~\pageref{part2:first}--\pageref{part2:last}.
\end{thebibliography}

\begin{thebibliography}{99}
\bibitem{p1:Szonyi} T. Sz\H{o}nyi, \emph{Blocking sets in Desarguesian affine and projective planes}, Finite Fields Appl. \textbf{3} (1997), 187--202. \href{https://doi.org/10.1006/ffta.1996.0176}{doi:10.1006/ffta.1996.0176}.
\bibitem{p1:SW} T. Sz\H{o}nyi and Zs. Weiner, \emph{Small blocking sets in higher dimensions}, J. Combin. Theory Ser. A \textbf{95} (2001), 88--101. \href{https://doi.org/10.1006/jcta.2000.3152}{doi:10.1006/jcta.2000.3152}.
\bibitem{p1:Sziklai} P. Sziklai, \emph{On small blocking sets and their linearity}, J. Combin. Theory Ser. A \textbf{115} (2008), 1167--1182. \url{https://www.sciencedirect.com/science/article/pii/S0097316508000149}.
\bibitem{p1:DSc} P. Sziklai, \emph{Applications of Polynomials over Finite Fields}, doctoral dissertation, Hungarian Academy of Sciences, Budapest, 2013; especially Section 11.5, Theorem 11.33 and Corollaries 11.35 and 11.37. \url{https://real-d.mtak.hu/617/7/dc_637_12_doktori_mu.pdf}.
\bibitem{p1:Blokhuis} A. Blokhuis, \emph{On the size of a blocking set in
$\PG(2,p)$}, Combinatorica \textbf{14} (1994), 111--114.
\href{https://doi.org/10.1007/BF01305953}{doi:10.1007/BF01305953}.
\bibitem{p1:Polverino} O. Polverino, \emph{Small blocking sets in
$\PG(2,p^3)$}, Des. Codes Cryptogr. \textbf{20} (2000), 319--324.
\bibitem{p1:PolverinoStorme} O. Polverino and L. Storme,
\emph{Small minimal blocking sets in $\PG(2,q^3)$}, European J. Combin.
\textbf{23} (2002), 83--92.
\bibitem{p1:BBBSS} A. Blokhuis, S. Ball, A. E. Brouwer, L. Storme and
T. Sz\H{o}nyi, \emph{On the number of slopes of the graph of a function
defined on a finite field}, J. Combin. Theory Ser. A \textbf{86}
(1999), 187--196.
\bibitem{p1:Ball} S. Ball, \emph{The number of directions determined by a function over a finite field}, J. Combin. Theory Ser. A \textbf{104} (2003), 341--350. \href{https://doi.org/10.1016/j.jcta.2003.09.006}{doi:10.1016/j.jcta.2003.09.006}.
\bibitem{p1:Missing} P. Sziklai, \emph{Functions determining few directions:
the final missing case}, arXiv:2610.28002, 2026.
\url{https://arxiv.org/abs/2610.28002}.
\bibitem{p1:DBVminimum} J. De Beule and G. Van de Voorde, \emph{The minimum size of a linear set}, J. Combin. Theory Ser. A \textbf{164} (2019), 109--124. \href{https://doi.org/10.1016/j.jcta.2018.12.008}{doi:10.1016/j.jcta.2018.12.008}.
\bibitem{p1:JenaVDV} D. Jena and G. Van de Voorde,
\emph{The geometric field of linearity of linear sets},
Des. Codes Cryptogr. \textbf{90} (2022), 779--799.
\href{https://doi.org/10.1007/s10623-022-01011-9}{doi:10.1007/s10623-022-01011-9}.
\bibitem{p1:PUB2} P. Sziklai, \emph{The Linearity Conjecture, Part~2:
The planar Linearity Conjecture for intersection exponents $P\ge5$},
this collection, pp.~\pageref{part2:first}--\pageref{part2:last}.
\bibitem{p1:VDV} G. Van de Voorde, \emph{On the linearity of higher-dimensional blocking sets}, Electron. J. Combin. \textbf{17} (2010), Research Paper 174. \url{https://www.combinatorics.org/ojs/index.php/eljc/article/download/v17i1r174/pdf/}.
\bibitem{p1:SziklaiSzonyi} P. Sziklai and T. Sz\H{o}nyi,
\emph{Blocking sets and algebraic curves}, Rend. Circ. Mat. Palermo
(2) Suppl. \textbf{51} (1998), 71--86.
\bibitem{p1:LSVcode} M. Lavrauw, L. Storme and G. Van de Voorde,
\emph{On the code generated by the incidence matrix of points and
$k$-spaces in $\PG(n,q)$ and its dual},
Finite Fields Appl. \textbf{14} (2008), 1020--1038.
\url{https://arxiv.org/abs/1201.3291}.
\bibitem{p1:LidlNiederreiter} R. Lidl and H. Niederreiter,
\emph{Finite Fields}, second edition, Encyclopedia of Mathematics
and its Applications 20, Cambridge University Press, 1997.
\bibitem{p1:LSV} M. Lavrauw, L. Storme and G. Van de Voorde,
\emph{A proof of the linearity conjecture for $k$-blocking sets in
$\PG(n,p^3)$, $p$ prime}, J. Combin. Theory Ser. A \textbf{118}
(2011), 808--818. \url{https://arxiv.org/abs/1201.3296}.
\bibitem{p1:Lunardon} G. Lunardon, \emph{Normal spreads}, Geom. Dedicata
\textbf{75} (1999), 245--261.
\href{https://doi.org/10.1023/A:1005052007006}{doi:10.1023/A:1005052007006}.
\bibitem{p1:LunardonK} G. Lunardon, \emph{Linear $k$-blocking sets},
Combinatorica \textbf{21} (2001), 571--581.
\href{https://doi.org/10.1007/s004930100013}{doi:10.1007/s004930100013}.
\end{thebibliography}

\begin{thebibliography}{99}
\bibitem{p2:DSc} P. Sziklai, \emph{Applications of Polynomials over Finite Fields}, doctoral dissertation, Hungarian Academy of Sciences, Budapest, 2013; especially Section 11.5, Theorem 11.33 and Corollaries 11.35 and 11.37. \url{https://real-d.mtak.hu/617/7/dc_637_12_doktori_mu.pdf}.
\bibitem{p2:Blokhuis} A. Blokhuis, \emph{On the size of a blocking set in
$\PG(2,p)$}, Combinatorica \textbf{14} (1994), 111--114.
\href{https://doi.org/10.1007/BF01305953}{doi:10.1007/BF01305953}.
\bibitem{p2:Szonyi} T. Sz\H{o}nyi, \emph{Blocking sets in Desarguesian affine and projective planes}, Finite Fields Appl. \textbf{3} (1997), 187--202. \href{https://doi.org/10.1006/ffta.1996.0176}{doi:10.1006/ffta.1996.0176}.
\bibitem{p2:Polverino} O. Polverino, \emph{Small blocking sets in
$\PG(2,p^3)$}, Des. Codes Cryptogr. \textbf{20} (2000), 319--324.
\bibitem{p2:PolverinoStorme} O. Polverino and L. Storme,
\emph{Small minimal blocking sets in $\PG(2,q^3)$}, European J. Combin.
\textbf{23} (2002), 83--92.
\bibitem{p2:LSV} M. Lavrauw, L. Storme and G. Van de Voorde,
\emph{A proof of the linearity conjecture for $k$-blocking sets in
$\PG(n,p^3)$, $p$ prime}, J. Combin. Theory Ser. A \textbf{118}
(2011), 808--818. \url{https://arxiv.org/abs/1201.3296}.
\bibitem{p2:BBBSS} A. Blokhuis, S. Ball, A. E. Brouwer, L. Storme and
T. Sz\H{o}nyi, \emph{On the number of slopes of the graph of a function
defined on a finite field}, J. Combin. Theory Ser. A \textbf{86}
(1999), 187--196.
\bibitem{p2:Ball} S. Ball, \emph{The number of directions determined by a function over a finite field}, J. Combin. Theory Ser. A \textbf{104} (2003), 341--350. \href{https://doi.org/10.1016/j.jcta.2003.09.006}{doi:10.1016/j.jcta.2003.09.006}.
\bibitem{p2:Missing} P. Sziklai, \emph{Functions determining few directions:
the final missing case}, arXiv:2610.28002, 2026.
\url{https://arxiv.org/abs/2610.28002}.
\bibitem{p2:JenaVDV} D. Jena and G. Van de Voorde,
\emph{The geometric field of linearity of linear sets},
Des. Codes Cryptogr. \textbf{90} (2022), 779--799.
\href{https://doi.org/10.1007/s10623-022-01011-9}{doi:10.1007/s10623-022-01011-9}.
\bibitem{p2:SW} T. Sz\H{o}nyi and Zs. Weiner, \emph{Small blocking sets in higher dimensions}, J. Combin. Theory Ser. A \textbf{95} (2001), 88--101. \href{https://doi.org/10.1006/jcta.2000.3152}{doi:10.1006/jcta.2000.3152}.
\bibitem{p2:Sziklai} P. Sziklai, \emph{On small blocking sets and their linearity}, J. Combin. Theory Ser. A \textbf{115} (2008), 1167--1182. \url{https://www.sciencedirect.com/science/article/pii/S0097316508000149}.
\bibitem{p2:PUB1} P. Sziklai, \emph{The Linearity Conjecture, Part~1:
Frobenius methods for small minimal blocking sets and binary linearity
in exponent one}, this collection, pp.~\pageref{part1:first}--\pageref{part1:last}.
\bibitem{p2:VDV} G. Van de Voorde, \emph{On the linearity of higher-dimensional blocking sets}, Electron. J. Combin. \textbf{17} (2010), Research Paper 174. \url{https://www.combinatorics.org/ojs/index.php/eljc/article/download/v17i1r174/pdf/}.
\bibitem{p2:Bruen} A. A. Bruen, \emph{Baer subplanes and blocking sets},
Bull. Amer. Math. Soc. \textbf{76} (1970), 342--344.
\href{https://doi.org/10.1090/S0002-9904-1970-12470-3}{doi:10.1090/S0002-9904-1970-12470-3}.
\bibitem{p2:PolitoPolverino} P. Polito and O. Polverino,
\emph{Linear blocking sets in $\PG(2,q^4)$}, Australas. J. Combin.
\textbf{26} (2002), 41--48.
\url{https://ajc.maths.uq.edu.au/pdf/26/ajc_v26_p041.pdf}.
\bibitem{p2:BonoliPolverino} G. Bonoli and O. Polverino,
\emph{$\F_q$-linear blocking sets in $\PG(2,q^4)$}, Innov. Incidence
Geom. \textbf{2} (2005), 35--56.
\url{https://msp.org/iig/2005/2-1/iig-v2-n1-p02-p.pdf}.
\end{thebibliography}
\end{document}